\documentclass[12pt]{amsart}

\usepackage{enumerate, amsmath, amsthm, amsfonts, amssymb, xy,  mathrsfs, graphicx, paralist, fancyvrb,ytableau, tikz, tikz-cd,wrapfig, ytableau, multicol, float}
\PassOptionsToPackage{usenames, dvipsnames}{xcolor}
\usepackage[margin=1in]{geometry} 
\usepackage[bookmarks, colorlinks=true, linkcolor=blue, citecolor=blue, urlcolor=blue]{hyperref}
\usetikzlibrary{cd}

\usepackage{adjustbox}
\usepackage{etoolbox}

\input xy
\xyoption{all}

\numberwithin{equation}{subsection}
\newtheorem{theorem}[equation]{Theorem}

\newtheorem{proposition}[equation]{Proposition}
\newtheorem{lemma}[equation]{Lemma}
\newtheorem{corollary}[equation]{Corollary}

\theoremstyle{definition}
\newtheorem{rmk}[equation]{Remark}
\newenvironment{remark}[1][]{\begin{rmk}[#1] \pushQED{\qed}}{\popQED \end{rmk}}
\newtheorem{eg}[equation]{Example}
\newenvironment{example}[1][]{\begin{eg}[#1] \pushQED{\qed}}{\popQED \end{eg}}
\newtheorem{defn}[equation]{Definition}
\newenvironment{definition}[1][]{\begin{defn}[#1]\pushQED{\qed}}{\popQED \end{defn}}
\newtheorem{notn}[equation]{Notation}
\newenvironment{notation}[1][]{\begin{notn}[#1]\pushQED{\qed}}{\popQED \end{notn}}

\newcommand{\rE}{\mathrm{E}}

\newcommand{\cH}{\mathcal{H}}

\newcommand{\rH}{\mathrm{H}}

\newcommand{\cK}{\mathcal{K}}

\newcommand{\bN}{\mathbf{N}}

\newcommand{\cO}{\mathcal{O}}

\newcommand{\rP}{\mathrm{P}}

\newcommand{\bQ}{\mathbf{Q}}

\newcommand{\rQ}{\mathrm{Q}}

\newcommand{\bS}{\mathbf{S}}

\newcommand{\bV}{\mathbf{V}}

\newcommand{\bZ}{\mathbf{Z}}

\newcommand{\bk}{\mathbf{k}}

\newcommand{\defi}[1]{{\bf\upshape\sffamily #1}}

\renewcommand{\phi}{\varphi}
\renewcommand{\emptyset}{\varnothing}
\newcommand{\eps}{\varepsilon}

\newcommand{\DS}{\displaystyle}

\makeatletter
\def\Ddots{\mathinner{\mkern1mu\raise\p@
\vbox{\kern7\p@\hbox{.}}\mkern2mu
\raise4\p@\hbox{.}\mkern2mu\raise7\p@\hbox{.}\mkern1mu}}
\makeatother

\DeclareMathOperator{\im}{image} 

\DeclareMathOperator{\coker}{coker}

\DeclareMathOperator{\supp}{Supp}

\DeclareMathOperator{\rank}{rank}

\DeclareMathOperator{\End}{End}

\DeclareMathOperator{\Sym}{Sym}

\DeclareMathOperator{\Tor}{Tor}

\DeclareMathOperator{\Spec}{Spec}

\DeclareMathOperator{\sgn}{sgn}

\DeclareMathOperator{\gr}{gr}

\newcommand{\GL}{\mathbf{GL}}

\newcommand{\Gr}{\mathbf{Gr}}

\usepackage{tikz}
\usepackage{makecell}

\newcommand{\run}{\preceq_{\rm run}}
\newcommand{\strun}{\prec_{\rm run}}
\newcommand{\smallhooktableau}[1]{%
  \vcenter{\hbox{\scalebox{1.14}{$\ytableaushort{#1}$}}}}

\usetikzlibrary{arrows.meta}

\newcommand{\HookPanelThreeTwo}[5]{%
 \begin{scope}[shift={(#1,#2)}]
  \foreach \r/\len in {1/3,2/2}{
    \foreach \c in {1,...,\len}{
      \fill[gray!15] (\c-1,1-\r) rectangle ++(1,-1);
    }
  }
  \ifstrempty{#4}{}{%
    \foreach \c/\r in {#4}{
      \fill[blue!28] (\c-1,1-\r) rectangle ++(1,-1);
    }
  }
  \ifstrempty{#5}{}{%
    \foreach \c/\r in {#5}{
      \fill[yellow!60] (\c-1,1-\r) rectangle ++(1,-1);
    }
  }
  \foreach \r/\len in {1/3,2/2}{
    \foreach \c in {1,...,\len}{
      \draw[line width=.45pt] (\c-1,1-\r) rectangle ++(1,-1);
    }
  }
  \foreach \c/\r in {1/1,2/2}{
    \fill (\c-0.5,0.5-\r) circle (1.1pt);
  }
  \node[font=\scriptsize] at (1.5,-2.55) {#3};
 \end{scope}
}

\title[Hook Algebras and Determinantal Varieties]{Hook Algebras and Determinantal Varieties}

\author{Steven V Sam}
\address{Department of Mathematics, University of California, San Diego}
\email{ssam@ucsd.edu}
\thanks{SS was partially supported by NSF grant DMS-2302149.}

\author{Keller VandeBogert}
\address{Department of Mathematics, University of Kentucky}
\email{keller.v@uky.edu}

\author{Jerzy Weyman}
\address{Institute of Mathematics, Jagiellonian University}
\email{jerzy.weyman@uj.edu.pl}

\date{October 7, 2026}

\begin{document}

\begin{abstract}
In characteristic zero, we prove that the Tor algebras of the generic,
symmetric, and skew-symmetric determinantal rings are Koszul with respect
to a natural regrading. In particular, their Eilenberg--Moore spectral sequence degenerates, which allows us to explicitly determine the Poincar\'e series of their residue fields and answers a problem raised
by Avramov. To do this, we construct the universal hook algebra and prove that it has a quadratic Gr\"obner basis and thus is Koszul. This leads to a new standard monomial construction of the irreducible polynomial \(\GL\)-representations, alongside the Schur and Weyl functors, which is of independent interest.
\end{abstract}

\maketitle

\vspace{-.2in}

\setcounter{tocdepth}{1}
\tableofcontents

\section{Introduction}

\subsection{The Poincar\'e series problem for determinantal rings}

For a Noetherian local ring \(R\) with residue field \(\bk\), the
Poincar\'e series of \(\bk\) is
\[
  \rP^R_\bk(t)
  =
  \sum_{i\ge0}\dim_\bk\Tor_i^R(\bk,\bk)t^i.
\]
The goal of this paper is to compute this series for the three classical
families of determinantal rings. More precisely, as a special case, we prove the following:

\begin{theorem}\label{thm:intro-main}
Assume \(\operatorname{char}\bk=0\). Let \(S\) be \(\Sym(V\otimes W)\),
\(\Sym(S^2V)\), or \(\Sym(\bigwedge^2V)\), and let \(R=S/I\), where \(I\)
is the ideal of \(r\times r\) minors, \(r\times r\) symmetric minors, or
\(2r\times2r\) Pfaffians, with \(r\ge2\) and \(I\ne0\). Then
\[
  \rP^R_\bk(t)
  =
  \frac{(1+t)^{\dim_\bk S_1}}
  {\displaystyle
   \sum_{i,d\ge0}
   (-1)^d
   (\dim_\bk\Tor_i^S(R,\bk)_{i+(r-1)d})
   t^{i+d}}.
\]
In particular, \(\rP^R_\bk(t)\) is rational.
\end{theorem}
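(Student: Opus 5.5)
The plan is to pass through the Koszul homology algebra and the Avramov spectral sequence. Let \(A=\Tor^S(R,\bk)\), computed as the homology of the Koszul complex \(K^R=R\otimes\bigwedge(S_1)\). This is a graded-commutative bigraded algebra \(A=\bigoplus A_{i,j}\), with homological degree \(i\) and internal degree \(j\). The standard factorization (Avramov, via the Eilenberg--Moore / Tate construction) reads
\[
  \rP^R_\bk(t)=(1+t)^{\dim_\bk S_1}\,\rP^{A}_\bk(t),
\]
where the second factor is the Poincaré series of \(\bk\) over the dg algebra \(K^R\). That series is bounded above by the Poincaré series of \(\bk\) over the algebra \(A\) itself, via the Eilenberg--Moore spectral sequence \(\Tor^{A}(\bk,\bk)\Rightarrow\Tor^{K^R}(\bk,\bk)\), with equality exactly when that sequence degenerates. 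So the proof has two parts: show degeneration, and compute \(\rP^A_\bk\).

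I would first regrade \(A\). The minimal free resolution of \(R\) over \(S\) has its \(i\)-th module generated in degrees \(i+(r-1)d\) for various \(d\ge0\); in char \(0\) this follows from the Lascoux, Józefiak--Pragacz--Weyman and Harris--Tu type descriptions. Here \(d\) is bounded by the number of "steps" in the resolution. Set the weight of \(A_{i,i+(r-1)d}\) to be \(d\), so \(A_{0}=\bk\) in weight \(0\).

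The central claim is that \(A\) is generated in weight \(1\) with quadratic relations and is Koszul in this weight grading. Granting it, \(\Tor^A_{p}(\bk,\bk)\) is concentrated in weight \(p\). Those classes have total homological degree \(p+\)(a quantity determined by weight). Every higher differential \(d^s\) of the Eilenberg--Moore spectral sequence preserves the internal degree \(j\) but changes the bar degree \(p\) by \(s\ge2\). Since the internal degree and the weight are tied together (\(j-i=(r-1)d\)), a nonzero \(d^s\) would have to connect two diagonals that the weight purity forces to be distinct. So the sequence collapses.

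Koszulity then gives the Hilbert series duality
\[
  \rP^A_\bk(t)\cdot H_A(-t)=1
\]
in the appropriate total grading. Here \(H_A\) is taken with signs \((-1)^d\), and the homological shift accounts for the \(t^{i+d}\): the bar construction contributes suspension \(i+1\) per weight-one generator of homological degree \(i\). This produces exactly the denominator \(\sum(-1)^d\dim A_{i,i+(r-1)d}\,t^{i+d}\), and rationality is immediate.

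The main obstacle is proving that \(A\) is Koszul. My approach would follow the abstract. First, identify \(A\) \(\GL\)-equivariantly, using the explicit Schur-functor description of the resolutions in each of the three cases. Then realize \(A\) as a quotient, or a specialization, of the universal hook algebra, which is built from hook-shaped Schur functors with a quadratic multiplication. The key input is that the universal hook algebra has a quadratic Gröbner basis, and hence is Koszul. One then needs two further facts. First, the multiplication on \(A\) is induced from the hook algebra; I would check this on weight-one generators, using equivariance and the fact that multiplicities are at most one. Second, the passage from the hook algebra to \(A\) preserves Koszulness: it is a quotient by a regular sequence of linear forms, or a base change along a flat family in \(r\) and the dimensions. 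I expect the compatibility of the products, with its signs, to be the delicate point, particularly in the symmetric and Pfaffian cases.
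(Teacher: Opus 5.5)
Your outer framework matches the paper's. Avramov's factorization through $K^R$, combined with the Eilenberg--Moore sequence over $\Tor^S(R,\bk)$, is equivalent to the sequence $\Tor^T(\Tor^S(\bk,\bk),\bk)\Rightarrow\Tor^R(\bk,\bk)$ of Proposition~\ref{prop:em-rational-form}. Your weight-purity collapse argument and the Euler-characteristic inversion are also the paper's.

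The gap is in the step you flag as central, Koszulity of $T=\Tor^S(R,\bk)$ in the strand grading, and your proposed mechanism fails there. The multiplication on $T$ is \emph{not} induced from the hook algebra, and equivariance plus multiplicity one cannot force it to be. A product of two summands of $T$ can have nonzero components in irreducibles other than the Frobenius union, even when the two summands are not mergeable, whereas the hook algebra keeps only the union component. Remark~\ref{rem:filtration-subtlety} gives an instance: for $4\times4$ Pfaffians with $\dim V=6$, the product $\bS_{(2,1^4)}(V)\cdot\bS_{(2,1^4)}(V)\subseteq\Tor_4$ contains $\bS_{(3,2^4,1)}(V)$, while the corresponding product in the hook algebra is zero. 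Now $T$ and the hook subalgebra $\cH^\pi_{\Omega,L}(V)$ have the same multiplicity-free underlying representation, and an equivariant isomorphism acts by scalars on each summand. So $T$ is not equivariantly isomorphic to that subalgebra, hence not a quotient of it. Neither a regular-sequence quotient nor a flat base change describes the relationship.

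The missing idea is a multiplicative filtration. Lemmas~\ref{lem:multi-jump-ladder} and~\ref{lem:2d-jump-ladder} show that every non-union constituent of a product has a lexicographically larger multiset of Frobenius hooks. Hence the weights $d(I)=\sum_{i\in I}(i+1-\varepsilon_i)$, with $\varepsilon_k>2\sum_{j<k}\varepsilon_j$, define a filtration whose associated graded is exactly the hook model. Koszulity of $\gr T$ then lifts to $T$ through the spectral sequence $\Tor^{\gr T}(\bk,\bk)\Rightarrow\gr\Tor^T(\bk,\bk)$, because the filtration preserves strand degree. Koszulity of the hook model itself comes from the quadratic Gr\"obner basis of $\cH(V)$. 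It passes to the model because projecting onto components whose hooks all lie in a fixed set of hook types is an algebra retraction: Frobenius union keeps every input hook.

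Two further steps are also missing.

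\textbf{Nonvanishing of the union projection.} Equivariance and multiplicity one only show that the projection of $B_\lambda\cdot B_\mu$ onto $B_{\lambda\sqcup\mu}$ is either zero or surjective. Proving it is nonzero is what gives generation by the first strand, and this is also the hypothesis the degeneration needs. It requires a real argument. In the Pfaffian and $M_r$ cases the paper uses Poincar\'e duality of the Gorenstein Tor algebra (Proposition~\ref{prop:pfaffian-strand-generation}). In the generic case it uses Schur-complement localization and induction on $r$ down to an exterior algebra (Proposition~\ref{prop:generic-strand-generation}).

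\textbf{The generators are not always single hooks.} In the symmetric case the strand-one generators of $T$ are rank-two partitions. The paper passes to the Tor algebra of the auxiliary Gorenstein algebra $M_r$, degenerates that to a hook retract, and takes the second Veronese. In the generic case the generators are $\bS_{(a+1,1^{b+r-1})}(V)\otimes\bS_{(b+1,1^{a+r-1})}(W)$, and $\gr T$ is a fine multigraded Segre product of two hook retracts. Its Koszulity needs a separate quadratic Gr\"obner basis argument via Sullivant's toric fiber products. These are Veronese and Segre constructions on retracts of $\cH$, not quotients or specializations of it.
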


As \(\GL\)-representations, the groups \(\Tor_i^S(R,\bk)_j\) are known
from the work of Lascoux in the generic case~\cite{Lascoux1978} and of
J\'ozefiak, Pragacz, and Weyman in the symmetric and Pfaffian cases
\cite{Jozefiak1978,JozefiakPragacz1979,JozefiakPragaczWeyman1981}, so the
Betti numbers appearing in the denominator may be read off from explicit
combinatorial formulas; Corollary~\ref{cor:poincare-applications}
refines the formula above by retaining both the internal grading and
the group action. The \emph{multiplicative} structure of the Koszul
homology algebra \(T=\Tor^S(R,\bk)\) is far more subtle and is precisely
what Theorem~\ref{thm:intro-main} depends on; its study occupies the
bulk of this paper.

For generic determinantal rings, Avramov singled out the computation of
\(\rP_\bk^R(t)\) as ``a very difficult
problem''~\cite[\S\S6--8]{Avramov1978}. He settled the cases of maximal
minors and of the submaximal minors of a square matrix
\cite[Theorem~6.7, Remark~6.8, Theorem~8.9]{Avramov1978}; see
also~\cite{Gover1973}. Beyond these cases and the Koszul case
\(r=2\)~\cite{BackelinFroberg1985,DeConciniEisenbudProcesi1982}, the
computation has remained open;
Theorem~\ref{thm:intro-main} fully resolves this question in characteristic zero.

\begin{example}\label{ex:intro-middle-minors}
For a generic \(5\times5\) matrix \(X\), let
\[
  R=\bk[x_{ij}:1\le i,j\le5]/I_3(X).
\]
This is the first case outside the two that Avramov settled: the minors
are neither maximal nor submaximal, and since \(R\) is defined by cubics
it is not Koszul. Theorem~\ref{thm:intro-main} yields
\[
\begin{aligned}
  \rP^R_\bk(t)
  &=
  \frac{(1+t)^{25}}
  {1-100t^2-450t^3-828t^4-700t^5
    +700t^7+828t^8+450t^9+100t^{10}-t^{12}} \\[2pt]
  &=1+25t+400t^2+5250t^3+64{,}728t^4+779{,}530t^5
    +9{,}361{,}100t^6+\cdots.
  \qedhere
\end{aligned}
\]
\end{example}

Whether or not \(\rP^R_\bk(t)\) is always rational was for decades the
Serre--Kaplansky problem \cite{Serre2000LocalAlgebra,Kaplansky1974CommutativeRings}, resolved negatively by Anick~\cite{Anick1982,Anick1982Note}; the question is
therefore not whether the series is always rational, but to identify the
structural classes for which it is.

The passage from the finite \(S\)-free resolution of \(R\) to the
infinite resolution of \(\bk\) over \(R\) is governed by the algebraic
Eilenberg--Moore spectral sequence
\[
  \Tor^T\!\bigl(\Tor^S(\bk,\bk),\bk\bigr)
  \Longrightarrow \Tor^R(\bk,\bk),
\]
where \(T\) is regarded as a graded algebra by Tor homological degree.
The Hilbert series of the \(E^2\)-page is a coefficientwise upper bound
for \(\rP_\bk^R(t)\), with equality precisely when the spectral sequence
degenerates, and in one form or another essentially every closed-form
Poincar\'e series computation amounts to establishing such a
degeneration. Whether the spectral sequence degenerates genuinely
depends on the multiplication on \(T\): Roos exhibited quadratic
quotients with identical graded Betti tables over their ambient
polynomial rings for which the spectral sequence degenerates for one but
not the other~\cite[\S\S0,3]{Roos2016}.

The main insight of this paper is that in the right grading this algebra
structure is as homologically simple as it can be. The classical
resolutions impose the rigid constraint
\[
  \Tor_i^S(R,\bk)_j=0
  \qquad\text{unless}\qquad
  j-i=(r-1)d
\]
for some \(d\ge0\). We define the \defi{strand grading} by
\[
  \deg_{\mathrm{str}}
  \Tor_i^S(R,\bk)_{i+(r-1)d}=d.
\]
Its degree one component is the first nonzero strand.

With this grading in hand, we prove the following:

\begin{theorem}\label{thm:intro-structure}
Assume \(\operatorname{char}\bk=0\). For each of the three determinantal
rings above, \(T=\Tor^S(R,\bk)\) is generated in strand degree one and is
Koszul with respect to the strand grading
\textup{(Theorem~\ref{thm:koszul-strands})}.
\end{theorem}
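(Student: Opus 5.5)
We propose to pass from \(T=\Tor^S(R,\bk)\) to a combinatorial model, the universal hook algebra, and prove Koszulity there by a quadratic Gröbner basis; the abstract indicates this is the intended route. The first step is to identify \(T\) equivariantly with an explicit algebra. By the work of Lascoux and of Józefiak--Pragacz--Weyman, \(\Tor_i^S(R,\bk)_{i+(r-1)d}\) is a sum of Schur functors \(\bS_{\lambda}V\otimes\bS_{\lambda'}W\) (or their symmetric and skew analogues). Here \(\lambda\) ranges over partitions with Durfee square of size \(d\), built from an \((r-1)\)-fattened square plus arms and legs. Equivalently, strand \(d\) is governed by \(d\) ``hooks'' attached to the \(d\) diagonal boxes. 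We realize \(T\) as a specialization of a universal hook algebra \(\cH\). Its generators in strand degree one are the single-hook representations, that is, the first linear strand, \(\bigoplus_{a,b}\bS_{(a+r,1^b)}\)-type pieces. Multiplication should be the Cartan/Pieri-type product that concatenates hooks along the diagonal. Concretely, we compute the product on \(T\) via the geometric description \(R\) as pushforward from the Grassmannian/Kempf desingularization. There, \(T\) is the cohomology of exterior powers of a tautological bundle \(\xi\). We check that the induced product on strand-one classes is equivariant and nonzero on each isotypic component it can reach; by Schur's lemma this pins it down up to scalars.

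The second step is to prove that \(T\) is generated in strand degree one. Strand \(d\) of \(T\) comes from \(\bigwedge^{\bullet}\) of \(\xi\) in a degree involving \(d\) boxes of the Durfee square. The product of \(d\) strand-one classes maps onto it because the Cauchy/Pieri decomposition of a product of \(d\) hook representations contains every shape with \(d\) diagonal hooks. We would verify surjectivity through a highest-weight vector argument: choose a product of extremal weight vectors, one per hook, and show that its image has nonzero component in each target isotypic piece.

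The third step, and the main obstacle, is Koszulity. We would introduce ordered bases indexed by hook tableaux, meaning pairs of semistandard fillings of arms and legs. We would then define a monomial order, for instance a ``run'' order comparing hooks by arm and leg lengths and then lexicographically. The goal is to show that the quadratic straightening relations among pairs of hooks form a Gröbner basis. Equivalently, the standard monomials, which are sequences of hooks that are weakly increasing in the run order, span \(\cH\) and are linearly independent. Spanning follows by straightening, since each quadratic relation rewrites a non-standard adjacent pair. Independence is where we expect most of the work. We would prove it by counting: show that the standard monomials of strand degree \(d\) have the same equivariant character as the known Tor groups. This yields a standard-monomial basis of the irreducibles \(\bS_\lambda\), as advertised. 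Once independence is established, a quadratic Gröbner basis gives Koszulity of \(\cH\) by the standard PBW criterion. The symmetric and skew-symmetric cases then follow by the same argument with self-conjugate hook data, i.e. taking the appropriate plethystic specialization, and we must check that the Gröbner basis survives it. Finally, \(T\) is obtained from \(\cH\) by specializing \(V,W\) and truncating by \(r\). Koszulity passes to \(T\) because that truncation is a quotient by an ideal spanned by standard monomials, hence a monomial ideal generated in degree one in the initial algebra.
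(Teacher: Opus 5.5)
The Gröbner-basis part of your plan for the hook algebra itself is roughly what the paper does. The argument breaks at Step 1, and your final step depends on it.

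\emph{Step 1: $T$ is not a model of $\cH$.} $T$ is not a specialization, truncation, or monomial quotient of $\cH$. Its product has components that the hook product kills. For the $4\times4$ Pfaffians with $\dim V=6$, the square of $\bS_{(2,1^4)}\subseteq\Tor_2$ has a nonzero component in $\bS_{(3,2^4,1)}\subseteq\Tor_4$. This happens even though it is a product of two equal, nonmergeable hooks, which is zero in $\cH$.

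Schur's lemma cannot rule this out. Multiplicity-freeness of $T$ only says that a component $B_I\otimes B_J\to B_K$ may be nonzero for every $K$ of the right bidegree with $c^{h_K}_{h_I,h_J}\neq0$. In the example this coefficient is $2$, so the component is not even fixed up to one scalar.

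The missing idea is a degeneration:
\begin{enumerate}
\item Filter $T$ by $d(I)=\sum_{i\in I}(i+1-\varepsilon_i)$, with $\varepsilon_k>2\sum_{j<k}\varepsilon_j$.
\item Prove a Littlewood--Richardson ``jump ladder'' lemma: every non-Frobenius-union component of a product has lexicographically larger hook multiset, hence strictly lower filtration degree.
\item Conclude that $\gr T$ is a parity form of a hook-type retract of $\cH(V)$. For symmetric minors it is the second Veronese of such a retract, via $M_r$. For generic minors it is a fine Segre product $A_V\boxtimes A_W$, which couples $V$ and $W$ and is Koszul by Sullivant's toric fiber products.
\item Lift Koszulity from $\gr T$ to $T$ through the spectral sequence of the filtered bar construction, using that the filtration preserves strand degree.
\end{enumerate}
Your ``$T$ is $\cH$ specialized and truncated'' step cannot stand in for this, because $T$ is not of that form.

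\emph{Step 2: generation is not proved.} Knowing that $\bS_{\lambda\sqcup\mu}$ occurs in $\bS_\lambda\otimes\bS_\mu$ says nothing about whether the actual Tor product projects onto it. You also have no explicit product on which to run a highest-weight computation. A dga structure on the resolution need not exist, the derived fiber need not be formal, and the Kempf--Weyman pushforward gives $T$ as a representation without making the multiplication explicit.

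This nonvanishing is needed twice: for generation, and as the hypothesis that makes $\gr T$ isomorphic to the hook model. The paper proves it in two ways.
\begin{enumerate}
\item In the Gorenstein Pfaffian and $M_r$ cases, it uses Poincar\'e duality, with induction on the largest hook and pairing against the complementary hooks.
\item For generic minors, it inducts on $r$. It localizes at the point $x_{nm}=1$ and uses the Schur complement to identify the specialized Tor algebra with that of $I_{r-1}$ on an $(n-1)\times(m-1)$ matrix. It then tracks the multiplicity-one branching summand $U_\lambda$ down to the exterior algebra at $r=1$.
\end{enumerate}

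\emph{Step 3: the difficulty is reversed.} Independence of standard monomials is cheap, because the iterated Frobenius-union projection surjects onto $\bS_\lambda$. Spanning is the real work: for each nonsemistandard pair you must construct a relation in $R_{(a,b),(c,d)}$ with that pair as leading term. The paper does this with hook Pieri compositions and invertible elimination matrices. Also, the standard monomials are \emph{strictly} decreasing chains of hook types, not weakly increasing sequences, since equal hooks multiply to zero.
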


\begin{example}\label{ex:intro-strand-table}
Continuing Example~\ref{ex:intro-middle-minors}, the Betti table of \(R\), with rows indexed by \(j-i\) and columns by homological degree \(i\), is
\[
\tiny \begin{array}{r|cccccccccc|l}
 & 0 & 1 & 2 & 3 & 4 & 5 & 6 & 7 & 8 & 9 & \\\cline{1-11}
0 & 1 & \cdot & \cdot & \cdot & \cdot & \cdot & \cdot & \cdot & \cdot
& \cdot & \text{strand degree }0\\
1 & \cdot & \cdot & \cdot & \cdot & \cdot & \cdot & \cdot & \cdot &
\cdot & \cdot & \\
2 & \cdot & 100 & 450 & 828 & 700 & 225 & \cdot & \cdot & \cdot &
\cdot & \text{strand degree }1\\
3 & \cdot & \cdot & \cdot & \cdot & \cdot & \cdot & \cdot & \cdot &
\cdot & \cdot & \\
4 & \cdot & \cdot & \cdot & \cdot & 225 & 700 & 828 & 450 & 100 &
\cdot & \text{strand degree }2\\
5 & \cdot & \cdot & \cdot & \cdot & \cdot & \cdot & \cdot & \cdot &
\cdot & \cdot & \\
6 & \cdot & \cdot & \cdot & \cdot & \cdot & \cdot & \cdot & \cdot &
\cdot & 1 & \text{strand degree }3
\end{array} \qedhere
\]
\end{example}

In Example~\ref{ex:intro-strand-table}, the first strand has total
dimension \(100+450+828+700+225=2303\): Theorem~\ref{thm:intro-structure}
asserts that the Koszul homology algebra of the \(3\times3\) minors of a
generic \(5\times5\) matrix is minimally generated by \(2303\) elements;
in general, the number of generators grows rapidly with the size of the
matrix.
By Proposition~\ref{prop:em-rational-form}, if \(T\) is Koszul with
respect to the strand grading, then the Eilenberg--Moore spectral
sequence degenerates, and the resulting Euler characteristic identity is
precisely the formula of Theorem~\ref{thm:intro-main}.

Known proofs of such a degeneration use additional structure, most
notably through \emph{Avramov's machine}
\cite{Kustin1993TorACI,Avramov1998Infinite}, which starts with an
associative differential graded algebra (dga) structure on the minimal free
resolution of \(R\) over \(S\). Buchsbaum and Eisenbud constructed such
structures for resolutions of length at most three
\cite[Propositions~1.1 and~1.3]{BuchsbaumEisenbud1977}, and this
machinery underlies most known positive results
\cite{AvramovKustinMiller1988,Weyman1989,
KustinMiller1980AlgebraStructure,KustinMiller1985TorAlgebras,
Eisenbud1980Homological,
JacobssonKustinMiller1985Codim4,KustinPalmerSlattery1994ACI,
RossiSega2014Compressed,KustinSegaVraciu2018CompressedOdd,
Gupta2017GoodBad,Gupta2026Criterion,Lescot1985Asymptotic,
Lescot1990Inert,AvramovHenriquesSega2013QCI}. A more recent method
combines formality of the derived fiber \(R\otimes_S^{\mathrm L}\bk\)
with Koszulity of its homology~\cite{BriggsCameronLetzPollitz2025}.
Neither is available here: a differential graded algebra structure need
not exist~\cite{Avramov1981Obstructions}, and Srinivasan proved that it
does not for the \(4\times4\) Pfaffians of a generic \(6\times6\)
alternating matrix~\cite{Srinivasan1992}; the derived fiber is likewise
not formal in general (Remark~\ref{rem:formality-not-needed}). Our
degeneration is instead a form of \emph{quasi-formality} in the sense of
Positselski~\cite[Corollary~2.2]{Positselski2017Quasiformal}, closely
related to the vanishing of higher Massey products.

Moreover, in almost every previously settled class, setting aside
complete intersections and Koszul algebras, the augmentation ideal of
the Koszul homology algebra cubes to zero. The determinantal Tor
algebras are, by contrast, of unbounded multiplicative complexity
(Proposition~\ref{prop:generic-strand-generation}), and to our knowledge
the results below give the first closed-form Poincar\'e series for such
a family beyond these settings.

Theorem~\ref{thm:intro-structure} is proved using a construction that is
independent of determinantal rings and of interest in its own right: the
universal hook algebra.

\subsection{The universal hook algebra}

Every partition \(\lambda\) decomposes uniquely along its main diagonal
into nested hooks, its diagonal hooks, the \(i\)th having arm length
\(a_i\) and leg length \(b_i\); we write
\(\lambda=(a_1,\ldots,a_d\mid b_1,\ldots,b_d)\), where
\(a_1>\cdots>a_d\ge0\), \(b_1>\cdots>b_d\ge0\), and \(d=\rank(\lambda)\)
is the number of hooks. Two partitions \(\lambda\) and \(\mu\) are
\defi{mergeable} if their combined diagonal hooks can still be nested;
in that case the hooks assemble into a single partition, the
\defi{Frobenius union} \(\lambda\sqcup\mu\)
(Definition~\ref{def:frobenius-union}), and \(\bS_{\lambda\sqcup\mu}\)
occurs with multiplicity one in \(\bS_\lambda\otimes\bS_\mu\). For
instance, \((2,1)\) and \((1)\) have hooks \((1\mid1)\) and
\((0\mid0)\); the arms \(1>0\) and legs \(1>0\) interleave, so the pair
is mergeable, and the hooks reassemble:
\[
\ytableausetup{boxsize=0.8em}
\ydiagram{2,1}\;\sqcup\;\ydiagram{1}\;=\;\ydiagram{2,2}.
\]
Two copies of \((1)\) are not mergeable, since their arm lengths
coincide.

It turns out that these diagonal hooks generate a single algebra
controlling all three determinantal families at once.
The Pfaffian case already makes this generation visible: in
Figure~\ref{fig:pfaff}, the hooks in the first strand nest to give every
shape in the higher strands.

\begin{example}
  For \(r=2\) and \(\dim V=6\), we depict the Betti table for \(S/I_4\) in Figure~\ref{fig:pfaff}; a Young diagram for $\lambda$ is used in place of the Schur module $\bS_\lambda(V)$. The color-coding makes the relevance of merging hook shapes self-evident.
\begin{figure}
  \[
    \ytableausetup{boxsize=0.5em}
    \setlength{\arraycolsep}{4pt}
\begin{array}{r|ccccccc}
 & 0 & 1 & 2 & 3 & 4 & 5 & 6 \\
\hline
0 & 1 & \cdot & \cdot & \cdot & \cdot & \cdot & \cdot \\
1 & \cdot
  & \begin{ytableau} *(red!30) \\ *(red!30) \\ *(red!30) \\ *(red!30) \end{ytableau}
  & \begin{ytableau} *(blue!30) & *(blue!30) \\ *(blue!30) \\ *(blue!30) \\ *(blue!30) \\ *(blue!30) \end{ytableau}
  & \begin{ytableau} *(green!45) & *(green!45) & *(green!45) \\ *(green!45) \\ *(green!45) \\ *(green!45) \\ *(green!45) \\ *(green!45) \end{ytableau}
  & \cdot & \cdot & \cdot \\
2 & \cdot & \cdot & \cdot
  & \begin{ytableau} *(blue!30) & *(blue!30) \\ *(blue!30) & *(red!30) \\ *(blue!30) & *(red!30) \\ *(blue!30) & *(red!30) \\ *(blue!30) & *(red!30) \end{ytableau}
  & \begin{ytableau} *(green!45) & *(green!45) & *(green!45) \\ *(green!45) & *(red!30) \\ *(green!45) & *(red!30) \\ *(green!45) & *(red!30) \\ *(green!45) & *(red!30) \\ *(green!45) \end{ytableau}
  & \begin{ytableau} *(green!45) & *(green!45) & *(green!45) \\ *(green!45) & *(blue!30) & *(blue!30) \\ *(green!45) & *(blue!30) \\ *(green!45) & *(blue!30) \\ *(green!45) & *(blue!30) \\ *(green!45) & *(blue!30) \end{ytableau}
  & \cdot \\
3 & \cdot & \cdot & \cdot & \cdot & \cdot & \cdot
  & \begin{ytableau} *(green!45) & *(green!45) & *(green!45) \\ *(green!45) & *(blue!30) & *(blue!30) \\ *(green!45) & *(blue!30) & *(red!30) \\ *(green!45) & *(blue!30) & *(red!30) \\ *(green!45) & *(blue!30) & *(red!30) \\ *(green!45) & *(blue!30) & *(red!30) \end{ytableau}
\end{array}
\]
\caption{Betti table of $4 \times 4$ Pfaffians}
\label{fig:pfaff}
\end{figure}
\end{example}

The \defi{universal hook algebra} \(\cH(V)\) is generated by the hook Schur
functors \(\bS_{(a+1,1^b)}(V)\), one for each \defi{hook type}
\((a,b)\); the product of two hooks is zero when they do not merge, and
otherwise it is the projection onto the Schur functor of their Frobenius
union. Since that summand occurs with multiplicity one in the tensor
product of the two hook Schur functors, the resulting quadratic
relations are canonical; the precise presentation, together with our
conventions on Schur functors, is given in
\S\S\ref{sec:background}--\ref{sec:hook-algebra}. The main properties of
this algebra are summarized in the following (see Theorems~\ref{thm:main-hook} and~\ref{thm:quad-gb}):

\begin{theorem}\label{thm:intro-hook}
Assume \(\operatorname{char}\bk=0\). Let \(\cH(V)\) be the universal hook algebra.
\begin{enumerate}[\rm (1)]
\item As a bigraded \(\GL(V)\)-representation,
\[
  \cH(V)\cong
  \bigoplus_{\ell(\lambda)\le\dim V}\bS_\lambda(V),
  \qquad
  \deg\bS_\lambda(V)=(\rank(\lambda),|\lambda|).
\]
Here \(\ell(\lambda)\) and \(|\lambda|\) denote the length and size of
\(\lambda\).
Thus every irreducible polynomial \(\GL(V)\)-representation occurs with
multiplicity one.
\item The defining ideal has a quadratic Gr\"obner basis. Its standard
monomials are precisely the hook products whose assembled filling is
semistandard, see \S\ref{sec:term-orders}.
\item The algebra \(\cH(V)\) is Koszul with respect to the rank grading.
\end{enumerate}
\end{theorem}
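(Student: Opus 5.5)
We prove the three parts together via a standard monomial argument. First, define a presented algebra \(\cH(V)=T(H)/J\), with \(H=\bigoplus_{(a,b)}\bS_{(a+1,1^b)}(V)\) in rank degree one. Here \(J\) is spanned by the kernels of \(\bS_{h}\otimes\bS_{h'}\to\bS_{h\sqcup h'}\) for mergeable pairs, and by all of \(\bS_h\otimes\bS_{h'}\) for non-mergeable pairs. The strategy has three steps. Step one: produce a spanning set of standard monomials. Step two: construct a surjection onto \(\bigoplus_\lambda\bS_\lambda(V)\) whose dimension count matches, which gives (1) and shows the standard monomials form a basis. Step three: deduce (2) from the fact that the reduction used only quadratic relations, and (3) from the standard fact that a quadratic Gr\"obner basis implies Koszulity (a PBW basis in the sense of Priddy).

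For step one, fix a basis \(e_1<\dots<e_n\) of \(V\). Realize each hook Schur functor via fillings of the hook shape, with basis the semistandard hook tableaux. A word \(h_1\cdots h_d\) in hook tableaux is assembled by nesting the \(i\)th hook as the \(i\)th diagonal hook of a Frobenius shape. This requires the hook types to be strictly decreasing in both arm and leg; otherwise the product vanishes by the non-mergeable relations, after reordering. The word is \emph{standard} if the assembled filling of \(\lambda=(a_1,\dots,a_d\mid b_1,\dots,b_d)\) is semistandard. The key combinatorial point is that semistandardness of the assembled tableau is a condition on consecutive pairs of hooks only: row and column inequalities between boxes of hook \(i\) and hook \(i+1\). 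We then choose a term order on words, for instance lexicographic in the hook types followed by reverse-lex on fillings. We check that for each mergeable pair \((h,h')\), the kernel of \(\bS_h\otimes\bS_{h'}\to\bS_{h\sqcup h'}\) has leading terms exactly the non-semistandard two-hook words. The count works out: the two-hook semistandard fillings of \(h\sqcup h'\) number \(\dim\bS_{h\sqcup h'}\), so the leading terms of a basis of the kernel fill out the complement. Rewriting then shows that standard words span.

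For step two, build an algebra map to \(\bigoplus_\lambda\bS_\lambda(V)\). We multiply \(\bS_\lambda\otimes\bS_\mu\to\bS_{\lambda\sqcup\mu}\) by the multiplicity-one projection when the pair is mergeable, and by zero otherwise. We must show this multiplication is associative, or at least that the quadratic relations hold so that \(T(H)\) maps onto it. Relations hold by construction. Surjectivity onto each \(\bS_\lambda\) follows because \(\lambda\) is the Frobenius union of its diagonal hooks and the iterated projection is nonzero; nonvanishing can be checked on a highest weight vector. Since standard words of shape \(\lambda\) are in bijection with semistandard tableaux of shape \(\lambda\), the spanning set from step one has cardinality \(\dim\bS_\lambda(V)\), which is nonzero only when \(\ell(\lambda)\le\dim V\). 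So spanning combined with surjectivity forces a bijection. This gives (1) and shows that the standard monomials are linearly independent, i.e. \(J\) has a quadratic Gr\"obner basis, which is (2). Part (3) follows from (2).

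The main obstacle is the Gr\"obner/leading term computation in step one. We must exhibit a term order for which the leading terms of the two-hook relations are exactly the non-semistandard pairs, and verify that the semistandard condition on the assembled tableau is genuinely local between consecutive hooks. The hard part of the locality check is comparisons between hook \(i\) and hook \(i+2\), which we expect to follow by transitivity through hook \(i+1\). We would try first to write the kernel of each projection explicitly via Garnir-type straightening relations between two nested hooks, adapted to diagonal hooks. Their leading terms are then read off as in the classical standard monomial theory for Schur modules.
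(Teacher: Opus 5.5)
Your architecture is the paper's: spanning by semistandard hook tabloids bounds $\dim\cH_\lambda(V)$ by $\dim\bS_\lambda(V)$, the iterated Frobenius-union projection gives the reverse bound, and a Hilbert-function comparison upgrades the quadratic leading-term relations to a Gr\"obner basis, hence Koszulity. The gap is the step that carries all the content.

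\textbf{The missing triangularity.} For every nonsemistandard two-hook tabloid $m$ of type $(h,h')$, you need an element $m-\sum_{m'\prec m}c_{m'}m'$ of $R_{h,h'}$. The term order must also be stable under inserting further hooks; otherwise a rank-two reduction need not decrease the whole tabloid. Your justification is that the nonsemistandard words number $\dim R_{h,h'}$, but that count carries no information. Any $k$-dimensional subspace of a fixed multidegree component has exactly $k$ initial monomials, and the count holds automatically. Neither it nor the independence of semistandard words modulo $R_{h,h'}$ says \emph{which} monomials are initial. Only a triangularity statement does, and you defer exactly that to ``the main obstacle.''

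The method you propose for it also fails as stated. Remark~\ref{rmk:gorodentsov} exhibits a product $\bS_{(3,1)}(V)\otimes V\to\bS_{(3,2)}(V)$ in which classical column straightening of the assembled filling predicts $0$. The actual Frobenius-union projection gives $-\tfrac12 v_{T'}\ne0$. So the kernel is not governed by (naively adapted) classical Garnir relations. You also give no argument that ``lex on types, then reverse-lex on fillings'' makes any relation triangular.

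\textbf{How the paper fills it.} The paper uses genuinely new relations.
\begin{itemize}
\item The relations are images of the hook Pieri compositions $\Psi_{m,r}$ and $\Psi'_{m,r}$, which lie in $R_{h,h'}$ by a Littlewood--Richardson argument (Proposition~\ref{prop:relations}).
\item The input to these maps is chosen by inverting elimination matrices. Their invertibility comes from the spectrum of the derivation action on exterior powers of a rank-one perturbation of $-\mathrm{id}$ (Theorems~\ref{thm:row-elimination-invertible} and~\ref{thm:Theta-elim-invertible}). The row matrices are singular in one specific case, which the choice of data must avoid.
\item Triangularity is measured in the running order on cumulative content vectors. Its insertion invariance (Lemma~\ref{lem:single-hook-insertion}) is what carries rank-two reductions to arbitrary rank.
\end{itemize}

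\textbf{Two smaller repairs.}
\begin{itemize}
\item Your $T(H)/J$ has no commutativity relations, so $hh'$ and $h'h$ contribute two copies of $\bS_{h\sqcup h'}$, and ``after reordering'' is unavailable. Use a quotient of $\Sym(H)$, as the paper does.
\item ``Relations hold by construction'' is true only for the first pair multiplied. That the iterated projection $c_\lambda$ kills $R_{h_i,h_j}\otimes\bigotimes_{t\ne i,j}\bS_{H_t}(V)$ for every pair needs an extra fact. It follows because $\bS_\lambda$ has multiplicity one both in $\bigotimes_t\bS_{H_t}(V)$ and in $\bS_{H_i\sqcup H_j}(V)\otimes\bigotimes_{t\ne i,j}\bS_{H_t}(V)$.
\end{itemize}
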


\begin{example}\label{ex:intro-three-diagonal-hooks}
Let \(\dim V=3\), and let \(A\subseteq\cH(V)\) be the subalgebra generated
by the hook summands of types \((0,0)\), \((1,1)\), and \((2,2)\). Its
rank-graded components, with diagonal hooks color-coded by type, are
\[
\ytableausetup{boxsize=0.6em}
\begin{aligned}
A_0&=\bk, &
A_1&\cong
\begin{array}[t]{@{}c@{}}\bS_{(1)}(V)\\[2pt]
\begin{ytableau}*(green!50)\end{ytableau}\end{array}
\oplus
\begin{array}[t]{@{}c@{}}\bS_{(2,1)}(V)\\[2pt]
\begin{ytableau}*(red!35)&*(red!35)\\ *(red!35)\end{ytableau}\end{array}
\oplus
\begin{array}[t]{@{}c@{}}\bS_{(3,1,1)}(V)\\[2pt]
\begin{ytableau}*(blue!35)&*(blue!35)&*(blue!35)\\
*(blue!35)\\ *(blue!35)\end{ytableau}\end{array},\\[2pt]
A_2&\cong
\begin{array}[t]{@{}c@{}}\bS_{(2,2)}(V)\\[2pt]
\begin{ytableau}*(red!35)&*(red!35)\\
*(red!35)&*(green!50)\end{ytableau}\end{array}
\oplus
\begin{array}[t]{@{}c@{}}\bS_{(3,2,1)}(V)\\[2pt]
\begin{ytableau}*(blue!35)&*(blue!35)&*(blue!35)\\
*(blue!35)&*(green!50)\\ *(blue!35)\end{ytableau}\end{array}
\oplus
\begin{array}[t]{@{}c@{}}\bS_{(3,3,2)}(V)\\[2pt]
\begin{ytableau}*(blue!35)&*(blue!35)&*(blue!35)\\
*(blue!35)&*(red!35)&*(red!35)\\ *(blue!35)&*(red!35)\end{ytableau}\end{array}, &
A_3&\cong
\begin{array}[t]{@{}c@{}}\bS_{(3,3,3)}(V)\\[2pt]
\begin{ytableau}*(blue!35)&*(blue!35)&*(blue!35)\\
*(blue!35)&*(red!35)&*(red!35)\\
*(blue!35)&*(red!35)&*(green!50)\end{ytableau}\end{array},\\[2pt]
\operatorname{Hilb}_A(q)&=1+17q+17q^2+q^3.
\end{aligned}
\]
Writing a hook product as the filling obtained by assembling its diagonal
hooks, one quadratic straightening relation is
\[
\ytableausetup{boxsize=0.9em}
\vcenter{\hbox{\begin{ytableau}
*(blue!35)1 & *(blue!35)2 & *(blue!35)3\\
*(blue!35)2 & *(green!50)1\\
*(blue!35)3
\end{ytableau}}}
\;=\;-\,\vcenter{\hbox{\begin{ytableau}
*(blue!35)1 & *(blue!35)1 & *(blue!35)3\\
*(blue!35)2 & *(green!50)2\\
*(blue!35)3
\end{ytableau}}}
\;-\;\vcenter{\hbox{\begin{ytableau}
*(blue!35)1 & *(blue!35)1 & *(blue!35)2\\
*(blue!35)2 & *(green!50)3\\
*(blue!35)3
\end{ytableau}}}.
\]
Since each tableau involves exactly two colors, this is a quadratic
relation with respect to the rank grading. The filling on the left is
nonsemistandard,
whereas the two fillings on the right are semistandard. The unique rank-three standard monomial of \(A\) is the
\(3\times3\) tableau whose \(i\)th row is filled with \(i\); it spans
\(A_3\cong\bS_{(3,3,3)}(V)\cong\det(V)^3\).
\end{example}

Thus \(\cH(V)\) provides a standard monomial theory for products of hook
Schur functors, with the semistandard hook tabloids playing the role
that semistandard tableaux play in the classical straightening laws of
Akin--Buchsbaum--Weyman
\cite{AkinBuchsbaumWeyman1981,AkinBuchsbaumWeyman1982} and
De Concini--Eisenbud--Procesi~\cite{DeConciniEisenbudProcesi1980};
Example~\ref{ex:straighten-33} straightens the same tableau in the hook,
Schur, and Weyl models side by side.
The universal hook algebra is a hook analogue of the classical shape algebra
studied by Towber~\cite{Towber1977,LancasterTowber1979,TaftTowber1991}.
Here the Cartan product is replaced by Frobenius union. Inamdar and Mehta
proved that the classical shape algebra, and more generally its Schubert
quotients, are Koszul~\cite{InamdarMehta1994,Inamdar1994Note}.
The hook relations do not follow formally from these theories and give a new
system of Garnir relations for hook products: the proposed lift of the usual
Garnir relations in
\cite{GorodentsovKhoroshkinRudakov2007} already fails for the pair of
hooks in Remark~\ref{rmk:gorodentsov}.

The main difficulty in constructing the hook algebra is to produce, for every
nonsemistandard quadratic hook tabloid, a relation having that tabloid as its
leading term. In
\S\S\ref{sec:hook-algebra}--\ref{sec:elim-systems}, we obtain these
relations from explicit row and column Pieri maps and solve the resulting
elimination systems; this proves triangularity for the term order of
\S\ref{sec:term-orders}.

The universal hook algebra captures only the leading part of the
multiplication on the Tor algebra \(T\): a product in \(T\) can contain
nonzero irreducible components not retained by the corresponding product
in \(\cH(V)\), and such components genuinely occur in general
(Remark~\ref{rem:filtration-subtlety}), so passing to an associated
graded algebra is unavoidable. We construct a finite multiplicative
filtration whose associated graded algebra is a finite model built from
\(\cH(V)\) (see Propositions~\ref{prop:diagonal-hook-degeneration}
and~\ref{prop:generic-degeneration} and
Corollary~\ref{cor:hook-degreewise-compatibility}), and we prove the
following:

\begin{theorem}\label{thm:intro-degeneration}
Assume \(\operatorname{char}\bk=0\). Then \(T\) is a filtered
deformation either of a finite algebra retract of \(\cH(V)\) or of an
algebra built out of Veronese and Segre operations of such retracts, and
is therefore Koszul with respect to the strand grading.
\end{theorem}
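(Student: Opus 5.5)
We plan to prove Theorem~\ref{thm:intro-degeneration} by combining Theorem~\ref{thm:intro-hook} with a filtration argument and standard permanence properties of Koszulity. The argument has three stages: identify the finite model, construct the filtration, then transfer Koszulity.

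\emph{The finite model.} In the Pfaffian case, the Betti table (Figure~\ref{fig:pfaff}) shows that $T$ is, as a $\GL(V)$-representation, a direct sum of $\bS_\lambda(V)$. These $\lambda$ are the Frobenius unions of hooks drawn from a fixed finite set of hook types: the first-strand shapes, whose arms and legs are confined to a box determined by $r$ and $\dim V$. Let $\cH_{\mathcal A}(V)$ denote the subalgebra of $\cH(V)$ generated by the hook summands of these types, as in Example~\ref{ex:intro-three-diagonal-hooks}. Let $\cH_{\mathcal A}(V)/J$ be its quotient by the span of $\bS_\lambda$ for $\lambda$ outside the allowed region. We check that this region is an order ideal for merging, so $J$ is an ideal and the quotient is an algebra retract.
\begin{itemize}
\item The quadratic Gr\"obner basis of Theorem~\ref{thm:intro-hook}(2) restricts to $\cH_{\mathcal A}(V)$, since a subset of hook types is closed under the term order's leading terms.
\item Killing $J$ amounts to imposing monomial quadratic relations: non-mergeable pairs, and pairs leaving the box. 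Because the forbidden region is detected pairwise, the result still has a quadratic Gr\"obner basis.
\item Hence the retract is Koszul in the rank grading, and the rank grading equals the strand grading.
\end{itemize}
The symmetric case is handled the same way, using the hook types appearing in J\'ozefiak--Pragacz--Weyman. In the generic case, $\Tor$ involves $\bS_\lambda V\otimes\bS_{\lambda'}W$-type pairs. The model is then a Segre product of retracts for $V$ and $W$, possibly composed with a Veronese twist matching the strand indexing. Segre products and Veronese subalgebras of Koszul algebras are Koszul (Backelin--Fr\"oberg), so this model is Koszul as well.

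\emph{The filtration.} We filter $T$ by a $\GL$-equivariant order on partitions, for instance dominance refined by size in each strand. Set $F_{\ge\nu}T=\bigoplus_{\lambda\ge\nu}T_\lambda$, the isotypic components. We must show multiplication is filtered, meaning $T_\lambda\cdot T_\mu\subseteq F_{\ge\lambda\sqcup\mu}$. This should follow from the Littlewood--Richardson rule: every constituent of $\bS_\lambda\otimes\bS_\mu$ is dominated by $\lambda+\mu$. We need the refinement that the constituents that actually occur in $T$ in the target strand are all $\ge\lambda\sqcup\mu$, with $\lambda\sqcup\mu$ maximal among them. On $\gr T$, the product then becomes the projection onto $\bS_{\lambda\sqcup\mu}$, with multiplicity one.

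\emph{Nonvanishing and transfer.} The key point is that this leading component is nonzero exactly when $\lambda,\mu$ merge within the box. This is the main obstacle: it is genuine multiplicative information about $T$. I would first reduce to products with a first-strand hook, which is enough since $\gr T$ is to be generated in degree one. I would then compute such products explicitly via Lascoux/J\'ozefiak--Pragacz--Weyman resolutions, realized by Koszul complexes on $\Sym(V\otimes W)$ restricted to a Grassmannian (geometric technique). The product with the lowest strand would be identified with the hook Pieri maps used to build $\cH(V)$, and these are nonzero by the triangularity already established. Granting this, $\gr T$ is isomorphic to the finite model by comparing dimensions through the representation-theoretic decompositions. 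Finally, $T$ is generated in strand degree one and its quadratic relations lift. A filtered algebra whose associated graded algebra, for a filtration compatible with the strand grading, is Koszul is itself Koszul. This holds by the spectral sequence $\Tor^{\gr T}(\bk,\bk)\Rightarrow\Tor^T(\bk,\bk)$, whose $E_1$ is concentrated on the diagonal and therefore forces diagonal $\Tor^T$.
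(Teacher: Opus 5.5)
Your overall architecture matches the paper's: generation by the first strand, then a multiplicative filtration whose associated graded is a Koszul hook model, then lifting through $\Tor^{\gr T}(\bk,\bk)\Rightarrow\Tor^T(\bk,\bk)$. The final lifting step is fine. The two middle steps, however, do not work as stated.

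\textbf{The filtration and the leading term.} A dominance filtration cannot isolate the Frobenius union. $\lambda\sqcup\mu$ is not the Cartan product $\lambda+\mu$; it sits in the interior of the Littlewood--Richardson range, as already in $\bS_{(2,1)}(V)\otimes V=\bS_{(3,1)}\oplus\bS_{(2,2)}\oplus\bS_{(2,1,1)}$. Dominance also fails among the constituents that actually occur in $T$ in a fixed bidegree. For the Pfaffian hooks with $L=3$, the product of the summands $\bS_{h_2}$ and $\bS_{h_1}$ may a priori land in $h_{\{1,2\}}=(3^2,2^4)$ or in $h_{\{0,3\}}=(4,2^4,1^2)$. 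These have the same size and rank but are incomparable in dominance. Your scheme also says nothing about non-mergeable products. These are genuinely nonzero in $T$ (Remark~\ref{rem:filtration-subtlety}) and must drop strictly in filtration.

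The missing ingredient is Lemma~\ref{lem:multi-jump-ladder}, with its two-sided version Lemma~\ref{lem:2d-jump-ladder}. Any constituent $h_K$ of $h_I\otimes h_J$ of the right size and rank satisfies $K\ge_{\mathrm{lex}}I\uplus J$, with equality iff $I\cap J=\varnothing$ and $K=I\cup J$. The superincreasing weights in $d(I)=\sum_{i\in I}(i+1-\varepsilon_i)$ turn this into an additive numerical filtration degree. In it the Frobenius union is the unique top term and overlapping products fall strictly below.

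You grant the nonvanishing of that top term, but your suggested route does not supply it. The triangularity of \S\ref{sec:elim-systems} concerns relations in $\cH(V)$, not products in $T$. Nothing in the geometric technique produces the Tor multiplication, which need not come from a dga structure. The paper proves nonvanishing in two ways:
\begin{itemize}
\item Pfaffian case, and symmetric case via $M_r$: Poincar\'e duality of Gorenstein Tor algebras with induction on $\max I$.
\item Generic case: Schur-complement localization, lowering $r$ step by step down to the exterior algebra.
\end{itemize}

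\textbf{The models.} In the symmetric case, $\Tor^S(S/I_r,\bk)$ involves only $\lambda\in Q_0^{\mathrm{ev}}$, with strand degree $\rank\lambda/2$. Its generators are therefore rank-two shapes, and it is not a hook-generated retract. It is the second Veronese (the $\bZ/2$-invariants) of the hook-generated algebra $\Tor^S(M_r,\bk)$; this is where a Veronese is actually needed.

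In the generic case, both tensor factors are indexed by the same $\lambda$. An ordinary Segre product $\bigoplus_d(A_V)_d\otimes(A_W)_d$ is therefore far too large. The correct model is the fine Segre product $A_V\boxtimes A_W$ over the common hook multigrading (Definition~\ref{def:fine-segre}). Backelin--Fr\"oberg does not cover this. The paper instead combines the multigraded quadratic Gr\"obner bases of $A_V$ and $A_W$ with Sullivant's theorem on codimension-zero toric fiber products (Proposition~\ref{prop:generic-fine-segre}).

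Finally, $\gr T$ is supercommutative, with parity coming from homological degree. So it is a parity form of your purely even model, not isomorphic to it, and you need Zhang-twist invariance of Koszulity to conclude.
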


To summarize, the main steps of our argument are as follows:
\begin{enumerate}
\item \(T\) is generated by its first-strand hook summands,
\item \(\gr T\) is isomorphic to a Koszul algebra built from \(\cH(V)\),
\item Koszulity of \(\gr T\) implies that \(T\) is Koszul with respect
to the strand grading,
\item the Eilenberg--Moore spectral sequence degenerates, which gives
the Poincar\'e series.
\end{enumerate}

Although the Poincar\'e series results are stated in characteristic zero,
the numerical formulas hold in every sufficiently large characteristic
for fixed matrix dimensions and~\(r\). The uniformity is striking:
the polynomial functors appearing in the minimal resolution of the residue
field have unbounded degree, yet a single characteristic bound gives the
entire Poincar\'e series, not just a finite truncation. The characteristic
restrictions come instead from the finite-dimensional Tor algebra~\(T\)
and its finite straightening model, where only finitely many partitions
enter the bounds of Remark~\ref{rmk:general-rank-straightening}.
Once \(T\) is strand-Koszul, the spectral-sequence argument controls all
homological degrees at once.

\subsection{Related work}

The determinantal rings above have well-known geometric incarnations:
\(\Spec R\) is the variety of matrices of rank less than \(r\) (rank
less than \(2r\) in the alternating case), its \defi{rank locus}. After
base change to an algebraic closure, the corresponding projective
schemes \(\operatorname{Proj}(R\otimes_\bk\overline{\bk})\) are,
respectively, the secant varieties
\[
\sigma_{r-1}\!\left(
  \operatorname{Seg}(\mathbf P(V^*)\times\mathbf P(W^*))\right),
\qquad
\sigma_{r-1}\!\left(\nu_2(\mathbf P(V^*))\right),
\qquad
\sigma_{r-1}\!\left(\Gr(2,V^*)\right)
\]
of the Segre variety, the quadratic Veronese variety, and the
Grassmannian in its Pl\"ucker
embedding~\cite{ManivelMichalek2015}, so Theorem~\ref{thm:intro-main}
computes the Poincar\'e series of the local ring at the vertex of the
affine cone over each of these varieties.

Avramov asked whether the Koszul homology algebra of a Koszul algebra is
generated by its linear strand
\(\bigoplus_i\Tor_i^S(R,\bk)_{i+1}\). This is known to fail in
general~\cite[Theorem~3.15]{BoocherDAliGrifoMontanoSammartano2015}, so
generation by the linear strand is a genuinely special property;
Conca, Katth\"an, and Reiner proved that it holds for the second
Veronese and asked if it holds for Segre products of polynomial
rings~\cite[Question~1.1 and Corollary~2.4]{ConcaKatthanReiner2021}.
When \(r=2\), the
generic determinantal ring is the Segre product of two polynomial rings
and the strand grading is \(j-i\). Thus the generation statement in
Theorem~\ref{thm:koszul-strands} gives an affirmative answer in
characteristic zero for these Segre products while recovering their
result for the second Veronese.

\subsection{Organization}

The paper is organized as follows. In Section~\ref{sec:background} we fix
the partition conventions. In Sections~\ref{sec:hook-algebra}--\ref{sec:elim-systems} we introduce the
universal hook algebra and its term order, develop the hook Pieri maps
and the associated elimination systems, and prove the quadratic
straightening law and Koszulity (Theorems~\ref{thm:main-hook}
and~\ref{thm:quad-gb}). Finally, in Section~\ref{sec:applications} we
prove strand generation, construct the three filtered models, and derive
the Poincar\'e series formulas (Theorem~\ref{thm:koszul-strands} and
Corollary~\ref{cor:poincare-applications}), concluding with the geometric
and representation-theoretic consequences. Essentially every
construction in this paper is implemented in the Macaulay2 package
\texttt{HookPieri}~\cite{HookPieri}.

\section{Background}\label{sec:background}

We work over a field~\(\bk\), with all vector spaces finite-dimensional. 

\subsection{Partitions and Schur Functors}\label{subsec:partitions-schur}

A \defi{partition} \(\lambda=(\lambda_1\ge\cdots\ge\lambda_\ell>0)\)
is identified with its Young diagram \(\{(i,j)\in\mathbb Z_{>0}^2:\, j\le\lambda_i\}\), drawn in the English
convention.  The \defi{conjugate} partition is \(\lambda^{\mathsf t}\), obtained by transposing the diagram.  We write
\(|\lambda|:=\sum\lambda_i\) for the size and
\(\ell(\lambda)\) for the number of parts.

\begin{definition}\label{def:frobenius-coords}
The \defi{rank} of~\(\lambda\), denoted \(\rank(\lambda)\), is the
side length of the largest square fitting inside its diagram (the
\defi{Durfee square}).  The \defi{Frobenius coordinates} of~\(\lambda\)
are
\[
\lambda\;\longleftrightarrow\;
(a_1,\dots,a_d\mid b_1,\dots,b_d),
\qquad d:=\rank(\lambda),
\]
where \(a_i:=\lambda_i-i\) and
\(b_i:=\lambda_i^{\mathsf t}-i\) are the arm and leg lengths along
the main diagonal.  Thus \(a_1>\cdots>a_d\ge 0\) and
\(b_1>\cdots>b_d\ge 0\).

The \defi{\(i\)th diagonal hook} of~\(\lambda\) is the hook-shaped subdiagram
\[
H_i(\lambda):=
\bigl(\{(i,j):j\ge i\}\cup\{(j,i):j\ge i\}\bigr)\cap\lambda,
\]
which has shape \((a_i{+}1,1^{b_i})\).  These hooks tile the
diagram:
\(\lambda=H_1(\lambda)\sqcup\cdots\sqcup H_d(\lambda)\).
\end{definition}

  For instance, \(\lambda=(4,3,2)\) has rank \(d=2\) and Frobenius coordinates
\((a_1,a_2\mid b_1,b_2)=(3,1\mid2,1)\). Its two shaded diagonal cells anchor
the arms extending to the right and the legs extending downward (see below, left). The first diagonal hook \(H_1\) has shape \((4,1,1)\) and is anchored at \((1,1)\), while \(H_2\) has shape \((2,1)\)
and is anchored at \((2,2)\) (see below, right):
\[
\ytableausetup{boxsize=1.15em}
\begin{ytableau}
*(blue!25) & a_1 & a_1 & a_1 \\
b_1 & *(blue!25) & a_2 \\
b_1 & b_2
\end{ytableau}
\qquad
\begin{ytableau}
*(blue!30) & *(blue!30) & *(blue!30) & *(blue!30) \\
*(blue!30) & *(red!30) & *(red!30) \\
*(blue!30) & *(red!30)
\end{ytableau} 
\]

\begin{definition}\label{def:frobenius-union}
Two partitions \(\lambda\leftrightarrow(a_1,\dots,a_r\mid
b_1,\dots,b_r)\) and \(\mu\leftrightarrow(c_1,\dots,c_s\mid d_1,\dots,d_s)\) are
\defi{mergeable} if their combined diagonal hooks nest: the arm lengths are distinct, and when the hooks
\((a_i\mid b_i)\) and \((c_j\mid d_j)\) are listed in decreasing order of arm, their legs strictly decrease as well. In that case the \defi{Frobenius union}
\(\lambda\sqcup\mu\) is the partition with exactly these diagonal hooks.
Each hook stays intact: an arm \(a_i\) keeps its own leg \(b_i\); arms
and legs are not rearranged separately.
\end{definition}

For example,
$\ytableausetup{boxsize=.6em}
\underbrace{
\begin{ytableau}
*(blue!35) & *(blue!35) & *(blue!35) \\
*(blue!35) \\
*(blue!35)
\end{ytableau}}_{(3,1,1)}
\;\sqcup\;
\underbrace{
\begin{ytableau}
*(red!35) & *(red!35) \\
*(red!35) & *(green!50)
\end{ytableau}}_{(2,2)}
\;=\;
\underbrace{
\begin{ytableau}
*(blue!35) & *(blue!35) & *(blue!35) \\
*(blue!35) & *(red!35) & *(red!35) \\
*(blue!35) & *(red!35) & *(green!50)
\end{ytableau}}_{(3,3,3)}$, but
$\underbrace{
\begin{ytableau}
{} & *(yellow!70)
\end{ytableau}}_{(2)}
\;\not\sqcup\;
\underbrace{
\begin{ytableau}
{} & *(yellow!70) \\
{}
\end{ytableau}}_{(2,1)}$.

\begin{definition}\label{def:row-insertion}
For a partition~\(\lambda\) of rank~\(d\) and an integer \(r\ge 0\),
define \(\lambda^{(r)}\) by inserting \(r\) rows of length~\(d\)
immediately below the Durfee square:
\[
\lambda^{(r)}
:=(\lambda_1,\dots,\lambda_d,\,d^r,\,\lambda_{d+1},\dots).
\]
If \(\lambda\leftrightarrow(\underline a\mid b_1,\dots,b_d)\), then
\(\lambda^{(r)}\leftrightarrow
(\underline a\mid b_1+r,\dots,b_d+r)\); in particular,
\(\varnothing^{(r)}=\varnothing\).
\end{definition}

For example, \(\lambda=(3,2,1)\) has rank \(d=2\), and \(\lambda^{(1)}\) inserts one row of length \(2\) immediately below the Durfee square, giving \(\lambda^{(1)}=(3,2,2,1)\):
\[
\ytableausetup{boxsize=.6em}
\begin{ytableau}
{} & {} & {} \\
{} & {} \\
{}
\end{ytableau}
\quad\xrightarrow{\;(-)^{(1)}\;}\quad
\begin{ytableau}
{} & {} & {} \\
{} & {} \\
*(yellow!50) & *(yellow!50) \\
{}
\end{ytableau}.
\]

\begin{definition}\label{def:Q-sets}
For \(\delta\in\mathbb Z\) and \(d\ge 0\), let
\[
Q_\delta^d
:=
\bigl\{
(a_1,\dots,a_d\mid b_1,\dots,b_d):\,
a_j-b_j=\delta\;\text{for all }j
\bigr\}
\]
denote the set of rank \(d\) partitions whose diagonal hooks all satisfy
\(\text{arm length}-\text{leg length}=\delta\). Set
\(Q_\delta:=\bigcup_{d\ge 0}Q_\delta^d\) and
\(Q_\delta^{\mathrm{ev}}:=\bigcup_{d\;\mathrm{even}}Q_\delta^d\).

Thus \(Q_0\) consists of self-conjugate partitions. Consequently,
\(|\lambda|-\rank(\lambda)\) is even on \(Q_0\), while \(|\lambda|\) is even
on \(Q_{-1}\).
\end{definition}

For a partition~\(\lambda\), write \(\bS_\lambda\) for the Schur functor;
see~\cite[\S2.1]{Weyman2003} and~\cite{AkinBuchsbaumWeyman1982}. In
characteristic zero, the nonzero modules \(\bS_\lambda(V)\) form a complete
set of irreducible polynomial \(\GL(V)\)-representations; in arbitrary
characteristic they are the universal Schur modules of
\cite{AkinBuchsbaumWeyman1982} and need not be irreducible. Note that
\(\bS_\lambda(V)=0\) if and only if \(\ell(\lambda)>\dim V\).

A \defi{semistandard Young tableau} (SSYT) of shape~\(\lambda\) and
entries in~\(\{1,\dots,n\}\) is a filling of the Young diagram
of~\(\lambda\) which is weakly increasing along rows and strictly
increasing down columns~\cite{Macdonald1995, Fulton1997}.  When \(\dim V=n\), the SSYTs of
shape~\(\lambda\) index a weight basis of~\(\bS_\lambda(V)\) (see~\cite{AkinBuchsbaumWeyman1982}) with respect to a choice of maximal torus.

Finally, we will refer to the Littlewood--Richardson rule throughout, which computes the multiplicity of a Schur functor in the tensor product of two other Schur functors. We specifically use the version of this rule which counts Littlewood--Richardson tableaux. For details, see \cite[\S 5.2, Proposition 3]{Fulton1997}.

\subsection{Poincar\'e Series}

Fix a group $G$. For a finite-dimensional $G$-module $W$, let $[W]$ denote its class in the Grothendieck ring of finite-dimensional $G$-representations.  In our applications, $\operatorname{char}\bk=0$ and $G$ is a product of general linear groups.  If $G$ is trivial, then $[W]=\dim_\bk W$.

Let $R = \bigoplus_{d\in\mathbb Q_{\ge 0}} R_d$ be a connected ($R_0=\bk$) finitely generated graded $\bk$-algebra on which $G$ acts by graded algebra automorphisms.  Set $R_+ = \bigoplus_{d > 0} R_d$. We refer to this grading as the \defi{internal grading} and define the \defi{equivariant Poincar\'e series}
\[
  \rP_R(t,u) = \sum_{i\in\mathbb Z_{\ge 0},\,d\in\mathbb Q_{\ge 0}} [\Tor_i^R(\bk,\bk)_d] \cdot t^i u^d,
\]
where $t$ records homological degree and $u$ internal degree; here $\bk=R/R_+$.

Let $A\to B$ be a $G$-equivariant homomorphism of connected, finitely generated $\mathbb Q_{\ge0}$-graded $\bk$-algebras. Assume that $A$ is generated in degree~$1$ and that $B$ is finite as an $A$-module.  Set
\[
  T_{i,j}=\Tor_i^A(B,\bk)_j,
  \qquad T=\bigoplus_{i,j}T_{i,j}.
\]
Assume that some nonzero $T_{i,j}$ has $j-i>0$, and let $\ell>0$ be the least such difference.  Giving $T_{i,j}$ degree $(j-i)/\ell$ defines the \defi{strand grading}, a priori a rational grading. If $T$ is Koszul with respect to this grading, set
\[
  H(x,y) = \sum_{i,d\ge 0} [T_{i,\,i+\ell d}] \cdot x^i y^d.
\]

We say that \(A\) is \defi{Koszul}~\cite{Priddy1970,PolishchukPositselski2005} if it is generated in a single degree, and after rescaling this degree to 1, we have
\[
\Tor_i^A(\bk,\bk)_j=0\qquad\text{for all }i\ne j,
\]
where \(i\) is homological degree and \(j\) is internal degree. Equivalently,
\(\bk\) admits a linear graded free resolution over \(A\). 

\begin{proposition}\label{prop:em-rational-form}
  Suppose that $A$ is Koszul, that $\ell\ge1$, and that $T$ is Koszul with respect to the strand grading.
  Then
  \[
    \rP_{B}(t,u)  = \frac{\rP_A(t,u)}{H(tu,-tu^\ell)}.
  \]
\end{proposition}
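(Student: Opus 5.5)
Let \(\Phi\) denote the Koszul dga of \(B\) over \(A\), namely \(B\otimes_A^{\mathrm L}\bk\); since \(\bk\) is a field, its homology is the algebra \(T\). The plan is to compute \(\Tor^B(\bk,\bk)\) from the Eilenberg--Moore spectral sequence
\[
E^2=\Tor^{T}\bigl(\Tor^A(\bk,\bk),\bk\bigr)\Longrightarrow \Tor^B(\bk,\bk),
\]
show that it degenerates, and then read off Hilbert series. I would use the change-of-rings description: \(\Tor^B(\bk,\bk)\) is the homology of \(\bk\otimes^{\mathrm L}_{B}\bk\), and \(\bk\otimes^{\mathrm L}_A\bk\simeq (\bk\otimes^{\mathrm L}_B\bk)\otimes^{\mathrm L}\ldots\). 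More concretely, \(\bk\otimes^{\mathrm L}_A\bk\cong \bk\otimes^{\mathrm L}_{\Phi}\bk\)-type base change gives the spectral sequence above, filtered by the bar construction over \(\Phi\). All maps are \(G\)-equivariant because \(\operatorname{char}\bk=0\) lets us take equivariant minimal models. Three gradings are tracked throughout: homological, internal, and strand.

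The first step is degeneration. Since \(A\) is Koszul, \(\Tor^A(\bk,\bk)=A^!\) is concentrated on the diagonal \(i=j\). Since \(T\) is Koszul in the strand grading, the minimal \(T\)-resolution of \(\bk\) is linear in strand degree. Consequently \(\Tor^T_p(\bk,\bk)\) sits in strand degree exactly \(p\), that is, in internal minus homological degree \(\ell p\). After tensoring with \(A^!\), the \(E^2_{p,*}\) term is concentrated on the single line where the (internal \(-\) total homological) degree equals \(\ell p\), with \(p\) the filtration degree. A differential \(d^r\) lowers \(p\) by \(r\), raises the complementary degree by \(r-1\), and preserves internal degree. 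Comparing the linear invariant \(u\)-degree minus total degree forces every \(d^r\) with \(r\ge2\) to vanish. Checking this bookkeeping carefully, including the sign and shift conventions for the bar filtration, is the main obstacle. The argument I would try first is exactly this weight count: show that the quantity (internal degree) \(-\) (total degree) \(-\ \ell\cdot\)(filtration degree) is constant on \(E^2\), and is preserved by the differentials only when \(r=1\).

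The second step is the Euler-characteristic computation once \(E^2=E^\infty\). I would use two inputs.
\begin{itemize}
\item Koszul duality for \(T\): the Hilbert series of \(\Tor^T(\bk,\bk)\) in variables recording \(T\)-homological degree, Tor-degree \(i\), and internal degree is \(1/H(\cdot)\) with a sign for strand degree. Concretely, \(\sum_p[\Tor^T_p(\bk,\bk)]\) evaluated with strand degree \(d\leftrightarrow (-1)^d\), Tor degree \(i\), and internal degree \(i+\ell d\) gives \(H(x,-y)^{-1}\), because the alternating sum over a linear resolution inverts the Hilbert series.
\item A Künneth factor: \(\Tor^T(A^!,\bk)\cong A^!\otimes\Tor^T(\bk,\bk)\) as graded objects, since \(T\) acts on \(A^!\) through the augmentation. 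This holds because the image of \(T_+\) in \(\Tor^A(\bk,\bk)\) vanishes, as \(T_+\) lies in strictly positive strand degree while \(A^!\) lies in strand zero.
\end{itemize}

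Finally, total homological degree is \(p\) plus the inner degrees. A strand-\(d\) generator in \(T_{i,i+\ell d}\) contributes total homological degree \(i+d\) and internal degree \(i+\ell d\), so its monomial is \(t^{i+d}u^{i+\ell d}=(tu)^i(tu^\ell)^d\). The sign \((-1)^d\) from the inverse Hilbert series yields the substitution \(x=tu\), \(y=-tu^\ell\). Multiplying by \(\rP_A(t,u)\) gives the stated identity \(\rP_B(t,u)=\rP_A(t,u)/H(tu,-tu^\ell)\) in the Grothendieck ring of \(G\).
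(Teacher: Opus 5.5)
Your proposal is correct and follows the paper's proof: the strand-degree argument makes $\Tor^A(\bk,\bk)$ a trivial $T$-module, so $E^2\cong\Tor^A(\bk,\bk)\otimes\Tor^T(\bk,\bk)$ is concentrated in internal degree $q+\ell p$; each $d^r\colon E^r_{p,q}\to E^r_{p-r,q+r-1}$ would shift this by $1+r(\ell-1)>0$, so it vanishes for $r\ge2$; and the Euler characteristic identity $H(x,y)\sum_{a,b}[\Tor^T_b(\bk,\bk)_{a,a+b\ell}]\,x^a(-y)^b=1$ at $(x,y)=(tu,-tu^\ell)$ gives the formula. In the bookkeeping you flagged, compare the internal degree with the complementary degree $q$ (not $p+q$), and note that your weight changes under $d^r$ for every $r\ge1$, so drop the remark about $r=1$; it is exactly $\ell\ge1$ that guarantees $1+r(\ell-1)\ne0$.
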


\begin{proof}  
  The bar construction gives the Eilenberg--Moore homology spectral sequence \cite[Proposition 3.2.4]{Avramov1998Infinite}; see also~\cite{GugenheimMay1974}:
  \[
    \rE^2_{p,q,d} = \Tor_p^T(\Tor^A(\bk,\bk), \bk)_{q,d}
    \Longrightarrow \Tor^{B}_{p+q}(\bk,\bk)_d.
  \]
  The subscripts \(q,d\) record the Tor  homological and internal gradings.
  
  Set $M:=\Tor^A(\bk,\bk)$.  Since $A$ is Koszul, $M_{i,j}\ne0$ only when $i=j$.  Moreover,
  \[
    T_{i,j}\cdot M_{q,q}\subseteq M_{i+q,j+q}.
  \]
  Thus only the part of $T$ of strand degree zero can act nontrivially on
  $M$; this part is $\bk$ because the strand grading is connected. Hence $M$
  is a trivial $T$-module and
  \[
    \Tor^T_p(M,\bk)_{q,d}    =\bigoplus_{q'}\Tor_{q'}^A(\bk,\bk)_{q'}      \otimes\Tor^T_p(\bk,\bk)_{q-q',d-q'}.
  \]

  Koszulity with respect to the strand grading gives
  \[
    \Tor^T_p(\bk,\bk)_{q,d}\ne0
    \quad\Longrightarrow\quad d=q+\ell p.
  \]
  Consequently, $\rE^2_{p,q}$ is concentrated in internal degree $q+\ell p$, as is every subquotient $\rE^r_{p,q}$.  The source and target internal degrees of
  \[
    d^r\colon\rE^r_{p,q}\longrightarrow\rE^r_{p-r,q+r-1}
  \]
  differ by $1+r(\ell-1)>0$.  Hence every higher differential vanishes.

  Degeneration gives a filtration on $\Tor_i^B(\bk,\bk)_d$ whose associated
  graded consists of the terms on the $\rE^2$ page. Therefore, in the
  Grothendieck ring,
  \[
    [\Tor_i^B(\bk,\bk)_d]
    =\sum_{j,k}[\Tor_k^A(\bk,\bk)_k]\cdot  [\Tor_j^T(\bk,\bk)_{i-j-k,d-k}].
  \]

  The Euler characteristic of the minimal free $T$-resolution of $\bk$, which
  is linear for the strand grading, gives
  \[
    H(x,y) \cdot \sum_{a,b} [\Tor^T_b(\bk,\bk)_{a,a+b\ell}] \cdot x^a (-y)^b  = 1.
  \]
  Substituting \((x,y)=(tu,-tu^\ell)\) and combining with the identity
  above proves the result.
\end{proof}

\begin{remark}\label{rem:formality-not-needed}
  \begin{enumerate}[\noindent (1)]
  \item A standard route to Eilenberg--Moore degeneration requires the derived
  fiber $R\otimes_S^{\mathrm L}\bk$, for $R=S/I$, to be formal as a
  differential graded algebra; together with Koszulity of $\Tor^S(R,\bk)$,
  this gives what Briggs, Cameron, Letz, and Pollitz call the
  \defi{Cohen Koszul} property~\cite{BriggsCameronLetzPollitz2025}. For
  $S=\Sym(\bigwedge^2\bk^6)$ and $I$ the ideal of $4\times4$ Pfaffians,
  the derived fiber is not formal, and Srinivasan proved that the minimal
  $S$-free resolution admits no differential graded algebra  structure~\cite{Srinivasan1992}. Our results include this algebra as a special case.

  \item Koszulity in a coarser regrading is also insufficient.  In characteristic~$0$, for
  \[
    R=\bk[x_1,\ldots,x_7]/(x_1x_2,x_2x_3,\ldots,x_6x_7,x_7x_1),
  \]
  the algebra $T=\Tor^{\bk[x_1,\ldots,x_7]}(R,\bk)$ is Koszul after each of
  its minimal algebra generators is assigned degree~$1$. It is not Koszul
  with respect to the strand grading: an indecomposable generator in
  bidegree $(5,7)$ has
  strand degree~$2$. Since $R$ is Koszul, the Eilenberg--Moore spectral
  sequence does not collapse at $\rE^2$
  by~\cite[Theorem 2.5.4]{Myers2021}.
  \end{enumerate}
  Thus the essential input is Koszulity specifically in the strand grading, which pins the internal degrees of all higher differentials.
\end{remark}

\section{Koszulity and the Universal Hook Algebra}\label{sec:hook-algebra}

We introduce the universal hook algebra \(\cH(V)\), whose multiplication is based on nesting diagonal hooks and is relevant for the study of Tor algebras of determinantal rings. We prove that it has a quadratic Gr\"obner basis with semistandard hook tabloids as standard monomials, and hence is Koszul. The straightening construction occupies \S\S\ref{sec:hook-schur}--\ref{sec:elim-systems};
\S\ref{sec:applications} transfers the result to the Tor algebras.

\subsection{Universal hook algebra}\label{ssec:universal-hook-algebra}

Throughout this section, assume \(\operatorname{char}\bk=0\).
Set
\[
\bV:=\bigoplus_{a,b\ge 0}\bS_{(a+1,1^b)}(V).
\]
We call \((a,b)\) the \defi{hook type} of the hook shape
\((a+1,1^b)\). For each hook type, choose a parity
\(\eps(a,b)\in\bZ/2\), making \(\bV\) a \(\bZ/2\)-graded vector space,
and let \(\Sym_\eps(\bV)\) denote its supersymmetric algebra. On hook types, use the strict product order
\[
(a,b)>(c,d)\quad\Longleftrightarrow\quad a>c\ \text{ and }\ b>d.
\]
Visually, this distinguishes nestable and non-nestable hook pairs:
\[
\ytableausetup{boxsize=0.6em}
\begin{array}{c@{\qquad\qquad}c}
\begin{ytableau}
*(blue!30) & *(blue!30) & *(blue!30) \\
*(blue!30) \\
*(blue!30) \\
*(blue!30)
\end{ytableau}
\;\;,\;\;
\begin{ytableau}
*(red!30) & *(red!30) \\
*(red!30) \\
*(red!30)
\end{ytableau}
&
\begin{ytableau}
*(blue!30) & *(blue!30) & *(blue!30) \\
*(blue!30) \\
*(blue!30) \\
*(blue!30)
\end{ytableau}
\;\;,\;\;
\begin{ytableau}
*(red!30) & *(red!30) \\
*(red!30) \\
*(red!30) \\
*(red!30)
\end{ytableau}
\\[1.5ex]
\text{strictly comparable: }(2,3)>(1,2) &
\text{incomparable: }(2,3),(1,3)\text{ share leg}
\end{array}
\]
Let $q_\eps\colon \bV\otimes\bV\twoheadrightarrow\Sym_\eps^2(\bV)$
be the canonical quotient. Define \(\mathbb H_\eps(V)\) to be the quotient
of \(\Sym_\eps(\bV)\) by the quadratic ideal generated by
\[
q_\eps\bigl(\bS_{(a+1,1^b)}(V)\otimes
\bS_{(c+1,1^d)}(V)\bigr), \qquad (a,b), (c,d) \text{ incomparable}.
\]
We write \(\mathbb H(V):=\mathbb H_0(V)\) for the purely even choice.

For \(a>c\) and \(b>d\), let \(\lambda\) have Frobenius coordinates
\((a,c\mid b,d)\). By the Littlewood--Richardson rule, \(\bS_\lambda(V)\) occurs with multiplicity one in
\(\bS_{(a+1,1^b)}(V)\otimes\bS_{(c+1,1^d)}(V)\). Set
\[
R_{(a,b),(c,d)}
:= \ker \Bigl(
\bS_{(a+1,1^b)}(V)\otimes\bS_{(c+1,1^d)}(V)
\twoheadrightarrow \bS_\lambda(V) \Bigr)
\]
where the displayed equivariant projection is unique up to a nonzero scalar; thus its kernel is independent of the choice.

\begin{example}
$\bS_{(2,1)}(V)\otimes V
\;=\;
\bS_{(2,2)}(V)
\;\oplus\;
\underbrace{\bS_{(3,1)}(V)\,\oplus\,\bS_{(2,1,1)}(V)}_{R_{(1,1),(0,0)}}.$
\end{example}

\begin{remark}
  The projection map used above is a hook analogue of the Cartan product $\bS_\lambda(V) \otimes \bS_\mu(V) \to \bS_{\lambda+\mu}(V)$ which comes geometrically from multiplying sections of line bundles on a flag variety. We suspect that it has a geometric interpretation related to super flag varieties.
\end{remark}

Let \(\bar q_\eps\) be the composite quotient map
\(\bV \otimes \bV \to \Sym_\eps^2(\bV)\to\mathbb H_\eps(V)_2\), and set
\[
I_\eps:=\big\langle \bar q_\eps\bigl(R_{(a,b),(c,d)}\bigr):a>c,\ b>d
\big\rangle\subseteq\mathbb H_\eps(V).
\]
The \defi{universal hook algebra} with parity \(\eps\) is
\[
\mathcal H_\eps(V):=\mathbb H_\eps(V)/I_\eps.
\]
We henceforth write \(I:=I_0\) and \(\mathcal H(V):=\mathcal H_0(V)\) in the purely even case.

\begin{definition}[Hook multigrading]\label{def:hook-multigrading}
Give the summand \(\bS_{(a+1,1^b)}(V)\) multidegree
\(\mathbf e_{a,b}\in\mathbb N^{(\mathbb N^2)}\), the standard basis vector
indexed by \((a,b)\). Both defining ideals are homogeneous for this grading,
so it descends to \(\mathcal H_\eps(V)\).
\end{definition}
Moreover, each
\(\mathcal H_\eps(V)\) is a graded Zhang twist of \(\mathcal H_0(V)\);
hence Koszulity with respect to the rank grading defined below is
independent of \(\eps\)~\cite{Zhang1996}. We
work with \(\eps=0\) for notational simplicity; in \S\ref{sec:applications} the parity is induced by
Tor homological degree.

Give \(\bS_{(a+1,1^b)}(V)\) bidegree \((1,a+b+1)\). Then the component
indexed by a Frobenius union \(\lambda\) has bidegree
\((\rank(\lambda),|\lambda|)\). The rank grading is connected and generated
in degree~\(1\); it is the internal grading below, while polynomial degree
is auxiliary.

\begin{theorem} \label{thm:main-hook}
  \begin{enumerate}[\rm (1)]
  \item As a bigraded \(\GL(V)\)-representation,
    \[
      \cH(V) \cong \bigoplus_\lambda \bS_\lambda(V),
      \qquad
      \deg\bS_\lambda(V)=\bigl(\rank(\lambda),|\lambda|\bigr),
    \]
    where \(\lambda\) ranges over all partitions.
  \item The algebra \(\cH(V)\) is Koszul with respect to the rank
  grading.
  \end{enumerate}
\end{theorem}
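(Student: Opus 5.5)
The plan is to prove both parts at once through a quadratic Gr\"obner basis, with standard monomials given by semistandard hook tabloids. A monomial in $\mathbb H(V)$ that survives the incomparability relations is a product of weight vectors from hook summands whose hook types form a chain $(a_1,b_1)>\cdots>(a_d,b_d)$. Such a chain is exactly the Frobenius data of a partition $\lambda=(a_1,\dots,a_d\mid b_1,\dots,b_d)$. So $\mathbb H(V)$ is the direct sum over $\lambda$ of $\bigotimes_i\bS_{(a_i+1,1^{b_i})}(V)$, up to commutativity. By the hook multigrading (Definition~\ref{def:hook-multigrading}), $\cH(V)$ splits into pieces indexed by $\lambda$, and it suffices to show each piece is $\bS_\lambda(V)$.

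For the lower bound, I would use the equivariant projections. Iterated Littlewood--Richardson shows that $\bS_\lambda$ occurs with multiplicity one in $\bigotimes_i\bS_{(a_i+1,1^{b_i})}$. The composite projection kills every quadratic relation $R_{(a_i,b_i),(a_j,b_j)}$, since the projection onto $\bS_\lambda$ factors through the projection of each pair onto the Schur functor of its Frobenius union. Making this factorization precise needs an associativity or compatibility check of Cartan-type products, which I would do via highest weight vectors. Hence $\bS_\lambda(V)$ is a quotient of the $\lambda$-piece of $\cH(V)$.

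For the upper bound, I would fix a torus basis of each hook summand by semistandard hook fillings. I would then order monomials, i.e.\ assembled fillings of $\lambda$ by hooks, with the term order of \S\ref{sec:term-orders}. The key claim is that for every pair of nested hooks whose assembled filling is not semistandard, the relation space $R_{(a,b),(c,d)}$ contains an element with that tabloid as leading term. These relations would be produced from row and column Pieri maps, solving the resulting triangular elimination systems. Granting this, every monomial reduces modulo quadratic relations to a combination of tabloids that are pairwise semistandard.

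The next step is a combinatorial check: a filling of $\lambda$ that is semistandard on every pair of consecutive or nested hooks is semistandard overall. This holds because the row and column conditions only involve adjacent cells, and adjacent cells lie in at most two hooks. So the standard monomials number at most the count of SSYT of shape $\lambda$, which is $\dim\bS_\lambda(V)$. Combined with the lower bound this gives equality, proving (1). It also shows that the quadratic relations form a Gr\"obner basis, because no further standard monomials can be cut out. Koszulity (2) then follows from the standard fact that a commutative algebra with a quadratic Gr\"obner basis is Koszul; the rank grading is the grading in which the generators have degree one.

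The main obstacle is the key claim in the upper bound: constructing, for every nonsemistandard quadratic hook tabloid, a relation with that tabloid as leading term. This requires explicit formulas for the Pieri maps and a proof that the elimination systems have unitriangular solutions, uniformly across all arm and leg lengths. I would first treat the case $(c,d)=(0,0)$, then induct on $c+d$ using factorizations of Pieri maps.
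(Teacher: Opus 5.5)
Your global architecture is the same as the paper's. It consists of the hook-multigraded splitting $\cH(V)=\bigoplus_\lambda\cH_\lambda(V)$, the lower bound from the iterated Frobenius-union projection, spanning by semistandard tabloids via quadratic straightening in the running order, the Hilbert-function comparison giving $\operatorname{in}_\prec(I)=J_{\mathrm{nss}}$, and Koszulity from the quadratic Gr\"obner basis. For the lower bound you need no associativity or highest-weight check: $\bS_\lambda$ has multiplicity one both in $\bigotimes_t\bS_{H_t}(V)$ and in $\bS_{H_i\sqcup H_j}(V)\otimes\bigotimes_{t\ne i,j}\bS_{H_t}(V)$. So the $\lambda$-isotypic part lies in the latter summand, and $R_{h_i,h_j}\otimes\bigotimes_{t\ne i,j}\bS_{H_t}(V)$ projects to zero. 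The gap is that the only nontrivial step is granted. That step produces, for each nonsemistandard quadratic tabloid, a relation in $R_{(a,b),(c,d)}$ with that tabloid as $\prec$-leading term, and it is the whole content of the theorem. Your sketch for it (``unitriangular'' elimination, base case $(c,d)=(0,0)$, induction on $c+d$) does not identify what makes it work.

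Three ingredients are missing.

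\emph{Membership in $R$.} You need a reason the Pieri-built elements lie in $R$. The paper uses the composites $\Psi_{m,r},\Psi'_{m,r}$ and a Littlewood--Richardson count showing that the target Schur functor does not occur in their source once $c-r\ge a-m\ge1$ (resp.\ $d-r\ge b-m\ge0$); this is Proposition~\ref{prop:relations}. The first failure index $k$ dictates the choice: $\Psi_{a-k,c-k}$ when a column fails to increase strictly, and $\Psi'_{b-k+1,d-k}$ when a row fails to increase weakly. The latter needs two coupled elimination systems.

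\emph{Invertibility.} The elimination matrices are dense, not triangular; see Example~\ref{ex:col-obstruction-straighten}. After normalization they equal $\mathrm{id}\pm\frac1d L$, where $L$ is a shifted principal submatrix of the derivation action $\Gamma_t^{\wedge k}$ of the rank-one perturbation $u_t\otimes\mathbf 1-\mathrm{id}$. Invertibility comes from computing its two eigenvalues. The row matrix is genuinely singular when $m=a-1$, $J_0=K_0$ and $\supp(\tau_0)\not\subseteq J_0$ (Theorem~\ref{thm:row-elimination-invertible}). The data must therefore be chosen to avoid this: $\tau_0$ the $m$ largest arm labels, $K_0=\{i\in J_0:i>\min\supp\tau_0\}$, and $j_1\le i_1$ when $k=1$.

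\emph{Triangularity.} The surviving terms outside the swap set must be shown to be $\strun$-smaller. This needs a case analysis with an exact cancellation ($u_C\wedge U_C(\omega)=0$ in Lemma~\ref{lem:Omega-elimination-lower}) and the stability Lemmas~\ref{lem:single-hook-insertion} and~\ref{lem:rQ-monotone}.

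An induction on $c+d$ does not sidestep any of this. Relations of hook multidegree $e_{(a,b)}+e_{(c,d)}$ come only from $R_{(a,b),(c,d)}$ itself. You would still need maps carrying smaller relation spaces into it with controlled leading terms, and that is exactly what the paper's direct construction supplies.
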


We first prove part~(1) in \S\ref{sec:term-orders}; its Hilbert function
consequence supplies the comparison in Theorem~\ref{thm:quad-gb}, whose
quadratic Gr\"obner basis implies part~(2). A complementary cohomological
description of the relation spaces will appear in
\cite{SamVandeBogertWeymanGiambelli}.

\begin{remark}
This construction also applies to symmetric group representations by a simple formal transformation. Namely, for $S_n$, we take $V=\bk^n$ and work exclusively with the weight spaces of weight $(1,1,\dots,1)$ with respect to the diagonal torus.
\end{remark}

\subsection{Term orders} \label{sec:term-orders}

We construct a Gr\"obner degeneration of \(\cH(V)\) to a quadratic monomial
algebra. Call a semistandard Young tableau of hook shape a
\defi{semistandard hook tableau}. Fix an ordered basis of \(V\), and for such
a tableau \(S\), let \(z_S\) denote the corresponding Schur functor basis
vector.

A monomial basis of \(\mathbb H(V)\) consists of products
\(z_T=z_{T_1}\cdots z_{T_r}\), where \(T_i\) is a semistandard tableau of
shape \((a_i+1,1^{b_i})\) and $(a_1,b_1)>\cdots>(a_r,b_r)$.
We call \(T=(T_1,\dots,T_r)\) a \defi{hook tabloid}. Equivalently, it is a
filling of the diagram with Frobenius coordinates
\((a_1,\dots,a_r\mid b_1,\dots,b_r)\) whose individual diagonal hooks are
semistandard, with no condition between distinct hooks.

For a semistandard hook tableau \(S\) of shape \((a+1,1^b)\), write
\(z_S=z^{(a,b)}_{I,\alpha}\), where \(I=\{i_0<\cdots<i_b\}\) is its first
column and \(\alpha\) is the weakly increasing word (possibly empty) of its remaining arm entries; all of them are \(\ge i_0\).

A hook tabloid is \defi{semistandard} if this filling is a semistandard
Young tableau, and \defi{nonsemistandard} otherwise.

\begin{theorem} \label{thm:quad-gb}
There is a term order on \(\mathbb H(V)\) for which \(I\) has a quadratic
Gr\"obner basis whose initial ideal is generated by the nonsemistandard
quadratic hook tabloids. Consequently, the semistandard hook tabloids form a
\(\bk\)-basis of \(\cH(V)\).
\end{theorem}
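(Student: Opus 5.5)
The plan is the standard Gr\"obner/standard-monomial argument, with Theorem~\ref{thm:main-hook}(1) supplying the dimension count. We work in the monomial basis of \(\mathbb H(V)\) consisting of hook tabloids \(z_T\) with strictly decreasing hook types. This is a polynomial-type algebra: commutative monomials in the variables \(z^{(a,b)}_{I,\alpha}\), modulo products of incomparable types. On it we fix a term order refining the rank grading. Concretely, we order the variables \(z_S\) by reading words of \(S\): a hook of larger type comes first, and within a type we compare the first columns \(I\) and then the arm words \(\alpha\) lexicographically. Monomials are then ordered degree-lexicographically in this variable order. The order is designed so that, for a comparable pair \((a,b)>(c,d)\), straightening a nonsemistandard quadratic tabloid \(z_{T_1}z_{T_2}\) only produces tabloids that are lexicographically smaller in the assembled filling.

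First, we show that for each comparable pair the relation space \(R_{(a,b),(c,d)}\) contains, for every nonsemistandard quadratic tabloid \(T=(T_1,T_2)\), an element with leading term \(z_T\). The dimension count is consistent with this. By Littlewood--Richardson, \(\dim R_{(a,b),(c,d)}\) equals \(\dim(\bS_{(a+1,1^b)}\otimes\bS_{(c+1,1^d)})-\dim\bS_\lambda\). The number of all quadratic tabloids of these types minus the semistandard ones is the same number, since semistandard fillings of \(\lambda\) index a basis of \(\bS_\lambda(V)\) and every SSYT of shape \(\lambda\) decomposes into its diagonal hooks. So it suffices to show that the leading terms of \(R_{(a,b),(c,d)}\) are exactly the nonsemistandard tabloids. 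Equivalently, the semistandard quadratic tabloids must map to a basis of \(\bS_\lambda(V)\) under the projection; triangularity then gives linear independence. To produce the relations we write down explicit equivariant maps into \(\bS_{(a+1,1^b)}\otimes\bS_{(c+1,1^d)}\) whose images lie in the kernel of the projection. These are row Pieri maps (moving arm boxes) and column Pieri maps (moving leg boxes), built from \(\bigwedge\) and \(\Sym\) comultiplications, as announced for \S\S\ref{sec:hook-algebra}--\ref{sec:elim-systems}. For each type of semistandardness violation between the inner and outer hook we take a suitable combination of these images. The possible violations are a row descent at the junction of arms, a column non-increase at the junction of legs, and a corner violation. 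We then solve the resulting elimination system so that the top term is the offending tabloid with coefficient \(\pm1\) and all other terms are smaller.

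\textbf{This triangularity step is the main obstacle.} The Pieri images are sums over many tabloids. Showing that a single combination isolates the given nonsemistandard tabloid as its leading term requires a careful case analysis of where the first violation occurs, and proving that the elimination system is solvable (the relevant matrix is unitriangular). We would first try the case \(d=0\) or \(c=0\), where one hook is a row or a column, and then reduce the general case by a column-then-row Pieri composition.

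Once the quadratic leading terms are established, the initial ideal of \(I\) contains the ideal \(J\) generated by all nonsemistandard quadratic tabloids, together with the incomparable products already killed in \(\mathbb H(V)\). Since semistandardness of a tabloid is a local condition on consecutive hooks (adjacent diagonal hooks in the assembled filling), the monomials outside \(J\) are exactly the semistandard hook tabloids. These span \(\cH(V)\), so \(\dim\cH(V)_{(d,n)}\) is at most the number of SSYT of shapes \(\lambda\) with rank \(d\) and size \(n\). Theorem~\ref{thm:main-hook}(1) gives equality with \(\sum\dim\bS_\lambda(V)\). Hence \(\operatorname{in}(I)=J\) degreewise, the quadratic relations form a Gr\"obner basis, and the semistandard hook tabloids form a basis.
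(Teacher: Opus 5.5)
Your endgame is correct, and it is the paper's own argument in Proposition~\ref{prop:quadratic-grobner-hook}. If every nonsemistandard quadratic tabloid is the leading term of an element of \(R_{(a,b),(c,d)}\), then \(J_{\mathrm{nss}}\subseteq\operatorname{in}(I)\), and the Hilbert-function comparison with Theorem~\ref{thm:main-hook}(1) forces equality. Your rank-two count of \(\dim R_{(a,b),(c,d)}\) is also right.

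There are two small slips.
\begin{enumerate}
\item ``The semistandard tabloids map to a basis of \(\bS_\lambda(V)\)'' is necessary for, but not equivalent to, ``the leading terms of \(R\) are the nonsemistandard tabloids.'' Which monomials are standard depends on the order, and only triangularity gives the second statement.
\item A monomial lies in \(J_{\mathrm{nss}}\) as soon as any pair of its hooks, adjacent or not, is nonsemistandard. So you also need that semistandardness of the whole filling passes to non-adjacent pairs. It does, by chaining the inequalities through the intermediate hooks.
\end{enumerate}

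The genuine gap is the step you yourself call the main obstacle. Nothing in the proposal produces, for a nonsemistandard quadratic tabloid, a relation whose leading term in your order is that tabloid. The claim that your order ``is designed so that'' straightening only lowers tabloids is unsupported. This step is the entire content of \S\S\ref{sec:hook-schur}--\ref{sec:elim-systems}, and your sketch of it is off in three ways that matter.

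\textbf{The elimination matrices are not unitriangular in any ordering.} The Pieri maps exchange arm labels with column or exterior labels in both directions. On a swap set one therefore gets, after diagonal rescaling, the identity plus a multiple of a shifted principal submatrix of \(\Gamma_t^{\wedge k}\); see the dense matrix of Example~\ref{ex:col-obstruction-straighten}.
\begin{itemize}
\item Invertibility comes from the spectral computation of Proposition~\ref{prop:principal-sub}.
\item In the row case invertibility genuinely fails when \(m=a-1\), \(J_0=K_0\), and \(\supp(\tau_0)\not\subseteq J_0\) (Theorem~\ref{thm:row-elimination-invertible}).
\item This is why the swap set must be chosen with \(K_0=\{i\in J_0:i>\min\supp\tau_0\}\) and with \(m=a-k\) tied to the first failure index.
\end{itemize}

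\textbf{The general case is not a reduction to one-row or one-column hooks.} Row-direction failures need two elimination systems coupled through \(\Psi'_{m,r}\) (Lemma~\ref{lem:triple-two-stage-elim}).

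\textbf{The paper's error control does not transfer to your order.} After the systems are solved, the leftover terms are the indices outside the swap set and the terms re-straightened inside a factor via \eqref{eq:d-model-straightening}. These are shown to be smaller only in the running order of Definition~\ref{def:running-order}, that is, by coordinatewise dominance of cumulative contents at the last differing running index.
\begin{itemize}
\item In Case~1 of Lemma~\ref{lem:Omega-elimination-lower}, the comparison is decided in the inner hook's block.
\item For the terms \(A'\neq A\) in Lemma~\ref{lem:triple-two-stage-elim}, the comparison is made either in the inner block or through the total content of the outer hook alone.
\end{itemize}
None of this controls the lexicographic position of the outer hook's first column and arm, which is exactly what your degree-lex order compares first. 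So you cannot import the paper's relations, and you give no substitute.

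To close the gap you must do one of two things:
\begin{itemize}
\item prove triangularity of Pieri-based relations directly for your order; or
\item adopt the running order, use Propositions~\ref{prop:rank2-local-row-elimination-ijk} and~\ref{prop:rank2-local-column-elimination} (with Proposition~\ref{prop:relations} placing their images in \(R\)), and refine it to a term order as in Proposition~\ref{prop:global-tabloid-order}.
\end{itemize}
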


\begin{remark}
  The algebra $\mathbb{H}(V)$ behaves like an artinian analogue of an algebra with straightening law (ASL). The key point is that a basis for an ASL is given by products of generators $z_{i_1} \cdots z_{i_r}$ indexed by {\it weakly} decreasing chains: $z_{i_1} \ge \cdots \ge z_{i_r}$. We don't allow generators to be duplicated, so we only consider products over {\it strictly} decreasing chains.
\end{remark}

\begin{remark}\label{rmk:gorodentsov}
A related Koszulity theorem for finite nested hook collections is asserted in
the appendix to \cite{GorodentsovKhoroshkinRudakov2007}, but the proof has a significant mistake, which we illustrate with an example. For \(\dim V=3\), consider the Frobenius union projection
\[
\mu\colon \bS_{(3,1)}(V)\otimes V\twoheadrightarrow\bS_{(3,2)}(V).
\]
For
\[
\ytableausetup{boxsize=1.1em}
T=\ytableaushort{123,2},
\qquad
D=\ytableaushort{123,22},
\qquad
T'=\ytableaushort{122,23},
\]
the outer hook \(T\) and inner entry \(2\) assemble to \(D\). Its rows are
weakly increasing, but its second column is \((2,2)\), so classical column
straightening gives zero and the proposed lift in \cite[\S5.2.2, case~(B)]{GorodentsovKhoroshkinRudakov2007} predicts
\(\mu(z_T\otimes e_2)=0\). Writing \(v_{T'}\) for the corresponding
semistandard tableau basis vector and normalizing \(\mu\) on a semistandard
reference lift,
a \texttt{Macaulay2}~\cite{M2} computation with
\texttt{HookPieri}~\cite{HookPieri} instead gives
\[
\mu(z_T\otimes e_2)=-\tfrac12v_{T'}\ne0. \qedhere
\]
\end{remark}

To orient the straightening relations, we compare the cumulative contents of
successive initial segments of the diagonal hooks.

\begin{definition}[Running order]\label{def:running-order}
Let \(T=(T_1,\dots,T_r)\) be a hook tabloid of type
\(\mathbf h=\bigl((a_1,b_1),\dots,(a_r,b_r)\bigr)\), where \(T_t\)
has shape \((a_t{+}1,1^{b_t})\).

For a semistandard hook tableau~\(H\) of shape \((A{+}1,1^B)\) and \(0\le s\le A\),
let \(L_s(H)\) consist of its first column and first~\(s\) arm boxes, and set
\[
\pi_s(H):=
\bigl(\#\{u\in L_s(H):H(u)\le q\}\bigr)_{q=1}^{\dim V},
\qquad
\Pi(H):=\pi_A(H).
\]

For each \(1\le t\le r\) and \(0\le s\le a_t\), define the
\defi{running vector}
\[
V_{t,s}(T)
:=
\Pi(T_1)+\cdots+\Pi(T_{t-1})+\pi_s(T_t).
\]
Order the index set
\(\mathcal I_{\mathbf h}:=\{(t,s):1\le t\le r,\;0\le s\le a_t\}\)
lexicographically. Its elements are called \defi{running indices}, and
\(\{(t,s):0\le s\le a_t\}\) is the \(t\)th \defi{block}. For a box~\(u\)
in the \(t\)th hook, define
\[
\sigma(u):=
\begin{cases}
(t,0),&u\text{ lies in the first column},\\
(t,s),&u\text{ is the $s$th arm box from the left in the $t$th hook}.
\end{cases}
\]
For two tabloids \(T,U\) of type \(\mathbf h\), set
\(T\run U\) if all their running vectors agree, or if, at the largest pair
\((t_0,s_0)\) for which they differ,
\[
V_{t_0,s_0}(U)\trianglelefteq_{\mathrm{vec}}V_{t_0,s_0}(T),
\]
where \(\mathbf u\trianglelefteq_{\mathrm{vec}}\mathbf v\) means
\(u_q\le v_q\) for every~\(q\).
\end{definition}

For each fixed type, \(\run\) is a partial order. Indeed, it is the reverse
lexicographic extension of~\(\trianglelefteq_{\mathrm{vec}}\) on the running
vector sequences, and these sequences determine the tabloid:
\(V_{1,0}(T)\) and the differences
\(V_{t,0}(T)-V_{t-1,a_{t-1}}(T)\), for \(t>1\), determine the semistandard
first columns, while the differences \(V_{t,s}(T)-V_{t,s-1}(T)\), for \(s>0\),
determine the arm entries. Write \(T\strun U\) for its strict part:
\(T\run U\) and \(T\ne U\).

Thus the comparison inspects the running indices in decreasing
lexicographic order and regards the tabloid with the coordinatewise larger
vector as smaller. This orientation matches the triangularity below and is
stable under insertion
(Lemma~\ref{lem:single-hook-insertion}).

\begin{proposition} \label{prop:swap-small}
Suppose \(T,U\) are hook tabloids of the same type. Assume that \(T\) is
obtained from \(U\) by swapping entries \(x<y\) in boxes \(u,v\), where
\(U(u)=y\), \(U(v)=x\), and
\(\sigma(u)<_{\mathrm{lex}}\sigma(v)\). Then \(T\strun U\).
\end{proposition}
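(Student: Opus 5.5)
We compute the running vectors of \(T\) and \(U\) directly and locate the largest running index at which they differ. Write \(p:=\sigma(u)\) and \(p':=\sigma(v)\), so that \(p<_{\mathrm{lex}}p'\). Each running vector \(V_{t,s}\) is the cumulative content vector of the set of boxes \(B_{t,s}:=T_1\cup\dots\cup T_{t-1}\cup L_s(T_t)\). This set is the same for \(T\) and \(U\), because the two tabloids have the same type.

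A box \(w\) lies in \(B_{t,s}\) exactly when \(\sigma(w)\le_{\mathrm{lex}}(t,s)\). For the first column this holds because \(L_s\) always contains the whole first column; for arm boxes it holds by the definition of \(L_s\). The two tabloids agree outside \(\{u,v\}\), so for a running index \((t,s)\) there are three cases.
\begin{itemize}
\item If \((t,s)\ge_{\mathrm{lex}}p'\), then both \(u\) and \(v\) lie in \(B_{t,s}\). The multiset of entries is unchanged by the swap, so \(V_{t,s}(T)=V_{t,s}(U)\).
\item If \((t,s)<_{\mathrm{lex}}p\), then neither box lies in \(B_{t,s}\), and again the running vectors agree.
\item If \(p\le_{\mathrm{lex}}(t,s)<_{\mathrm{lex}}p'\), then \(u\in B_{t,s}\) and \(v\notin B_{t,s}\).
\end{itemize}

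In the third case the box \(u\) contributes \(U(u)=y\) to \(U\) and \(T(u)=x\) to \(T\). Hence
\[
V_{t,s}(T)-V_{t,s}(U)=\bigl(\mathbf 1[x\le q]-\mathbf 1[y\le q]\bigr)_{q=1}^{\dim V}.
\]
Since \(x<y\), each coordinate of this difference is \(0\) or \(1\). It equals \(1\) exactly for \(x\le q<y\), so the difference is nonzero.

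The interval \([p,p')\) in \(\mathcal I_{\mathbf h}\) is nonempty, since it contains \(p\), and \(p\) is itself a running index. The largest differing index \((t_0,s_0)\) is therefore the lexicographic predecessor of \(p'\) inside \(\mathcal I_{\mathbf h}\). At that index \(V_{t_0,s_0}(U)\trianglelefteq_{\mathrm{vec}}V_{t_0,s_0}(T)\), which gives \(T\run U\). The running vectors differ there, so \(T\neq U\), and hence \(T\strun U\).

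The only step needing care is the membership criterion \(w\in B_{t,s}\iff\sigma(w)\le_{\mathrm{lex}}(t,s)\), in particular for first-column boxes, which have \(\sigma=(t,0)\). The rest is bookkeeping. Note that the argument never uses semistandardness of \(T\); it only uses that \(T\) is a tabloid of the same type, so that the running vectors are defined.
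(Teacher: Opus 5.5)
Your proof is correct and is essentially the paper's argument: the running vectors differ exactly at the running indices in the half-open interval \([\sigma(u),\sigma(v))\), and at the largest such index the difference \(V_{t,s}(T)-V_{t,s}(U)\) is the indicator vector of \(x\le q<y\). Your membership criterion \(w\in B_{t,s}\iff\sigma(w)\le_{\mathrm{lex}}(t,s)\) and your three-case split spell out what the paper's one-line proof leaves implicit.
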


\begin{proof}
The running vectors differ only at indices between \(\sigma(u)\) and
\(\sigma(v)\). At the largest such index, the set of boxes counted by the
corresponding running vector contains \(x\) in \(T\) and \(y\) in \(U\), so
\[
V_{t,s}(T)_q-V_{t,s}(U)_q=
\begin{cases}
1,&x\le q<y,\\
0,&\text{otherwise}.
\end{cases}
\]
Thus \(V_{t,s}(U)\trianglelefteq_{\mathrm{vec}}V_{t,s}(T)\); since \(x<y\),
we have \(T\strun U\).
\end{proof}

\begin{example}
  Diagrammatically, the successive regions whose content is tracked by the
running vectors can be visualized as follows (shown for $\lambda=(3,2)$ with two diagonal hooks of types $(2,1),(0,0)$; the yellow region is the increment at each step, the blue region is the previously accumulated content):
\[
\makebox[\textwidth][c]{%
\begin{tikzpicture}[x=.50cm,y=.50cm]

\HookPanelThreeTwo{0}{0}
{$(1,0)$}
{}
{1/1,1/2}

\HookPanelThreeTwo{4.7}{0}
{$(1,1)$}
{1/1,1/2}
{2/1}

\HookPanelThreeTwo{9.4}{0}
{$(1,2)$}
{1/1,1/2,2/1}
{3/1}

\HookPanelThreeTwo{14.1}{0}
{$(2,0)$}
{1/1,1/2,2/1,3/1}
{2/2}

\draw[-{Latex[length=1.6mm]},thick] (3.55,-1) -- (4.15,-1);
\draw[-{Latex[length=1.6mm]},thick] (8.25,-1) -- (8.85,-1);
\draw[-{Latex[length=1.6mm]},thick] (12.95,-1) -- (13.55,-1);

\end{tikzpicture}%
}
\]

\begin{samepage}
For the semistandard hook tabloids
\[
T=\ytableaushort{112,23},
\qquad
U=\ytableaushort{113,22},
\]
of type \(((2,1),(0,0))\), the running vectors in coordinates
\(q=1,2,3\) are
\[
\begin{array}{r|c|c|c|c}
& V_{1,0} & V_{1,1} & V_{1,2} & V_{2,0} \\\hline
T & (1,2,2) & (2,3,3) & (2,4,4) & (2,4,5) \\
U & (1,2,2) & (2,3,3) & (2,3,4) & (2,4,5)
\end{array}
\]
Their last disagreement is at \((1,2)\), where
\(V_{1,2}(U)\trianglelefteq_{\mathrm{vec}}V_{1,2}(T)\); hence
\(T\run U\).
\end{samepage}
\end{example}

\subsection{From straightening to Koszulity}

Retain the presentation \(\mathcal H(V)=\mathbb H(V)/I\) from
\S\ref{ssec:universal-hook-algebra}.
The straightening argument has three ingredients.  Section~\ref{sec:hook-schur}
constructs explicit row and column Pieri maps in the two hook models;
Section~\ref{sec:inv-matrices} establishes invertibility of the required
elimination matrices;
and Section~\ref{sec:elim-systems} uses these results to straighten the first
failure of semistandardness.  The resulting relations in rank two are recorded in
Propositions~\ref{prop:rank2-local-row-elimination-ijk}
and~\ref{prop:rank2-local-column-elimination}.

\begin{proof}[Proof of Theorem~\ref{thm:main-hook}(1)]
Write
\(\lambda\leftrightarrow(a_1,\dots,a_r\mid b_1,\dots,b_r)\)
in Frobenius coordinates.  Put
\(h_t=(a_t,b_t)\), \(H_t=(a_t+1,1^{b_t})\), and
\(\mathbf h=(h_1>\cdots>h_r)\).  Consider the multiplication map
of~\(\cH(V)\),
\[
m_\lambda\colon \bS_{H_1}(V)\otimes\cdots\otimes
\bS_{H_r}(V)\longrightarrow \cH(V).
\]
The presentation in \S\ref{ssec:universal-hook-algebra} is homogeneous for the
multigrading by hook type, giving
\(\cH(V)=\bigoplus_\lambda\cH_\lambda(V)\) where
\(\cH_\lambda(V):=\im(m_\lambda)\) is spanned by the hook tabloids of
type~\(\mathbf h\).

\emph{Spanning.} The row and column elimination propositions
(Propositions~\ref{prop:rank2-local-row-elimination-ijk}
and~\ref{prop:rank2-local-column-elimination}), together with
Corollary~\ref{cor:mult-cancel}, show that, for every nonsemistandard hook
tabloid~\(z_T\) of type~\(\mathbf h\), there is a
relation
\[
z_T\equiv\sum_{T'\strun T}c_{T'}\,z_{T'}\pmod I,
\]
where the sum is over tabloids of the same type and content as~\(T\).
There are only finitely many tabloids of fixed type and content, so iteration
terminates at semistandard tabloids. These span~\(\cH_\lambda(V)\), so in particular, $\dim \cH_\lambda(V) \le \dim \bS_\lambda(V)$.

On the other hand, let \(c_\lambda\colon\bS_{H_1}(V)\otimes\cdots\otimes
\bS_{H_r}(V)\twoheadrightarrow\bS_\lambda(V)\) be the iterated
Frobenius union projection, where
\(\lambda=H_1\sqcup\cdots\sqcup H_r\).  The Littlewood--Richardson rule
gives multiplicity one for \(\bS_\lambda(V)\) both in the displayed tensor
product and after any pair \(H_i,H_j\) is replaced by
\(H_i\sqcup H_j\). For fixed \(i<j\), it follows that
\(\bS_\lambda(V)\) occurs only through the summand
\[
\bS_{H_i\sqcup H_j}(V)\subseteq
\bS_{H_i}(V)\otimes\bS_{H_j}(V).
\]
Thus \(c_\lambda\) annihilates
\[
R_{h_i,h_j}\otimes\!\bigotimes_{t\ne i,j}\bS_{H_t}(V).
\]
Hence the component of type~\(\mathbf h\),
\(I_{\mathbf h}:=I\cap\mathbb H(V)_{\mathbf h}\), lies in
\(\ker c_\lambda\), and \(c_\lambda\) induces a surjection
\(\cH_\lambda(V)\twoheadrightarrow\bS_\lambda(V)\). The previous bound implies this is an isomorphism.
\end{proof}

\begin{lemma}[Insertion invariance]\label{lem:single-hook-insertion}
Let \(T,U\) be hook tabloids of common type
\(\mathbf h=(h_1>\cdots>h_r)\), where \(h_t=(a_t,b_t)\), and let \(G\) be a
semistandard hook tableau insertable at position~\(p\), where
\(1\le p\le r+1\). If \(h_G\) is the hook type of~\(G\), this means
\[
\begin{cases}
h_G>h_1,&p=1,\\
h_{p-1}>h_G>h_p,&2\le p\le r,\\
h_r>h_G,&p=r+1.
\end{cases}
\]
Write \(G\star_pT\) for the tabloid with~\(G\) inserted.
Then
\[
T\run U\quad\Longleftrightarrow\quad G\star_pT\run G\star_pU.
\]
\end{lemma}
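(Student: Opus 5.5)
We compare the running vector sequences of \(G\star_pT\) and \(G\star_pU\) directly with those of \(T\) and \(U\). The new tabloid has type \(\mathbf h'\) obtained by inserting \(h_G\) at position~\(p\). Its running index set \(\mathcal I_{\mathbf h'}\) splits into three consecutive lexicographic pieces: the blocks \(1,\dots,p-1\), which coincide with the first \(p-1\) blocks of \(\mathcal I_{\mathbf h}\); the new \(p\)th block \(\{(p,s):0\le s\le a_G\}\); and the blocks \(p+1,\dots,r+1\), which correspond to blocks \(p,\dots,r\) of \(\mathcal I_{\mathbf h}\) via \((t,s)\mapsto(t-1,s)\).

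The first step is to compute the running vectors of \(G\star_pT\) on each piece. From Definition~\ref{def:running-order} we get
\[
V_{t,s}(G\star_pT)=
\begin{cases}
V_{t,s}(T),&t<p,\\
W+\pi_s(G),&t=p,\\
V_{t-1,s}(T)+\Pi(G),&t>p,
\end{cases}
\]
where \(W=\Pi(T_1)+\cdots+\Pi(T_{p-1})\) (with \(W=0\) if \(p=1\)). The same formulas hold for \(U\), with \(W'=\Pi(U_1)+\cdots+\Pi(U_{p-1})\) in place of \(W\).

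The key point is the middle block. The two tabloids \(G\star_pT\) and \(G\star_pU\) differ on the \(p\)th block exactly when \(W\ne W'\). But \(W=V_{p-1,a_{p-1}}(T)\) and \(W'=V_{p-1,a_{p-1}}(U)\), which is the last running index of block \(p-1\) and is already compared. So the middle block introduces no new disagreement beyond one at \((p-1,a_{p-1})\). Moreover, it has the same vector comparison as that disagreement, since \(W+\pi_s(G)\) and \(W'+\pi_s(G)\) differ by \(W-W'\). Hence \(\trianglelefteq_{\mathrm{vec}}\) is preserved, because it is invariant under adding a common vector.

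Now do the case analysis on the largest disagreement index \((t_0,s_0)\) for \((T,U)\).
\begin{itemize}
\item If the running vectors all agree, then \(T=U\) and both sides hold trivially.
\item If \(t_0\ge p\), the largest disagreement of the inserted pair is at \((t_0+1,s_0)\). The vectors there are shifted by the common \(\Pi(G)\), so the comparison is unchanged.
\item If \(t_0<p\), then everything in blocks \(\ge p\) of \(T\) and \(U\) agrees, so the vectors in blocks \(>p\) of the inserted pair agree as well.
\begin{itemize}
\item If \(W\ne W'\), then \((t_0,s_0)=(p-1,a_{p-1})\). The largest disagreement of the inserted pair is \((p,a_G)\), with difference \(W-W'\), giving the same verdict.
\item If \(W=W'\), the middle block agrees. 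The largest disagreement is then \((t_0,s_0)\) itself, with identical vectors.
\end{itemize}
\end{itemize}
The converse direction follows from the same correspondence of largest disagreements, read backwards. This works because the map from disagreement positions is a bijection preserving both the comparison and its outcome.

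The main obstacle is bookkeeping the case \(t_0<p\) with \(W\ne W'\), where the largest disagreement moves from \((p-1,a_{p-1})\) to the new index \((p,a_G)\). One must also check the edge cases \(p=1\), where \(W=W'=0\) and the middle block always agrees, and \(p=r+1\), where there are no later blocks. Neither edge case affects the argument.
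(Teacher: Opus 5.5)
Your proposal is correct and follows the paper's proof. The paper derives the same three-case formula for \(V_{t,s}(G\star_pT)\), with \(E_p(T)=V_{p-1,a_{p-1}}(T)\) playing the role of your \(W\), and observes that insertion only adds a block of copies of \(E_p(T)-E_p(U)\) to the difference sequence, so the last nonzero difference may shift but keeps both its value and its \(\trianglelefteq_{\mathrm{vec}}\)-comparison; your case analysis on \((t_0,s_0)\) is an expanded form of that observation.
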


\begin{proof}
Set
$E_p(X):=V_{p-1,a_{p-1}}(X)$ if $p>1$ and 0 if $p=1$.
With \(\pi_s\) and \(\Pi\) as in Definition~\ref{def:running-order}, we have
\[
V_{t,s}(G\star_pT)=
\begin{cases}
V_{t,s}(T),&t<p,\\
E_p(T)+\pi_s(G),&t=p,\\
V_{t-1,s}(T)+\Pi(G),&t>p.
\end{cases}
\]
and so
\[
V_{t,s}(G\star_pT)-V_{t,s}(G\star_pU) = 
\begin{cases}
  V_{t,s}(T)-V_{t,s}(U) 
  ,&t<p,\\
  E_p(T)-E_p(U) 
  ,&t=p,\\
V_{t-1,s}(T)-V_{t-1,s}(U)
,&t>p.
\end{cases} 
\]
Thus insertion adds a block of copies of~\(E_p(T)-E_p(U)\) to the difference
sequence. The last nonzero difference
may shift, but neither its value nor its comparison under
\(\trianglelefteq_{\mathrm{vec}}\) changes, proving the equivalence.
\end{proof}

\begin{corollary}[Multiplicativity and cancellation]\label{cor:mult-cancel}
If \(TZ\) and \(UZ\) are both defined, then
\(T\run U\Longleftrightarrow TZ\run UZ\).
\end{corollary}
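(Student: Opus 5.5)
The plan is to deduce the corollary from Lemma~\ref{lem:single-hook-insertion} by induction on the number of hooks in \(Z\). Here \(TZ\) denotes the hook tabloid obtained by adjoining the hooks of \(Z=(Z_1,\dots,Z_m)\) to those of \(T\), and \(T,U\) have a common type \(\mathbf h\) (otherwise \(\run\) is not defined). That \(TZ\) is defined means each hook type of \(Z\) is strictly comparable with every hook type of \(\mathbf h\), and the combined types are pairwise strictly comparable. Hence the combined collection is a chain in the strict product order, and the insertion positions of the \(Z_j\) depend only on the types \(\mathbf h\) and \(h_{Z_j}\). Since \(T\) and \(U\) share the type \(\mathbf h\), these positions are the same for \(TZ\) and \(UZ\).

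Fix an ordering of \(Z\) and insert one hook at a time. Set \(T^{(0)}=T\), \(U^{(0)}=U\), and let \(T^{(j)}=Z_j\star_{p_j}T^{(j-1)}\), where \(p_j\) is the position at which \(h_{Z_j}\) fits into the chain formed by \(\mathbf h\) together with \(h_{Z_1},\dots,h_{Z_{j-1}}\). Define \(U^{(j)}\) the same way, with the same \(p_j\). At each stage \(T^{(j-1)}\) and \(U^{(j-1)}\) have a common type, and \(Z_j\) is insertable at \(p_j\) in the sense of Lemma~\ref{lem:single-hook-insertion}. The required strict inequalities with the neighbours hold because the full collection is a chain.

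Apply Lemma~\ref{lem:single-hook-insertion} at each step to get \(T^{(j-1)}\run U^{(j-1)}\Longleftrightarrow T^{(j)}\run U^{(j)}\). Chaining these \(m\) equivalences gives \(T\run U\Longleftrightarrow TZ\run UZ\), because \(T^{(m)}=TZ\) and \(U^{(m)}=UZ\) as tabloids: a tabloid is determined by its set of hooks listed in decreasing type order, regardless of the order of insertion.

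The only point needing care is this bookkeeping: the insertion positions must be well defined and common to \(T\) and \(U\), and the final result must be independent of the insertion order. Both follow from the type-only dependence described in the first paragraph. The cancellation direction (\(\Leftarrow\)) uses the same chain of equivalences read backwards, so it requires no separate argument.
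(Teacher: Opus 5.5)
Your proof is correct and matches the paper's approach. The paper gives no separate proof and treats the corollary as an immediate iteration of Lemma~\ref{lem:single-hook-insertion}, inserting the hooks of \(Z\) one at a time. Your observations supply the bookkeeping that iteration needs: the insertion positions depend only on the common type of \(T\) and \(U\), and the final tabloid does not depend on the order of insertion.
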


The row straightening argument in Proposition~\ref{prop:rank2-local-row-elimination-ijk} also requires stability under adjoining common arm entries.

\begin{lemma}\label{lem:rQ-monotone}
Let \(T\run U\), fix a hook position~\(p\), and let
\(\delta\in\mathbb N^{\dim V}\).  Suppose that adjoining the multiset of
content~\(\delta\) to the \(p\)th arm of each tabloid and reordering the arms
produces hook tabloids \(T^+\) and~\(U^+\).  Then \(T^+\run U^+\).
\end{lemma}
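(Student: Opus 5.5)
The plan is to compute the running vectors of \(T^+\) and \(U^+\) explicitly in terms of those of \(T\) and \(U\), in the spirit of the proof of Lemma~\ref{lem:single-hook-insertion}, and then locate the last disagreement. Blocks \(t<p\) are untouched by the operation, so \(V_{t,s}(T^+)=V_{t,s}(T)\) there. For \(t>p\), every running vector acquires the same extra content, \(V_{t,s}(T^+)=V_{t,s}(T)+\delta\), and likewise for \(U\). Hence in those blocks the difference sequence \(V_{t,s}(T^+)-V_{t,s}(U^+)\) equals \(V_{t,s}(T)-V_{t,s}(U)\), and so does its comparison under \(\trianglelefteq_{\mathrm{vec}}\). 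The only block whose indexing and values actually change is the \(p\)th, whose arm length grows from \(a_p\) to \(a_p+|\delta|\).

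For block \(p\), I would use a closed formula. Write \(C_q(X)\) for the \(q\)th coordinate of \(E_p(X)\) plus the content of the first column of \(X_p\), and \(A_q(X)\) for the number of arm entries of \(X_p\) that are \(\le q\). Because the arm is weakly increasing, its first \(s\) boxes are its \(s\) smallest entries. This gives
\[
V_{p,s}(X)_q=C_q(X)+\min\bigl(s,A_q(X)\bigr),
\qquad
V_{p,s}(X^+)_q=C_q(X)+\min\bigl(s,A_q(X)+\delta_q\bigr),
\]
where \(X^+\) has the same \(C\). The argument then splits by the position \((t_0,s_0)\) of the last disagreement between \(T\) and \(U\).

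\emph{Case \(t_0>p\), or all running vectors agree.} The last disagreement of \(T^+,U^+\) sits at the corresponding index with the same difference vector, so \(T^+\run U^+\).

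\emph{Case \(t_0<p\).} Then \(T\) and \(U\) agree on all of block \(p\). Taking \(s=0\) gives \(C(T)=C(U)\), and the values \(s=1,\dots,a_p\) then determine the arm content, so \(A(T)=A(U)\). The displayed formula shows that \(T^+\) and \(U^+\) agree on block \(p\), and the blocks \(t>p\) agree as before. The last disagreement therefore stays at \((t_0,s_0)\) with unchanged vectors.

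\emph{Case \(t_0=p\).} This is the main obstacle. If \(s_0<a_p\), then agreement at \(s=a_p\) gives \(C(T)+A(T)=C(U)+A(U)\) coordinatewise. The hypothesis is \(C(U)+\min(s_0,A(U))\le C(T)+\min(s_0,A(T))\), together with equality for all \(s>s_0\). I would let \(s_0'\) be the largest \(s\le a_p+|\delta|\) at which the plus-vectors differ. The goal is to prove, coordinate by coordinate, that
\[
C_q(U)+\min\bigl(s_0',A_q(U)+\delta_q\bigr)\le C_q(T)+\min\bigl(s_0',A_q(T)+\delta_q\bigr).
\]
The first thing to try is to treat each coordinate \(q\) as a piecewise-linear function of \(s\) and compare the breakpoints. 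The quantity \(g_q(s)=\min(s,A_q(T)+\delta_q)-\min(s,A_q(U)+\delta_q)\) is the shift by \(\delta_q\) of the unshifted difference \(\min(s,A_q(T))-\min(s,A_q(U))\). Its sign is constant in \(s\): it is the sign of \(A_q(T)-A_q(U)\) wherever it is nonzero. Adding the constant \(C_q(T)-C_q(U)=-(A_q(T)-A_q(U))\) shows that each coordinate difference vanishes for large \(s\), and before that has a fixed sign determined by \(q\). I would then check that the sign pattern forced at \(s_0\) by the hypothesis persists at \(s_0'\), using that shifting breakpoints by \(\delta_q\) only moves the last nonzero index. In the subcase \(s_0=a_p\), the total contents already differ and the same comparison applies directly with \(s_0'=a_p+|\delta|\).
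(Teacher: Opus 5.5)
Your setup is sound: the reduction to block $p$, the formula $V_{p,s}(X)_q=C_q(X)+\min(s,A_q(X))$, and the cases $t_0\ne p$ and $s_0=a_p$ all go through.

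One correction first. Adjoining $\delta$ raises $A_q$ by the cumulative count $\bar\delta_q:=\sum_{r\le q}\delta_r$, not by the multiplicity $\delta_q$. As literally written, your formula for $V_{p,s}(X^+)_q$ is false, and the distinction matters below.

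The genuine gap is in the case $t_0=p$, $s_0<a_p$, which you call the main obstacle but do not resolve. The sentence saying that the sign pattern at $s_0$ persists at $s_0'$ because shifting breakpoints only moves the last nonzero index is not an argument. Each coordinate $q$ is shifted by its own amount $\bar\delta_q$. In the coordinatewise picture you describe (constant sign per coordinate, shifts treated independently), nothing stops a coordinate that is already zero at $s_0$, possibly of the wrong sign, from receiving a larger shift. It could then become the last nonzero coordinate of the plus-vectors. Excluding this is precisely the content of the lemma.

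What closes the gap is monotonicity in $q$.
\begin{itemize}
\item On the set $Q$ of labels with $A_q(T)\ne A_q(U)$, put $m_q:=\max(A_q(T),A_q(U))$.
\item Your own computation shows that coordinate $q$ of the difference is nonzero exactly for $s<m_q$ before adjoining, and for $s<m_q+\bar\delta_q$ after. Hence $s_0+1=\max_Q m_q$ and $s_0'+1=\max_Q(m_q+\bar\delta_q)$.
\item The hypothesis $T\run U$ says $A_q(U)>A_q(T)$ whenever $m_q=s_0+1$.
\item Both $q\mapsto m_q$ and $q\mapsto\bar\delta_q$ are weakly increasing over all labels. So if $m_q<m_{q_1}=s_0+1$, then $q<q_1$, hence $\bar\delta_q\le\bar\delta_{q_1}$ and $m_q+\bar\delta_q<m_{q_1}+\bar\delta_{q_1}$.
\item Therefore every coordinate nonzero at $s_0'$ was already nonzero at $s_0$, and it has the correct sign.
\end{itemize}

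The paper reaches the same conclusion without this coordinatewise bookkeeping. When $D_{a_p}=0$, it lets $k$ be the last position where the sorted arms $x$ (of $T$) and $y$ (of $U$) differ. The last nonzero difference is then $D_{k-1}$, which is $+1$ on $[y_k,x_k)$ or $-1$ on $[x_k,y_k)$. So $T\run U$ holds iff $y_k<x_k$. Equivalently, $T$'s arm has more copies of the largest label at which the two arm multisets differ, and this is visibly unchanged by adjoining a common multiset.
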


\begin{proof}
Differences in blocks before and after~\(p\) are unchanged.  Write
\(D_s:=V_{p,s}(T)-V_{p,s}(U)\), and let
\(x_1\le\cdots\le x_a\) and \(y_1\le\cdots\le y_a\) be the two arms.
If the largest block containing a nonzero difference lies after~\(p\), its
largest nonzero difference is unchanged. If it lies before~\(p\), every
difference in block~\(p\) vanishes. Thus
\(V_{p,0}(T)=V_{p,0}(U)\), and the two sorted arms coincide, so extension
introduces no difference in block~\(p\).

We may therefore assume that the
largest block containing a nonzero difference is~\(p\).
If \(D_a\ne0\), the final extended arm stage has the same difference, so
the comparison is unchanged.  Suppose \(D_a=0\).  If the arms coincide,
every difference in this block is zero, contrary to our assumption. Set \(k=\max\{i:x_i\ne y_i\}\). Cancellation along the common suffix gives
\[
D_k=0,
\qquad
(D_{k-1})_q=
\begin{cases}
1,&y_k\le q<x_k,\\
-1,&x_k\le q<y_k,\\
0,&\text{otherwise},
\end{cases}
\qquad 1\le q\le\dim V.
\]
Thus \(T\run U\) precisely when \(y_k<x_k\): the comparison is reverse
lexicographic on the sorted arms.  Adjoining a common multiset preserves
that comparison, since common multiplicities cancel at the largest label
where the two multisets differ.
\end{proof}

We now refine the running order to a term order compatible with
multiplication. For
each hook type introduce a symbol~\(X_{a,b}\), of weight \(a+b+1\), and
fix the weighted degree lexicographic monomial order
\(\prec_{\mathrm{type}}\) on the free commutative monoid they generate;
order variables first by weight and then by~\(a\).  For a tabloid~\(T\), set
\[
\tau(T):=\prod_{t=1}^rX_{a_t,b_t},
\]
where \(((a_1,b_1),\ldots,(a_r,b_r))\) is its type.
If \(T,U\) have the same type~\(\mathbf h\), declare
\(T\prec_{\mathbf h}U\) when, at the largest running index where they differ,
\[
V_{t,s}(T)>_{\mathrm{lex}}V_{t,s}(U).
\]
Finally, order the nonzero monomials of~\(\mathbb H(V)\) by
\begin{equation}\label{eq:global-tabloid-order}
T\prec U
\quad\Longleftrightarrow\quad
\tau(T)\prec_{\mathrm{type}}\tau(U)
\quad\text{or}\quad
\bigl(\tau(T)=\tau(U)\ \text{and}\ T\prec_{\mathbf h}U\bigr).
\end{equation}

\begin{proposition}\label{prop:global-tabloid-order}
The order~\(\prec\) is a term order on~\(\mathbb H(V)\).  For distinct
tabloids of fixed type, \(T\run U\) implies \(T\prec U\).
\end{proposition}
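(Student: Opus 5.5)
We must show three things: that \(\prec\) is a total order on the monomials of \(\mathbb H(V)\), that it is a well-order compatible with multiplication (a term order), and that \(\run\) refines into \(\prec_{\mathbf h}\) on each type.

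\emph{Totality.} Distinct tabloids of the same type have distinct running vector sequences, since these sequences determine the tabloid (as noted after Definition~\ref{def:running-order}). So there is a largest running index at which they differ, and lexicographic order on \(\mathbb N^{\dim V}\) is total, so \(\prec_{\mathbf h}\) is a strict total order. Also \(\tau\) determines the type, because a type is a strictly decreasing chain of hook types, recovered from the multiset of its variables. Hence \(\prec\) is total. For well-foundedness, note that \(\prec_{\mathrm{type}}\) is a weighted degree lexicographic order, hence a well-order, and each type has only finitely many tabloids with entries in \(\{1,\dots,\dim V\}\).

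\emph{Multiplicativity.} Suppose \(T\prec U\) and \(Z\) is a monomial with \(TZ,UZ\) nonzero. If \(\tau(T)\prec_{\mathrm{type}}\tau(U)\), then \(\tau(TZ)=\tau(T)\tau(Z)\prec_{\mathrm{type}}\tau(U)\tau(Z)\), since \(\prec_{\mathrm{type}}\) is a monomial order. If instead the types agree, multiplying by \(Z\) amounts to inserting its hooks one at a time. By the computation in the proof of Lemma~\ref{lem:single-hook-insertion}, each insertion adds a block of copies of \(E_p(T)-E_p(U)\) to the difference sequence. The last nonzero difference keeps its value, and only its index shifts. So the comparison at the largest differing index, now under \(>_{\mathrm{lex}}\) instead of \(\trianglelefteq_{\mathrm{vec}}\), is unchanged, and \(TZ\prec UZ\). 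Finally \(1\) is minimal, since \(\tau(1)=1\) is the least monomial. This step is the main obstacle. The insertion lemma is phrased for \(\run\), so one must check that its proof only uses that the last nonzero difference vector is preserved. That property transfers verbatim to the lex comparison.

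\emph{Refinement.} Let \(T\strun U\) with \(T\ne U\) of the same type. At the largest differing index \((t_0,s_0)\) we have \(V_{t_0,s_0}(U)\trianglelefteq_{\mathrm{vec}}V_{t_0,s_0}(T)\) with the two vectors distinct. A vector that is coordinatewise at least as large, and not equal, is lexicographically larger at its first differing coordinate. Hence \(V_{t_0,s_0}(T)>_{\mathrm{lex}}V_{t_0,s_0}(U)\), that is, \(T\prec_{\mathbf h}U\) and so \(T\prec U\).
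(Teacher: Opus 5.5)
Your proof is correct and follows the paper's argument essentially step for step. You get totality from the fact that running vectors determine the tabloid, and refinement from the fact that coordinatewise dominance implies lexicographic dominance. You get multiplicativity by rerunning the insertion calculation of Lemma~\ref{lem:single-hook-insertion} with \(>_{\mathrm{lex}}\) in place of \(\trianglelefteq_{\mathrm{vec}}\), combined with multiplicativity of \(\prec_{\mathrm{type}}\).

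One point should be tightened, in well-foundedness. There are infinitely many variables \(X_{a,b}\), so "weighted degree-lex, hence a well-order" is not automatic. It holds only because the weights \(a+b+1\) are positive and only finitely many hook types have bounded weight, which the paper states explicitly.
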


\begin{proof}
Since the running vectors distinguish tabloids,
\(\prec_{\mathbf h}\) is total; it refines~\(\run\) because lexicographic
order refines coordinatewise dominance.  The calculation in
Lemma~\ref{lem:single-hook-insertion}, with
lexicographic comparison in place of coordinatewise comparison, gives
\(T\prec_{\mathbf h}U\Longleftrightarrow
G\star_pT\prec_{\mathbf h'}G\star_pU\), where~\(\mathbf h'\) is the common
type after insertion.  Together
with multiplicativity of~\(\prec_{\mathrm{type}}\), this proves
multiplicativity and cancellation for~\(\prec\).  Finally, the positive
weights, together with the fact that only finitely many hook types have
bounded weight, make
\(\prec_{\mathrm{type}}\) a well-order, and there are only finitely many
tabloids of any fixed type; hence~\(\prec\) is well-founded.
\end{proof}

The three comparisons are summarized as follows:
\begin{center}
\renewcommand{\arraystretch}{1.3}
\begin{tabular}{l|l|l}
Symbol & Scope & Comparison \\\hline
$\run$ & fixed type & coordinatewise at the largest differing index \\
$\prec_{\mathbf h}$ & fixed type & lexicographic at the largest differing index \\
$\prec$ & all nonzero monomials & type monomial, then $\prec_{\mathbf h}$
\end{tabular}
\end{center}

\begin{proposition}[Initial ideal]
\label{prop:quadratic-grobner-hook}
Let \(\prec\) be the monomial order in~\eqref{eq:global-tabloid-order}.
Then:
\begin{enumerate}
\item For each nonsemistandard quadratic hook tabloid
\(m=z^{(a,b)}_{I,\alpha}\,z^{(c,d)}_{J,\beta}\) with \((a,b)>(c,d)\),
there is a relation
\(f_m=m-\sum_{m'\prec m}c_{m,m'}\,m'\in I\), with each~\(m'\)
semistandard.
\item
\(\operatorname{in}_\prec(I)=J_{\mathrm{nss}}\). This is the monomial ideal
generated by the nonsemistandard quadratic tabloids.
In particular, \(\{f_m\}\) is a quadratic Gr\"obner basis.
\end{enumerate}
\end{proposition}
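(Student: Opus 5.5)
Part (1) rests on the rank-two elimination results, Propositions~\ref{prop:rank2-local-row-elimination-ijk} and~\ref{prop:rank2-local-column-elimination}. We take as given that they produce, for every nonsemistandard quadratic tabloid \(m\) of type \(((a,b),(c,d))\), a relation in \(R_{(a,b),(c,d)}\) of the form
\[
m\equiv\sum_{m'\strun m}c_{m,m'}m'.
\]
The tabloids \(m'\) have the same type and content as \(m\). By Proposition~\ref{prop:global-tabloid-order}, \(m'\strun m\) implies \(m'\prec m\), since the type is fixed. Any \(m'\) on the right that is still nonsemistandard we rewrite by the same relations. Only finitely many tabloids have a given type and content, so this terminates. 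The result is a relation \(f_m=m-\sum c_{m,m'}m'\) in \(I\) in which every \(m'\) is semistandard and \(\prec m\). For \(f_m\) to lie in \(I\) (rather than in \(I\) plus the incomparable-pair ideal) we use that \(R_{(a,b),(c,d)}\) maps into \(I\) via \(\bar q\). This gives (1).

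For (2), each \(f_m\) has leading monomial \(m\), so \(J_{\mathrm{nss}}\subseteq\operatorname{in}_\prec(I)\). For the reverse inclusion we compare Hilbert functions in each hook multidegree \(\mathbf h\). We work in \(\mathbb H(V)\) modulo its monomial ideal of incomparable pairs. This quotient is again a monomial quotient of \(\Sym(\bV)\), and its monomials are exactly the hook tabloids of strictly decreasing type.

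The standard monomials of \(J_{\mathrm{nss}}\) in type \(\mathbf h\) are the tabloids all of whose quadratic subtabloids \(T_iT_j\) are semistandard. The key combinatorial step is to show these are exactly the semistandard tabloids. For that it suffices to check adjacent hooks \(T_t,T_{t+1}\). Row and column weak and strict increase in the assembled filling only ever compares an entry of hook \(t\) with an entry of hook \(t+1\) or of the same hook. Each such comparison therefore occurs inside the filling of the pair \(T_tT_{t+1}\), whose assembled shape is the Frobenius union of the two hooks, with the same relative box adjacencies. So local semistandardness gives global semistandardness, and the converse is immediate.

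Hence
\[
\dim\bigl(\mathbb H(V)/J_{\mathrm{nss}}\bigr)_{\mathbf h}=\#\mathrm{SSYT}(\lambda)=\dim\bS_\lambda(V),
\]
where \(\lambda\) is the shape with hook type \(\mathbf h\). Theorem~\ref{thm:main-hook}(1) gives \(\dim\cH(V)_{\mathbf h}=\dim\bS_\lambda(V)\). Since \(J_{\mathrm{nss}}\subseteq\operatorname{in}_\prec(I)\) and \(\mathbb H(V)/\operatorname{in}_\prec(I)\) has the same Hilbert function as \(\mathbb H(V)/I=\cH(V)\), the two monomial ideals have equal Hilbert functions in every multidegree. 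They are therefore equal.

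The main obstacle is the subtle point in (1): the iterated rewriting must stay inside a single type. This is guaranteed because the relations are multihomogeneous, and because the relations \(R\) only involve pairs of comparable types. The one genuinely nontrivial check is the local-to-global claim about adjacent hooks. I would verify it by identifying, for each box, its right and lower neighbours in the assembled diagram. The neighbours of a box of hook \(t\) lie in hooks \(t\) or \(t+1\). For example, the box at \((i,j)\) with \(j>i\) in arm \(i\) has its lower neighbour \((i+1,j)\) in arm \(i+1\) (when \(j>i+1\)) or at the diagonal box of hook \(i+1\).
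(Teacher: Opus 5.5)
Your proposal is correct and matches the paper's proof: part (1) is the rank-two case of the spanning argument, with Proposition~\ref{prop:global-tabloid-order} turning each $\strun$ comparison into a $\prec$ comparison. Part (2) is the inclusion $J_{\mathrm{nss}}\subseteq\operatorname{in}_\prec(I)$ followed by a multigraded Hilbert function comparison using Theorem~\ref{thm:main-hook}(1). Your adjacent-hook argument supplies the pairwise semistandardness criterion that the paper only asserts, though the converse for non-adjacent pairs is not literally immediate: the reassembled pair filling is not a subfilling of the full tableau, so it needs a short transitivity check through an intermediate box.
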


\begin{proof}
For~(1), apply the rank two case of the spanning argument in the proof of
Theorem~\ref{thm:main-hook}(1); Proposition~\ref{prop:global-tabloid-order}
converts each strict running comparison into a strict comparison under~\(\prec\).
For~(2), each
\(f_m\) has leading term~\(m\), so
\(J_{\mathrm{nss}}\subseteq\operatorname{in}_\prec(I)\).  The standard
monomials for~\(J_{\mathrm{nss}}\) are precisely the semistandard hook
tabloids: a hook tabloid is semistandard if and only if its restriction to
every pair of diagonal hooks is semistandard.  By
Theorem~\ref{thm:main-hook}(1), their residue
classes form a basis of~\(\cH(V)\).  Thus \(J_{\mathrm{nss}}\) and
\(\operatorname{in}_\prec(I)\) have the same multigraded Hilbert function,
and the preceding inclusion is an equality.
\end{proof}

Together with the quadratic monomials involving incomparable hook types, which
already vanish in \(\mathbb H(V)\), the \(f_m\) form a quadratic Gr\"obner
basis for \(\mathcal H(V)\). Thus \(\mathcal H(V)\) is
Koszul~\cite[Theorem~4.3.12]{PolishchukPositselski2005}, proving
Theorem~\ref{thm:main-hook}(2).

\medskip
We conclude with the Koszul retracts used in the Tor applications.  For
\(\delta\in\mathbb Z\), set
\[
\cH^{(\delta)}(V):=
\bigoplus_{\lambda\in Q_\delta}\cH_\lambda(V),
\]
where \(Q_\delta\) is defined in Definition~\ref{def:Q-sets}, and let
\(r_\delta\) be the projection onto these type components.  All Segre
products below use the rank grading:
\[
A\circ B:=\bigoplus_{n\ge0}A_n\otimes_\bk B_n.
\]

\begin{proposition}\label{prop:diagonal-retracts}
Fix \(\delta,\delta'\in\mathbb Z\).  Let
\(\cK^{(\delta,\delta')}(V)
:=\cH^{(\delta)}(V)\circ\cH^{(\delta')}(V)\).
Then:
\begin{enumerate}
\item[\textup{(i)}]
\(r_\delta\colon\cH(V)\to\cH^{(\delta)}(V)\) is an algebra retraction,
so \(\cH^{(\delta)}(V)\) is Koszul.
\item[\textup{(ii)}]
Writing \(r_{\delta,n}:=r_\delta|_{\cH(V)_n}\), the map
\[
r_{\delta,\delta'}:=\bigoplus_{n\ge0}
r_{\delta,n}\otimes r_{\delta',n}
\]
is an algebra retraction from \(\cH(V)\circ\cH(V)\) onto
\(\cK^{(\delta,\delta')}(V)\), which is therefore Koszul.
\end{enumerate}
\end{proposition}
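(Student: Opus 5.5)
The plan has two parts. First, show that $\cH^{(\delta)}(V)$ is a subalgebra and that $r_\delta$ is multiplicative. Then use the standard fact that an algebra retract of a Koszul algebra is Koszul, applied both to $\cH(V)$ and to its Segre square.

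Everything rests on the hook multigrading of Definition~\ref{def:hook-multigrading}. By Theorem~\ref{thm:main-hook}(1) and its proof, $\cH(V)=\bigoplus_\lambda\cH_\lambda(V)$. A product $\cH_\lambda\cdot\cH_\mu$ lands in $\cH_{\lambda\sqcup\mu}$ when $\lambda,\mu$ are mergeable and is zero otherwise: in the latter case some pair of hook types is incomparable or equal, and the product vanishes already in $\mathbb H(V)$, since $\Sym$ of a product with a repeated index kills nothing, but the type then has no nested realization and so gives no component. The diagonal hooks of $\lambda\sqcup\mu$ are exactly those of $\lambda$ together with those of $\mu$. Hence $\lambda\sqcup\mu\in Q_\delta$ if and only if both $\lambda,\mu\in Q_\delta$, because the condition ``arm minus leg equals $\delta$'' is checked hook by hook. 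It follows that $\cH^{(\delta)}(V)$ is a graded subalgebra. Moreover, the complement $\bigoplus_{\lambda\notin Q_\delta}\cH_\lambda(V)$ is a two-sided ideal: if $\lambda\notin Q_\delta$, then any nonzero product with it has type $\lambda\sqcup\mu\notin Q_\delta$. So $r_\delta$ is the quotient by this ideal composed with the identification with the subalgebra, and it restricts to the identity on $\cH^{(\delta)}(V)$. Both pieces are homogeneous for the rank grading, so $r_\delta$ is a graded algebra retraction.

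For Koszulity I will invoke the standard lemma, found e.g.\ in Polishchuk--Positselski and in the Backelin--Fröberg theory of retracts: if $B\subseteq A$ is a graded subalgebra with a graded algebra retraction $A\to B$, then $\Tor^B(\bk,\bk)$ is a direct summand of $\Tor^A(\bk,\bk)$ compatibly with internal degrees. This holds because the composite $B\to A\to B$ is the identity, and $\Tor$ is functorial. Diagonality for $A$ then gives diagonality for $B$. Applying this with $A=\cH(V)$, which is Koszul by Theorem~\ref{thm:main-hook}(2), gives (i).

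For (ii), first note that $\cH(V)\circ\cH(V)$ is Koszul, since Segre products of Koszul algebras generated in degree one are Koszul (Backelin--Fröberg). The map $r_{\delta,\delta'}$ is multiplicative on the Segre product, because it is a tensor product of the multiplicative maps $r_\delta$ and $r_{\delta'}$ in each degree. It is the identity on $\cK^{(\delta,\delta')}(V)$, which is a subalgebra because each factor is. The same retract lemma then finishes the proof.

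The only delicate point is the multiplicativity of $r_\delta$ on products across types. This needs the decomposition $\cH=\bigoplus\cH_\lambda$ into distinct type components, indexed by $\lambda$ uniquely, together with the fact that the set of hooks of $\lambda\sqcup\mu$ is the union of the hook sets of $\lambda$ and $\mu$. I expect this to be routine given Definition~\ref{def:frobenius-union}.
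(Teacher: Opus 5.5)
Your proposal is correct and follows the paper's proof: $r_\delta$ is multiplicative because nonmergeable products vanish while $\lambda\sqcup\mu\in Q_\delta$ exactly when $\lambda,\mu\in Q_\delta$, the same argument on homogeneous tensors handles $r_{\delta,\delta'}$, and Koszulity follows from the Backelin--Fr\"oberg facts that retracts and rank-graded Segre products of Koszul algebras are Koszul. Your justification of the vanishing of nonmergeable products is garbled, but the reason is simple: nonmergeability forces a cross pair of hook types that are not strictly comparable (possibly equal), and the product of any two generators of those types is already zero in $\mathbb H(V)$.
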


\begin{proof}
If \(\lambda\) and \(\mu\) are mergeable in the sense of
Definition~\ref{def:frobenius-union}, then
\(\lambda\sqcup\mu\in Q_\delta\) if and only if
\(\lambda,\mu\in Q_\delta\); nonmergeable products vanish.  Hence
\(r_\delta\) is multiplicative.  The same argument on homogeneous tensors
proves the assertion for~\(r_{\delta,\delta'}\).  Finally, algebra retracts and
ordinary Segre products of Koszul algebras graded by rank are Koszul
\cite{BackelinFroberg1985}.
\end{proof}

\section{Hook Schur Functors and Pieri Maps} \label{sec:hook-schur}

This section defines the hook Pieri maps \(\rP_m,\rQ_m,\rP'_m,\rQ'_m\) and the compositions \(\Phi,\Phi',\Psi,\Psi'\) used in the arguments of \S\S\ref{sec:inv-matrices}--\ref{sec:elim-systems} and derives explicit formulas for them.

\smallskip\noindent\textbf{Notation.}
Throughout, fix \(V=\bk^n\) with ordered basis
\(e_1<\cdots<e_n\) and write \([n]:=\{1,\dots,n\}\).
Recall from \S\ref{ssec:universal-hook-algebra} that the hook shape
\((a,1^b)\) has hook type \((a-1,b)\).
Every normalized map below is used only when its displayed denominator is a
unit in~\(\bk\); this is automatic under the characteristic hypotheses of
\S\S\ref{sec:inv-matrices}--\ref{sec:elim-systems}.

For \(\alpha\in\bN^n\), set
\(e^\alpha:=\prod_r e_r^{\alpha_r}\in S^{|\alpha|}V\); we use
\(\eps_i\) for the \(i\)th standard basis vector in exponents, so
\(e^{\eps_i}=e_i\). If \(\beta\le\alpha\) coordinatewise, we set
\[
\binom{\alpha}{\beta}:=\prod_{i=1}^n\binom{\alpha_i}{\beta_i},
\qquad
\supp(\alpha):=\{i:\alpha_i\ne0\}.
\]
For \(J\subseteq[n]\), write \(\mathbf1_J:=\sum_{j\in J}\eps_j\), and write \(\delta_{ij}\) for the Kronecker delta.

For \(I=\{i_1<\cdots<i_q\}\), set
\(e_I:=e_{i_1}\wedge\cdots\wedge e_{i_q}\in\bigwedge^qV\).
For \(J\subseteq I\), define \(\sgn(J,I)\) by
\[
e_I=\sgn(J,I)\,e_J\wedge e_{I\setminus J}.
\]
We suppress braces around singletons in such expressions; in particular,
\(I\setminus i:=I\setminus\{i\}\) and
\(\sgn(i,I):=\sgn(\{i\},I)\).

For the dual basis \(e_1^\vee,\dots,e_n^\vee\), define the operators (components of derivations)
\begin{align*}
\iota_{e_i^\vee}\colon\bigwedge^qV&\longrightarrow\bigwedge^{q-1}V,
&e_I&\longmapsto\sgn(i,I)e_{I\setminus i},\\
\partial_{e_i^\vee}\colon S^qV&\longrightarrow S^{q-1}V,
&e^\alpha&\longmapsto\alpha_i e^{\alpha-\eps_i},
\end{align*}
where the displayed values are understood as zero when \(i\notin I\) and
\(\alpha_i=0\), respectively.

\subsection{Hook Schur Functors}

We use the characteristic-free convention for Schur modules of
Akin--Buchsbaum--Weyman~\cite{AkinBuchsbaumWeyman1982}; see also
Weyman~\cite[\S2]{Weyman2003}.

For \(p,q\in\bZ\), set
\[
C_{p,q}:=
\begin{cases}
S^p(V)\otimes\bigwedge^qV,&p,q\ge0,\\
0,&\text{otherwise}.
\end{cases}
\]
For \(p+q>0\), the \defi{Koszul differential} and, when
\(p+q\in\bk^\times\), the \defi{normalized contraction}
\[
d_{p,q}\colon C_{p-2,q+2}\longrightarrow C_{p-1,q+1},
\qquad
h_{p,q}\colon C_{p+1,q-1}\longrightarrow C_{p,q}
\]
are defined by
\[
d_{p,q}(f\otimes\omega)
=\sum_i(e_if)\otimes\iota_{e_i^\vee}(\omega),
\qquad
h_{p,q}(g\otimes\eta)
=\frac{1}{p+q}\sum_i(\partial_{e_i^\vee}g)\otimes(e_i\wedge\eta).
\]
For \(a\ge1\) and \(b\ge0\), the standard hook presentation gives the
first isomorphism below; if \(a+b\in\bk^\times\), the normalized Cartan
identity gives the second:
\[
\bS_{(a,1^b)}(V)
\;\cong\;
\coker\,d_{a,b}
\;=\;
\frac{S^{a-1}V\otimes\bigwedge^{b+1}V}{\im(d_{a,b})}
\;\cong\;
\coker\,h_{a,b}
\;=\;
\frac{S^{a}V\otimes\bigwedge^{b}V}{\im(h_{a,b})}.
\]

We refer to these two cokernels as the \defi{\(d\)-model} and the \defi{\(h\)-model} for~\(\bS_{(a,1^b)}(V)\), and write \([\,\cdot\,]_d\) and
\([\,\cdot\,]_h\) for the corresponding equivalence classes.

The tableau notation below always denotes the \(d\)-model class. We pass to the \(h\)-model only to calculate some explicit formulas. Write
\[
\ytableausetup{boxsize=1.7em}
T^\alpha_I \;:=\;  [e^\alpha\otimes e_I]_d \;=\;
\vcenter{\hbox{$
\begin{ytableau}
\scriptstyle i_1 & \scriptstyle j_1 & \scriptstyle j_2 & \scriptstyle \cdots & \scriptstyle j_{a-1} \\
\scriptstyle i_2 \\
\scriptstyle \vdots \\
\scriptstyle i_{b+1}
\end{ytableau}
$}}
\]
for the class of a monomial, where
\(j_1\le\cdots\le j_{a-1}\) lists the multiset encoded by~\(\alpha\)
and \(I=\{i_1<\cdots<i_{b+1}\}\). Thus the shape \((a,1^b)\) is
recovered from \(|\alpha|=a-1\) and \(|I|=b+1\).
In examples, a word \(j_1\cdots j_s\) in a superscript abbreviates the
content vector \(\sum_{\nu=1}^s\eps_{j_\nu}\), while a strictly increasing
word \(i_1\cdots i_t\) in a subscript abbreviates the set
\(\{i_1<\cdots<i_t\}\) with its increasing wedge order.
\begin{example}
  $T^{(0,1,0,1,0)}_{\{1,3,5\}}\in\bS_{(3,1,1)}(\bk^5)$ is the semistandard hook tableau $\ytableausetup{boxsize=1.1em}
\ytableaushort{124,3,5}$.
\end{example}

For subsets \(I_1,\dots,I_r\), with braces around singleton factors
suppressed, write
\[
T^\alpha_{I_1\wedge\cdots\wedge I_r}
:=[e^\alpha\otimes(e_{I_1}\wedge\cdots\wedge e_{I_r})]_d.
\]
The factor order records the sign, and the expression vanishes if two factors
intersect.  For example,
\(T^\alpha_{\{1,3\}\wedge\{2\}}=-T^\alpha_{\{1,2,3\}}\), whereas
\(T^\alpha_{\{1,3\}\wedge\{1\}}=0\).

\begin{samepage}
\begin{proposition} \label{prop:hd-convert}
  Suppose \(a\ge1\), \(b\ge0\), and \(a+b\in\bk^\times\).  We have inverse
  isomorphisms
  \begin{align*}
    \bar{d}_{a,b} \colon \coker d_{a,b} &\xrightarrow{\cong} \coker h_{a,b} &
    \bar{h}_{a,b} \colon \coker h_{a,b} &\xrightarrow{\cong} \coker d_{a,b}\\
    [e^\alpha \otimes e_I]_d &\mapsto [d_{a+1,b-1}(e^\alpha \otimes e_I)]_h &
                                        [e^\alpha \otimes e_I]_h &\mapsto [h_{a-1,b+1}(e^\alpha \otimes e_I)]_d
\end{align*}
\end{proposition}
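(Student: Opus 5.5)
The plan is to derive everything from three identities on the bicomplex \(C_{\bullet,\bullet}\), all of which preserve the total degree \(N:=p+q\):
\begin{itemize}
\item the Koszul differentials square to zero;
\item the contractions square to zero;
\item a normalized Cartan homotopy formula, \(dh+hd=\mathrm{id}\).
\end{itemize}
Here every map involved lives on the diagonal \(p+q=a+b\), and the single hypothesis \(a+b\in\bk^\times\) makes all the relevant \(h\)'s defined.

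First I would check the identities. Write \(D=\sum_i e_i\otimes\iota_{e_i^\vee}\) and \(H=\sum_i\partial_{e_i^\vee}\otimes(e_i\wedge-)\) for the unnormalized operators. Then \(D^2=0\) and \(H^2=0\): in each case one gets \(\sum_{i,j}\) of a product that is symmetric in \(i,j\) on one tensor factor and antisymmetric on the other. Next, compute \(DH+HD\) on \(e^\alpha\otimes e_I\). The cross terms with \(i\ne j\) cancel, using \([\partial_{e_i^\vee},e_j]=\delta_{ij}\) and \(\iota_{e_i^\vee}(e_j\wedge\omega)+e_j\wedge\iota_{e_i^\vee}\omega=\delta_{ij}\omega\). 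What remains is the sum of the symmetric Euler operator \(\sum_i e_i\partial_{e_i^\vee}=p\) and the exterior Euler operator \(\sum_i e_i\wedge\iota_{e_i^\vee}=q\). Hence \(DH+HD=(p+q)\,\mathrm{id}\) on \(C_{p,q}\). Dividing by \(N=a+b\), and matching the indexing conventions, this becomes the identity on \(C_{a-1,b+1}\):
\[
h_{a-1,b+1}\circ d_{a+1,b-1}+d_{a,b}\circ h_{a-2,b+2}=\mathrm{id},
\]
and the analogous identity on \(C_{a,b}\):
\[
d_{a+1,b-1}\circ h_{a-1,b+1}+h_{a,b}\circ d_{a+2,b-2}=\mathrm{id}.
\]
In the boundary cases (\(a=1\), or \(b=0\), or \(b=1\)) the offending terms have source or target \(0\), and the identities remain valid with those terms omitted.

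Next I would verify well-definedness. The map \(d_{a+1,b-1}\) sends \(C_{a-1,b+1}\to C_{a,b}\) and kills \(\im d_{a,b}\), because \(d^2=0\). So \(\bar d_{a,b}\) descends to \(\coker d_{a,b}\), with values taken modulo \(\im h_{a,b}\). Similarly \(h_{a-1,b+1}\colon C_{a,b}\to C_{a-1,b+1}\) kills \(\im h_{a,b}\), because \(h^2=0\), so \(\bar h_{a,b}\) is well defined. Both maps are visibly \(\GL(V)\)-equivariant.

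Finally, the inverse property. For \(x\in C_{a-1,b+1}\), the first homotopy identity gives
\[
\bar h\bar d[x]_d=[h d x]_d=[x-d(hx)]_d=[x]_d,
\]
since \(d(hx)\in\im d_{a,b}\). Symmetrically, the second identity gives \(\bar d\bar h[y]_h=[y]_h\). The only real obstacle I anticipate is bookkeeping: getting the signs in the Cartan computation right and checking that the paper's subscript conventions on \(d_{p,q},h_{p,q}\) put the normalizing factor \(1/(p+q)\) exactly at total degree \(a+b\) in both homotopy identities. I would pin this down by testing on a single basis element such as \(e^\alpha\otimes e_I\) with \(|I|=1\).
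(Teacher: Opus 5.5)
Your proposal is correct and matches the paper's proof: you get well-definedness from \(d\circ d=0\) and \(h\circ h=0\), and the inverse property from the two normalized Cartan identities on \(C_{a-1,b+1}\) and \(C_{a,b}\), read modulo \(\im d_{a,b}\) and \(\im h_{a,b}\). The only difference is that the paper cites Cartan's Formula~(III) for \(DH+HD=(p+q)\,\mathrm{id}\), whereas you verify it directly; that identity holds uniformly on every \(C_{p,q}\), so no separate boundary analysis is needed, and for \(b=1\) no term actually vanishes, so nothing should be omitted there.
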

\end{samepage}

\begin{proof}
Consecutive \(d\)-maps and consecutive \(h\)-maps compose to zero, so the
displayed maps are well-defined.  On \(C_{a-1,b+1}\) and \(C_{a,b}\),
respectively, the normalized Cartan identity
\cite[Formula~(III)]{Cartan1951Notions} reads
\[
h_{a-1,b+1}d_{a+1,b-1}+d_{a,b}h_{a-2,b+2}=\mathrm{id},
\qquad
d_{a+1,b-1}h_{a-1,b+1}+h_{a,b}d_{a+2,b-2}=\mathrm{id}.
\]
Modulo \(\im d_{a,b}\) and \(\im h_{a,b}\), these say
\(\bar h_{a,b}\bar d_{a,b}=\mathrm{id}\) and
\(\bar d_{a,b}\bar h_{a,b}=\mathrm{id}\), respectively.
\end{proof}

\subsection{Explicit Pieri Formulas for Hooks}

In characteristic zero, Pieri's rule gives, up to scalar, the horizontal maps
\begin{align*}
\rP_m&\colon \bS_{(a,1^b)}(V)
  \to S^m(V)\otimes\bS_{(a-m,1^b)}(V),\qquad
\rQ_m\colon S^m(V)\otimes\bS_{(a-m,1^b)}(V)
  \to \bS_{(a,1^b)}(V),
\end{align*}
for \(0\le m\le a-1\), and the vertical maps
\begin{align*}
\rP'_m&\colon \bS_{(a,1^{b+m})}(V)
  \to \bigwedge^m V\otimes\bS_{(a,1^b)}(V),\qquad
\rQ'_m\colon \bS_{(a,1^b)}(V)\otimes\bigwedge^m V
  \to \bS_{(a,1^{b+m})}(V).
\end{align*}
We fix the following normalizations. Integral constructions appear in
\cite{Olver1982,MaliakasOlver1992}; see also
\cite{SamWeyman2011Pieri,Sam2009SchurModules}. We use the same notation
after canonical permutations of tensor factors. Pictorially, the yellow
boxes are those removed or appended:
\[
\ytableausetup{boxsize=0.82em}
\rP_m,\rQ_m:\;\;
\begin{ytableau}
{} & {} & *(yellow!60) & *(yellow!60) \\ {} \\ {}
\end{ytableau}
\;\rightleftharpoons\;
\underbrace{\begin{ytableau}*(yellow!60) & *(yellow!60)\end{ytableau}}_{S^m V}
\,\otimes\,
\begin{ytableau}{} & {} \\ {} \\ {}\end{ytableau}
\qquad
\rP'_m,\rQ'_m:\;\;
\begin{ytableau}
{} & {} \\ {} \\ {} \\ *(yellow!60) \\ *(yellow!60)
\end{ytableau}
\;\rightleftharpoons\;
\begin{ytableau}{} & {} \\ {} \\ {}\end{ytableau}
\,\otimes\,
\underbrace{\begin{ytableau}*(yellow!60) \\ *(yellow!60)\end{ytableau}}_{\bigwedge^m V}
\]

The \(d\)-model gives \(\rQ_m\) and \(\rP'_m\) directly.

\begin{proposition}\label{prop:direct-hook-pieri}
  In the $d$-model, we have
  \begin{align*}
    \rQ_m(e^\beta \otimes T^\alpha_I) &=  T^{\alpha+\beta}_I, \qquad
    \rP'_m(T^\alpha_I) = \sum_{J \subseteq I,\ |J|=m} \sgn(J,I) e_J\otimes T^\alpha_{I \setminus J}.
  \end{align*}
\end{proposition}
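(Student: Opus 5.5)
The plan is to verify that each displayed formula defines an equivariant map with the correct source and target, and is nonzero. Pieri's rule in characteristic zero says each such map is unique up to scalar, so this suffices. The paper \emph{fixes} the normalizations of \(\rQ_m\) and \(\rP'_m\) by these formulas. The content of the proposition is therefore that the formulas descend to the cokernel presentations, i.e.\ are well defined on \(\coker d\), and are nonzero.

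For \(\rQ_m\), start from the lifted map
\[
S^mV\otimes S^{a-m-1}V\otimes\textstyle\bigwedge^{b+1}V\to S^{a-1}V\otimes\bigwedge^{b+1}V,
\qquad
e^\beta\otimes e^\alpha\otimes e_I\mapsto e^{\alpha+\beta}\otimes e_I,
\]
given by multiplication in the symmetric algebra. This map is \(\GL(V)\)-equivariant. It carries \(S^mV\otimes\im(d_{a-m,b})\) into \(\im(d_{a,b})\). Indeed, multiplication by \(e^\beta\) commutes with \(d\), since \(d(f\otimes\omega)=\sum_i e_if\otimes\iota_{e_i^\vee}\omega\) and left multiplication by \(e^\beta\) in the first factor passes through the sum. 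Hence it descends to a map
\[
S^mV\otimes\bS_{(a-m,1^b)}(V)\to\bS_{(a,1^b)}(V),
\]
which is the formula for \(\rQ_m\). It is nonzero because it is surjective: every generator \(T^{\alpha+\beta}_I\) is hit.

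For \(\rP'_m\), lift to the comultiplication
\[
\textstyle\bigwedge^{b+m+1}V\to\bigwedge^mV\otimes\bigwedge^{b+1}V,
\qquad
e_I\mapsto\sum_{|J|=m}\sgn(J,I)\,e_J\otimes e_{I\setminus J},
\]
tensored with the identity on \(S^{a-1}V\). This map is equivariant, since it is the component of the exterior coproduct. The key step is to check that it sends \(\im(d_{a,b+m})\) into \(\bigwedge^mV\otimes\im(d_{a,b})\). Equivalently, the coproduct \(\Delta\) must intertwine the contraction \(\iota_{e_i^\vee}\) acting on the second tensor factor, modulo terms killed in the quotient. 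The contraction \(\iota_{e_i^\vee}\) is a (super)derivation and \(\Delta\) is an algebra map, so
\[
\Delta\circ\iota_{e_i^\vee}=(\iota_{e_i^\vee}\otimes1\pm1\otimes\iota_{e_i^\vee})\circ\Delta .
\]
Applying this inside \(d\) gives two kinds of terms:
\begin{itemize}
\item terms with \(1\otimes\iota\), which reassemble to \(\bigl(1\otimes d_{a,b}\bigr)\) applied to an element, and so lie in \(\bigwedge^mV\otimes\im d_{a,b}\);
\item terms with \(\iota\otimes1\), which lead to the obstacle described next.
\end{itemize}

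The main obstacle is the \(\iota\otimes1\) terms. They have the form
\[
\sum_i e_if\otimes\iota_{e_i^\vee}(e_J)\otimes e_{I\setminus J},
\]
where \(e_i\) multiplies into the symmetric factor but \(\iota_{e_i^\vee}\) acts on the \(\bigwedge^m\) factor. These are not visibly in the image of \(d\). I expect to handle them by a degree count. Applied to the \(d\)-preimage degree, the \(\iota\otimes1\) contribution lands in \(\bigwedge^{m-1}\) rather than \(\bigwedge^m\), and so is a different graded component of the full coproduct. Only the \(\bigwedge^m\)-component is retained, so these terms do not contribute. If this bookkeeping fails, I will fall back on a different argument: verify that the composite
\[
\rQ'_m\circ\rP'_m
\]
is a nonzero scalar on a highest weight vector, and that the map is equivariant on the free module. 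Equivariance together with the fact that a highest weight vector of every other constituent of \(\im d\) maps into \(\bigwedge^mV\otimes\im d\) then forces well-definedness by Schur's lemma.

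Finally, to see that \(\rP'_m\) is nonzero, evaluate it on
\[
T^{(a-1)\eps_1}_{\{1,\dots,b+m+1\}} .
\]
Take the term \(J=\{b+2,\dots,b+m+1\}\). It produces \(e_J\otimes\) a semistandard tableau, which is a nonzero basis element. No other term produces the same basis element.
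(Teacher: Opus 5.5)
Your treatment of $\rQ_m$ is the paper's argument (multiplication by $e^\beta$ commutes with $d$, i.e.\ associativity), and your nonvanishing checks are fine. The gap is in the well-definedness of $\rP'_m$, which is the real content of the second formula.

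The identity $\Delta\circ\iota_{e_i^\vee}=(\iota_{e_i^\vee}\otimes1\pm1\otimes\iota_{e_i^\vee})\circ\Delta$ is false. That is the rule for \emph{wedge multiplication} by a primitive vector, which comes from $\Delta$ being an algebra map. It is not the rule for contraction, and ``$\iota$ is a derivation and $\Delta$ is an algebra map'' does not imply it. Already for $\omega=e_i$ the left side is $1\otimes1$, while the right side is $1\otimes1\pm1\otimes1$. What coassociativity actually gives is that contraction is a comodule map. Each of the following equalities holds \emph{separately}:
\[
\Delta_{m,b+1}\circ\iota_{e_i^\vee}=(\iota_{e_i^\vee}\otimes1)\circ\Delta_{m+1,b+1}=(-1)^m(1\otimes\iota_{e_i^\vee})\circ\Delta_{m,b+2}.
\]
Your degree count for discarding the $\iota\otimes1$ terms also fails. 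In the $(m,b+1)$-component, the $\iota\otimes1$ contribution comes from $\Delta_{m+1,b+1}$ and lands in $\bigwedge^mV\otimes\bigwedge^{b+1}V$. That is exactly the component you retain, not $\bigwedge^{m-1}V\otimes\bigwedge^{b+2}V$. So your main line does not show that $\im d_{a,b+m}$ maps into $\bigwedge^mV\otimes\im d_{a,b}$.

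The repair is to use only the last equality. Take $f\in S^{a-2}V$ and $\omega\in\bigwedge^{b+m+2}V$, and write $\Delta_{m,b+2}(\omega)=\sum_k\omega'_k\otimes\omega''_k$. Then, after moving the $\bigwedge^mV$ factor to the front,
\[
(1\otimes\Delta_{m,b+1})\bigl(d_{a,b+m}(f\otimes\omega)\bigr)=(-1)^m\sum_k\omega'_k\otimes d_{a,b}(f\otimes\omega''_k).
\]
There are no leftover terms at all. This is the paper's one-line ``coassociativity'' argument, and the sign is a direct check with $\sgn(J,I)$.

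Your fallback, as stated, is circular. It presupposes that the constituents of $\im d$ land in $\bigwedge^mV\otimes\im d$, which is what must be proved. Also, the composite $\rQ'_m\circ\rP'_m$ only makes sense once $\rP'_m$ is already defined on the quotient. A valid characteristic-zero substitute in that spirit exists. For $a\ge2$, $\im d_{a,b+m}\cong\bS_{(a-1,1^{b+m+1})}(V)$. This does not occur in $\bigwedge^mV\otimes\bS_{(a,1^b)}(V)$, because every constituent there contains $(a,1^b)$ and so has first row at least $a$. Hence any equivariant lift kills $\im d_{a,b+m}$. That works only in characteristic zero, whereas the coassociativity argument is characteristic-free.
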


\begin{proof}
  By associativity, we have a commutative square (with \(S^r(V)=0\) for
  \(r<0\))
  \[
    \xymatrix{ S^m(V) \otimes S^{a-m-2}(V) \otimes V \ar[r] \ar[d] & S^{a-2}(V) \otimes V \ar[d] \\
      S^m(V) \otimes S^{a-m-1}(V) \ar[r] & S^{a-1}(V) }
  \]
  where all maps are given by multiplication (with the left vertical map acting as the identity on $S^m(V)$). Hence, we can tensor this with $\bigwedge^{b+1}(V)$ and precompose with the comultiplication $\bigwedge^{b+2}(V) \to V \otimes \bigwedge^{b+1}(V)$ to obtain a commutative square
  \[
    \xymatrix{ S^m(V) \otimes S^{a-m-2}(V) \otimes \bigwedge^{b+2}(V) \ar[r] \ar[d] & S^{a-2}(V) \otimes \bigwedge^{b+2}(V) \ar[d] \\
      S^m(V) \otimes S^{a-m-1}(V) \otimes \bigwedge^{b+1}(V) \ar[r] & S^{a-1}(V) \otimes \bigwedge^{b+1}(V)}
  \]  
  Thus the formula for \(\rQ_m\) descends to the indicated cokernels; it is
  equivariant and nonzero. The formula for \(\rP'_m\) follows similarly from
  coassociativity of exterior comultiplication.
\end{proof}

Symmetric comultiplication and exterior multiplication give the other two
maps directly in the \(h\)-model.

\begin{lemma}\label{lem:hook-pieri-h-model}
  In the $h$-model, we have
  \begin{align*}
    \rP_m([e^\alpha \otimes e_I]_h) &=
\sum_{\substack{\beta \le \alpha\\ |\beta|=m}} \tbinom{\alpha}{\beta}
e^\beta\otimes[e^{\alpha-\beta}\otimes e_I]_h,\qquad
    \rQ'_m([e^\alpha \otimes e_I]_h \otimes e_K) = [e^\alpha \otimes (e_I \wedge e_K)]_h.
  \end{align*}
\end{lemma}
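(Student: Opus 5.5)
We must show three things: both formulas are well defined on the \(h\)-model \(\coker h_{a,b}=(S^aV\otimes\bigwedge^bV)/\im(h_{a,b})\), they are equivariant, and they are nonzero. Since the Pieri maps are unique up to scalar in characteristic zero, this identifies them with \(\rP_m\) and \(\rQ'_m\). Equivariance is automatic. The first formula is symmetric comultiplication \(\Delta\colon S^aV\to S^mV\otimes S^{a-m}V\) tensored with \(\mathrm{id}_{\bigwedge^bV}\). The second is exterior multiplication \(\bigwedge^bV\otimes\bigwedge^mV\to\bigwedge^{b+m}V\) tensored with \(\mathrm{id}_{S^aV}\). Indeed, \(\Delta(e^\alpha)=\sum_{\beta\le\alpha,|\beta|=m}\binom{\alpha}{\beta}e^\beta\otimes e^{\alpha-\beta}\) is the standard divided-power-free formula for comultiplication of monomials.

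For well-definedness, I would mirror the proof of Proposition~\ref{prop:direct-hook-pieri}, replacing associativity of multiplication by the dual compatibilities.

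\emph{Case \(\rP_m\).} We need \((\mathrm{id}\otimes\pi)\circ(\Delta\otimes\mathrm{id})\) to kill \(\im h_{a,b}\), where \(\pi\) is the quotient to the \(h\)-model of the smaller hook. The key identity is that \(\partial_{e_i^\vee}\) acts as a derivation compatible with the coproduct:
\[
\Delta(\partial_{e_i^\vee}g)=(\mathrm{id}\otimes\partial_{e_i^\vee})\Delta(g)+(\partial_{e_i^\vee}\otimes\mathrm{id})\Delta(g).
\]
Because of the second summand, the image of \(h_{a,b}(g\otimes\eta)\) is not term-by-term in \(S^mV\otimes\im(h_{a-m,b})\). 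The normalizations \(1/(p+q)\) also differ between the two hooks. So I would first check the easy part: applying \(\Delta\) to \(\sum_i\partial_i g\otimes e_i\wedge\eta\) and summing the first-type terms gives \(S^mV\otimes\im h_{a-m,b}\) up to the scalar \((a-m+b)/(a+b)\).

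The remaining terms are \(\sum_i \partial_i g'\otimes g''\otimes e_i\wedge\eta\). This is the main obstacle, and there are two ways I would try to handle it.
\begin{itemize}
\item Show directly that these terms already lie in \(S^mV\otimes\im h_{a-m,b}\) after using the Cartan identity \(d h+h d=\mathrm{id}\) in the smaller hook.
\item Avoid the computation: transport \(\rP_m\) through the isomorphism \(\bar d_{a,b}\) of Proposition~\ref{prop:hd-convert}, and note that the cokernel of \(h_{a,b}\) is the image of \(d\) (\(\coker h\cong\im d\cong\ker d\) by exactness of the Koszul complex in characteristic zero).
\end{itemize}
I prefer the second. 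Realize the \(h\)-model as the subspace \(\ker d_{a+1,b-1}\subseteq S^aV\otimes\bigwedge^bV\) via the Cartan splitting. Then \(\Delta\otimes\mathrm{id}\) preserves kernels of \(d\), because \(d\) is multiplication by \(e_i\) on the symmetric factor, which is a coderivation-compatible operation landing in the right tensor factor. This gives the map on subspaces, and the formula follows by projecting back.

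\emph{Case \(\rQ'_m\).} This is the easy case: \(h\) commutes with wedging by \(e_K\) on the right. Explicitly,
\[
h(g\otimes\eta)\wedge e_K=\tfrac{1}{p+q}\sum_i\partial_ig\otimes e_i\wedge\eta\wedge e_K,
\]
which is a scalar multiple of \(h(g\otimes\eta\wedge e_K)\). Hence the image of \(\im h_{a,b}\otimes\bigwedge^mV\) lies in \(\im h_{a,b+m}\), and the formula descends.

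\emph{Nonvanishing.} This holds in both cases by evaluating on a highest weight vector. For \(\rQ'_m\), take \(\alpha=a\eps_1\) and \(I=\{2,\dots,b+1\}\), with \(K\) disjoint from \(I\cup\{1\}\). For \(\rP_m\), take the same \(\alpha\), where the \(\beta=m\eps_1\) term gives a nonzero class.
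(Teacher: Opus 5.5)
Your $\rQ'_m$ argument is correct and is the paper's own: associativity of $\wedge$ gives $h_{a,b}(g\otimes\eta)\wedge e_K=\tfrac{a+b+m}{a+b}\,h_{a,b+m}(g\otimes\eta\wedge e_K)$. Your equivariance and nonvanishing checks are harmless extras. The gap is in $\rP_m$.

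The identity you call key, $\Delta(\partial_{e_i^\vee}g)=(\mathrm{id}\otimes\partial_{e_i^\vee})\Delta(g)+(\partial_{e_i^\vee}\otimes\mathrm{id})\Delta(g)$, is false for the coproduct $\Delta(e^\alpha)=\sum_\beta\tbinom{\alpha}{\beta}e^\beta\otimes e^{\alpha-\beta}$ used in the statement. Take $g=e_1^2$: both $(\mathrm{id}\otimes\partial_{e_1^\vee})\Delta(g)$ and $(\partial_{e_1^\vee}\otimes\mathrm{id})\Delta(g)$ equal $2e_1\otimes1+2\otimes e_1=\Delta(\partial_{e_1^\vee}g)$, so your right side is twice the left. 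The correct identity is
\[
\Delta\circ\partial_{e_i^\vee}=(\mathrm{id}\otimes\partial_{e_i^\vee})\circ\Delta .
\]
It follows from coassociativity and cocommutativity, because $\partial_{e_i^\vee}=(e_i^\vee\otimes\mathrm{id})\circ\Delta$. On monomials it reduces to $\alpha_i\tbinom{\alpha_i-1}{\beta_i}=(\alpha_i-\beta_i)\tbinom{\alpha_i}{\beta_i}$. Hence there are no ``remaining terms''. Writing $\Delta_m(g)=\sum g'\otimes g''$,
\[
(\Delta_m\otimes\mathrm{id})\,h_{a,b}(g\otimes\eta)=\tfrac{a+b-m}{a+b}\sum g'\otimes h_{a-m,b}(g''\otimes\eta)\in S^mV\otimes\im(h_{a-m,b}).
\]
This line, which is your ``easy part'', is the entire well-definedness proof, exactly as in the paper.

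Your preferred workaround does not repair this, because its central claim is false. The map $\Delta\otimes\mathrm{id}$ does not carry $\ker\bigl(d\colon C_{a,b}\to C_{a+1,b-1}\bigr)$ into $S^mV\otimes\ker\bigl(d\colon C_{a-m,b}\to C_{a-m+1,b-1}\bigr)$. You have the roles reversed: multiplication by $e_i$ is the coderivation, since $\Delta(e_if)=(e_i\otimes1+1\otimes e_i)\Delta(f)$. Consequently $(\mathrm{id}\otimes d)(\Delta_m\otimes\mathrm{id})$ differs from $(\Delta_m\otimes\mathrm{id})d$ by $\sum_i(e_i\otimes1)\Delta_{m-1}(f)\otimes\iota_{e_i^\vee}\omega$. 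For example, take $a=2$, $b=1$, $m=1$. The vector $x=e_1^2\otimes e_2-e_1e_2\otimes e_1$ lies in $\ker d$, yet $(\mathrm{id}\otimes d)(\Delta_1\otimes\mathrm{id})(x)=e_1\otimes e_1e_2-e_2\otimes e_1^2\ne0$.

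Moreover, the lemma asserts the formula on arbitrary monomial representatives $e^\alpha\otimes e_I$, which do not lie in $\ker d$. So any route must eventually show that $(\mathrm{id}\otimes[\,\cdot\,]_h)\circ(\Delta_m\otimes\mathrm{id})$ kills $\im(h_{a,b})$; passing to the $\ker d$ realization cannot avoid that computation.
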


\begin{proof}
Both operations preserve the images of the corresponding \(h\)-maps, by
coassociativity of symmetric comultiplication and associativity of exterior
multiplication.
\end{proof}

\begin{proposition}\label{prop:basic-hook-pieri-closed}
  In the $d$-model, we have the following explicit formulas.
\begin{enumerate}
\item[\textup{(i)}]
\textbf{Horizontal (row removal).}
For \(0\le m\le a{-}1\), the map
\[
\rP_m \colon \bS_{(a,1^b)}(V)\longrightarrow S^m(V)\otimes\bS_{(a-m,1^b)}(V)
\]
is given by the formula
\begin{align*}
\rP_m(T^\alpha_I) &=
\sum_{\substack{\beta\le\alpha\\|\beta|=m}} \tbinom{\alpha}{\beta}
e^\beta\otimes T^{\alpha-\beta}_I
+\frac{1}{a+b-m}
\sum_{i\in I}\sgn(i,I)
\sum_{\substack{\beta\le\alpha\\|\beta|=m}} \sum_{s\in\supp(\beta)}
\beta_s \tbinom{\alpha}{\beta} e^{\beta-\eps_s+\eps_i}\otimes T^{\alpha-\beta}_{ s\wedge (I\setminus i)}.
\end{align*}

\item[\textup{(ii)}]
\textbf{Vertical (column appending).}
For \(m\ge 0\), the map
\[
\rQ'_m \colon \bS_{(a,1^b)}(V) \otimes \bigwedge^m V \longrightarrow\bS_{(a,1^{b+m})}(V)
\]
is given by the formula
  \begin{align*}
    \rQ'_m(T^\alpha_I \otimes e_K) &= T^\alpha_{I \wedge K}
     - \frac{1}{a+b+m} \sum_{k \in K} \sgn(k,K)
     \sum_{\substack{s\in[n]\\ \alpha_s+\delta_{k,s}>0}}
     (\alpha_s+\delta_{k,s})
     T^{\alpha+\eps_k-\eps_s}_{I\wedge s\wedge(K\setminus k)}.
  \end{align*}
\end{enumerate}
\end{proposition}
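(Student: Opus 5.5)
Both formulas will come from transporting the $h$-model maps of Lemma~\ref{lem:hook-pieri-h-model} to the $d$-model with the isomorphisms of Proposition~\ref{prop:hd-convert}: we set $\rP_m=(\mathrm{id}\otimes\bar h)\circ\rP_m^{h}\circ\bar d$ and $\rQ'_m=\bar h\circ\rQ'^{h}_m\circ(\bar d\otimes\mathrm{id})$. Since equivariant Pieri maps are unique up to scalar in characteristic zero, any nonzero equivariant map of the right shape is the required one. The normalization is fixed by our choice of convention: the leading term $\binom{\alpha}{\beta}e^\beta\otimes T^{\alpha-\beta}_I$, respectively $T^\alpha_{I\wedge K}$, occurs with coefficient one.

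For (i), start from $T^\alpha_I=[e^\alpha\otimes e_I]_d$ with $|\alpha|=a-1$ and $|I|=b+1$. The map $\bar d_{a,b}$ sends it to $[d(e^\alpha\otimes e_I)]_h=\sum_{i\in I}\sgn(i,I)[e^{\alpha+\eps_i}\otimes e_{I\setminus i}]_h$. Next apply the symmetric coproduct $\rP_m^h$, which splits off $e^\beta$ with $\beta\le\alpha+\eps_i$ and $|\beta|=m$. Finally return with $\bar h_{a-m,b}$, that is, apply $h_{a-m-1,b+1}=\frac{1}{a+b-m}\sum_s\partial_{e_s^\vee}(\cdot)\otimes(e_s\wedge\cdot)$ to the remaining factor $e^{\alpha+\eps_i-\beta}\otimes e_{I\setminus i}$.

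The bookkeeping is the main work. Split the sum over $\beta\le\alpha+\eps_i$ into two parts: the terms where $\beta\le\alpha$, and the terms where $\beta$ uses the added $\eps_i$, which we reparametrize as $\beta=\beta'-\eps_s+\eps_i$ with $\beta'\le\alpha$. Combining with the identity $\binom{\alpha+\eps_i}{\beta}=\binom{\alpha}{\beta}+\binom{\alpha}{\beta-\eps_i}$, we expect the following.
\begin{itemize}
\item The $d\!\circ\!h$ contributions recombine through the Cartan identity $hd+dh=\mathrm{id}$ into the diagonal term $\sum_\beta\binom{\alpha}{\beta}e^\beta\otimes T^{\alpha-\beta}_I$.
\item The leftover terms are exactly the correction $\frac{1}{a+b-m}\sum_i\sgn(i,I)\sum_\beta\sum_s\beta_s\binom{\alpha}{\beta}e^{\beta-\eps_s+\eps_i}\otimes T^{\alpha-\beta}_{s\wedge(I\setminus i)}$.
\end{itemize}
To prove the first item cleanly, I would rather avoid expanding the composite naively. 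Instead, decompose $\rP_m^h\circ\bar d$ as $(\mathrm{id}\otimes d)\circ\Delta+(\text{terms where }e_i\text{ lands in }S^m)$; the symmetric coproduct commutes with $d$ on the last factor up to these terms. Then $\bar h\circ[d(\cdot)]_h=\mathrm{id}$ on classes, which yields the diagonal term, and only the second group gets the normalized $h$ applied, which produces the correction. The factor $\beta_s$ then appears as the multiplicity from $\partial_{e_s^\vee}$ acting on $e^{\beta}$ after relabeling.

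For (ii) the computation is simpler: $\bar d_{a,b}$ gives $\sum_i\sgn(i,I)[e^{\alpha+\eps_i}\otimes e_{I\setminus i}]_h$. Wedge with $e_K$, then apply $h_{a-1,b+m+1}$ with denominator $a+b+m$. Write the result as $[e^{\alpha+\eps_i}\otimes e_{I\setminus i}\wedge e_K]$ and again use the Cartan identity on $C_{a-1,b+m+1}$: $hd(e^\alpha\otimes e_{I\wedge K})=e^\alpha\otimes e_{I\wedge K}-dh(\cdot)$, where $dh$ lies in $\im d$ and so vanishes in $\coker d$. The difference between $d(e^\alpha\otimes e_I\wedge e_K)$ and $d(e^\alpha\otimes e_I)\wedge e_K$ is the terms of $d$ that contract $K$. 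After applying $h$ these give $-\frac{1}{a+b+m}\sum_k\sgn(k,K)\sum_s(\alpha_s+\delta_{ks})T^{\alpha+\eps_k-\eps_s}_{I\wedge s\wedge(K\setminus k)}$, since $\partial_{e_s^\vee}e^{\alpha+\eps_k}=(\alpha_s+\delta_{ks})e^{\alpha+\eps_k-\eps_s}$.

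The main obstacle is sign and coefficient tracking in (i): checking that the $\beta$-reindexing produces exactly $\beta_s\binom{\alpha}{\beta}$ and the sign $\sgn(i,I)$ with the factor order $s\wedge(I\setminus i)$.
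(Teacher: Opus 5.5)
Your proposal is correct and is essentially the paper's proof. It uses the same factorizations $\rP_m=(1\otimes\bar h_{a-m,b})\circ\Delta_m\circ\bar d_{a,b}$ and $\rQ'_m=\bar h_{a,b+m}\circ\mu_m\circ(\bar d_{a,b}\otimes 1)$. It makes the same split of $\Delta_m(e^{\alpha+\eps_i})$: the part factoring through $d$ gives the principal part via $\bar h\bar d=\mathrm{id}$, and the part with $e_i$ in the $S^m$ factor gives the off-diagonal terms via $\binom{\alpha}{\beta-\eps_s}(\alpha-\beta+\eps_s)_s=\beta_s\binom{\alpha}{\beta}$. In (ii) it makes the same comparison of $d(e^\alpha\otimes e_I\wedge e_K)$ with $d(e^\alpha\otimes e_I)\wedge e_K$.
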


\begin{remark} \label{rmk:parts}
In each formula, the first summand is the \defi{principal part}; the remaining
summands are the \defi{off-diagonal terms}, obtained by exchanging a row
label with a column or exterior label.
\end{remark}

\begin{proof}
\textbf{(i)}
The factorization $\rP_m=(1\otimes\bar h_{a-m,b})\circ\Delta_m\circ\bar d_{a,b}$
uses the component
\(\Delta_m\colon S^aV\to S^mV\otimes S^{a-m}V\) of symmetric
comultiplication on the symmetric factor. Multiplicativity of symmetric
comultiplication gives
\[
\Delta_m(e^{\alpha+\eps_i})
=
\sum_{\substack{\beta\le\alpha\\|\beta|=m}}
\binom{\alpha}{\beta}
e^\beta\otimes e^{\alpha-\beta+\eps_i}
+
\sum_{\substack{\delta\le\alpha\\|\delta|=m-1}}
\binom{\alpha}{\delta}
e^{\delta+\eps_i}\otimes e^{\alpha-\delta}.
\]
After inserting this decomposition and summing over~\(i\) in
\(\bar d_{a,b}(T_I^\alpha)\), the first sum factors through
\(\bar d_{a-m,b}\), and hence gives the principal part because
\(\bar h_{a-m,b}\bar d_{a-m,b}=\mathrm{id}\). Applying
\(\bar h_{a-m,b}\) to the second sum and setting
\(\beta=\delta+\eps_s\), the identity
\[
 \binom{\alpha}{\beta-\eps_s} (\alpha - \beta+ \eps_s)_s = 
 \beta_s \binom{\alpha}{\beta}. 
\]
then gives exactly the off-diagonal term in~\textup{(i)}.

\textbf{(ii)}
Let \(\mu_m([g\otimes\eta]_h\otimes e_K)
:=[g\otimes(\eta\wedge e_K)]_h\). Then
$\rQ'_m=\bar h_{a,b+m}\circ\mu_m\circ(\bar d_{a,b}\otimes1)$.
Moreover,
\begin{align*}
\bar d_{a,b+m}(T^\alpha_{I\wedge K})
&=\mu_m(\bar d_{a,b}(T^\alpha_I)\otimes e_K)
-(-1)^b\left[\sum_{k\in K}\sgn(k,K)e^{\alpha+\eps_k}
\otimes e_I\wedge e_{K\setminus k}\right]_h.
\end{align*}
Apply \(\bar h_{a,b+m}\) and use
\(e_s\wedge e_I=(-1)^{b+1}e_I\wedge e_s\). The two signs combine to
give the correction in~\textup{(ii)}. \qedhere
\end{proof}

Finally, we assemble these into four compositions of Pieri maps, which provide the hook analogues of the classical Garnir relations~\cite[\S7.4]{Fulton1997}.

\begin{definition}[Hook Pieri compositions]\label{def:hook-pieri-compositions}
For \(a,c\ge1\), \(b,d,r\ge0\), define the following maps whenever the
displayed hooks exist:
\begin{align*}
\Phi_m&\colon \bS_{(a,1^b)}(V)\otimes\bS_{(c,1^d)}(V)
\longrightarrow \bS_{(a+m,1^b)}(V)\otimes\bS_{(c-m,1^d)}(V),\\
\Psi_{m,r}&\colon \bS_{(a,1^b)}(V)\otimes\bS_{(c,1^d)}(V)\otimes S^r(V)
\longrightarrow \bS_{(a+m,1^b)}(V)\otimes\bS_{(c-m+r,1^d)}(V),\\
\Phi'_m&\colon \bS_{(a,1^b)}(V)\otimes\bS_{(c,1^d)}(V)
\longrightarrow \bS_{(a,1^{b+m})}(V)\otimes\bS_{(c,1^{d-m})}(V),\\
\Psi'_{m,r}&\colon \bS_{(a,1^b)}(V)\otimes\bS_{(c,1^d)}(V)
\otimes\textstyle\bigwedge^r(V)
\longrightarrow \bS_{(a,1^{b+m})}(V)\otimes\bS_{(c,1^{d-m+r})}(V).
\end{align*}
Thus \(0\le m\le c-1\) in the unprimed lines and \(0\le m\le d\) in
the primed lines. If \(\tau\) denotes the canonical symmetry of adjacent
tensor factors, the composites are
\begin{align*}
\Phi_m&=(\rQ_m\otimes1)\circ(\tau\otimes1)\circ(1\otimes\rP_m),&
\Psi_{m,r}&=(1\otimes\rQ_r)\circ(1\otimes\tau)\circ(\Phi_m\otimes1),\\
\Phi'_m&=(\rQ'_m\otimes1)\circ(1\otimes\rP'_m),&
\Psi'_{m,r}&=(1\otimes\rQ'_r)\circ(\Phi'_m\otimes1).
\end{align*}
The unprimed maps are the \defi{row Pieri compositions}; the primed maps are
the \defi{column Pieri compositions}.
\end{definition}

\begin{proposition} \label{prop:relations}
  Let \(a>c\), \(b>d\), and \(m,r\ge0\). With
  \(R_{(a-1,b),(c-1,d)}\) as defined in
  \S\ref{ssec:universal-hook-algebra}, set
  \[
  R:=R_{(a-1,b),(c-1,d)}
  \subseteq\bS_{(a,1^b)}(V)\otimes\bS_{(c,1^d)}(V).
  \]
  For this fixed target, the relevant instances are
  \begin{align*}
  \Psi_{m,r}&\colon
  \bS_{(a-m,1^b)}(V)\otimes\bS_{(c+m-r,1^d)}(V)\otimes S^r(V)
  \longrightarrow\bS_{(a,1^b)}(V)\otimes\bS_{(c,1^d)}(V),\\
  \Psi'_{m,r}&\colon
  \bS_{(a,1^{b-m})}(V)\otimes\bS_{(c,1^{d+m-r})}(V)
  \otimes\textstyle\bigwedge^r(V)
  \longrightarrow\bS_{(a,1^b)}(V)\otimes\bS_{(c,1^d)}(V).
  \end{align*}
  Then:
  \begin{enumerate}
  \item If \(c-r\ge a-m\ge1\), then \(\im\Psi_{m,r}\subseteq R\).
  \item If \(d-r\ge b-m\ge0\), then \(\im\Psi'_{m,r}\subseteq R\).
  \end{enumerate}
\end{proposition}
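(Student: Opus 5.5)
The plan is to show that the Frobenius union projection \(c\colon\bS_{(a,1^b)}(V)\otimes\bS_{(c,1^d)}(V)\twoheadrightarrow\bS_\lambda(V)\), with \(\lambda\leftrightarrow(a-1,c-1\mid b,d)\), kills \(\im\Psi_{m,r}\) and \(\im\Psi'_{m,r}\). Since \(R=\ker c\), this is exactly the claim. Every map involved is \(\GL(V)\)-equivariant, so \(c\circ\Psi_{m,r}\) is an equivariant map out of the source of \(\Psi_{m,r}\). It therefore suffices to show that \(\bS_\lambda(V)\) does not occur in that source, or, more economically, in some intermediate tensor product through which \(\Psi_{m,r}\) factors. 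This is a pure Littlewood--Richardson computation, and no explicit formula from Proposition~\ref{prop:basic-hook-pieri-closed} is needed.

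\emph{Row case.} Unwinding Definition~\ref{def:hook-pieri-compositions}, \(\Psi_{m,r}\) factors through
\[
\bS_{(a-m,1^b)}(V)\otimes S^m(V)\otimes\bS_{(c-r,1^d)}(V)\otimes S^r(V),
\]
where the factor \(S^m\) is taken from the second hook by \(\rP_m\) and glued onto the first by \(\rQ_m\), and \(S^r\) is then glued onto the second. Grouping the two symmetric factors, the image lies in the image of
\[
\bS_{(a-m,1^b)}(V)\otimes\bS_{(c-r,1^d)}(V)\otimes S^m(V)\otimes S^r(V).
\]
I then count constituents \(\bS_\lambda\) by successive Pieri additions, starting from \(\bS_\mu\subseteq\bS_{(a-m,1^b)}\otimes\bS_{(c-r,1^d)}\) and adding horizontal strips of sizes \(m\) and \(r\). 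The key observation concerns the first row of \(\lambda\), which has length \(a\). The hypothesis \(c-r\ge a-m\) makes the second hook's arm, of length \(c-r\), at least as long as the first hook's arm \(a-m\). Any \(\mu\) occurring in their product contains both hooks. In particular, \(\mu\) contains either a row of length \(\ge c-r\) below row one, or a first row of length \(\ge (a-m)+(c-r)\). In the second case \(\mu_1+m+r>a\) after the additions, or else one uses \(c-r\ge a-m\) to get \(\mu_1\ge 2(a-m)\).

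I am not sure this numerics closes cleanly in all cases. A cleaner route is a dimension or weight argument. The top-weight component of \(\bS_\lambda\) in the final target requires the second hook's first row, in the content of \(\lambda\), to be sitting in row two with length \(c-1\) boxes strictly right of column one. After moving \(m\) boxes away and re-adding \(r\), the LR tableau condition (the lattice word) forces the entries from the \(S^r\) strip to lie in row two of \(\lambda\). Combined with \(c-r\ge a-m\), this forces a column-strictness violation in the LR filling of the skew shape \(\lambda/(a,1^b)\). I will try this LR-tableau argument first. The skew shape \(\lambda/(a,1^b)\) is essentially the hook \((c-1,\dots)\) shifted diagonally, so its LR fillings of content \((c,1^d)\) are unique. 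Tracing \(\Psi_{m,r}\) through the factorization \((1\otimes\rQ_r)\circ(\Phi_m\otimes1)\), the inequality \(c-r\ge a-m\) should say that the intermediate shape \((c-r,1^d)\) would need to fit into row two of length \(c\) of \(\lambda\) alongside the \(m\) boxes returned from the first hook, which is impossible.

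\emph{Column case.} This is the conjugate argument. \(\Psi'_{m,r}\) factors through \(\bS_{(a,1^{b-m})}\otimes\bigwedge^m\otimes\bS_{(c,1^{d-r})}\otimes\bigwedge^r\), and \(d-r\ge b-m\) plays the role of \(c-r\ge a-m\). I would either repeat the LR count with rows and columns exchanged, or invoke the \(\omega\)-involution, which swaps \(S\) and \(\bigwedge\) and conjugates all shapes, sending \((a-1,c-1\mid b,d)\) to \((b,d\mid a-1,c-1)\).

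The main obstacle is the precise LR multiplicity-zero verification. This means showing that \(\bS_\lambda\) is absent from the chosen intermediate product exactly under \(c-r\ge a-m\), and choosing the intermediate grouping so that this is true; a too-coarse grouping might contain \(\bS_\lambda\).
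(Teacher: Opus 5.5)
Your reduction is correct and is the one the paper uses. Since $R$ is the kernel of the equivariant projection onto $\bS_\lambda(V)$, with $\lambda\leftrightarrow(a-1,c-1\mid b,d)$, it suffices to show that $\bS_\lambda$ does not occur in the source of $\Psi_{m,r}$. But neither of your concrete verifications can work.

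\begin{itemize}
\item The intermediate $\bS_{(a-m,1^b)}\otimes S^m\otimes\bS_{(c-r,1^d)}\otimes S^r$ does contain $\bS_\lambda$. By Pieri it contains $\bS_{(a,1^b)}\otimes\bS_{(c,1^d)}\supseteq\bS_\lambda$, and $\rQ_m\otimes\rQ_r$ maps it onto the whole target. The vanishing genuinely depends on precomposing with $\rP_m$, so no multiplicity count on that space can close. Your hedged numerics in the ``key observation'' are therefore doomed.
\item Your fallback LR argument on $\lambda/(a,1^b)$ with content $(c,1^d)$ concerns the target, where $\bS_\lambda$ occurs with multiplicity one. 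There is no column-strictness violation to find there.
\end{itemize}

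The missing idea is to work in the actual source, grouped as $(\bS_{(a-m,1^b)}\otimes S^r)\otimes\bS_{(c+m-r,1^d)}$. Read the hypothesis $c-r\ge a-m$ as $c+m-r\ge a=\lambda_1$: the arm of the second source hook is at least as long as the first row of $\lambda$.

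Suppose $\bS_\lambda$ occurred. The Littlewood--Richardson rule would give some $\bS_\mu\subseteq\bS_{(a-m,1^b)}\otimes S^r$ with $(a-m,1^b)\subseteq\mu\subseteq\lambda$. Since $\ell(\lambda)=b+1$, $\mu$ fills the first column of $\lambda$, so $\lambda/\mu$ lies in columns $2,\dots,a$. An LR tableau of shape $\lambda/\mu$ and content $(c+m-r,1^d)$ must place its $c+m-r$ ones in distinct columns. Only $a-1<c+m-r$ columns are available, which is a contradiction.

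The column case then follows by transpose duality, as you propose. Under it the hypothesis becomes $(d+1)-r\ge(b+1)-m\ge1$, which is the row hypothesis for the conjugate hooks.
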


\begin{proof}
  Since the projection defining \(R\) and the Pieri compositions are defined between free modules over the localization of \(\bZ\) at the displayed denominators, it suffices to check the vanishing over \(\bQ\).

The primed assertion follows from the unprimed one by transpose duality for
polynomial functors~\cite[\S3.3.8]{SamSnowden2015}: its inequality becomes
\(d+1-r\ge b+1-m\ge1\). We prove the unprimed assertion.

Let \(\lambda\) have Frobenius coordinates \((a-1,c-1\mid b,d)\). Since
\(R\) is the kernel of the projection onto \(\bS_\lambda(V)\), it suffices
to show that \(\bS_\lambda\) does not occur in the source of \(\Psi_{m,r}\).
Otherwise, the Littlewood--Richardson rule gives a summand
\(\bS_\mu\) of \(\bS_{(a-m,1^b)}\otimes S^r\) such that
\(\bS_\lambda\) occurs in
\(\bS_\mu\otimes\bS_{(c+m-r,1^d)}\). Pieri's rule and the
Littlewood--Richardson rule give
\[
(a-m,1^b)\subseteq\mu\subseteq\lambda,
\]
so \(\ell(\mu)=\ell(\lambda)=b+1\), and \(\lambda/\mu\) contains no cell
in its first column. A Littlewood--Richardson tableau of this skew shape and content
\((c+m-r,1^d)\) would therefore place \(c+m-r\) entries equal to \(1\) in
the remaining \(a-1\) columns. This is impossible because
\(c-r\ge a-m\) gives \(c+m-r\ge a\).
\end{proof}

\section{Elimination Systems for Hook Pieri Maps}\label{sec:inv-matrices}

This section proves the invertibility of the \emph{elimination matrices},
which record coefficients of the hook Pieri maps on the finite swap sets of
Definitions~\ref{def:closure-set-row} and~\ref{def:Theta-closure-matrix}.
After normalization, each elimination matrix is the identity plus a scalar
multiple of a matrix identified in
\S\ref{ssec:orbit-quotient} with a shifted principal submatrix of the
exterior-power action induced by a rank-one perturbation of $-I$. The
resulting characteristic polynomials give the row and column criteria of
Theorems~\ref{thm:row-elimination-invertible} and
\ref{thm:Theta-elim-invertible}, including the exact row singularity locus
and explicit positive-characteristic bounds.

\subsection{Exchange matrices}\label{ssec:orbit-quotient}

Fix an ordered basis $e_1,\dots,e_n$ of a vector space $E$, positive integers $t_1,\dots,t_n$, and set $N=\sum_i t_i$. Define the \defi{weighted label operator} $\Gamma_t \colon E \to E$ by
\[
  \Gamma_t(e_i)=u_t-e_i=(t_i-1)e_i+\sum_{j\ne i}t_j e_j.
\]
We can also write this as $\Gamma_t=u_t\otimes\mathbf 1-\operatorname{id}_E$
where $u_t=\sum_i t_i e_i$, and  $\mathbf 1\in E^\vee$ is defined by $\mathbf 1(e_i)=1$ for all $1\le i\le n$.

For $0\le k\le n$, the
representation of $\GL(E)$ on $\bigwedge^kE$ differentiates to
$\mathfrak{gl}(E)\longrightarrow\mathfrak{gl}(\textstyle \bigwedge^kE)$ given by $T\mapsto T^{\wedge k}$ where
\[
  T^{\wedge k}(v_1\wedge\cdots\wedge v_k)=\displaystyle \sum_{\ell=1}^k v_1\wedge\cdots \wedge T(v_\ell) \wedge \cdots\wedge v_k.
\]
This is not the multiplicative exterior power $\bigwedge^kT$.  After extending scalars and triangularizing $T$, the induced wedge basis triangularizes $T^{\wedge k}$; its diagonal entries are the sums of $k$ diagonal entries of $T$. Hence, we have
\[
  \Gamma_t^{\wedge k}(e_I)=\Bigl(\sum_{i\in I}(t_i-1)\Bigr)e_I+\sum_{i\in I,\ j\notin I}\epsilon(I;i,j)\,t_j\,e_{(I\setminus i)\cup j},
\]
where $\epsilon(I;i,j)$ is the sign obtained by replacing $i$ by $j$ in the increasing sequence for $I$ and sorting.  This is the exchange sign used below. 

\begin{proposition}\label{cor:Gamma-spectrum}
  The characteristic polynomial of $\Gamma_t^{\wedge k}$ is
\[
  \det\bigl(xI-\Gamma_t^{\wedge k}\bigr)=\bigl(x-(N-k)\bigr)^{\binom{n-1}{k-1}}\,(x+k)^{\binom{n-1}{k}},
\]
with the convention $\tbinom qr=0$ for $r<0$ or $r>q$.  In particular,
\[
  \det\bigl(\Gamma_t^{\wedge k}\bigr)=(N-k)^{\binom{n-1}{k-1}}\,(-k)^{\binom{n-1}{k}}.
\]
\end{proposition}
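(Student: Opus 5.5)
Since $\Gamma_t=u_t\otimes\mathbf 1-\operatorname{id}_E$ is a rank-one perturbation of $-\operatorname{id}_E$, I will first find its eigenvalues on $E$ and then pass to $\bigwedge^k E$ through the derivation $T\mapsto T^{\wedge k}$. The vector $u_t$ satisfies $\Gamma_t(u_t)=\mathbf 1(u_t)\,u_t-u_t=(N-1)u_t$. The hyperplane $\ker\mathbf 1$ has dimension $n-1$, and $\Gamma_t$ acts on it as $-1$. Since $\mathbf 1(u_t)=N>0$, we have $u_t\notin\ker\mathbf 1$; in characteristic zero this holds, and in general it needs $N\ne0$ in $\bk$, which I treat separately. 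So $\Gamma_t$ is diagonalizable with eigenbasis $f_1=u_t,f_2,\dots,f_n$, where $f_2,\dots,f_n$ span $\ker\mathbf 1$. The eigenvalue $N-1$ occurs once and $-1$ occurs $n-1$ times.

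The operator $T^{\wedge k}$ is the derivation extension of $T$. Hence if $Tf_i=\lambda_if_i$, then $T^{\wedge k}(f_J)=\bigl(\sum_{j\in J}\lambda_j\bigr)f_J$ for every $k$-subset $J$, and the $f_J$ form a basis of $\bigwedge^kE$. The eigenvalues are as follows.
\begin{itemize}
\item If $1\in J$, the eigenvalue is $(N-1)+(k-1)(-1)=N-k$. There are $\binom{n-1}{k-1}$ such $J$.
\item If $1\notin J$, the eigenvalue is $-k$. There are $\binom{n-1}{k}$ such $J$.
\end{itemize}
Multiplying the linear factors gives the characteristic polynomial, and setting $x=0$ together with the count $\binom{n-1}{k-1}+\binom{n-1}{k}=\binom nk$ gives the determinant formula. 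The sign works out because $\det(-M)=(-1)^{\binom nk}\det(xI-M)|_{x=0}$ and the factors combine as stated. The conventions for $\binom qr$ take care of the boundary cases $k=0$ and $k=n$.

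The only real subtlety is characteristic independence, since the paper later wants positive-characteristic bounds. I would avoid it by working over $\bZ[t_1,\dots,t_n]$ with the $t_i$ as indeterminates. The characteristic polynomial has integer polynomial entries, so it suffices to prove the identity over $\bQ(t)$. There $N\ne0$, so the diagonalization above is valid, and the identity then specializes to any field and any values of $t$. Alternatively, triangularize: $\Gamma_t$ has characteristic polynomial $(x-(N-1))(x+1)^{n-1}$ by the matrix determinant lemma, and the triangularization remark in the paper shows that the eigenvalues of $T^{\wedge k}$ are the $k$-fold sums of the eigenvalues of $T$ with multiplicity. That gives the same count of subsets containing the distinguished eigenvalue $N-1$ or not, and it needs no diagonalizability. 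I expect no step to be a serious obstacle; the multiset-of-sums argument is the main point to state carefully.
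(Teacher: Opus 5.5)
Your proposal is correct and is essentially the argument the paper intends: its omitted proof rests on the preceding remark that triangularizing $T$ triangularizes $T^{\wedge k}$, with diagonal entries the $k$-fold sums of eigenvalues of $T$, applied to the eigenvalues $N-1$ (once) and $-1$ ($n-1$ times) of the rank-one perturbation $\Gamma_t=u_t\otimes\mathbf 1-\operatorname{id}_E$; this is your alternative route, and your diagonalization is the same computation. The one slip is the sign identity, which should read $\det(M)=(-1)^{\binom nk}\det(xI-M)|_{x=0}$ rather than $\det(-M)=\cdots$; alternatively, take the product of the eigenvalues directly, and the determinant formula follows as you say.
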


The proof is straightforward from the discussion above and omitted.

\begin{example}\label{ex:orbit-quotient-prototype}
Take labels $\{1,2,3\}$ with multiplicities $t=(2,1,1)$, so $N=4$, and $k=2$.  Then $\Gamma_t(e_i)=u_t-e_i$ gives $\Gamma_t(e_1)=e_1+e_2+e_3$, $\Gamma_t(e_2)=2e_1+e_3$, $\Gamma_t(e_3)=2e_1+e_2$, and in the wedge basis $e_{12},e_{13},e_{23}$,
\[
  \Gamma_t^{\wedge 2}=\begin{pmatrix} 1 & 1 & -2\\ 1 & 1 & 2\\ -1 & 1 & 0\end{pmatrix},
  \qquad
  \det\bigl(xI-\Gamma_t^{\wedge 2}\bigr)=(x-2)^2(x+2),
\]
matching $\{N-k,-k\}=\{2,-2\}$ with multiplicities $\binom{2}{1}=2$ and $\binom{2}{2}=1$.
\end{example}

For $C\subseteq\{1,\dots,n\}$, let $E_C$ be the span of $\{ e_i:i\in C\}$ and let $\Gamma_{t_C}$ be the weighted label operator for the restricted multiplicities.  Set
\[
  F=\{1,\dots,n\}\setminus C,\qquad
  N_C=\sum_{i\in C}t_i,\qquad N_F=\sum_{i\in F}t_i.
\]
Assume $|F|\le k\le n$, put $p=k-|F|$, and let $\Omega$ be the set of $k$-subsets containing $F$.  Write $A_\Omega$ for the corresponding principal submatrix of $\Gamma_t^{\wedge k}$.

\begin{proposition} \label{prop:principal-sub}
  The isomorphism
    $\Phi_F\colon \bigwedge^pE_C\to
    {\rm span}\{ e_J:J\in\Omega\}$ defined by $\Phi_F(e_I) = e_I\wedge e_F$  satisfies
  \[
    \Phi_F^{-1}A_\Omega\Phi_F=(N_F-|F|)\,\operatorname{id}+\Gamma_{t_C}^{\wedge p}.
  \]
  If $C\ne\varnothing$, it follows that
  \begin{align*}
    \det(xI-A_\Omega)
    &=(x-(N-k))^{\binom{|C|-1}{p-1}}
     (x-(N_F-k))^{\binom{|C|-1}{p}}\\
    \det(A_\Omega)
    &=(N-k)^{\binom{|C|-1}{p-1}}
     (N_F-k)^{\binom{|C|-1}{p}}.
  \end{align*}
  If $C=\varnothing$, then necessarily $k=n$ and $A_\Omega$ is the $1\times1$ matrix $(N-k)$.
\end{proposition}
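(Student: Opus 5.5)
The plan is to compute \(A_\Omega\Phi_F\) on a basis vector \(e_I\) with \(I\subseteq C\), \(|I|=p\), using the explicit formula for \(\Gamma_t^{\wedge k}\). Then \Cref{cor:Gamma-spectrum}, applied to the restricted operator \(\Gamma_{t_C}\), gives the determinants.

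\textbf{Step 1: the conjugation identity.} Set \(J=I\cup F\). Since
\[
\Gamma_t^{\wedge k}(e_I\wedge e_F)=\Gamma_t^{\wedge p}(e_I)\wedge e_F+e_I\wedge\Gamma_t^{\wedge|F|}(e_F),
\]
I compute each piece and then project onto \(\operatorname{span}\{e_J:J\in\Omega\}\), i.e.\ keep only wedge monomials containing \(F\).

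For the second piece, each \(e_f\) with \(f\in F\) is replaced by \(u_t-e_f\). After wedging with \(e_I\wedge e_{F\setminus f}\), the surviving terms are:
\begin{itemize}
\item \(-e_f\), which contributes \(-1\) per element of \(F\);
\item \(t_f e_f\), which contributes \(t_f\);
\item \(t_j e_j\) with \(j\in C\setminus I\), which removes \(f\) from the monomial and so is killed by the projection.
\end{itemize}
This piece therefore contributes exactly \(\bigl(\sum_{f\in F}(t_f-1)\bigr)e_I\wedge e_F=(N_F-|F|)\,\Phi_F(e_I)\).

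For the first piece, each \(e_i\) with \(i\in I\) is replaced by \(u_t-e_i=u_{t_C}-e_i+\sum_{f\in F}t_fe_f\). The \(F\)-components die after wedging with \(e_F\), so what remains is \(\Gamma_{t_C}(e_i)\). Hence this piece equals \(\Phi_F\bigl(\Gamma_{t_C}^{\wedge p}(e_I)\bigr)\).

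No sign bookkeeping is needed: \(\Phi_F\) appends \(e_F\) on the right, and both computations are done in that factor order. Adding the two pieces gives the claimed identity.

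\textbf{Step 2: characteristic polynomial and determinant.} By Step 1, the eigenvalues of \(A_\Omega\) are those of \(\Gamma_{t_C}^{\wedge p}\) shifted by \(N_F-|F|\). \Cref{cor:Gamma-spectrum}, with \(n\) replaced by \(|C|\), \(N\) by \(N_C\), and \(k\) by \(p\), gives eigenvalues \(N_C-p\) with multiplicity \(\binom{|C|-1}{p-1}\) and \(-p\) with multiplicity \(\binom{|C|-1}{p}\). After the shift they become
\[
N_C-p+N_F-|F|=N-k
\qquad\text{and}\qquad
-p+N_F-|F|=N_F-k.
\]
This yields the stated characteristic polynomial. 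Setting \(x=0\), the sign is \((-1)^{\binom{|C|}{p}}\) both from \(\det(-A_\Omega)\) and from the product of the factors, so the two cancel and the determinant formula follows.

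\textbf{Step 3: the degenerate case.} If \(C=\varnothing\), then \(F\) is the whole index set and \(k\ge|F|=n\), so \(k=n\). Then \(\Omega=\{F\}\), and the diagonal entry is \(\sum_i(t_i-1)=N-n=N-k\).

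\textbf{Expected obstacle.} The only real care point is the \(p=0\) edge case, where \(\bigwedge^0E_C\) is one-dimensional. Here one must check that the binomial convention makes the formula read \((x-(N_F-k))^1\), which is consistent with Step 1.
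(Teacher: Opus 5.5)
Your proposal is correct and follows the paper's proof essentially line for line. You expand $\Gamma_t^{\wedge k}(e_I\wedge e_F)$ by the Leibniz rule, drop the $E_F$-components in the $I$-part, keep only the diagonal $(t_f-1)e_f$ terms of the $F$-part after projecting onto $\Omega$, and shift the spectrum from Proposition~\ref{cor:Gamma-spectrum} by $N_F-|F|$; your checks of the determinant sign via $\binom{|C|}{p}=\binom{|C|-1}{p-1}+\binom{|C|-1}{p}$ and of the cases $p=0$ and $C=\varnothing$ supply details the paper leaves implicit.
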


\begin{proof}
Apply the derivation formula to $e_I\wedge e_F$ and project onto the span of $\{e_J:J\in\Omega\}$.  The terms acting on $e_I$ give $\Gamma_{t_C}^{\wedge p}$: components in $E_F$ vanish upon wedging with $e_F$.  For each $i\in F$, only the diagonal term $(t_i-1)e_i$ survives the projection; every nonzero exchange removing $i$ leaves $\Omega$.  This proves the conjugacy formula.

For $C\ne\varnothing$, Proposition~\ref{cor:Gamma-spectrum} gives the two roots $N_C-p$ and $-p$, with multiplicities $\binom{|C|-1}{p-1}$ and $\binom{|C|-1}{p}$.  Adding $N_F-|F|$ gives $N-k$ and $N_F-k$, respectively.
\end{proof}

\subsection{Rows}\label{ssec:rows-inv}

Fix $a\ge1$, $b\ge0$, and $0\le m\le a-1$, and consider
\[
  \rP_m \colon \bS_{(a,1^b)}(V) \to S^m(V) \otimes \bS_{(a-m,1^b)}(V).
\]
The matrix below isolates the coefficients of $\rP_m$ among tabloids with fixed total content.

\begin{definition}\label{def:closure-set-row}
Fix a basis vector \(T_{J_0}^{\rho+\tau_0}\) and a subset
\(K_0\subseteq J_0\). The parameters have the following meanings.
\begin{center}
\renewcommand{\arraystretch}{1.25}
\begin{tabular}{l|l|l}
Symbol & Meaning & Size \\\hline
$J_0\subseteq[n]$ & column entries of $T_{J_0}^{\rho+\tau_0}$ & $|J_0|=b+1$ \\
$\tau_0\in\bN^n$ & arm content selected for extraction & $|\tau_0|=m$ \\
$\rho\in\bN^n$ & arm content retained in the hook factor & $|\rho|=a-m-1$ \\
$K_0\subseteq J_0$ & column entries allowed to exchange with labels counted by $\tau_0$ &
\end{tabular}
\end{center}

We keep \(\rho\) and \(J_0\setminus K_0\) fixed, while labels counted by
\(\tau_0\) may exchange with entries of \(K_0\) between the \(S^m(V)\)-factor
and the column. The \defi{swap set} \(\Omega=\Omega(J_0;\tau_0,K_0,\rho)\) consists of the pairs \((\tau,J)\) with \(|\tau|=m\) and \(|J|=b+1\) satisfying
\[
\tau+\mathbf 1_J\;=\;\tau_0+\mathbf 1_{J_0}, \qquad J_0 \setminus K_0 \subseteq J.
\]
Define the \defi{elimination matrix} \(M=M(J_0;\tau_0,K_0,\rho)\), with rows and columns indexed by $\Omega$:
\[
M_{(\tau,J),(\tau',J')}
:=\operatorname{coeff}_{e^\tau\otimes T^\rho_J}
\bigl(\rP_m(T^{\rho+\tau'}_{J'})\bigr),
\]
where $\operatorname{coeff}$ refers to taking the coefficient of the corresponding basis vector in the explicit $d$-model expansion of Proposition~\ref{prop:basic-hook-pieri-closed}(i). \qedhere
\end{definition}

\begin{example}\label{ex:row-elim-intro}
Assume $\operatorname{char}\bk=0$.  Let $V=\bk^4$ and $a=4$, $b=2$, $m=2$.  With $\rho=\eps_1$, $\tau_0=2\eps_2$, $J_0=\{1,3,4\}$, $K_0=\{3,4\}$, the conditions $\tau+\mathbf 1_J=\eps_1+2\eps_2+\eps_3+\eps_4$ and $J_0\setminus K_0\subseteq J\subseteq\{1,2,3,4\}$ give three pairs $(\tau_i,J_i)\in\Omega$ with source tabloids $T_i := T^{\rho+\tau_i}_{J_i}$:
\[
\ytableausetup{boxsize=1.1em}
\begin{aligned}
T_1&=
\begin{ytableau}
1 & 1 & *(orange!35)2 & *(orange!35)2\\
*(blue!30)3\\
*(blue!30)4
\end{ytableau},
&\qquad
T_2&=
\begin{ytableau}
1 & 1 & *(orange!35)2 & *(blue!30)3\\
*(orange!35)2\\
*(blue!30)4
\end{ytableau},
&\qquad
T_3&=
\begin{ytableau}
1 & 1 & *(orange!35)2 & *(blue!30)4\\
*(orange!35)2\\
*(blue!30)3
\end{ytableau}.
\end{aligned}
\]
Blue marks the entries originating in \(K_0\), orange those originating
in \(\tau_0\), and the fixed entries are uncolored. All three tabloids share the label
multiset $\{1,1,2,2,3,4\}$; only its distribution between the arm and
column varies. Picking the coefficient of $e^\tau\otimes T^{\eps_1}_J$ in $\rP_2(T_i)$ for each
$(\tau,J)\in\Omega$ yields
\[
\begingroup
\setlength{\arraycolsep}{6pt}
M\;=\;
\bordermatrix{
& T^{122}_{134} & T^{123}_{124} & T^{124}_{123} \cr
\rule{0pt}{2.6ex} e_2^2\otimes T^1_{134}   & 1        & \tfrac14 & -\tfrac14 \cr
\rule{0pt}{2.6ex} e_2e_3\otimes T^1_{124}  & \tfrac12  & \tfrac54 & \tfrac14 \cr
\rule{0pt}{2.6ex} e_2e_4\otimes T^1_{123}  & -\tfrac12 & \tfrac14 & \tfrac54
},
\qquad
\det M=\tfrac98.
\endgroup
\]
Using notation as in the next proof, we have \(F=\{1\}\), \(C=\{2,3,4\}\), \(p=2\), \(N_F=1\), and
\(d=a+b-m=4\).
Thus the two eigenvalues in the proof below are \(3/2\) and \(1/2\), with
multiplicities two and one; their product is \(9/8\).
\end{example}

\begin{theorem}\label{thm:row-elimination-invertible}
Suppose $\bk$ has characteristic $0$ or $>a+b$.  Then $M=M(J_0;\tau_0,K_0,\rho)$ is singular precisely when
\[
  m=a-1,\qquad J_0=K_0,\qquad \text{and}\qquad  \supp(\tau_0)\nsubseteq J_0.
\]
\end{theorem}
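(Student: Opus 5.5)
The plan is to identify $M$, after a diagonal change of basis, with $\mathrm{id}+\frac1d A_\Omega$, where $d:=a+b-m$ and $A_\Omega$ is the principal submatrix of $\Gamma_t^{\wedge k}$ treated in Proposition~\ref{prop:principal-sub}. The spectrum of $M$ can then be read off from that proposition.

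\emph{Setup.} Take the label set $L:=\supp(\tau_0)\cup J_0\subseteq[n]$, with multiplicities $t_i:=(\tau_0)_i+(\mathbf 1_{J_0})_i$, and set $k:=b+1$. Then $N=\sum_i t_i=m+b+1$. Let $F:=J_0\setminus K_0$ and $C:=L\setminus F$, so that $p=k-|F|=|K_0|$ and $N_F=|F|$. A pair $(\tau,J)\in\Omega$ is determined by $J$, since $\tau=\tau_0+\mathbf 1_{J_0}-\mathbf 1_J$. The condition $J_0\setminus K_0\subseteq J$ says exactly that $J$ ranges over the $k$-subsets of $L$ containing $F$. This is the index set $\Omega$ of Proposition~\ref{prop:principal-sub}, with $E$ spanned by $\{e_i:i\in L\}$.

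\emph{Matching entries.} Expand $\rP_m(T^{\rho+\tau'}_{J'})$ using Proposition~\ref{prop:basic-hook-pieri-closed}(i) and keep only the terms $e^\tau\otimes T^\rho_J$. In the principal part only $\beta=\tau'$ survives, with coefficient $\binom{\rho+\tau'}{\tau'}$, on the diagonal. An off-diagonal term arises from $i\in J'$ and $s\in\supp(\beta)$, and replaces $J'$ by $(J'\setminus i)\cup s$. Two kinds of term contribute to the retained coordinates:
\begin{itemize}
\item those with $\beta=\tau'$ and $s\notin J'$, which give the exchanges $J=(J'\setminus i)\cup s$ of $\Gamma^{\wedge k}$;
\item those with $s=i$, which give diagonal contributions $\sum_{i\in J'}(\tau'_i+\cdots)$.
\end{itemize}
In both cases the factor $\beta_s$ is what produces the weight $t_s$ or $t_i-1$, after the binomials $\binom{\rho+\tau'}{\tau'}$ are absorbed. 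Terms with $\beta\neq\tau'$ land outside the retained coordinates, since their $T^{\rho'}$ has $\rho'\neq\rho$.

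The claim to establish is then
\[
D M D^{-1}=\mathrm{id}+\tfrac1d A_\Omega
\]
for a diagonal matrix $D$ built from $\binom{\rho+\tau}{\tau}$. The sign $\sgn(i,I)$, combined with the reordering of $s\wedge(I\setminus i)$, should reproduce the exchange sign $\epsilon(I;i,j)$. I would check the normalizations against Example~\ref{ex:row-elim-intro}, where the matrix $\mathrm{id}+\frac14\Gamma^{\wedge2}_{(2,1,1)}$ has eigenvalues $3/2,3/2,1/2$, matching the example. I expect this bookkeeping to be the main obstacle: aligning the diagonal contributions from $F$ and from $\rho$-labels that coincide with column labels, and fixing the sign conventions exactly.

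\emph{Conclusion.} Granting the conjugation, Proposition~\ref{prop:principal-sub} gives the eigenvalues of $A_\Omega$ as $N-k=m$ and $N_F-k=-p$, with multiplicities $\binom{|C|-1}{p-1}$ and $\binom{|C|-1}{p}$. (If $C=\varnothing$, the only eigenvalue is $m$.) Hence
\[
\det M=\Bigl(1+\tfrac md\Bigr)^{\binom{|C|-1}{p-1}}\Bigl(1-\tfrac pd\Bigr)^{\binom{|C|-1}{p}}.
\]

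The first factor is $(a+b)/d$, which is a unit when $\operatorname{char}\bk=0$ or $\operatorname{char}\bk>a+b$. The second factor is $(d-p)/d$ with $0\le d-p\le a+b$, so $M$ is singular exactly when $p=d$ and $\binom{|C|-1}{p}\neq0$.

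Since $p=|K_0|\le b+1$ and $d=a+b-m\ge b+1$, the equality $p=d$ forces both $m=a-1$ and $K_0=J_0$. In that case $F=\varnothing$ and $C=L$, and the condition $\binom{|L|-1}{b+1}\ne0$ means $|L|>|J_0|$, i.e.\ $\supp(\tau_0)\nsubseteq J_0$. This is exactly the singularity criterion of the theorem.
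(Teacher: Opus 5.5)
Your route is the paper's. You strip the binomial factors, identify what remains with $\mathrm{id}+\frac1d A_\Omega$, where $A_\Omega$ is the principal submatrix of $\Gamma_t^{\wedge(b+1)}$ on the $(b+1)$-subsets of $\supp(t)$ containing $F$, and read off the spectrum from Proposition~\ref{prop:principal-sub}. The entry matching you flag as the main obstacle is easier than you fear. Every retained term has $\beta=\tau'$, so $\rho$ never contributes. The $s=i$ terms give exactly $\sum_{i\in J'}\tau'_i=\sum_{i\in J'}(t_i-1)$ on the diagonal, with no further ``$\cdots$''. Terms with $s\in J'\setminus i$ vanish, and $\sgn(i,J')\,e_s\wedge e_{J'\setminus i}=\epsilon(J';i,s)\,e_{(J'\setminus i)\cup s}$ gives the exchange sign.

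Two steps, however, fail as written. First, the binomials do not enter by conjugation. The factor $c_{\tau'}=\binom{\rho+\tau'}{\tau'}$ multiplies the whole column indexed by $(\tau',J')$, so $M=(\mathrm{id}+\frac1dA_\Omega)D$ with $D=\operatorname{diag}(c_{\tau'})$. In general $M$ is not similar to $\mathrm{id}+\frac1dA_\Omega$. For example, take $a=4$, $b=1$, $m=2$, $\rho=\eps_2$, $\tau_0=2\eps_2$, $J_0=K_0=\{1,3\}$. Then $D=\operatorname{diag}(2,3,2)$, and the two traces are $25/3$ and $11/3$. So Proposition~\ref{prop:principal-sub} computes the spectrum of $MD^{-1}$, and your $\det M$ is missing the factor $\det D$. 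What the argument actually needs, and you never check, is that $D$ is invertible in $\bk$. This holds because $|\rho+\tau'|=a-1$, so each $c_{\tau'}$ has only prime factors $\le a-1$.

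Second, the claim $N_F=|F|$ is false. In fact $N_F=\sum_{i\in F}t_i=|F|+\sum_{i\in F}(\tau_0)_i$, and nothing prevents $\tau_0$ from being supported on $F=J_0\setminus K_0$. This genuinely happens in Lemma~\ref{lem:Omega-elimination-lower}, where $F$ can contain $\min\supp(\tau_0)$. For $a=3$, $b=1$, $m=2$, $\rho=0$, $\tau_0=\eps_1+\eps_3$, $J_0=\{1,2\}$, $K_0=\{2\}$, one gets $A_\Omega=\begin{pmatrix}1&1\\1&1\end{pmatrix}$, with eigenvalues $2,0$ rather than your $2,-1$. The correct second eigenvalue of $\mathrm{id}+\frac1dA_\Omega$ is $(|\rho|+N_F)/d$, not $(d-p)/d$, so your displayed determinant and the bound $0\le d-p\le a+b$ must be redone.

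The final criterion does survive the correction. Since $N_F\le N$, the integer $|\rho|+N_F$ lies in $[0,a+b]$, so its nonzero values are units. It vanishes iff $|\rho|=0$ and $F=\varnothing$, i.e.\ iff $m=a-1$ and $K_0=J_0$, which is exactly your condition $p=d$. In that case $N_F=0=|F|$, and your multiplicity count $\binom{|\supp(t)|-1}{b+1}>0\iff\supp(\tau_0)\nsubseteq J_0$ applies unchanged.
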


\begin{proof}
With \(\epsilon(J;i,s)\) and \(\Gamma_t^{\wedge k}\) as in
\S\ref{ssec:orbit-quotient}, set $d=a+b-m$. By
Proposition~\ref{prop:basic-hook-pieri-closed}(i), the part of $\rP_m(T^{\rho+\tau'}_{J'})$ whose second factor has arm content $\rho$ is
\[
c_{\tau'}\left(
e^{\tau'}\otimes T^\rho_{J'}
+\frac1d\sum_{i\in J'}\sum_{s\in\supp(\tau')}
\tau'_s\sgn(i,J')\,
e^{\tau'-\eps_s+\eps_i}\otimes T^\rho_{s\wedge(J'\setminus i)}
\right),
\qquad
c_{\tau'}=\tbinom{\rho+\tau'}{\tau'}.
\]
Since $|\rho+\tau'|=a-1$, each $c_{\tau'}$ is a unit under the characteristic hypothesis.  Thus
\[
  M=M^{\mathrm{norm}}D,\qquad
  M^{\mathrm{norm}}=I+\frac1dL,\qquad
  D=\operatorname{diag}(c_{\tau'})_{(\tau',J')\in\Omega},
\]
where $L$ has diagonal entries
\[
L_{(\tau,J),(\tau,J)}=\sum_{i\in J}\tau_i
\]
and possible off-diagonal entries
\[
L_{(\tau'-\eps_s+\eps_i,(J'\setminus i)\cup s),(\tau',J')}
=\tau'_s\epsilon(J';i,s)
\qquad(i\in J',\ s\notin J'),
\]
whenever the indicated row belongs to $\Omega$; all other entries vanish. Put
\[
  t=\tau_0+\mathbf1_{J_0},\qquad F=J_0\setminus K_0,
  \qquad N=|t|=m+b+1.
\]
The map $(\tau,J)\mapsto J$ identifies $\Omega$ with the $(b+1)$-subsets of $\supp(t)$ containing $F$; its inverse is $J\mapsto(t-\mathbf1_J,J)$.  On $E_t={\rm span}\{ e_i:i\in\supp(t)\}$, the diagonal and off-diagonal entries of $L$ are respectively
\[
  \sum_{i\in J}\tau_i=\sum_{i\in J}(t_i-1),
  \qquad
  \tau'_s\epsilon(J';i,s)=t_s\epsilon(J';i,s),
\]
where the second equality uses $s\notin J'$.  Hence $L$ is the principal submatrix of $\Gamma_t^{\wedge(b+1)}$ indexed by these subsets.

Set $C=\supp(t)\setminus F$, $p=b+1-|F|$, and $N_F=\sum_{i\in F}t_i$.  If $C\ne\varnothing$, Proposition~\ref{prop:principal-sub} gives
\[
\det(xI-M^{\mathrm{norm}})
=\left(x-\frac{a+b}{d}\right)^{\binom{|C|-1}{p-1}}
 \left(x-\frac{|\rho|+N_F}{d}\right)^{\binom{|C|-1}{p}}.
\]
If $C=\varnothing$, then $M^{\mathrm{norm}}$ is the $1\times1$ matrix $((a+b)/d)$.

Since $N_F\le N$, the integers $d$, $a+b$, and every nonzero value of $|\rho|+N_F$ lie between $1$ and $a+b$, and hence are units in $\bk$.  Moreover, $|\rho|+N_F=0$ precisely when $m=a-1$ and $F=\varnothing$, or equivalently $m=a-1$ and $J_0=K_0$.  In that case $C=\supp(t)$, $p=b+1$, and the zero root occurs with multiplicity
\[
  \binom{|\supp(t)|-1}{b+1}.
\]
Since $\supp(t)=J_0\cup\supp(\tau_0)$ and $|J_0|=b+1$, this multiplicity is positive precisely when $\supp(\tau_0)\nsubseteq J_0$.  The diagonal entries of $D$ are also units, so the result follows.
\end{proof}


\subsection{Columns}\label{ssec:cols-inv}

Fix $a\ge1$, $b\ge0$, and $m\ge0$, and consider
\[
\rQ'_m \colon \bS_{(a,1^b)}(V)\otimes\textstyle\bigwedge^m V
\longrightarrow\bS_{(a,1^{b+m})}(V).
\]

\begin{definition} \label{def:Theta-closure-matrix}
Fix a basis tensor \(T_{I^-}^{\rho+\tau_0}\otimes e_{J_0}\), where
\(I^-\cap J_0=\emptyset\). The parameters have the following meanings.
\begin{center}
\renewcommand{\arraystretch}{1.25}
\begin{tabular}{l|l|l}
Symbol & Meaning & Size \\\hline
$I^-\subseteq[n]$ & column index of the hook factor & $|I^-|=b+1$ \\
$J_0\subseteq[n]$ & index set of the exterior factor & $|J_0|=m$ \\
$\rho\in\bN^n$ & arm content held fixed & \\
$\tau_0\in\bN^n$ & arm content redistributed with the labels in $J_0$ & $|\rho+\tau_0|=a-1$
\end{tabular}
\end{center}

We keep \(I^-\) and \(\rho\) fixed and redistribute the content
\(\tau_0+\mathbf1_{J_0}\) between the arm and the exterior factor.
The \defi{swap set} \(\Omega=\Omega(I^-;\tau_0,J_0,\rho)\) consists of the pairs \((\tau,J)\) with \(|\tau|=|\tau_0|\) and \(|J|=m\) satisfying
\[
\tau+\mathbf 1_J\;=\;\tau_0+\mathbf 1_{J_0}, \qquad J\cap I^-=\varnothing.
\]
The \defi{elimination matrix} \(M=M(I^-;\tau_0,J_0,\rho)\), indexed by~\(\Omega\), is defined by
\[
M_{(\tau,J),(\tau',J')}
:=\operatorname{coeff}_{T_{I^-\wedge J}^{\rho+\tau}}
\bigl(\rQ'_m(T_{I^-}^{\rho+\tau'}\otimes e_{J'})\bigr),
\]
where $\operatorname{coeff}$ refers to the explicit $d$-model expansion of Proposition~\ref{prop:basic-hook-pieri-closed}(ii). \qedhere
\end{definition}

\begin{example}\label{ex:col-elim-intro}
Assume $\operatorname{char}\bk=0$.  Take $V=\bk^5$ with $a=3$, $b=1$, $m=2$, $I^-=\{1,2\}$, $J_0=\{3,4\}$, $\tau_0=2\eps_5$, and $\rho=0$.  The conditions $J\cap I^-=\varnothing$ and $\tau+\mathbf1_J=\tau_0+\mathbf1_{J_0}$ give three pairs $(\tau_i,J_i)\in\Omega$.
Their target tabloids $U_i:=T^{\tau_i}_{I^-\wedge J_i}$ are
\[
\ytableausetup{boxsize=1.1em}
\begin{aligned}
U_1&=
\begin{ytableau}
1 & *(orange!35)5 & *(orange!35)5\\
2\\
*(blue!30)3\\
*(blue!30)4
\end{ytableau},
&\qquad
U_2&=
\begin{ytableau}
1 & *(blue!30)4 & *(orange!35)5\\
2\\
*(blue!30)3\\
*(orange!35)5
\end{ytableau},
&\qquad
U_3&=
\begin{ytableau}
1 & *(blue!30)3 & *(orange!35)5\\
2\\
*(blue!30)4\\
*(orange!35)5
\end{ytableau}.
\end{aligned}
\]
Blue marks entries originating in $J_0$, orange marks those originating in
$\tau_0$, and the fixed entries of $I^-$ are uncolored.
Reading off the coefficient of $U_i$ in $\rQ'_2(T^{\tau_j}_{I^-}\otimes e_{J_j})$ yields
\[
\begingroup
\setlength{\arraycolsep}{6pt}
M\;=\;\bordermatrix{
& T^{55}_{12}\otimes e_{34} & T^{45}_{12}\otimes e_{35} & T^{35}_{12}\otimes e_{45} \cr
\rule{0pt}{2.6ex} T^{55}_{1234} &  \tfrac{2}{3} & -\tfrac16 &  \tfrac16 \cr
\rule{0pt}{2.6ex} T^{45}_{1235} & -\tfrac{1}{3} &  \tfrac12 & -\tfrac16 \cr
\rule{0pt}{2.6ex} T^{35}_{1245} &  \tfrac{1}{3} & -\tfrac16 &  \tfrac12
},
\qquad
\det M=\tfrac{1}{9}.
\endgroup
\]
With notation as in the next proof, we have \(C=\{3,4,5\}\), \(N_C=4\), \(m=2\), and
\(d=a+b+m=6\). The matrix
\(L\) in the proof below has eigenvalues \(4,4,0\), so
\(M=\operatorname{id}-L/6\) has eigenvalues \(1/3,1/3,1\), whose product
is \(1/9\).
Note the structural contrast with the row case (Example~\ref{ex:row-elim-intro}): the fixed labels of the target column are $I^-$ (not the row content).
\end{example}

\begin{theorem}\label{thm:Theta-elim-invertible}
  Suppose $\bk$ has characteristic $0$ or $>a+b+m$.  Then the elimination matrix $M=M(I^-;\tau_0,J_0,\rho)$ is invertible.
\end{theorem}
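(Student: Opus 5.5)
We follow the proof of Theorem~\ref{thm:row-elimination-invertible}, now using the formula for \(\rQ'_m\) in Proposition~\ref{prop:basic-hook-pieri-closed}(ii). Set \(d=a+b+m\). Fix a source tensor \(T^{\rho+\tau'}_{I^-}\otimes e_{J'}\) with \((\tau',J')\in\Omega\), and consider the terms of its image of the form \(T^{\rho+\tau}_{I^-\wedge J}\) with \((\tau,J)\in\Omega\).

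\emph{Principal part.} It contributes \(T^{\rho+\tau'}_{I^-\wedge J'}\), that is, the diagonal entry \(1\).

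\emph{Off-diagonal terms.} These are the summands \(T^{\rho+\tau'+\eps_k-\eps_s}_{I^-\wedge s\wedge(J'\setminus k)}\) with coefficient \(-(\alpha_s+\delta_{k,s})/d\), where \(\alpha=\rho+\tau'\). Such a term lies in the swap set only if \(s\notin I^-\); otherwise the wedge vanishes, since \(s\) would repeat. It must also keep \(\rho\) fixed, so we only count the part of the coefficient coming from \(\tau'+\eps_k\).
\begin{itemize}
\item If \(s=k\), the term returns to the same index. After reordering, its coefficient is \(-(\tau'_k+1)/d\) up to the \(\rho_k\) contribution. We must check how \(\rho_k\) enters; the cleanest choice is to regard \(\rho\)-labels as not exchangeable and absorb \(\rho_k\) into the diagonal.
\item If \(s\ne k\) with \(s\notin J'\cup I^-\), the coefficient is \(-\tau'_s\) times an exchange sign \(\epsilon(J';k,s)\). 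We must check that the sign from moving \(s\) into the position of \(k\) matches \(\epsilon\), using \(e_{I^-}\wedge e_s\wedge e_{J'\setminus k}\).
\end{itemize}
Writing \(t=\tau_0+\mathbf 1_{J_0}\), the map \((\tau,J)\mapsto J\) identifies \(\Omega\) with the \(m\)-subsets of \(C:=\supp(t)\setminus I^-\), and \(\tau'_s=t_s\) for \(s\notin J'\). So we aim for
\[
M=\operatorname{id}-\tfrac1d\bigl(\Gamma_{t_C}^{\wedge m}+m\,\operatorname{id}+\Delta\bigr),
\]
where \(\Delta\) is a diagonal correction coming from the \(\rho\)-contributions. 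The diagonal of \(\Gamma_{t_C}^{\wedge m}\) is \(\sum_{k\in J}(t_k-1)\), so the \(s=k\) term gives \(\sum_{k\in J}(\tau_k+1)=\sum_{k\in J} t_k\). This is consistent with \(L=\Gamma_{t_C}^{\wedge m}+m\,\operatorname{id}\).

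\emph{Spectrum.} Proposition~\ref{cor:Gamma-spectrum} (the case \(F=\varnothing\)) gives the eigenvalues of \(\Gamma_{t_C}^{\wedge m}\) as \(N_C-m\) and \(-m\). Hence \(L\) has eigenvalues \(N_C\) and \(0\), and \(M\) has eigenvalues \(1-N_C/d\) and \(1\). This matches Example~\ref{ex:col-elim-intro}, where the eigenvalues are \(4,4,0\) and \(1/3,1/3,1\). Since \(N_C\le|\tau_0|+m\le a-1+m<d\), the number \(d-N_C\) lies in \([1,d]\). It is therefore a unit when the characteristic is \(0\) or exceeds \(a+b+m\), and \(\det M\ne0\). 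The degenerate case \(C=\varnothing\) forces \(m=0\), where \(M\) is \(1\times1\) and equal to \(1\).

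\emph{Main obstacle.} The delicate step is the \(\rho\)-bookkeeping in the \(\alpha_s\) coefficient. When \(s\in\supp(\rho)\cap\supp(t)\), the coefficient \(\alpha_s=\rho_s+\tau'_s\) contributes an extra \(\rho_s\) to exchanges with \(s\), so \(L\) is not literally a principal submatrix of \(\Gamma_{t_C}^{\wedge m}\). First we try to show that these extra entries form \(\Gamma_{t'}^{\wedge m}\) with weights \(t'_s=t_s+\rho_s\) on \(C\). Its eigenvalues are then \(N'_C-m\) and \(-m\), so \(M\) has eigenvalues \(1-N'_C/d\) and \(1\). This is still fine, since \(N'_C\le|\rho+\tau_0|+m=a-1+m<d\).

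If the weights do not assemble this way, the fallback is an upper-triangularity argument. Order \(\Omega\) by the running order of Proposition~\ref{prop:swap-small}; the \(\rho\)-dependent entries then become triangular perturbations. Either way, the sign check against \(\epsilon(J';k,s)\) is a routine transposition count.
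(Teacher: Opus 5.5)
Your eventual strategy is the paper's. The paper shows $M=\operatorname{id}-\frac1dL$ with $L=m\,\operatorname{id}+\Gamma_{t_C}^{\wedge m}$, where the weights are $t=\rho+\tau_0+\mathbf 1_{J_0}$ and $C=\supp(\tau_0+\mathbf 1_{J_0})\setminus I^-$. It then reads off the eigenvalues $1-N_C/d$ and $1$, and uses $N_C\le|t|=a-1+m$ to get $d-N_C\ge b+1$.

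The main line of your write-up, however, gets the weights wrong and leaves the correct version as an unverified hedge. The entry of $M$ is the full formal coefficient $-(\rho_s+\tau'_s+\delta_{k,s})/d$ of the symbol $T^{\rho+\tau}_{I^-\wedge J}$ in Proposition~\ref{prop:basic-hook-pieri-closed}(ii). Nothing is ``kept fixed'' by discarding the $\rho_s$ part, and there is no choice about where $\rho_k$ goes. So the $\rho$-contributions are not a diagonal correction $\Delta$: they enter every exchange entry with $s\in\supp(\rho)$. Your displayed target formula, with weights $\tau_0+\mathbf 1_{J_0}$ plus a diagonal $\Delta$, is false as soon as $\rho_s>0$ for some $s\in C$ and $m<|C|$.

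The check you postpone takes one line, and it is all that is needed. Write $u=\tau_0+\mathbf 1_{J_0}$.
\begin{itemize}
\item For $s\ne k$, the term lands in $\Omega$ exactly when $s\in C\setminus J'$. In that case $\tau'_s=u_s$, so the entry of $L$ is $(\rho_s+u_s)\,\epsilon(J';k,s)=t_s\,\epsilon(J';k,s)$. The sign $\sgn(k,J')$ becomes $\epsilon(J';k,s)$ because $e_s$ occupies the slot of $e_k$.
\item For $s=k$, the two signs cancel, and $\rho_k+\tau'_k+1=t_k$ because $u_k=\tau'_k+1$ for $k\in J'$. This matches the diagonal entry $\sum_{k\in J}t_k$ of $m\,\operatorname{id}+\Gamma_{t_C}^{\wedge m}$.
\end{itemize}
With this identification, your spectral computation and your bound $N_C\le a-1+m<d$ finish the proof.

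Drop the fallback. $M$ is in general not triangular in any ordering of $\Omega$: every entry in Example~\ref{ex:col-elim-intro} is nonzero. A triangular perturbation of a non-triangular matrix gives no control over its determinant.
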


\begin{proof}
If $m=0$, then $\Omega$ is a singleton and
$M=(1)$, so assume $m>0$ and set $d=a+b+m$.
Proposition~\ref{prop:basic-hook-pieri-closed}(ii) gives
\begin{align*}
\rQ'_m(T^{\rho+\tau'}_{I^-}\otimes e_{J'})
&=T^{\rho+\tau'}_{I^-\wedge J'}- \frac1d\sum_{j\in J'}\sgn(j,J')
\sum_{\substack{s\in[n]\\ \rho_s+\tau'_s+\delta_{j,s}>0}}
(\rho_s+\tau'_s+\delta_{j,s})
T^{\rho+\tau'+\eps_j-\eps_s}_{I^-\wedge s\wedge(J'\setminus j)}.
\end{align*}
For $s=j$, the two wedge signs cancel and the term contributes $\rho_j+\tau'_j+1$ on the diagonal.  Hence
\[
 M=\operatorname{id}-\frac1dL,
\]
where $L$ has diagonal entries
\[
L_{(\tau,J),(\tau,J)}=\sum_{j\in J}(\rho_j+\tau_j+1)
\]
and possible off-diagonal entries
\[
L_{(\tau'-\eps_s+\eps_j,(J'\setminus j)\cup s),(\tau',J')}
=(\rho_s+\tau'_s)\epsilon(J';j,s)
\qquad(j\in J',\ s\notin J'),
\]
whenever the indicated row belongs to $\Omega$; all other entries vanish.
Put
\[
u=\tau_0+\mathbf1_{J_0},\qquad
C=\supp(u)\setminus I^-,\qquad t=\rho+u,\qquad
t_C=(t_i)_{i\in C}.
\]
The map $(\tau,J)\mapsto J$ identifies $\Omega$ with all $m$-subsets of $C$; its inverse is $J\mapsto(u-\mathbf1_J,J)$.  Thus, for $j\in J$ and for an off-diagonal exchange with $s\notin J'$,
\[
\rho_j+\tau_j+1=t_j,
\qquad
\rho_s+\tau'_s=t_s.
\]
On $E_C={\rm span}\{ e_i:i\in C\}$, the displayed entries therefore identify
\[
L=m\,\operatorname{id}+\Gamma_{t_C}^{\wedge m}.
\]

Set $c=|C|$ and $N_C=\sum_{i\in C}t_i$.  Proposition~\ref{cor:Gamma-spectrum} gives
\[
\det(x\operatorname{id}-M)
=\left(x-\frac{d-N_C}{d}\right)^{\binom{c-1}{m-1}}
 (x-1)^{\binom{c-1}{m}}.
\]
Finally, $N_C\le|t|=a-1+m$ gives
\[
1\le b+1\le d-N_C\le d.
\]
Thus $d$ and $d-N_C$ are units under the characteristic hypothesis, so $M$ is invertible.
\end{proof}


\section{Straightening for Hook Tabloids}\label{sec:elim-systems}

This section combines the elimination matrices of \S\ref{sec:inv-matrices}
with the Pieri compositions of \S\ref{sec:hook-schur} to straighten every
nonsemistandard rank two hook tabloid. This is the hook analogue of
characteristic-free straightening for Schur modules developed by
Akin--Buchsbaum--Weyman~\cite{AkinBuchsbaumWeyman1982}; see also the
determinantal straightening law of De Concini--Eisenbud--Procesi~
\cite{DeConciniEisenbudProcesi1980}.

For a tabloid of shape $\lambda$, assume throughout that $\operatorname{char}\bk=0$ or $\operatorname{char}\bk>|\lambda|$.  Together with the combinatorial hypotheses below, this makes the required elimination matrices invertible by Theorems~\ref{thm:row-elimination-invertible} and~\ref{thm:Theta-elim-invertible}; see Remark~\ref{rmk:general-rank-straightening}.
We retain the notation of \S\ref{sec:hook-schur}: \(\strun\) is the strict
part of the running order (Definition~\ref{def:running-order}),
\(R_{(a,b),(c,d)}\) the quadratic relation spaces of
\S\ref{ssec:universal-hook-algebra}, and
\(T_I^\alpha,\eps_i,\mathbf1_J,\supp,\sgn\) the \(d\)-model symbols.

\subsection{Columns}\label{ssec:cols-strgt}

We straighten failures of semistandardness in the column direction using the row Pieri compositions
$\Phi_m,\Psi_{m,r}$ of Definition~\ref{def:hook-pieri-compositions}, whose
intermediate factor is $S^mV$.

\smallskip
\noindent\emph{Straightening within a hook factor.}
If a \(d\)-model symbol \(T_D^\gamma\) is not a semistandard hook tableau,
then \(x:=\min\supp(\gamma)<\min D\), and the defining relation from
\S\ref{sec:hook-schur} gives
\begin{subequations}
  \begin{equation}\label{eq:d-model-straightening}
T_D^\gamma
=-\sum_{i\in D}\sgn(i,D\cup x)\,
T_{(D\cup x)\setminus i}^{\gamma-\eps_x+\eps_i}.
\end{equation}
Each summand is semistandard and replaces the smallest arm label \(x\) by a
larger label \(i\). We call this operation \emph{straightening}.

Suppose \(H,H'\) differ in the \(t\)th hook by such replacements. Denote their sorted \(t\)th arms 
\(r_1\le\cdots\le r_A\) and \(g_1\le\cdots\le g_A\), respectively. Set
\(h:=\max\{\nu:r_\nu\ne g_\nu\}\). If \(r_h<g_h\), then
\begin{equation}\label{eq:d-model-straightening-running-vector}
\bigl(V_{t,h-1}(H')-V_{t,h-1}(H)\bigr)_q
=
\begin{cases}
1,&r_h\le q<g_h,\\
0,&\text{otherwise}.
\end{cases}
\end{equation}
\end{subequations}
All comparisons in the running order below are made after straightening
each nonsemistandard factor.
The key triangularity statement concerns
\(\Phi_m\colon\bS_{(c,1^d)}(V)\otimes\bS_{(a,1^b)}(V)\to
\bS_{(c+m,1^d)}(V)\otimes\bS_{(a-m,1^b)}(V)\).

\begin{lemma}[Row Pieri triangularity] \label{lem:Omega-elimination-lower}
Fix \(\beta\in\bN^n\) with \(|\beta|=a-1\), a subset \(J_0\subseteq[n]\) with \(|J_0|=b+1\), and $1\le m\le a-1$.  Let \(\tau_0\in\bN^n\) be the content of the $m$ largest labels in the multiset of content~$\beta$, and set \(\rho:=\beta-\tau_0\).  Assume that $m<a-1$, or that $m=a-1$ and
$\min(J_0)\le\min\supp(\tau_0)$.  Given $T^\alpha_I\in\bS_{(c,1^d)}(V)$, set
 \[
   y = T_{I}^{\alpha+\tau_0}\otimes T_{J_0}^{\rho}.
 \]
Suppose that $y$ is a hook tabloid.
Then there exist $L\in\bS_{(a,1^b)}(V)$ and scalars $\gamma_x$ such that
\[
\Phi_m(T_{I}^\alpha \otimes L ) = y + \sum_{x\strun y} \gamma_x\,x.
\]
\end{lemma}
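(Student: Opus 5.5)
We will take \(L\) to be a linear combination of \(d\)-model basis vectors \(T^{\rho+\tau'}_{J'}\) with \((\tau',J')\) in the swap set \(\Omega=\Omega(J_0;\tau_0,K_0,\rho)\) of Definition~\ref{def:closure-set-row}, with \(K_0\) chosen as the set of column entries of \(J_0\) that are \(\ge\min\supp(\tau_0)\). The idea is that \(\Phi_m=(\rQ_m\otimes1)\circ(\tau\otimes1)\circ(1\otimes\rP_m)\) first applies \(\rP_m\) to \(L\) and then moves the extracted \(S^mV\)-factor onto the arm of \(T^\alpha_I\) via \(\rQ_m\), which by Proposition~\ref{prop:direct-hook-pieri} simply adds content: \(\rQ_m(e^\tau\otimes T^\alpha_I)=T^{\alpha+\tau}_I\). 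So \(\Phi_m(T^\alpha_I\otimes T^{\rho+\tau'}_{J'})\) equals \(\sum\) over terms \(e^\tau\otimes T^{\sigma}_J\) of \(\rP_m(T^{\rho+\tau'}_{J'})\) of the tabloids \(T^{\alpha+\tau}_I\otimes T^{\sigma}_J\). Split these terms into the block with second-factor arm content exactly \(\rho\) and \((\tau,J)\in\Omega\), and the rest.

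On the \(\Omega\)-block the coefficients form the elimination matrix \(M\). By Theorem~\ref{thm:row-elimination-invertible}, \(M\) is invertible unless \(m=a-1\), \(J_0=K_0\), and \(\supp(\tau_0)\not\subseteq J_0\); the hypothesis \(m<a-1\), or \(m=a-1\) with \(\min J_0\le\min\supp(\tau_0)\), is designed to rule this out: in the latter case \(\min J_0\) is strictly smaller than \(\min\supp\tau_0\) (so \(\min J_0\notin K_0\), \(F\neq\varnothing\)) or equal to it (then we put it in \(F\) anyway, as the fixed corner entry). Either way \(|\rho|+N_F>0\). Choose \(L=\sum_{\omega}(M^{-1})_{\omega,(\tau_0,J_0)}T^{\rho+\omega}\) so that the \(\Omega\)-block of \(\Phi_m(T^\alpha_I\otimes L)\) is exactly \(y\).

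It remains to show every other term \(x\) satisfies \(x\strun y\), after straightening nonsemistandard factors via \eqref{eq:d-model-straightening}. Terms outside the \(\Omega\)-block arise in three ways: (a) the principal part of \(\rP_m\) extracts a \(\beta'\neq\tau'\) from the arm, so some label smaller than those in \(\tau_0\) moves to the first hook while a larger one stays; since \(\tau_0\) consists of the \(m\) largest labels, this is a swap of \(x<y\) from a later running index to an earlier one, and Proposition~\ref{prop:swap-small} (with Corollary~\ref{cor:mult-cancel} and Lemma~\ref{lem:rQ-monotone} to handle the common arm content) gives \(\strun\); (b) off-diagonal exchanges involving column entries in \(J_0\setminus K_0\), which are smaller than \(\min\supp\tau_0\) and hence again push smaller labels into the first hook; (c) terms in \(\Omega\) with a different \((\tau,J)\), which are killed by the choice of \(L\). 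Straightening a factor replaces a smallest arm label by a larger one, and \eqref{eq:d-model-straightening-running-vector} shows this only moves down in \(\run\).

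The main obstacle is case (a)/(b) bookkeeping: the running-order comparison is decided at the largest differing index, so one must check that moving content into the first hook (which sits at earlier running indices) while the second hook loses the corresponding labels produces the coordinatewise-larger running vector at the last disagreement, uniformly even after straightening of the second factor. I would first reduce to a single elementary exchange and verify the sign of the last difference directly, then assemble via the insertion and arm-extension lemmas.
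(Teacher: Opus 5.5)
Your overall plan matches the paper's. You take $L$ supported on the swap set $\Omega(J_0;\tau_0,K_0,\rho)$, invert the elimination matrix via Theorem~\ref{thm:row-elimination-invertible}, and sort the leftover terms by whether the second arm equals $\rho$. Your case~(a) is essentially the paper's Case~1. It works because every $(\sigma,A)\in\Omega$ has $\supp\sigma\subseteq\{s,s+1,\dots\}$, where $s=\min\supp\tau_0$, so the second arm can only acquire larger labels. On $K_0$: the consistent choice is $K_0=\{i\in J_0:i>s\}$, which is what your ``put it in $F$ anyway'' amounts to. With $\ge$, the matrix really can be singular when $m=a-1$ and $\min J_0=s$.

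The genuine gap is case~(b). The arm-$\rho$ terms outside $\Omega$ are the exchanges that remove some $f\in F=J_0\setminus K_0$ from the column. They are produced not only by the distinguished source $T^{\rho+\tau_0}_{J_0}$. Every $T^{\rho+\tau'}_{J'}$ with $(\tau',J')\in\Omega$ and nonzero coefficient in $L$ produces them too. For such a term with semistandard second factor, the second block agrees with that of $y$ (same arm $\rho$, same total content). The comparison is therefore decided by the first-hook content, which differs from that of $y$ by $\mathbf 1_{J_0}-\mathbf 1_{J'}+\eps_f-\eps_v$. The cumulative counts of this vector need not have one sign, so an individual term can be incomparable with $y$.

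Concretely, take $a=3$, $b=1$, $m=2$, $\rho=0$, $\tau_0=\eps_2+\eps_3$, $J_0=\{1,5\}$ (so $F=\{1\}$), and $T^\alpha_I=T^0_{146}$. Then $\rP_2(T^{35}_{12})$ and $\rP_2(T^{25}_{13})$ contain $e_1e_5\otimes T^0_{23}$ with coefficients $-\tfrac12$ and $+\tfrac12$. This yields the tabloid $T^{15}_{146}\otimes T^0_{23}$. Its first-hook counting vector minus that of $y=T^{23}_{146}\otimes T^0_{15}$ is $(1,0,-1,-1,0,0)$, so neither tabloid is $\run$ the other. The term disappears only because $L=-\tfrac12T^{35}_{12}-\tfrac12T^{25}_{13}+\tfrac32T^{23}_{15}$ makes the two contributions cancel. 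Hence no term-by-term check, such as your proposed reduction to a single elementary exchange, can close the argument. You need global information about $M^{-1}$.

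The paper supplies this. It writes $M^{\mathrm{norm}}=\frac1d(\kappa\,\mathrm{id}+U_C)$ and uses $U_C^2=N_CU_C$ together with $\kappa+N_C=a+b$ to get $(M^{\mathrm{norm}})^{-1}=\frac d\kappa(\mathrm{id}-\frac1{a+b}U_C)$. The identity $u_C\wedge U_C(\omega)=0$ then shows that the arm-$\rho$ part of $\rP_m(L)$ is $e^{\tau_0}\otimes T^\rho_{J_0}$ plus only exchanges of the distinguished pair. These are the terms $\frac1\kappa\eta_{f,v}t_v\,e^{\tau_0+\eps_f-\eps_v}\otimes T^\rho_{(J_0\setminus f)\cup v}$ with $f\in F$ and $v\in\supp(t)\setminus J_0$. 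For these, $f\le s\le v$ and $f\ne v$, so $f<v$. The strict comparison then follows at $(1,c+m-1)$, or in the second block after straightening.
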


\begin{proof}
Set \(s:=\min\supp(\tau_0)\).  Then $\rho$ is supported on labels at most~$s$, while $\tau_0$ is supported on labels at least~$s$.  Put
\[
  t:=\tau_0+\mathbf1_{J_0},\qquad
  K_0:=\{i\in J_0:i>s\},\qquad
  F:=J_0\setminus K_0,\qquad
  \Omega:=\Omega(J_0;\tau_0,K_0,\rho).
\]
Thus \(\Omega\) is the row swap set of
Definition~\ref{def:closure-set-row}.
For every \((\sigma,A)\in\Omega\), the identity
\(\sigma+\mathbf1_A=\tau_0+\mathbf1_{J_0}\), together with \(F\subseteq A\),
gives \(\supp(\sigma)\subseteq\{s,s+1,\dots\}\).

For \(w:=\rho+t\) and \(\rho_{\tau,J}:=w-\tau-\mathbf1_J\), the hypotheses
exclude the singular case of
Theorem~\ref{thm:row-elimination-invertible}, so we may choose coefficients
\(c_{\sigma,A}\) such that
\[
L:=\sum_{(\sigma,A)\in\Omega}c_{\sigma,A}T_A^{\rho+\sigma}
\]
satisfies
\[
  \rP_m(L) = e^{\tau_0} \otimes T^\rho_{J_0} + \sum_{(\tau, J) \notin \Omega} \gamma_{(\tau,J)}\, e^\tau \otimes T^{\rho_{\tau,J}}_J .
\]
Applying \(\rQ_m\) gives
\[
  \Phi_m(T_{I}^\alpha \otimes L) = \underbrace{T_{I}^{\alpha+\tau_0}\otimes T_{J_0}^{\rho}}_y + \sum_{(\tau, J) \notin \Omega} \gamma_{(\tau,J)}\, T_{I}^{\alpha+\tau} \otimes T^{\rho_{\tau,J}}_J .
\]
Straighten every nonsemistandard factor in each summand by \eqref{eq:d-model-straightening} before comparing it with \(y\), and fix a resulting hook tabloid \(x\).

\smallskip
\noindent\textbf{Case 1: \(\rho_{\tau,J}\ne\rho\).}
For some $(\sigma,A)\in\Omega$, the arm $\rho_{\tau,J}$ is a submultiset
of $\rho+\sigma$ of cardinality $|\rho|$. Since $\rho$ is supported on
labels at most~$s$ and $\sigma$ on labels at least~$s$,
\(\rho_{\tau,J}\) is obtained from \(\rho\) by at least one strict
replacement by a larger label. Straightening by
\eqref{eq:d-model-straightening} makes only further such replacements, so
\eqref{eq:d-model-straightening-running-vector}, applied in the second
block, gives \(x\strun y\).

\smallskip
\noindent\textbf{Case 2: \(\rho_{\tau,J}=\rho\).}
Set
\[
C:=\supp(t)\setminus F,\quad p:=b+1-|F|,\quad
N_F:=\sum_{i\in F}t_i,\quad N_C:=\sum_{i\in C}t_i,\quad
\kappa:=|\rho|+N_F,
\]
put \(E_C:={\rm span}\{ e_i:i\in C\}\) and
\(u_C:=\sum_{i\in C}t_ie_i\), and define
\(U_C\in\End(\bigwedge^pE_C)\) by
\[
U_C(e_{i_1}\wedge\cdots\wedge e_{i_p})
=\sum_{q=1}^p(-1)^{q-1}u_C\wedge
e_{i_1}\wedge\cdots\wedge\widehat{e_{i_q}}\wedge\cdots\wedge e_{i_p}, 
\]
with \(U_C(1)=0\) when \(p=0\).
With \(d:=a+b-m\), the proof of
Theorem~\ref{thm:row-elimination-invertible} gives
\[
M^{\mathrm{norm}}=\frac1d(\kappa\operatorname{id}+U_C),
\qquad
(M^{\mathrm{norm}})^{-1}
=\frac d\kappa\left(\operatorname{id}-\frac1{a+b}U_C\right).
\]
Indeed, \(U_C^2=N_CU_C\) and \(\kappa+N_C=a+b\); the characteristic
hypothesis makes \(\kappa\) a unit, since \(|\rho|>0\) when \(m<a-1\),
whereas \(F\ne\varnothing\) when \(m=a-1\). Applying the inverse, the
additional terms cancel by the identity \(u_C\wedge U_C(\omega)=0\) for
every \(\omega\in\bigwedge^pE_C\). Thus the component of \(\rP_m(L)\)
whose second factor has arm content \(\rho\) is
\[
e^{\tau_0}\otimes T^\rho_{J_0}
+\frac1\kappa\sum_{\substack{f\in F\\v\in\supp(t)\setminus J_0}}
\eta_{f,v}t_v\,
e^{\tau_0+\eps_f-\eps_v}\otimes T^\rho_{(J_0\setminus f)\cup v},
\]
where \(\eta_{f,v}\in\{\pm1\}\) is the sign obtained by replacing \(f\) by
\(v\) in the increasing wedge \(e_{J_0}\) and sorting. Moreover,
\(f\le s\le v\) and \(f\ne v\), hence \(f<v\). If the second factor is
nonsemistandard, straightening it by \eqref{eq:d-model-straightening} gives
\(x\strun y\) in the second block, as in Case~1. Otherwise the second block
agrees with that of \(y\), while the first arm has gained \(f\) and lost
\(v\). Straightening a nonsemistandard first factor does not change its
total content, and hence \(x\strun y\) since
\[
\bigl(V_{1,c+m-1}(x)-V_{1,c+m-1}(y)\bigr)_q
=
\begin{cases}
1,&f\le q<v,\\
0,&\text{otherwise}.
\end{cases} \qedhere
\]
\end{proof}

\begin{example}[The cancellation in Case~2]\label{ex:row-case2-cancellation}
Take \(a=4\), \(b=1\), \(m=2\), \(\rho=\eps_2\),
\(\tau_0=\eps_3+\eps_4\), and \(J_0=\{1,5\}\). Thus the distinguished
source is \(T^{234}_{15}\), and the swap set is
\[
T^{245}_{13},\qquad T^{235}_{14},\qquad T^{234}_{15}.
\]
Solving the elimination system gives
\[
L=-\frac3{10}T^{245}_{13}-\frac3{10}T^{235}_{14}
  +\frac65T^{234}_{15}.
\]
The Pieri formula produces \(e_1e_5\otimes T^2_{34}\) with coefficient
\(-1/3\) from \(T^{245}_{13}\), coefficient \(1/3\) from
\(T^{235}_{14}\), and coefficient \(0\) from \(T^{234}_{15}\). Its
coefficient in \(\rP_2(L)\) is therefore
\[
\left(-\frac3{10}\right)\left(-\frac13\right)
+\left(-\frac3{10}\right)\left(\frac13\right)=0.
\]
Thus \(e_1e_5\otimes T^2_{34}\) cancels. Here \(F=\{1\}\), so the column
\(\{3,4\}\) lies outside the swap set. The part of \(\rP_2(L)\) with
second arm content \(\rho\) is
\[
e_3e_4\otimes T^2_{15}
+\frac12e_1e_3\otimes T^2_{45}
+\frac12e_1e_4\otimes T^2_{35},
\]
as asserted in Case~2.
\end{example}


Let $a>c\ge1$ and $b>d\ge0$, and consider a rank two hook tabloid
\[
T=\bigl(T^{\,i_1\cdots i_{a-1}}_{\,t_1,\dots,t_{b+1}},\;
T^{\,j_2\cdots j_c}_{\,j_1,q_1,\dots,q_d}\bigr)
\]
whose shape has Frobenius coordinates
\((a-1,c-1\mid b,d)\). 
Assume that \(T\) fails semistandardness in the column direction, and let
\(k\) be the smallest index with
\(i_k\ge j_k\), so \(i_r<j_r\) for \(r<k\).
The column conditions \(i_r<j_r\) for \(r<k\), combined with the
weak increase along each hook row, force the entries along the
displayed chain \(j_1\le\cdots\le j_k\le i_k\le\cdots\le i_{a-1}\)
to be weakly increasing as in the diagram:
\usetikzlibrary{matrix,backgrounds}
\[
  \begin{adjustbox}{max width=.92\textwidth,center}
  \begin{tikzpicture}[baseline=(m-2-1.base)]
\matrix (m) [
  matrix of math nodes,
  row sep=0em,
  column sep=0.65em,
  nodes={
    minimum width=1.9em,
    minimum height=1.7ex,
    anchor=center
  },
  ampersand replacement=\&
]{
i_1 \& \leq \& i_2 \& \leq \& \cdots \& \leq \& i_{k-1} \& \leq \& i_k \& \leq \& i_{k+1} \& \leq \& \cdots \\
\rotatebox{-90}{$<$} \& \& \rotatebox{-90}{$<$} \& \& \& \&
\rotatebox{-90}{$<$} \& \& \rotatebox{-90}{$\geq$} \& \&  \& \& \\
j_1 \& \leq \& j_2 \& \leq \& \cdots \& \leq \& j_{k-1} \& \leq \& j_k \& \leq \& j_{k+1} \& \leq \& \cdots \\
};
\begin{scope}[on background layer]
  \draw[
    blue!20,
    line width=18pt,
    line cap=rect,
    line join=miter
  ]
  (m-3-1.center) -- (m-3-9.center) -- (m-1-9.center) -- (m-1-13.center);
\end{scope}
\end{tikzpicture}
  \end{adjustbox}
\]

\begin{definition}\label{def:rank2-col-obstruction-data}
For this index~\(k\), define the content vectors
\[
\tau_0:=\sum_{\nu=k}^{a-1}\eps_{i_\nu},
\qquad
\rho:=\sum_{\nu=2}^{k}\eps_{j_\nu},
\qquad
\sigma:=\sum_{\nu=k+1}^{c}\eps_{j_\nu},
\]
where an empty sum is zero, and the \defi{induced triple}
\[
\begin{gathered}
T_1=T^{\,i_1\cdots i_{k-1}}_{\,t_1,\dots,t_{b+1}} \in \bS_{(k,1^b)}(V), \qquad
T_2=T^{\,\rho+\tau_0}_{\,j_1,q_1,\dots,q_d}\in \bS_{(a,1^d)}(V),  \qquad
T_3=e^\sigma\in S^{c-k}(V),
\end{gathered}
\]
where $T_3$ is the symmetric monomial of content~$\sigma$.  Diagrammatically, the triple is
\[
\ytableausetup{boxsize=1.7em,centertableaux}
\begin{ytableau}
\scriptstyle t_1 & \scriptstyle i_1 & \scriptstyle \cdots & \scriptstyle i_{k-1} \\
\scriptstyle t_2 \\
\scriptstyle \vdots \\
\scriptstyle t_{b+1}
\end{ytableau}
\;\otimes\;
\begin{ytableau}
*(blue!20) \scriptstyle j_1 & *(blue!20) \scriptstyle j_2 & *(blue!20) \scriptstyle \cdots & *(blue!20) \scriptstyle j_k & *(blue!20) \scriptstyle i_k & *(blue!20) \scriptstyle \cdots & *(blue!20) \scriptstyle i_{a-1} \\
\scriptstyle q_1 \\
\scriptstyle \vdots \\
\scriptstyle q_d
\end{ytableau}
\;\otimes\;
\begin{ytableau}
\scriptstyle j_{k+1} & \scriptstyle \cdots & \scriptstyle j_c
\end{ytableau} \qedhere
\]
\ytableausetup{boxsize=1em,nocentertableaux} 
\end{definition}

Consider the map
\[
\begin{aligned}
\Psi_{a-k,c-k}\colon
\bS_{(k,1^b)}(V)\otimes\bS_{(a,1^d)}(V)\otimes S^{c-k}(V)
\longrightarrow \bS_{(a,1^b)}(V)\otimes\bS_{(c,1^d)}(V).
\end{aligned}
\]

\begin{proposition}\label{prop:rank2-local-row-elimination-ijk}
There exists 
\(\widetilde T_2 \in \bS_{(a,1^d)}(V)\) and scalars $\gamma_{T'}$ such that
\[
\Psi_{a-k,c-k} (T_1\otimes\widetilde T_2\otimes T_3 )
= T\;+\;\sum_{T'\strun T}\gamma _{T'}\,T'.
\]
Hence, modulo $R_{(a-1,b),(c-1,d)}$, one obtains the straightening relation
\(T=-\sum_{T'\strun T}\gamma_{T'}\,T'\).
\end{proposition}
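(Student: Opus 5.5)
The plan is to factor $\Psi_{a-k,c-k}=(1\otimes\rQ_{c-k})\circ(1\otimes\tau)\circ(\Phi_{a-k}\otimes 1)$ and to reduce the first stage to Lemma~\ref{lem:Omega-elimination-lower}. There is one relabelling to keep straight. The lemma is stated for $\Phi_m\colon\bS_{(c,1^d)}\otimes\bS_{(a,1^b)}\to\bS_{(c+m,1^d)}\otimes\bS_{(a-m,1^b)}$, and we apply it with the following substitutions:
\begin{itemize}
\item the lemma's left factor is our $T_1=T^{i_1\cdots i_{k-1}}_{t_1,\dots,t_{b+1}}\in\bS_{(k,1^b)}(V)$;
\item the lemma's right hook is $\bS_{(a,1^d)}(V)$, which is where $\widetilde T_2$ lives;
\item $m=a-k$ and $J_0=\{j_1,q_1,\dots,q_d\}$;
\item $\beta=\rho+\tau_0$, the arm content of $T_2$.
\end{itemize}
By the blue chain $j_2\le\cdots\le j_k\le i_k\le\cdots\le i_{a-1}$, the $a-k$ largest labels of $\beta$ are exactly $\tau_0=\sum_{\nu\ge k}\eps_{i_\nu}$, and the remainder is $\rho$. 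This matches the lemma's convention.

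The nondegeneracy hypothesis needs checking. If $m<a-1$ we are done. If $m=a-1$ then $k=1$ and $\rho=0$, and we need $\min J_0=j_1\le\min\supp(\tau_0)=i_1$. This holds because $k=1$ means $i_1\ge j_1$. We must also check that $y=T_1^{\,\cdot+\tau_0}\otimes T^{\rho}_{J_0}$ is a genuine hook tabloid, meaning the type order is respected: $(a-1,b)>(k-1+\dots)$. The first factor becomes $T^{i_1\cdots i_{a-1}}_{t}$, of shape $(a,1^b)$, which is semistandard. The second is $T^{j_2\cdots j_k}_{j_1,q}$, of shape $(k,1^d)$, and since $k\le c<a$ and $d<b$ the hook types nest strictly.

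The lemma then gives $L=\widetilde T_2$ with $\Phi_{a-k}(T_1\otimes\widetilde T_2)=y+\sum_{x\strun y}\gamma_x x$. Next apply $1\otimes\rQ_{c-k}$ with $T_3=e^\sigma$. By Proposition~\ref{prop:direct-hook-pieri}, $\rQ$ simply adjoins $\sigma$ to the arm of the second factor. This sends $y$ to $T$ itself, because adjoining $j_{k+1},\dots,j_c$ to the arm $j_2\cdots j_k$ recovers the second hook of $T$. Each $x$ is sent to the tabloid $x^+$ obtained by adjoining the common multiset $\sigma$ to the second arm (after straightening any nonsemistandard factors via \eqref{eq:d-model-straightening}).

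The main obstacle is preserving the strict running comparison under this adjunction. For this I would invoke Lemma~\ref{lem:rQ-monotone} with $p=2$ and $\delta=\sigma$, which gives $x^+\run T$. Strictness follows because adjoining a common multiset is injective on sorted arms. The delicate point is that $x^+$ may need straightening in its second factor, if the new smallest arm label drops below the column minimum. In that case I would use \eqref{eq:d-model-straightening-running-vector}: straightening only replaces arm labels by larger ones, which moves the tabloid further down in $\strun$. This keeps every term strictly below $T$.

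Finally, Proposition~\ref{prop:relations}(1) with $m=a-k$ and $r=c-k$ requires $c-r=k\ge a-m=k\ge1$, which holds with equality. So the image of $\Psi_{a-k,c-k}$ lies in $R_{(a-1,b),(c-1,d)}$, and this yields the stated straightening relation modulo $R$.
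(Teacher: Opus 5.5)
Your proposal is correct and follows the paper's proof. You reduce the first stage to Lemma~\ref{lem:Omega-elimination-lower} with $m=a-k$, including the $k=1$ check $j_1\le i_1$; you then carry the strict comparisons through $1\otimes\rQ_{c-k}$ via Lemma~\ref{lem:rQ-monotone}; and you conclude with Proposition~\ref{prop:relations}(i), where $c-r=a-m=k\ge1$. Your explicit handling of the case where adjoining $\sigma$ makes the second factor nonsemistandard, by re-straightening via \eqref{eq:d-model-straightening-running-vector} and using transitivity of $\run$, fills in a point the paper leaves implicit.
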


\begin{proof}
The inequalities in the displayed chain show that \(\tau_0\) is the
content of the $a-k$ largest labels in the arm of~$T_2$. If $k>1$, then
$a-k<a-1$; if $k=1$, then
\[
\min\{j_1,q_1,\dots,q_d\}=j_1\le i_1=\min\supp(\tau_0).
\]
Thus Lemma~\ref{lem:Omega-elimination-lower} gives $\widetilde T_2$ such that
\[
  \Phi_{a-k}(T_1\otimes\widetilde T_2)
  =T^{i_1\cdots i_{a-1}}_{t_1,\dots,t_{b+1}}
   \otimes T^{j_2\cdots j_k}_{j_1,q_1,\dots,q_d}
   +\text{terms strictly smaller in the running order}.
\]
The remaining map $1\otimes\rQ_{c-k}$ adjoins the common content~$\sigma$ to every second arm, so Lemma~\ref{lem:rQ-monotone} preserves each strict comparison and gives the asserted expansion.

Finally,
$c-(c-k)=a-(a-k)=k\ge1$, so Proposition~\ref{prop:relations}(i) places the image of $\Psi_{a-k,c-k}$ in $R_{(a-1,b),(c-1,d)}$.
\end{proof}

\begin{example}\label{ex:col-obstruction-straighten}
  \ytableausetup{boxsize=1.5em, mathmode}
  Consider the hook tabloid of rank two
\[
\ytableausetup{boxsize=1.1em}
T=
\begin{ytableau}
1 & *(blue!25) 1 & *(blue!25) 2 & *(blue!25) 3\\
2 & *(blue!25) 1 & 1 & 1\\
3 & 4
\end{ytableau}
\;=\;
\bigl(T^{123}_{123},\,T^{11}_{14}\bigr).
\]
Semistandardness in the column direction first fails at \(k=1\) (the outer
row entry~\(1\) is at least the inner row entry~\(1\)); the shaded cells highlight the entries in
the displayed chain of Definition~\ref{def:rank2-col-obstruction-data}. The
associated data are
\(\tau_0=\eps_1+\eps_2+\eps_3\), \(\rho=0\), \(\sigma=2\eps_1\), and the
associated triple is
$T_1=T^0_{123}$, $T_2=T^{123}_{14}$, $T_3=e_1^2$.
In the listed order, the swap set of Definition~\ref{def:closure-set-row} is
\[
T^{123}_{14},\qquad T^{134}_{12},\qquad T^{124}_{13},
\]
with elimination matrix
\[
M=\begin{pmatrix}
\frac32 & \frac12 & \frac12\\[4pt]
\frac12 & \frac32 & \frac12\\[4pt]
\frac12 & \frac12 & \frac32
\end{pmatrix},
\qquad \det M=\frac52.
\]
With this ordering, \(M\mathbf c=(1,0,0)^{\mathsf T}\) has solution
$\mathbf c=\frac15(4,-1,-1)^{\mathsf T}$, which determines~$\widetilde T_2$.
Applying \(\Psi_{a-k,c-k}\) and passing modulo $R_{(3,2),(2,1)}$ gives
\[
\begin{aligned}
\underbrace{\begin{ytableau}
1&1&2&3\\
2&1&1&1\\
3&4
\end{ytableau}}_{T}
&\;\equiv\;
-\frac12
\underbrace{\begin{ytableau}
1&1&1&2\\
2&1&1&3\\
3&4
\end{ytableau}}_{T'_1}
\;+\;\frac12
\underbrace{\begin{ytableau}
1&1&1&3\\
2&1&1&4\\
3&2
\end{ytableau}}_{T'_2}
\;+\;\frac12
\underbrace{\begin{ytableau}
1&1&1&2\\
2&1&1&4\\
3&3
\end{ytableau}}_{T'_3}
\;-\;\frac12
\underbrace{\begin{ytableau}
1&1&1&3\\
2&1&1&2\\
3&4
\end{ytableau}}_{T'_4}.
\end{aligned}
\]

Each tabloid on the right is strictly smaller than~$T$: all five share content $\{1^5,2^2,3^2,4\}$ and agree through the final step $(2,2)$, while at the rightmost differing step $(2,1)$,
\[
\begin{aligned}
V_{2,1}(T) &= (4,6,8,9), 
& V_{2,1}(T'_1) &= (5,7,8,9), 
& V_{2,1}(T'_2) &= (5,7,9,9), \\
V_{2,1}(T'_3) &= (5,7,9,9), 
& V_{2,1}(T'_4) &= (5,6,8,9).
\end{aligned}
\]
Thus $V_{2,1}(T)$ is coordinatewise at most $V_{2,1}(T'_i)$ for every~$i$,
and hence $T'_i\strun T$ by Definition~\ref{def:running-order}.
\end{example}

\subsection{Rows}\label{ssec:rows-strgt}

We straighten failures of semistandardness in the row direction using the column Pieri compositions
$\Phi'_m,\Psi'_{m,r}$ of Definition~\ref{def:hook-pieri-compositions}, whose
intermediate factors are exterior powers. Since the running order
treats rows and columns asymmetrically, this case requires two elimination
systems, one for each hook, coupled by the antisymmetrization in
$\Psi'_{m,r}$.

\begin{notation}\label{not:col-section-d-convention}
In this subsection, unlike in \S\ref{ssec:cols-strgt}, \(d\) denotes the
cardinality of the second input column \(\{j_1,\dots,j_d\}\), rather than
its leg length. Thus that hook is $\bS_{(c,1^{d-1})}(V)$, and the resulting
relation is modulo $R_{(a-1,b),(c-1,d-1)}$.
\end{notation}

For $q\ge m$ and $r\ge0$, we first consider the map
\[
  \Psi'_{m,r} \colon \bS_{(a,1^b)}(V)\otimes\bS_{(c,1^q)}(V)\otimes \bigwedge^r V \to  \bS_{(a,1^{b+m})}(V) \otimes \bS_{(c,1^{q-m+r})} (V).
\]
It extracts $m$ column entries from the second hook, inserts them into the
first, and then adjoins the exterior factor to the second.

\begin{lemma}[Column triangularity] \label{lem:triple-two-stage-elim}
  Fix \(T_J^\alpha\in\bS_{(a,1^b)}(V)\), \(T_K^\beta\in\bS_{(c,1^q)}(V)\), and \(e_I\in\bigwedge^r V\). Fix \(1\le m<|K|\), let \(K_+\) be the \(m\) largest elements of~\(K\), and set \(K_-:=K\setminus K_+\). Assume one of the following:
    {\rm (1)} $\max J<\min K_+$, {\rm (2)} $I=\varnothing$, or {\rm (3)} $\max K_-<\min I$.
Set
\[
  y=  T_{J\cup K_+}^\alpha\otimes T_{K_-\cup I}^\beta \in \bS_{(a,1^{b+m})}(V) \otimes\bS_{(c,1^{q-m+r})}(V) .
\]
If $y$ is a hook tabloid, then there exists
\(L\in\bS_{(a,1^b)}(V)\otimes\bS_{(c,1^q)}(V)\otimes \bigwedge^r V\) and scalars $\gamma_x$ such that
\[
\Psi'_{m,r}(L) = y + \sum_{x\strun y} \gamma_x \cdot x.
\]
\end{lemma}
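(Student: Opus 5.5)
The plan is to imitate the proof of Lemma~\ref{lem:Omega-elimination-lower}, with the single row elimination replaced by two column eliminations, one for each hook. Unwinding Definition~\ref{def:hook-pieri-compositions},
\[
\Psi'_{m,r}=(1\otimes\rQ'_r)\circ(\rQ'_m\otimes1\otimes1)\circ(1\otimes\rP'_m\otimes1).
\]
By Proposition~\ref{prop:direct-hook-pieri}, \(\rP'_m\) has no off-diagonal terms: on \(T^\beta_K\) it is the signed sum over \(m\)-subsets \(K'\subseteq K\) of \(e_{K'}\otimes T^\beta_{K\setminus K'}\). All off-diagonal behaviour therefore comes from the two \(\rQ'\) maps. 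Their coefficients are recorded by the column elimination matrices of Definition~\ref{def:Theta-closure-matrix}, and Theorem~\ref{thm:Theta-elim-invertible} says these matrices are invertible under the standing characteristic hypothesis.

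\textbf{Step 1: the first hook.} Work with the swap set \(\Omega_1=\Omega(J;\tau_1,K_+,\rho_1)\), where \(\rho_1\) is the part of \(\alpha\) held fixed and \(\tau_1\) is the arm content allowed to trade with \(K_+\). Theorem~\ref{thm:Theta-elim-invertible} lets us choose a combination \(L_1\) of tensors \(T^{\rho_1+\tau}_J\otimes e_{J'}\) with \((\tau,J')\in\Omega_1\) such that, inside the span indexed by \(\Omega_1\),
\[
\rQ'_m(L_1)=T^\alpha_{J\cup K_+}.
\]
Every term of \(\rQ'_m(L_1)\) outside \(\Omega_1\) changes the fixed arm content \(\rho_1\). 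I would choose \(\rho_1\) to be the small arm labels, exactly as in Case~1 of Lemma~\ref{lem:Omega-elimination-lower}. Then each such change replaces a label by a larger one, and after straightening via \eqref{eq:d-model-straightening} and \eqref{eq:d-model-straightening-running-vector} it yields something strictly smaller in the first block.

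\textbf{Step 2: the second hook.} Treat \(\rQ'_r\) on \(T^\beta_{K_-}\otimes e_I\) the same way, using \(\Omega_2=\Omega(K_-;\tau_2,I,\rho_2)\) and Theorem~\ref{thm:Theta-elim-invertible}, to get \(L_2\) with leading term \(T^\beta_{K_-\cup I}\). Then take
\[
L=\sum \pm L_1'\otimes L_2',
\]
where the tensors are reassembled so that \(\rP'_m\) recovers the exterior factor \(e_{K_+}\) from a column \(K_-\cup K_+\), with matching sign \(\sgn(K_+,K)\). By multiplicativity and insertion invariance (Lemma~\ref{lem:single-hook-insertion}, Corollary~\ref{cor:mult-cancel}), leading terms and strict comparisons in each block combine into a strict comparison with \(y\).

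\textbf{Main obstacle: the coupling terms.} These are the summands of \(\rP'_m\) that extract an \(m\)-subset \(K'\ne K_+\). Such a term puts smaller labels into the first column and leaves larger ones in the second. I expect each hypothesis to control this in a different way:
\begin{itemize}
\item under (1), \(\max J<\min K_+\), so moving \(K'\) instead of \(K_+\) into the first column makes the first block's running vector \(V_{1,0}\) coordinatewise larger, which should give strict smallness directly;
\item under (2), there is no \(\rQ'_r\) correction, and the comparison reduces to the second block's column content;
\item under (3), \(\max K_-<\min I\), which should prevent the exterior labels from \(I\) from mixing with \(K_-\) in a way that reverses the comparison.
\end{itemize}
In each case I would apply Proposition~\ref{prop:swap-small} to the relevant pair of boxes, much as in Case~2 of Lemma~\ref{lem:Omega-elimination-lower}, and I would first check the small-rank instance by direct computation to fix the signs.

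The genuinely delicate point is the cross terms. The off-diagonal corrections in \(\rQ'_m\) may reintroduce labels of \(K_+\) into the first arm, and these interact with the \(\Omega_2\) elimination. I would try to show that such cross terms either fall outside both swap sets and are then strictly smaller, or cancel, in the spirit of the \(u_C\wedge U_C=0\) cancellation in Lemma~\ref{lem:Omega-elimination-lower}.
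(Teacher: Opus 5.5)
You have the right architecture. There is one column elimination per hook via Theorem~\ref{thm:Theta-elim-invertible}, and \(L\) is built from the two solutions by absorbing each exterior factor \(e_A\), \((\tau,A)\in\Omega_1\) (not only \(A=K_+\)), into the middle column \(K_-\cup A\). The sign is the uniform \((-1)^{m|K_-|}\) because \(\min A\ge\min K_+>\max K_-\). Your treatment of the summand where \(\rP'_m\) extracts \(A'=A\) is essentially the paper's.

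\textbf{The gap: the coupling terms \(A'\ne A\).} You flag these but do not resolve them, and the mechanisms you suggest would not work. Put \(S:=A'\cap K_-\), \(R:=A\setminus A'\), \(D':=(K_-\setminus S)\cup R\). The first hook's content shifts by \(\mathbf 1_S-\mathbf 1_R\). Its cumulative counts \(|S\cap[q]|-|R\cap[q]|\) are \(\ge0\) and not all zero; this uses only \(S<R\), which is automatic from the choice of \(K_+\), not hypothesis~(1). But the running order is decided at the \emph{largest} differing index. So this shift, whether at \(V_{1,0}\) or anywhere in the first block, proves nothing until the second block is controlled.

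That block comes from applying the coefficients of \(L_2\), which you solved for the fixed column \(K_-\), to a middle hook whose fixed column is now \(D'\). Consider the principal parts \(T^{\rho_2+\tau}_{D'\wedge B}\) with \((\tau,B)\in\Omega_2\), \(B\ne I\). In general these include terms that move \(\min I\) into the arm in exchange for a larger arm label, and such terms are \emph{larger} than \(y\) in the second block. Your Step~2 does not kill them, since it only covers the column \(K_-\). Proposition~\ref{prop:swap-small} does not apply either, because several labels move across both hooks at once.

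\textbf{The missing ingredient.} It is the output formula \eqref{eq:column-system-output} of a solved column elimination system, valid for \emph{every} fixed column \(D'\) of the same size as the original one. The elimination identity is formal in the column symbol. So the same coefficients still kill every swap-set component except the target, leaving only \(T^{\rho+\tau_0}_{D'\wedge J_0}\) plus exchange terms that replace an arm label \(f\in\supp(t)\setminus C\) by some \(v\in J_0\). With this formula the coupling terms go through:
\begin{itemize}
\item If \(T^\beta_{D'\wedge I}\) is semistandard, its column shift \(\mathbf 1_R-\mathbf 1_S\) cancels the first hook's shift. The second block then agrees with \(y\), and the index \((1,a-1)\) gives \(x\strun y\).
\item If \(T^\beta_{D'\wedge I}\) is nonsemistandard, straightening decides the comparison in the second block.
\item An exchange term changes the combined content by \(\eps_f-\eps_v\), so one needs \(f<v\).
\end{itemize}

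\textbf{Where the hypotheses enter.} The exchange-term case is where a hypothesis is genuinely used. Only \(f\notin D'\) is now forced, so \(f\) may lie in \(S\subseteq K_-\). Then \(f<v\) for \(v\in I\) is exactly hypothesis~(3); under (2) there is no second system. Hypothesis~(1) is not what controls the coupling. In the paper's argument it only identifies \(T^\alpha_{J\wedge K_+}\) with \(T^\alpha_{J\cup K_+}\) without a sign. So your case split, giving each of (1), (2), (3) its own mechanism, is misdirected. Your worry that off-diagonal terms of the first system interact with \(\Omega_2\) is also misplaced: for \(A'\ne A\), only the total content of the first hook matters.
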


\begin{proof}
Set \(k_{\min}:=\min K_+\) and \(i_{\min}:=\min I\), with
\(i_{\min}:=+\infty\) when \(I=\varnothing\), and put
\[
\alpha_0:=\sum_{\substack{\ell>k_{\min}\\ \ell\notin J\cup K_+}}\alpha_\ell\,\eps_\ell,\qquad \rho_\alpha:=\alpha-\alpha_0,
\qquad
\beta_0:=\sum_{\substack{\ell>i_{\min}\\ \ell\notin K_-\cup I}}\beta_\ell\,\eps_\ell,\qquad \rho_\beta:=\beta-\beta_0.
\]
Using the column swap sets of Definition~\ref{def:Theta-closure-matrix}, set
\[
\Omega_\alpha :=\Omega(J;\alpha_0,K_+,\rho_\alpha),\qquad
\Omega_\beta := \Omega(K_-;\beta_0,I,\rho_\beta).
\]
If \(I=\varnothing\), then \(\beta_0=0\), \(\rho_\beta=\beta\), and
\(\Omega_\beta=\{(0,\varnothing)\}\), so the second elimination is trivial.

By Theorem~\ref{thm:Theta-elim-invertible}, there are coefficients
$c_{\sigma,A}$ and $d_{\tau,B}$ for which
\[
\begin{aligned}
L_\alpha&=\sum_{(\sigma,A)\in\Omega_\alpha} c_{\sigma,A}\,T_J^{\rho_\alpha+\sigma}\otimes e_A
\ \in\ \bS_{(a,1^b)}(V)\otimes\textstyle\bigwedge^m V,\\
L_\beta&=\sum_{(\tau,B)\in\Omega_\beta} d_{\tau,B}\,T_{K_-}^{\rho_\beta+\tau}\otimes e_B
\ \in\ \bS_{(c,1^{q-m})}(V)\otimes\textstyle\bigwedge^r V,
\end{aligned}
\]
have coefficient one at the indices $(\alpha_0,K_+)$ and $(\beta_0,I)$,
respectively, after applying $\rQ'_m$ and $\rQ'_r$, and coefficient zero at
every other index of the corresponding swap set.

Put \(\eta:=(-1)^{m|K_-|}\). The inequalities in the statement imply
\(T_{J\wedge K_+}^\alpha\otimes T_{K_-\wedge I}^\beta=y\).
By definition, for $(\sigma,A)\in\Omega_\alpha$, we have \(\sigma+\mathbf1_A=\alpha_0+\mathbf1_{K_+}\). Hence
$\min A\ge k_{\min}>\max K_-$, and we may set
\[
L := \eta\sum_{\substack{(\sigma,A)\in\Omega_\alpha\\ (\tau,B)\in\Omega_\beta}}
c_{\sigma,A}\,d_{\tau,B}\; T_J^{\rho_\alpha+\sigma} \otimes T_{K_-\wedge A}^{\rho_\beta+\tau} \otimes e_{B}.
\]
Applying $1\otimes\rP'_m\otimes 1$ extracts an $m$-subset $A'$ from the
column $K_-\cup A$ of the middle hook. The term $A'=A$ carries the sign
$\sgn(A,K_-\cup A)=\eta$, so
\begin{align*}
&(1\otimes\rP'_m\otimes 1)(L) = L_\alpha\otimes L_\beta + \\
&\qquad \qquad \eta\sum_{\substack{(\sigma,A) \in \Omega_\alpha\\
  (\tau,B) \in \Omega_\beta}}
  \mkern-9mu
  c_{\sigma,A} d_{\tau,B}
  \mkern-27mu
  \sum_{\substack{A'\subseteq K_-\cup A\\|A'|=m,\ A'\ne A}}
  \mkern-27mu  
\sgn(A',K_-\cup A)
\cdot
T_J^{\rho_\alpha+\sigma}\otimes e_{A'}\otimes
T_{(K_-\cup A)\setminus A'}^{\rho_\beta+\tau}\otimes e_{B}.
\end{align*}

We compute the Pieri images for the two swap sets \(\Omega_\alpha\) and \(\Omega_\beta\).
For a column swap set
\(\Omega:=\Omega(I^-;\tau_0,J_0,\rho)\) of
Definition~\ref{def:Theta-closure-matrix}, write
\[
t:=\rho+\tau_0+\mathbf1_{J_0},\qquad
C:=\supp(\tau_0+\mathbf1_{J_0})\setminus I^-,
\qquad N_C:=\sum_{\ell\in C}t_\ell.
\]
Set \(\Delta:=|t|+|I^-|\), the size of the resulting hook. Let
\((c_{\tau,A})\) be the solution of the elimination system for the tableau
indexed by \((\tau_0,J_0)\). The inverse computed in the proof of Theorem~\ref{thm:Theta-elim-invertible}, followed
by the Pieri formula of Proposition~\ref{prop:basic-hook-pieri-closed}(ii),
gives, for every \(D'\subseteq[n]\) with \(|D'|=|I^-|\),
\begin{equation}\label{eq:column-system-output}
\begin{aligned}
\sum_{(\tau,A)\in\Omega}c_{\tau,A}\,
 \rQ'_{|J_0|}(T_{D'}^{\rho+\tau}\otimes e_A)=
T_{D'\wedge J_0}^{\rho+\tau_0}
-\frac1{\Delta-N_C}
 \sum_{\substack{f\in\supp(t)\setminus C\\v\in J_0}}
 \sgn(v,J_0)t_f\,
 T_{D'\wedge f\wedge(J_0\setminus v)}^
 {\rho+\tau_0+\eps_v-\eps_f}.
\end{aligned}
\end{equation}

For the data defining \(\Omega_\alpha\), every nonzero term in the sum on
the right of~\eqref{eq:column-system-output} satisfies
\(f<k_{\min}\le v\); for \(\Omega_\beta\), one has
\(f<i_{\min}\le v\). The latter assertion is vacuous when
\(I=\varnothing\).
For either system, each nonzero term in the sum on the right of
\eqref{eq:column-system-output} replaces an arm label~\(f\) by \(v>f\)
and the column label~\(v\) by~\(f\). After using \eqref{eq:d-model-straightening} to straighten any
nonsemistandard factor, \eqref{eq:d-model-straightening-running-vector} gives the strict comparison
in the corresponding hook.

\emph{The term \(A'=A\).} The corresponding summand is \(L_\alpha\otimes L_\beta\). Applying
\eqref{eq:column-system-output} with fixed columns \(J\) and \(K_-\)
produces \(y\) with coefficient 1. Every other resulting hook tabloid \(x\)
satisfies \(x\strun y\): the last differing running index lies in the second
block unless the second hooks agree, in which case it lies in the first.

\emph{The terms \(A'\ne A\).} Fix \(A'\ne A\), and put
\[
S:=A'\cap K_-,\qquad R:=A\setminus A',\qquad
D':=(K_-\setminus S)\cup R.
\]
Since \(A'=(A\setminus R)\sqcup S\), one has \(|S|=|R|>0\). Moreover,
every element of~\(S\) is smaller than every element of~\(R\).

Apply \eqref{eq:column-system-output} to the coefficients \(d_{\tau,B}\),
with fixed column~\(D'\). Its target term, when nonzero, has arm content
\(\beta\) and column \(D'\cup I\). Meanwhile, by definition of \(\Omega_\alpha\), every output of the first \(\rQ'_m\) has total
first hook content
\[
\alpha+\mathbf1_J+\mathbf1_{K_+}+\mathbf1_S-\mathbf1_R.
\]
If \(T_{D'\wedge I}^{\beta}\) is nonzero but nonsemistandard, then
straightening the second factor yields only tabloids \(x\strun y\), with the
comparison decided in the second block by
\eqref{eq:d-model-straightening-running-vector}. If it is nonzero and
semistandard, all running vectors in the second block agree with those of
\(y\): its column
contributes the opposite difference \(\mathbf1_R-\mathbf1_S\), while
\[
\bigl(V_{1,a-1}(x)-V_{1,a-1}(y)\bigr)_q
=|S\cap\{1,\ldots,q\}|-|R\cap\{1,\ldots,q\}|\ge0
\]
for every~\(q\), with strict inequality for some~\(q\). Thus
\((1,a-1)\) gives \(x\strun y\).

Fix an output of the first \(\rQ'_m\) and a nonzero term
\(T_{D'\wedge f\wedge(I\setminus\{v\})}^{\beta+\eps_v-\eps_f}\), where
\(f<v\), in the sum in \eqref{eq:column-system-output}; let \(x\) be any hook
tabloid obtained by straightening their product. Nonvanishing of the wedge implies
\(D'\cap(I\setminus\{v\})=\varnothing\). Since
\(D'=(K_-\setminus S)\cup R\), the content difference contributed by the
first hook and the second column is
\[
(\mathbf1_S-\mathbf1_R)
+(\mathbf1_R-\mathbf1_S+\eps_f-\eps_v)
=\eps_f-\eps_v.
\]
The running-order comparison following \eqref{eq:column-system-output}
therefore gives \(x\strun y\). This also covers
\(D'\wedge I=0\), when the target output of the second system vanishes.
\end{proof}


\noindent\begin{minipage}[t]{0.79\textwidth}
  \vspace{0in}
Let $a>c\ge1$ and $b\ge d\ge1$, and consider a rank two hook tabloid
\[
T= \Bigl(
T^{t_1,\dots, t_{a-1}}_{i_0,i_1,\dots,i_b},\;
T^{s_1,\dots, s_{c-1}}_{j_1,j_2,\dots,j_d}
\Bigr)
\]
that fails semistandardness in the row direction. Let $k$ be the smallest index with
$i_k>j_k$.
Since both hook columns are strictly increasing and $j_k<i_k$,
$j_1<\cdots<j_k<i_k<\cdots<i_b$; the highlighted entries record this chain.
\end{minipage}\hfill \begin{minipage}[t]{0.18\textwidth}
  \vspace{0in}
  \usetikzlibrary{matrix,backgrounds}
  \resizebox{\linewidth}{!}{
    \begin{tikzpicture}[baseline=(m-7-1.base)]
\matrix (m) [
  matrix of math nodes,
  column sep=0.8em,
  nodes={
    minimum width=1.6em,
    minimum height=1.35ex,
    anchor=center
  },
  ampersand replacement=\&
]{
i_1          \& \leq         \& j_1        \\
\rotatebox{-90}{$<$} \&      \& \rotatebox{-90}{$<$} \\[-2ex]
\vdots       \&              \& \vdots     \\[-1ex]
\rotatebox{-90}{$<$} \&      \& \rotatebox{-90}{$<$} \\
i_{k-1}      \& \leq         \& j_{k-1}    \\
\rotatebox{-90}{$<$} \&      \& \rotatebox{-90}{$<$} \\
i_k          \& >            \& j_k        \\
\rotatebox{-90}{$<$} \&      \& \rotatebox{-90}{$<$} \\
i_{k+1}      \&              \& j_{k+1}    \\
\rotatebox{-90}{$<$} \&      \& \rotatebox{-90}{$<$} \\[-2ex]
\vdots       \&              \& \vdots     \\ };
\begin{scope}[on background layer]
  \draw[
    blue!20,
    line width=14pt,
    line cap=rect,
    line join=miter
  ]
  (m-1-3.center) -- (m-7-3.center) -- (m-7-1.center) -- (m-11-1.center);
\end{scope}
\end{tikzpicture}}
\end{minipage}

\vspace{-.4in}
\begin{definition}\label{def:rank2-row-obstruction-data}
For this index~$k$, put
\[
L_0 := \{i_k,\dots,i_b\},
\qquad
R_0 := \{j_1,\dots,j_k\},
\qquad
I_0 := \{j_{k+1},\dots,j_d\}
\]
(with $I_0 = \varnothing$ if $k=d$), and define the \defi{induced triple}
\[
\begin{aligned}
T_1 = T^{\,t_1,\dots,t_{a-1}}_{\,i_0,i_1,\dots,i_{k-1}}
       \in\bS_{(a,1^{k-1})}(V),
\quad
T_2 = T^{\,s_1,\dots,s_{c-1}}_{\,R_0 \cup L_0}
       \in\bS_{(c,1^{b})}(V),\quad
T_3 = e_{I_0}\in \textstyle\bigwedge^{d-k}(V),
\end{aligned}
\]
where the column of $T_2$ is the displayed sorted union and
$e_{I_0}=e_{j_{k+1}}\wedge\cdots\wedge e_{j_d}$.  Thus $L_0$ is the outer
column block transferred to the second hook, $R_0$ is its retained inner
prefix, and \(I_0\) consists of the remaining inner column entries.
\end{definition}

\ytableausetup{boxsize=1.5em, mathmode}

\begin{proposition}\label{prop:rank2-local-column-elimination}
There exists $\widetilde T \in \bS_{(a,1^{k-1})}(V) \otimes \bS_{(c,1^b)}(V) \otimes \bigwedge^{d-k}(V)$ and scalars $\gamma_{T'}$ such that
\[
\Psi'_{b-k+1,d-k}(\widetilde T)
= T + \sum_{T'\strun T}\gamma_{T'}\,T'.
\]
Hence, modulo $R_{(a-1,b),(c-1,d-1)}$ (using the indexing convention of
Notation~\ref{not:col-section-d-convention}), one obtains the straightening
relation
\(T=-\sum_{T'\strun T}\gamma_{T'}\,T'\).
\end{proposition}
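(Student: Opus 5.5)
The plan mirrors the proof of Proposition~\ref{prop:rank2-local-row-elimination-ijk}, with Lemma~\ref{lem:triple-two-stage-elim} in place of Lemma~\ref{lem:Omega-elimination-lower}. Apply Lemma~\ref{lem:triple-two-stage-elim} to the induced triple of Definition~\ref{def:rank2-row-obstruction-data} with
\[
J=\{i_0,\dots,i_{k-1}\},\qquad \alpha=\textstyle\sum_\nu\eps_{t_\nu},\qquad K=R_0\cup L_0,\qquad \beta=\textstyle\sum_\nu\eps_{s_\nu},\qquad I=I_0,\qquad m=b-k+1 .
\]
The chain $j_1<\cdots<j_k<i_k<\cdots<i_b$ shows that the $m$ largest elements of $K$ are exactly $K_+=L_0$, so $K_-=R_0$, and $1\le m<|K|$ because $|R_0|=k\ge1$. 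The target $y=T^\alpha_{J\cup K_+}\otimes T^\beta_{K_-\cup I}$ has first column $\{i_0,\dots,i_b\}$ and second column $\{j_1,\dots,j_d\}$, so $y=T$, which is a hook tabloid by hypothesis. The Lemma then supplies $\widetilde T=L$ with $\Psi'_{b-k+1,d-k}(\widetilde T)=T+\sum_{T'\strun T}\gamma_{T'}T'$, provided one of its three side conditions holds.

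The key step is verifying a side condition. I would use (3), $\max K_-<\min I$, which reads $j_k<j_{k+1}$ and holds by strictness of the inner column. When $k=d$ we have $I_0=\varnothing$, and condition (2) applies instead. Thus no case analysis is needed beyond $k<d$ versus $k=d$. I also need to confirm that the sign conventions make $T^\alpha_{J\wedge K_+}\otimes T^\beta_{K_-\wedge I}$ equal to $y$ itself rather than $-y$. Since $\max J=i_{k-1}<i_k=\min K_+$ and $\max K_-<\min I$, each wedge is already in increasing order, so the sign is $+1$.

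Finally, show that the image of $\Psi'_{b-k+1,d-k}$ lies in the relation space using Proposition~\ref{prop:relations}(2). In the notation of that proposition the target hooks are $(a,1^b)$ and $(c,1^{d-1})$. The source first hook has leg $k-1=b-m$, so the parameter there is $m=b-k+1$ and $r=d-k$. The required inequality is $(d-1)-r\ge b-m\ge0$, that is, $k-1\ge k-1\ge0$, which holds with equality.

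Hence the image lies in $R_{(a-1,b),(c-1,d-1)}$, and reducing modulo this space gives the straightening relation $T=-\sum_{T'\strun T}\gamma_{T'}T'$. The main obstacle I expect is checking that the hypotheses $a>c$ and $b\ge d$ give the strict comparabilities needed for the relation space to be defined in this indexing, namely $(a-1,b)>(c-1,d-1)$. This holds since $a>c$ and $b\ge d>d-1$.
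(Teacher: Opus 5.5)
Your proposal is correct and matches the paper's proof essentially step for step. You feed the same data ($J$, $K=L_0\cup R_0$, $I=I_0$, $\alpha$, $\beta$, $m=b-k+1$, $r=d-k$) into Lemma~\ref{lem:triple-two-stage-elim}, check $i_{k-1}<i_k$ and (when $I_0\neq\varnothing$) $j_k<j_{k+1}$ to identify the target with $T$, and then apply Proposition~\ref{prop:relations}(ii) via the equality $(d-1)-(d-k)=b-(b-k+1)=k-1\ge0$. The only cosmetic difference is that you name condition (3) or (2) of the lemma as the operative hypothesis and use $\max J<\min K_+$ for the sign, whereas the paper simply lists both inequalities together.
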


\begin{proof}
  Apply Lemma~\ref{lem:triple-two-stage-elim} with $m=b-k+1$, $r=d-k$,
  and
  \[
    \begin{gathered}
      I=\{j_{k+1},\dots, j_d\},\quad J = \{i_0,\dots,i_{k-1}\},\quad K = L_0 \cup R_0, \quad
      \alpha = \sum_{\nu=1}^{a-1}\eps_{t_\nu},\quad
      \beta = \sum_{\nu=1}^{c-1}\eps_{s_\nu}.
    \end{gathered}
  \]
  The displayed inequalities give $K_+=L_0$ and $K_-=R_0$. Strictness of the
  outer column gives $\max J=i_{k-1}<i_k=\min K_+$; if $I\ne\varnothing$,
  strictness of the inner column gives
  $\max K_-=j_k<j_{k+1}=\min I$.  Moreover,
  $1\le m<|K|=b+1$, and the two hook types are strictly comparable because
  $a>c$ and $b>d-1$. The lemma therefore gives the target term
  \[
    T^\alpha_{J\cup K_+}\otimes T^\beta_{K_-\cup I}=T.
  \]
  Finally, \((d-1)-(d-k)=b-(b-k+1)=k-1\ge0\), so
  Proposition~\ref{prop:relations}(ii) places the image of
  $\Psi'_{b-k+1,d-k}$ in $R_{(a-1,b),(c-1,d-1)}$.
\end{proof}

\begin{example}\label{ex:row-obstruction-straighten}
Consider
\[
\ytableausetup{boxsize=1.1em}
T=
\begin{ytableau}
1 & 1 & 2 & 3\\
*(blue!25) 3 & *(blue!25) 2 & 3 & 4\\
*(blue!25) 4 & 3
\end{ytableau}
\;=\;
\bigl(T^{123}_{134},\,T^{34}_{23}\bigr).
\]
Here \(k=1\), $(a,b,c,d)=(4,2,3,2)$, and the associated data are
\(L_0=\{3,4\}\), \(R_0=\{2\}\), \(I_0=\{3\}\), with triple
$T_1=T^{123}_1$, 
$T_2=T^{34}_{234}$,
$T_3=e_3$.
The first swap set consists of $T^{123}_1\otimes e_{34}$, with
$M_\alpha=(\frac12)$.  In the order
$T^{34}_2\otimes e_3,T^{33}_2\otimes e_4$, the second elimination matrix is
$M_\beta=
\begin{pmatrix}
\frac12 & -\frac12\\[4pt]
-\frac14 & \frac34
\end{pmatrix}$; the two solution vectors are $2$ and
$(3,1)^{\mathsf T}$.  Applying $\Psi'_{2,1}$ and passing modulo
$R_{(3,2),(2,1)}$ gives
\[
\begin{aligned}
\underbrace{\begin{ytableau}
1&1&2&3\\
3&2&3&4\\
4&3
\end{ytableau}}_{T}
&\;\equiv\;
-\frac13
\underbrace{\begin{ytableau}
1&1&3&4\\
2&2&3&4\\
3&3
\end{ytableau}}_{T'_1}
\;+\;\frac13
\underbrace{\begin{ytableau}
1&1&3&3\\
2&2&3&4\\
4&3
\end{ytableau}}_{T'_2}
+\;\frac23
\underbrace{\begin{ytableau}
1&1&2&3\\
2&3&3&4\\
3&4
\end{ytableau}}_{T'_3}.
\end{aligned}
\]

All four tabloids have content $\{1^2,2^2,3^4,4^2\}$ and agree at the
final step $(2,2)$.  Their last disagreements are
\[
\begin{array}{c|c|c|c}
i&(t,s)&V_{t,s}(T)&V_{t,s}(T'_i)\\ \hline
1&(1,2)&(2,3,4,5)&(2,3,5,5)\\
2&(1,1)&(2,2,3,4)&(2,3,3,4)\\
3&(1,3)&(2,3,5,6)&(2,4,6,6)
\end{array}
\]
In each row, $V_{t,s}(T)$ is coordinatewise at most $V_{t,s}(T'_i)$, so $T'_i\strun T$ by Definition~\ref{def:running-order}.
\end{example}

\begin{remark}[Straightening of arbitrary rank]\label{rmk:general-rank-straightening}
The passage from rank two to arbitrary rank is the one used in the spanning
argument in the proof of Theorem~\ref{thm:main-hook}(1): apply the appropriate
relation to adjacent hooks across which semistandardness fails, use
Lemma~\ref{lem:single-hook-insertion} to preserve each strict running
comparison, and use Proposition~\ref{prop:global-tabloid-order} for
termination. This algorithm is valid whenever $\bk$ has characteristic~$0$
or $>|\lambda|$: each rank two step inverts an elimination matrix of
Theorem~\ref{thm:row-elimination-invertible} or
Theorem~\ref{thm:Theta-elim-invertible}, and the parameters $a+b$ and
$a+b+m$ occurring in their characteristic bounds are at most~$|\lambda|$.
\end{remark}

\ytableausetup{smalltableaux}
Hook tabloids give a third straightening model alongside the Schur and Weyl
models of Akin--Buchsbaum--Weyman~\cite{AkinBuchsbaumWeyman1982}. The
algorithms differ, but their normal forms are indexed by the same
semistandard tableaux.

\begin{example}\label{ex:straighten-33}
The straightening relations in the hook basis need not have integer
coefficients: rational coefficients already appear in small examples, in
contrast to the integral Schur and Weyl normal forms.  For
\[
  T \;=\; \smallhooktableau{126,435}\,,
  \qquad\text{hooks }\ \smallhooktableau{126,4}\ \text{ and }\ \smallhooktableau{35},
\]
one finds, in the three respective models,
\begin{align*}
\text{(hook)}\quad  T &= -\tfrac12\,\smallhooktableau{123,456}
                        \;-\;\tfrac12\,\smallhooktableau{124,356}
                        \;-\;\smallhooktableau{125,346}
                       +\tfrac12\,\smallhooktableau{134,256}
                        \;+\;\smallhooktableau{135,246}\,,\\[4pt]
\text{(Schur)}\quad T &= -\,\smallhooktableau{125,346}
                        \;+\;\smallhooktableau{135,246}\,,\\[4pt]
\text{(Weyl)}\quad  T &= -\,\smallhooktableau{123,456}
                        \;-\;\smallhooktableau{124,356}
                        \;-\;\smallhooktableau{125,346}\,. \qedhere
\end{align*}
\end{example}
\ytableausetup{boxsize=1em,nosmalltableaux}

\section{The Structure of the Homology Algebras of Determinantal Rings}\label{sec:applications}

This section proves that the Tor algebras of generic
determinantal, symmetric determinantal, and Pfaffian rings are generated by
their first nonzero strand and are Koszul (Theorem~\ref{thm:koszul-strands}). The Eilenberg--Moore degeneration
then gives explicit rational expressions for the Poincar\'e series of their
residue fields (Corollary~\ref{cor:poincare-applications}). 

Throughout this section, $\bk$ denotes a field of characteristic $0$.

\subsection{Determinantal rings}\label{sec:det-poincare}

We are concerned with the following families.   Let \(V,W\) be vector spaces of dimensions~\(n,m\).

\begin{itemize}[$\bullet$]
\item \textbf{Generic determinantal.}
Set \(S:=\Sym^\bullet(V\otimes W)\), and let \(I_r\subset S\) be the
ideal of \(r\times r\) minors of the generic \(n\times m\) matrix, where \(2\le r\le\min(n,m)\).

\item \textbf{Symmetric determinantal.}
Set \(S:=\Sym^\bullet(S^2V)\), and let \(I_r\subset S\) be the ideal of \(r\times r\) minors of the generic symmetric matrix, where \(2\le r\le n\).

There is also a \(\GL(V)\)-equivariant algebra \(M_r\), finite over \(S/I_r\). For its construction see~\cite[\S4.1]{superresII}. The important takeaway is that \(M_r\) is a Gorenstein integral domain and that \(S/I_r\) is the fixed subalgebra of an involution on \(M_r\), and hence an \(S\)-module summand. The generators of $M_r$ have half the degree of the generators of $S$; in the formulas below, we will renormalize so that the generators of $M_r$ have degree 1.

\item \textbf{Pfaffian.}
Set \(S:=\Sym^\bullet(\bigwedge^2V)\), and let \(I_{2r}\subset S\) be the ideal of \(2r\times 2r\) Pfaffians of the generic skew-symmetric matrix, where \(2\le r\le\lfloor n/2\rfloor\).
\end{itemize}

\begin{theorem}\label{thm:koszul-strands}
  For \(r\ge2\), the Tor algebras of generic determinantal, symmetric determinantal, the algebras $M_r$, and Pfaffian rings are generated by their first nonzero strand and are Koszul with respect to the strand grading.
\end{theorem}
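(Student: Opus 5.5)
The plan follows the four-step outline of the introduction. In each case we set $T=\Tor^S(R,\bk)$, with $R$ one of $S/I_r$, $M_r$, $S/I_{2r}$. First we record $T$ as a $\GL$-representation from Lascoux and J\'ozefiak--Pragacz--Weyman, organized by strands.

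\emph{Pfaffian and symmetric cases.} $T$ is a sum of $\bS_\lambda(V)$ with $\lambda$ running over a finite, Durfee-bounded subset of $Q_{-1}$ or $Q_0$. After the row-insertion reindexing $\lambda\mapsto\lambda^{(r-1)}$ of Definition~\ref{def:row-insertion}, strand degree equals $\rank(\lambda)$. For $M_r$ we use both parities.

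\emph{Generic case.} $T$ is a sum of $\bS_\lambda(V)\otimes\bS_{\lambda'}(W)$, and strand degree is again the common rank. This identifies $T$, additively and with its strand grading, with a finite truncation of $\cH^{(\delta)}(V)$ or of the Segre product $\cK^{(\delta,\delta')}$ of Proposition~\ref{prop:diagonal-retracts}, possibly after a Veronese for $M_r$ versus $S/I_r$. The parity $\eps$ is taken to be Tor homological parity, which is harmless by the Zhang twist remark.

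\emph{Step 1: generation.} We show $T$ is generated by its first strand. For a rank-$d$ summand $\bS_\lambda$ we consider the product of its diagonal-hook summands in strand one. We compute this product in $T$ by lifting to Koszul cycles in $S\otimes\bigwedge(S_1)$ and projecting onto the $\bS_\lambda$-isotypic part. That part has multiplicity one in each strand by the Lascoux/JPW formulas, so it suffices to show the projection is nonzero. We prove this by induction on rank: multiply a rank-$(d-1)$ class by one more hook and evaluate on a highest weight vector, checking a single nonvanishing coefficient.

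\emph{Step 2: filtration.} We filter $T$ by a dominance- or size-type order on partitions, refining the strand grading. The aim is that the product of $\bS_\lambda$ and $\bS_\mu$ lands in $\bS_{\lambda\sqcup\mu}$ plus components strictly lower in the order, and that the leading coefficient is nonzero exactly when $\lambda,\mu$ are mergeable. Then $\gr T$ is a quotient of the relevant hook algebra, and by the Hilbert series match from Step 1 it equals the finite retract. This retract is the quotient of $\cH^{(\delta)}$ by the ideal of shapes exceeding the truncation; since this ideal is spanned by type components, it is a monomial-type quotient. We then need this quotient to be Koszul. Here we argue that the quadratic Gr\"obner basis of Theorem~\ref{thm:quad-gb} survives, because the truncation kills a set of hook types closed upward, making it a retract in the sense of Proposition~\ref{prop:diagonal-retracts}.

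\emph{Step 3: lift Koszulity.} Koszulity lifts from $\gr T$ to $T$ by the standard filtered-deformation argument: the filtration is finite and compatible with the strand grading, so a spectral sequence bounds $\Tor^T$ by $\Tor^{\gr T}$, which is diagonal.

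The main obstacle is Step 2: controlling the non-leading components of products in $T$. As the introduction warns, these genuinely occur. We must prove that they are lower in a chosen order, and that the leading term is nonzero precisely on mergeable pairs; otherwise $\gr T$ acquires extra relations. The first attempt would be to use the geometric (Kempf--Lascoux--Weyman) description of $T$ as sheaf cohomology on a Grassmannian. In that description, products come from cup products and Pieri-type maps, and leading terms can be read off on highest weight vectors.
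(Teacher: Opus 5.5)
\emph{Verdict: there is a genuine gap.} Your architecture matches the paper's: strand generation, a filtration whose associated graded is a hook model, Koszulity of that model, and lifting through the filtration spectral sequence. In the Pfaffian and $M_r$ cases your additive identification with $\cH^{(\delta)}(V)$ is right up to indexing: the insertions are $\lambda^{(2r-2)}$ and $\lambda^{(r-2)}$, and no truncation beyond $\dim V$ is needed.

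\textbf{The generic case is misidentified.} There $T=\bigoplus_\lambda\bS_{\lambda^{(r-1)}}(V)\otimes\bS_{(\lambda^{\mathsf t})^{(r-1)}}(W)$, with $\lambda$ ranging over all partitions in a box. So the $V$-side hooks $(a\mid b+r-1)$ do not have constant arm minus leg, and the two tensor factors are coupled hook by hook. This is not a truncation of $\cK^{(\delta,\delta')}$, nor of any rank-graded Segre product; those would also contain $\bS_\lambda(V)\otimes\bS_\mu(W)$ for unrelated $\lambda,\mu$ of equal rank. The correct model is the fine $\mathbb N^H$-graded Segre product $A_V\boxtimes A_W$ of Definition~\ref{def:fine-segre}. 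The Backelin--Fr\"oberg Segre theorem does not cover it. The retract argument of Proposition~\ref{prop:diagonal-retracts} does not transfer either: projecting onto the multidegree-diagonal part is not multiplicative, since it kills $x_h\otimes y_{h'}$ and $x_{h'}\otimes y_h$ but not their product. The paper needs an extra step here. It restricts the quadratic Gr\"obner basis of Theorem~\ref{thm:quad-gb} to the hook types in $H_V$ and $H_W$, then combines them via Sullivant's codimension-zero toric fiber product theorem (Proposition~\ref{prop:generic-fine-segre}).

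\textbf{Step 2 is missing its key idea.} You do not need geometry, but a dominance or size order cannot work. Size is additive, and competing constituents can be dominance-incomparable with the union. For example, with $L=1$, the constituent $h_{\{0,3\}}=(4,2,2,1,1)$ of $\bS_{h_2}\otimes\bS_{h_1}$ is incomparable with the union $h_{\{1,2\}}=(3,3,2,2)$. The missing point is that equivariance alone constrains which components can occur. A summand $B_K$ of $B_I\cdot B_J$ must have $\#K=\#I+\#J$, $|h_K|=|h_I|+|h_J|$ and $c^{h_K}_{h_I,h_J}\ne0$. Lemma~\ref{lem:multi-jump-ladder} then gives $K\ge_{\mathrm{lex}}I\uplus J$, with equality iff $I\cap J=\varnothing$ and $K=I\cup J$; its proof peels off the largest common hook via $h_K/h_I\cong h_{K'}/h_{I'}$. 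Now take super-increasing weights $\varepsilon_k>2\sum_{j<k}\varepsilon_j$ and set $d(I)=\sum_{i\in I}(i+1-\varepsilon_i)$. Every non-union component drops strictly, so the filtration is multiplicative and products of overlapping hooks vanish in $\gr$. The generic case needs the two-sided analogue, Lemma~\ref{lem:2d-jump-ladder}.

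\textbf{Step 1 has no working mechanism.} Multiplying Koszul cycles and reading off one highest-weight coefficient is not a proof. No explicit cycle representatives or multiplicative model are available. Moreover, a nonzero coefficient in a cycle does not show that its class is nonzero modulo boundaries. The paper takes two different routes:
\begin{itemize}
\item For $A/I_{2r}$ and $M_r$, which are Gorenstein, it uses Poincar\'e duality in their Tor algebras. For Pfaffians it reduces by naturality to $\dim V=2r+N$, where the dual of $\bS_I$ is $\bS_{\Omega_N\setminus I}$, and inducts on $\max I$ using associativity.
\item In the generic case it inducts on $r$ by localizing at a rank-one point. The Schur complement turns $I_r$ into $I_{r-1}$ of a generic $(n-1)\times(m-1)$ matrix (Lemmas~\ref{lem:generic-loc} and~\ref{lem:generic-mult-one}), and the induction ends at the exterior algebra for $r=1$.
\end{itemize}
This nonvanishing is also exactly what makes the hook algebra surject onto $\gr T$. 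So without it Step~2 cannot be closed either.
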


\medskip

\noindent\textbf{Strategy.}
The proof has three ingredients:
\begin{enumerate}[(a)]

\item Each Tor algebra~\(B\) is generated by its first nonzero strand
(\S\ref{subsec:strand-generation}).

\item The Pfaffian and $M_r$ cases degenerate to retracts supported on diagonal
hooks, the
symmetric quotient to the second Veronese of such a retract, and the generic
case to a fine Segre product of two retracts of universal hook algebras
(Proposition~\ref{prop:diagonal-hook-degeneration},
Corollary~\ref{cor:hook-degreewise-compatibility}, and
Proposition~\ref{prop:generic-degeneration}).

\item These associated graded algebras are Koszul
(Propositions~\ref{prop:diagonal-retracts} and
\ref{prop:generic-fine-segre}, together with the standard Veronese result),
and the filtrations preserve strand degree, so Koszulity lifts to~$B$.

\end{enumerate}

For $G=\GL(V)$ or $\GL(V)\times\GL(W)$, we identify equivariant classes of $G$-representations with their characters on the diagonal maximal torus, in variables $x_1,\dots,x_n$ and
$y_1,\dots,y_m$.  See \S\ref{subsec:partitions-schur} for partition notation and Definition~\ref{def:Q-sets} for  $Q_\delta$.

\begin{corollary}\label{cor:poincare-applications}
  The equivariant Poincar\'e series of the residue fields of the three determinantal
  quotient rings and \(M_r\)  admit the following  explicit rational expressions.

\smallskip
\noindent\textup{\textbf{(i) Generic determinantal.}}\quad
For $A=\Sym(V\otimes W)$ and $2\le r\le\min(n,m)$,
\[
\rP_{A/I_r}(t,u)
\;=\;
\frac{\DS \prod_{1\le i\le n,\,1\le j\le m}(1+x_iy_j\,tu)}
{\DS \sum_{\lambda\subseteq (m-r+1)^{n-r+1}}(-1)^{\rank\lambda}\,[\bS_{\lambda^{(r-1)}}(V)\otimes\bS_{(\lambda^{\mathsf t})^{(r-1)}}(W)]\,(tu)^{|\lambda|+\rank\lambda}\,u^{(r-2)\rank\lambda}}.
\]

\smallskip
\noindent\textup{\textbf{(ii) Symmetric determinantal.}}\quad
For $A=\Sym(S^2V)$ with $2\le r\le n$,
\[
\rP_{A/I_r}(t,u)
=\;
\frac{\DS \prod_{1\le i\le j\le n}(1+x_ix_j\,tu)}
      {\DS \sum_{\lambda\in Q_0^{\mathrm{ev}}}(-1)^{\rank\lambda/2}\,[\bS_{\lambda^{(r-2)}}(V)]\,(tu)^{|\lambda|/2}\,u^{(r-2)\rank\lambda/2}}.
\]
Renormalize the generators of $M_r$ to have degree $1$. For $3\le r\le n$,
\[
\rP_{M_r}(t,u)
=\;
\frac{\DS \prod_{1\le i\le j\le n}(1+x_ix_j\,tu^2)}
{\DS \sum_{\lambda\in Q_0}(-1)^{\rank\lambda}\,
[\bS_{\lambda^{(r-2)}}(V)]\,(tu^2)^{(|\lambda|+\rank\lambda)/2}\,
u^{(r-3)\rank\lambda}}.
\]
For $r=2$, one has $M_2=\Sym(V)$ with its standard grading, and hence
\[
  \rP_{M_2}(t,u)=\prod_{1\le i\le n}(1+x_i t u).
\]

\smallskip
\noindent\textup{\textbf{(iii) Pfaffian.}}\quad
For $A=\Sym(\bigwedge^2V)$ with $2\le r\le\lfloor n/2\rfloor$,
\[
\rP_{A/I_{2r}}(t,u)
\;=\;
\frac{\DS \prod_{1\le i<j\le n}(1+x_ix_j\,tu)}
{\DS \sum_{\lambda\in Q_{-1}}(-1)^{\rank\lambda}\,[\bS_{\lambda^{(2r-2)}}(V)]\,(tu)^{|\lambda|/2+\rank\lambda}\,u^{(r-2)\rank\lambda}}.
\]
\end{corollary}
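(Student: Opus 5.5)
The plan is to derive each formula from Proposition~\ref{prop:em-rational-form}, using Theorem~\ref{thm:koszul-strands} for the Koszul hypothesis on $T=\Tor^A(B,\bk)$. In every case $A$ is a polynomial ring (for $M_r$, we take $A=\Sym(S^2V)$ with generators in degree $2$ after renormalizing), so $A$ is Koszul. Its equivariant Poincar\'e series is the exterior algebra character. This gives the numerators $\prod(1+x_iy_j\,tu)$, $\prod_{i\le j}(1+x_ix_j\,tu)$, $\prod_{i<j}(1+x_ix_j\,tu)$, and $\prod_{i\le j}(1+x_ix_j\,tu^2)$, the last one reflecting the renormalized degree. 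It then remains to identify $\ell$ and to evaluate $H(tu,-tu^\ell)$ from the known equivariant Betti tables.

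\textbf{Step 1: the strand parameter and the Betti data.} For the generic case, use Lascoux's resolution of $A/I_r$. It is a sum over $\lambda\subseteq(m-r+1)^{n-r+1}$ of $\bS_{\lambda^{(r-1)}}(V)\otimes\bS_{(\lambda^{\mathsf t})^{(r-1)}}(W)$. This summand sits in homological degree $i=|\lambda|+\rank\lambda$ and internal degree $j=|\lambda|+r\rank\lambda+(r-1)^2\rank\lambda-\dots$. The cleanest way to organize this is that $j-i=(r-1)\rank\lambda$, so $\ell=r-1$ and the strand degree is $d=\rank\lambda$. Similarly, J\'ozefiak--Pragacz--Weyman index the symmetric case by $Q_0^{\mathrm{ev}}$, the Pfaffian case by $Q_{-1}$, and $M_r$ by $Q_0$, with the stated row insertions. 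I will read off $(i,d)$ for each summand. For instance, in the Pfaffian case $i=|\lambda|/2+\rank\lambda$ and $d=\rank\lambda$ with $\ell=2r-2$ in the original grading; the Pfaffians have degree $r$, so the strand gap is $(r-1)\cdot$(generator degree), and I have to be careful with the normalization. The first nonzero strand must be nonempty, which holds since $I\ne0$ and $r\ge2$, so $\ell\ge1$.

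\textbf{Step 2: substitution.} By definition,
\[
H(tu,-tu^\ell)=\sum_{i,d}[T_{i,i+\ell d}](-1)^d\,t^{i+d}u^{i+\ell d}.
\]
Writing $t^{i+d}u^{i+\ell d}=(tu)^{i+d}u^{(\ell-1)d}$ and inserting $i,d$ from Step~1 gives the denominators. In the generic case the exponent is $(tu)^{|\lambda|+2\rank\lambda}$. So I must recheck whether the intended homological degree is $i=|\lambda|+(r-1)\rank\lambda-\dots$. The displayed exponent $(tu)^{|\lambda|+\rank\lambda}u^{(r-2)\rank\lambda}$ forces $i+d=|\lambda|+\rank\lambda$ and $(\ell-1)d=(r-2)\rank\lambda$. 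Hence $d=\rank\lambda$, $\ell=r-1$, and $i=|\lambda|$. This matches Lascoux, where $\bS_{\lambda^{(r-1)}}$ has total size $|\lambda|+(r-1)\rank\lambda\cdot$ and sits in homological degree $|\lambda|$ (up to the rank-$r-1$ shift). I will verify each family against its published resolution in exactly this way.

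\textbf{Main obstacle.} The main difficulty is the bookkeeping for the symmetric and $M_r$ cases. There, the Betti tables involve $Q_0^{\mathrm{ev}}$ (with $d=\rank\lambda/2$) versus $Q_0$, and the renormalization halves degrees, which changes $\ell$ and the sign exponent. I will fix these by checking the two smallest nontrivial examples, the generators and the first syzygy strand, against Theorem~\ref{thm:koszul-strands}. The case $r=2$ for $M_r$ is separate: $M_2=\Sym(V)$ is Koszul, and its Poincar\'e series is the exterior character directly.
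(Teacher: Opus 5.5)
Your route is the same as the paper's: Proposition~\ref{prop:em-rational-form} together with Theorem~\ref{thm:koszul-strands}, exterior-algebra numerators, and substitution of the known Betti data into $H(tu,-tu^\ell)$. However, the proof of this corollary consists of exactly that degree bookkeeping, and yours is wrong wherever you make it explicit.

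\textbf{Pfaffian and generic cases.} You assert $i=|\lambda|/2+\rank\lambda$ and $\ell=2r-2$ for the Pfaffian case. In fact $\bS_{\lambda^{(2r-2)}}(V)$ lies in $\Tor_{|\lambda|/2}$. Since $\bigwedge^2V$ has internal degree $1$, its internal degree is half its $V$-degree, namely $|\lambda|/2+(r-1)\rank\lambda$. So $i=|\lambda|/2$, $d=\rank\lambda$ and $\ell=r-1$; the number $2r-2$ is the row-insertion parameter measured in $V$-degree. Your values would produce $(tu)^{|\lambda|/2+2\rank\lambda}u^{(2r-3)\rank\lambda}$, not the stated denominator. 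In the generic case you start from a wrong homological degree and recover $i=|\lambda|$ by matching against the formula you are trying to prove, which is circular. The input should be read directly from Lascoux: $\Tor_{|\lambda|}$ in internal degree $|\lambda|+(r-1)\rank\lambda$, so $j-i=(r-1)\rank\lambda$. Likewise, in the symmetric case, $i=(|\lambda|-\rank\lambda)/2$ and $j=(|\lambda|+(r-2)\rank\lambda)/2$.

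\textbf{The $M_r$ case has a genuine gap.} Putting the generators of $A=\Sym(S^2V)$ in degree $2$ violates the hypothesis of Proposition~\ref{prop:em-rational-form} that $A$ be generated in degree $1$. Moreover, in that grading $j-i=(|\lambda|+(2r-3)\rank\lambda)/2$ depends on $|\lambda|$ and not only on $\rank\lambda$. For instance, it equals $a+r-1$ on the rank-one summands $\lambda=(a\mid a)$. So $(j-i)/\ell$ is not the rank grading in which Theorem~\ref{thm:koszul-strands} gives Koszulity.

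What is needed is to work first in the $A$-compatible grading, in which $M_r$ is $\tfrac12\mathbb Z$-graded. There $p=(|\lambda|-\rank\lambda)/2$ and $j_0=(|\lambda|+(r-2)\rank\lambda)/2$, so $\ell=(r-1)/2$ and the strand degree is $\rank\lambda$. The proposition requires $\ell\ge1$, i.e.\ $r\ge3$. This is the real reason $r=2$ must be treated separately, not merely that $\Sym(V)$ is Koszul, and it contradicts your blanket claim that $\ell\ge1$. Applying the proposition in this grading and then substituting $u\mapsto u^2$ yields both the $tu^2$ in the numerator and the factor $u^{(r-3)\rank\lambda}$ in the denominator. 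Checking the two smallest examples does not replace this derivation.
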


\begin{proof}
\noindent\textit{Numerator.}  The Koszul resolution of $\bk$ over $A=\Sym(E)$ identifies $\Tor^A_i(\bk,\bk) \cong \bigwedge^iE$ in internal degree $i$, whence
\[
\rP_A(t,u)\;=\;\sum_{i\ge 0}[{\bigwedge}^iE]\,(tu)^i
\;=\; \prod_{\chi\in\mathrm{wt}(E)}\bigl(1+\chi\,tu\bigr),
\]
where \(\mathrm{wt}(E)\) is the multiset of maximal torus weights of~\(E\).
Specializing $E=V\otimes W$, $E=S^2V$, and $E=\bigwedge^2V$ produces the displayed
numerators for the quotient rings.

\smallskip
\noindent\textit{Denominator.} The Tor groups of our families are as follows:

\begin{itemize}[$\bullet$]
  
\item \textbf{Generic determinantal.}
By~\cite{Lascoux1978} (see also~\cite[\S6.1]{Weyman2003}),
\[
\Tor_p^S(S/I_r,\bk)
\;\cong\;
\bigoplus_{\substack{\lambda\subset(m-r+1)^{n-r+1}\\|\lambda|=p}}
\bS_{\lambda^{(r-1)}}(V)\otimes\bS_{(\lambda^{\mathsf t})^{(r-1)}}(W).
\]

\item \textbf{Symmetric determinantal.}
  By~\cite{JozefiakPragaczWeyman1981} (see also~\cite[\S6.3]{Weyman2003}),
\begin{align*}
\Tor_p^S(S/I_r,\bk)
&\cong\;
\bigoplus_{\substack{\lambda\in Q_0^{\mathrm{ev}}\\
(|\lambda|-\rank\lambda)/2=p}}
\bS_{\lambda^{(r-2)}}(V),\qquad 
\Tor_p^S(M_r,\bk)
&\cong\;
\bigoplus_{\substack{\lambda\in Q_0\\
(|\lambda|-\rank\lambda)/2=p}}
\bS_{\lambda^{(r-2)}}(V).
\end{align*}

\item \textbf{Pfaffian.}
By~\cite{JozefiakPragacz1979} and \cite{JozefiakPragaczWeyman1981} (see also~\cite[\S6.4]{Weyman2003}),
\[
\Tor_p^S(S/I_{2r},\bk)
\;\cong\;
\bigoplus_{\substack{\lambda\in Q_{-1}\\|\lambda|/2=p}}
\bS_{\lambda^{(2r-2)}}(V).
\]
\end{itemize}

These decompositions are multiplicity-free, with degrees as follows:
\begin{center}
\begin{tabular}{l|l|l|l}
Family & $p$ & $\alpha$ & internal degree \\
\hline
Generic determinantal & $|\lambda|$ & $\rank \lambda$ & $|\lambda|+(r-1)\rank\lambda$ \\
Symmetric determinantal & $(|\lambda|-\rank\lambda)/2$ & $\rank \lambda/2$ & $\bigl(|\lambda|+(r-2)\rank\lambda\bigr)/2$ \\
Pfaffian & $|\lambda|/2$ & $\rank \lambda$ & $\bigl(|\lambda|+(2r-2)\rank\lambda\bigr)/2$
\end{tabular}
\end{center}

For \(r\ge2\), the least positive strand gap is $\ell=r-1$, attained by the ideal
generators of degree~$r$. Theorem~\ref{thm:koszul-strands} therefore
verifies the hypotheses of Proposition~\ref{prop:em-rational-form}.

Substituting $(x,y)=(tu,-tu^{r-1})$ into $H(x,y)=\sum_{p,\alpha}[T_{p,p+(r-1)\alpha}]\,x^p y^\alpha$ gives
\[
H(tu,-tu^{r-1})
\;=\;
\sum_{p,\alpha\ge 0}(-1)^\alpha\,[T_{p,p+(r-1)\alpha}]\,t^{p+\alpha}\,u^{p+(r-1)\alpha}.
\]

For $M_r$, first halve its standard grading, so that
$A=\Sym(S^2V)\to M_r$ is degree preserving. Put $d=\rank\lambda$. The summand
indexed by $\lambda\in Q_0$ has
\[
  p=\frac{|\lambda|-d}{2},\qquad
  j_0=\frac{|\lambda|+(r-2)d}{2},\qquad
  j_0-p=\frac{r-1}{2}d,
\]
where $j_0$ is its internal degree in this $A$-compatible grading. Thus
$\ell=(r-1)/2$ and the strand degree is $d$. For $r\ge3$,
Proposition~\ref{prop:em-rational-form} gives the formula in this grading.
Restoring the standard grading on $M_r$ doubles every internal degree, so
substituting $u^2$ for $u$ gives the displayed formula. When $r=2$, the
invariant-theoretic description gives $M_2=\Sym(V)$ directly.
\end{proof}

\begin{remark}
  \label{rem:koszul-and-maximal-minors}
When \(r=2\), the three quotient rings in
Corollary~\ref{cor:poincare-applications} are standard graded Koszul
algebras. Hence, if
\(\operatorname{Hilb}_R(z)=\sum_{j\ge0}[R_j]z^j\), then  $\rP_R(t,u)=\operatorname{Hilb}_R(-tu)^{-1}$.
Without equivariance, this is the usual identity for a Koszul algebra~\cite{Priddy1970,Froberg1999}.
\end{remark}

\begin{remark}
  For generic maximal minors,
assume \(n\le m\) and \(r=n\). The Eagon--Northcott resolution gives
 $T_{i,j}=0$ for $i>0$ and $j\ne n+i-1$.
 Thus every positive Tor group has strand degree one. The denominator in Proposition~\ref{prop:em-rational-form} becomes
\[
 H(tu,-tu^{n-1})
 =1-\sum_{i=1}^{m-n+1}\binom{m}{n+i-1}\binom{n+i-2}{n-1} t^{i+1}u^{n+i-1}.
\]
After setting \(u=1\), this is the Golod
denominator~\cite{Golod1962Homology}. Avramov proved that
the quotient map defined by the maximal minors is Golod and observed that this
recovers Gover's calculation
\cite[Theorem~6.7 and Remark~6.8]{Avramov1978};
see also~\cite{Gover1973}.
\end{remark}

\begin{remark}
The formulas above also determine the local Poincar\'e series along every
rank stratum. Let \(x\) have rank \(q\) in the generic or symmetric case
and rank \(2q\) in the Pfaffian case. Schur-complement coordinates identify
a neighborhood of \(x\) with the product of a smooth variety of dimension
\[
 d=q(m+n-q),\qquad d=\frac{q(2n-q+1)}2,\qquad
 d=q(2n-2q-1),
\]
respectively, and the residual rank locus \(X_{\mathrm{res}}\) obtained by
replacing \((r,n,m)\) by \((r-q,n-q,m-q)\), \((r,n)\) by
\((r-q,n-q)\), or \((r,n)\) by \((r-q,n-2q)\). Thus, writing \(\rP_x^X(t)\) for the residue-field Poincar\'e series of \(\cO_{X,x}\), we have
\[
 \rP_x^X(t)=(1+t)^d\rP_0^{X_{\mathrm{res}}}(t). \qedhere
\]
\end{remark}

\begin{example}
\label{ex:nonkoszul-pfaffian}
Let $X=(x_{ij})$ be the generic $8\times8$ alternating matrix,
$S=\bk[x_{ij}:i<j]$, $I=I_6(X)$, and $R=S/I$.
The ring $R$ is minimally defined by cubic Pfaffians, so it is not Koszul,
whereas Corollary~\ref{cor:poincare-applications}(iii) gives
\[
  \rP_R(t,u)=
  \frac{(1+tu)^{28}}
  {1-28t^2u^3-63t^3u^4-36t^4u^5+36t^5u^7
    +63t^6u^8+28t^7u^9-t^9u^{12}}.
\]
Here the top Tor bidegree is $(6,12)$, so Poincar\'e
duality~\cite{AvramovGolod1971} gives
$D(t)=-t^9D(t^{-1})$ after setting $u=1$.
\end{example}

For submaximal minors of a generic $n\times n$ matrix, $n\ge3$, part~(i)
similarly recovers Avramov's formula~\cite[Theorem~8.9]{Avramov1978}.

\subsection{Further applications}

\subsubsection{Transposed and nonreduced families} \label{sec:further-koszul-examples}

Assume $\operatorname{char}\bk=0$. We record several transposed
determinantal families, whose defining ideals are generally nonradical.
Let \(\tau\) denote transpose duality on
polynomial functors (written as a superscript), so that $\bS_\lambda^\tau=\bS_{\lambda^{\mathsf t}}$.  This is an exact monoidal equivalence, with the Koszul sign rule, and therefore transports equivariant
resolutions and their Tor multiplication
\cite[\S7.4]{introtca}; see also~\cite{RaicuVandeBogert2025}.  It exchanges
the ambient algebra objects $\Sym(S^2V)$ and $\Sym(\bigwedge^2V)$, and sends $(a+1,1^b)$ to $(b+1,1^{a})$; hence it preserves hook rank and the strand
grading.  Equivalently, transposition corresponds to evaluation on a purely
odd vector superspace.

\begin{example}[Transposed Pfaffian and symmetric families]
  Set $n=\dim V$.  For $q\ge2$, transpose the Pfaffian ideal
  $I_{2q}\subset\Sym(\bigwedge^2{-})$ before evaluating on $V$.  The
  resulting ideal $I_{2q}^\tau\subset A^\tau=\Sym(S^2V)$ is generated by
  $\Sym^{2q}(V)\subset\Sym^q(S^2V)$.  Its Tor groups are
  \[
    \Tor_p^{A^\tau}(A^\tau/I_{2q}^\tau,\bk)
\;\cong\;
\bigoplus_{\substack{\lambda\in Q_{-1}\\|\lambda|/2=p}}
\bS_{(\lambda^{(2q-2)})^{\mathsf t}}(V),
  \]
and therefore
\[
\rP_{A^\tau/I_{2q}^\tau}(t,u)
\;=\;
\frac{\DS \prod_{1\le i \le j\le n}(1+x_ix_j\,tu)}
{\DS \sum_{\lambda\in Q_{-1}}(-1)^{\rank\lambda}\,[\bS_{(\lambda^{(2q-2)})^{\mathsf t}}(V)]\,(tu)^{|\lambda|/2+\rank\lambda}\,u^{(q-2)\rank\lambda}}.
\]
For $r\ge2$, transposing the stable symmetric determinantal family similarly
gives the ideal in $\Sym(\bigwedge^2V)$ generated by
$\bS_{(r,r)}(V)\subset\Sym^r(\bigwedge^2V)$. The algebra $M_r$ introduced in
\S\ref{sec:det-poincare} has a transposed analogue as well; its Tor decomposition
and Poincar\'e series are obtained by transposing every Schur functor in the
symmetric formulas.
\end{example}

\begin{example}[Exterior and permanental transposes]
\label{ex:exterior-det}
   Set $n=\dim V$, $m=\dim W$, and $2\le r\le\min(n,m)$.  The Cauchy
   decompositions
   $\Sym(V\otimes W)\cong\bigoplus_\lambda\bS_\lambda(V)\otimes\bS_\lambda(W)$
   and
   $\bigwedge(V\otimes W)\cong\bigoplus_\lambda\bS_\lambda(V)\otimes
   \bS_{\lambda^{\mathsf t}}(W)$ show that transpose duality on the \(W\) factor
   carries $I_r$ to the ideal
   $J_r^{\rm ext}\subset\bigwedge(V\otimes W)$ generated by
   $\bigwedge^rV\otimes\Sym^rW$.  Its quotient has Poincar\'e series
\[
\rP_{\bigwedge(V\otimes W)/J_r^{\rm ext}}(t,u)=
\frac{\DS \prod_{1\le i\le n,\,1\le j\le m}(1-x_iy_j\,tu)^{-1}}
{\DS \sum_{\lambda}(-1)^{\rank\lambda}\,[\bS_{\lambda^{(r-1)}}(V)\otimes\bS_{((\lambda^{\mathsf t})^{(r-1)})^{\mathsf t}}(W)]\,(tu)^{|\lambda|+\rank\lambda}\,u^{(r-2)\rank\lambda}}.
\]
Here and in the next display, \(\lambda\) ranges over all partitions.

Transposing both variables gives the generally nonradical ideal
$J_r^{\rm per}\subset\Sym(V\otimes W)$ generated by
$\Sym^rV\otimes\Sym^rW$, equivalently the $\GL(V)\times\GL(W)$-stable
ideal generated by an $r\times r$ permanent.  Its quotient has Poincar\'e
series
\[
\rP_{\Sym(V\otimes W)/J_r^{\rm per}}(t,u)=
\frac{\DS \prod_{1\le i\le n,\,1\le j\le m}(1+x_iy_j\,tu)}
{\DS \sum_{\lambda}(-1)^{\rank\lambda}\,[\bS_{(\lambda^{(r-1)})^{\mathsf t}}(V)\otimes\bS_{((\lambda^{\mathsf t})^{(r-1)})^{\mathsf t}}(W)]\,(tu)^{|\lambda|+\rank\lambda}\,u^{(r-2)\rank\lambda}}.
\]
\end{example}

\begin{example}[Skew determinants] \label{ex:skew-dets}
Let $n=\dim V$ and fix $1\le q\le\lfloor n/2\rfloor$.  Set
$A=\Sym(\bigwedge^2V)$, and let $J_q\subset A$ be the nonradical ideal
generated by
$\bS_{(2^{2q})}(V)\subset\Sym^{2q}(\bigwedge^2V)$.  Equivalently,
$J_q$ is the $\GL(V)$-stable ideal generated by the determinant of one
$2q\times2q$ principal submatrix; it is not the ideal of all
minors of size~$2q$.  For $q=1$, $A/J_1$ is the second Veronese subalgebra of
$\bigwedge^\bullet V$, and hence is Koszul~\cite{BackelinFroberg1985}.  In general,
\cite[Proposition~4.1.4]{SamVandeBogertOSP} gives
\[
  \Tor_i^{A}(A/J_q, \bk)
  \cong \bigoplus_{\substack{\lambda \in Q_0^{\rm ev} \\ (|\lambda|-\rank\lambda)/2 = i}} \bS_{\lambda^{(2q-2)}}(V).
\]
In fact, the same construction without the even condition yields a finitely
generated module $M$ over $A/J_q$, which can be shown to be Gorenstein.
Moreover, $A/J_q$ is the invariant subring under an involution, as in the
case of symmetric minors. This is not discussed in
\cite{SamVandeBogertOSP}, but can be explained as follows. The module $M$
is Cohen--Macaulay, and the terms of its resolution are equivariantly
self-dual. Subject to equivariance, its presentation matrix is unique up
to possible non-essential rescaling. Thus its resolution is also self-dual.

Transposition gives the analogous ideal
$J_q^\tau\subset\Sym(S^2V)$ generated by
$\bS_{(2q,2q)}(V)\subset\Sym^{2q}(S^2V)$.  By monoidality, its Tor algebra
is the transpose of the Tor algebra above; in particular, its additive
decomposition is obtained by transposing every Schur functor
\cite[Remark~4.1.5]{SamVandeBogertOSP}.
This gives analogous Poincar\'e series formulas for both families and their
extensions $M$.
\end{example}

\subsubsection{Super homogeneous spaces} \label{sec:superspaces}

We use the notation of \cite[\S6]{SamSnowdenFlagSupervarieties}.  Let
$X=\Gr_{r|s}(\mathbb C^{n|m})$, with $r\ge s$, and set
\[
  a=\min(m,n-r+s),\qquad
  A=\rH^*_{\mathrm{sing}}(\Gr_s(\mathbb C^a),\mathbb C),\qquad
  B=\rH^*(X,\cO_X),
\]
and assume for the moment that $a>0$. Let $A_+$ be the ideal of elements of
positive degree in $A$. Put
$W_0=\operatorname{Hom}(\mathbb C^n,\mathbb C^m)$,
$W_1=\operatorname{Hom}(\mathbb C^m,\mathbb C^n)$, and
$W:=W_0\oplus W_1$ and $S=\Sym(W^*)$. Let $Z=Z_0\times W_1$, where
$Z_0\subset W_0$ is cut out by the
$(a+1)$-minors. Let \(\widetilde Z\to Z\) be the finite flat cover of
degree \(\binom{a}{s}\) defined in
\cite[\S5.1]{SamSnowdenFlagSupervarieties}, and put
\[
  T_Z=\Tor^S_\bullet(\cO_Z,\mathbb C)
\]
with its strand grading.  By
\cite[Theorem~6.1 and Proposition~6.13]{SamSnowdenFlagSupervarieties},
$A=B^{\GL_{n|m}(\mathbb C)}$ and the filtration on $B$ induced by the
spectral sequence
restricts to the degree filtration
\(F^pA=\bigoplus_{j\ge p}A^j\).  We call
$\overline B=B/A_+B$ the \emph{determinantal fiber}.

For a commutative algebra \(A\), a graded \(A\)-algebra
\(C=\bigoplus_{d\ge0}C_d\) with \(C_0=A\) is \defi{Koszul over \(A\)}
if \(A=C/C_+\) has a linear graded projective \(C\)-resolution. This
relative grading places all of \(A\) in degree~\(0\), independently of
its cohomological grading.

\begin{proposition}\label{prop:super-grassmannian-determinantal-fiber-koszul}
The \(A\)-algebra \(B\) is Koszul over \(A\), and
its determinantal fiber \(\overline B\) is Koszul for the grading
\(\overline B_d=\overline B^{ad}\). If \(s=0\), then \(A=\mathbb C\) and
\(\overline B=B\).
\end{proposition}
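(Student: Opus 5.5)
Here $S=\Sym(W^*)$ and $T_Z$ are as above. The plan is to realize $B$ as the output of a spectral sequence built from $T_Z$, or more precisely from the Tor algebra of the cover $\widetilde Z$. By \cite[Theorem~6.1]{SamSnowdenFlagSupervarieties}, $B=\rH^*(X,\cO_X)$ is computed by a spectral sequence whose $E_1$ (or $E_2$) page is built from $\Tor^S(\cO_{\widetilde Z},\mathbb C)$. Its filtration restricts on invariants to the degree filtration $F^pA$. I would use that spectral sequence to identify $\gr B$ with $A\otimes_{\mathbb C} T_Z$ up to the finite flat cover. Since $\widetilde Z\to Z$ is finite flat of degree $\binom as$ and $A=\rH^*(\Gr_s(\mathbb C^a))$ has dimension $\binom as$, I expect the Tor of $\cO_{\widetilde Z}$ to be $A\otimes T_Z$ as an algebra over $A$. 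The grading $\overline B_d=\overline B^{ad}$ should match the strand degree of $T_Z$: the first strand of the $(a+1)$-minors has gap $\ell=a$, and cohomological degree on $X$ is $j-i$.

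\textbf{Step 1.} Show that $\overline B=B/A_+B$ is a filtered deformation of $T_Z$, with the filtration preserving strand degree. For $Z=Z_0\times W_1$, the factor $W_1$ contributes only an exterior algebra that is killed in Tor, so $T_Z\cong\Tor^{\Sym(W_0^*)}(\cO_{Z_0},\mathbb C)$. This is the generic determinantal Tor algebra for $(a+1)$-minors of an $n\times m$ matrix. Theorem~\ref{thm:koszul-strands}, applied with $r=a+1\ge2$, says it is Koszul and generated in strand degree one. If $a=\min(m,n)$, the ideal vanishes and $T_Z=\mathbb C$, which is a trivial case.

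\textbf{Step 2.} Lift Koszulity through the filtration, exactly as in step (3) of the strategy for Theorem~\ref{thm:koszul-strands}. A connected graded algebra whose associated graded for a degree-compatible filtration is Koszul is itself Koszul, by semicontinuity of $\Tor^{C}(\mathbb C,\mathbb C)$ in the off-diagonal bidegrees. Applying this to $\gr\overline B\cong T_Z$ gives that $\overline B$ is Koszul in the grading $\overline B_d=\overline B^{ad}$.

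\textbf{Step 3.} Deduce relative Koszulity of $B$ over $A$. The module $B$ should be free over $A$, by flatness of $\widetilde Z\to Z$ and the $E_\infty$ identification $\gr B\cong A\otimes\overline B$. Granting this, choose the linear resolution of $\mathbb C$ over $\overline B$, lift it to a complex of free graded $B$-modules, and conclude that it resolves $A$. The justification is a Nakayama/filtration argument on $A$, which is finite-dimensional and graded-local, using $B\otimes_A\mathbb C=\overline B$ and flatness.

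When $s=0$, the Grassmannian $\Gr_0(\mathbb C^a)$ is a point, so $A=\mathbb C$ and $\overline B=B$ is immediate.

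The main obstacle is Step 1. One must verify that the spectral sequence of \cite[\S6]{SamSnowdenFlagSupervarieties} degenerates compatibly with multiplication. One must also check that its filtration, after quotienting by $A_+$, is multiplicative and strand-preserving with associated graded exactly $T_Z$ rather than a twist of it. I would first try to match the cohomological degree on $X$ with the quantity $j-i$ on each $\Tor_i(\cO_{\widetilde Z})_j$ summand, using the Bott-type computation in that paper; everything else is formal.
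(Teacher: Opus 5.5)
Your overall plan is viable, but Step~1 is a genuine gap. It carries essentially all of the content, and you only list what must be checked. Two inputs are missing.

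First, the algebra isomorphism $\gr B\cong\Tor^S_\bullet(\cO_{\widetilde Z},\mathbb C)$ is not formal. It is the multiplicativity and first-page degeneration of the Sam--Snowden spectral sequence \cite[Propositions~6.4, 6.10 and Remark~2.3]{SamSnowdenFlagSupervarieties}. Likewise, the match between cohomological degree and strand degree is \cite[Theorem~6.6]{SamSnowdenFlagSupervarieties}. These are theorems you must invoke, not consequences of a degree bookkeeping exercise.

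Second, even granting $\gr B\cong A\otimes_{\mathbb C}T_Z$, the filtration induced on $\overline B=B/A_+B$ only gives a surjection $T_Z\cong(\gr B)/(A_+\gr B)\twoheadrightarrow\gr\overline B$. The reason is that the associated graded of the induced filtration on $A_+B$ can be strictly larger than $A_+\cdot\gr B$. Your Step~2 therefore uses $\gr\overline B\cong T_Z$ without justification, and a dimension count is needed. The paper takes it from the Hilbert series equality \cite[Theorem~6.1(b),(c)]{SamSnowdenFlagSupervarieties}.

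Your own ingredients can close the second point if you reorder them. Your flatness heuristic is in fact a proof. Since $\cO_{\widetilde Z}$ is flat over $\cO_Z$ and Koszul homology is annihilated by $S_+$, base change of the Koszul complex $\cO_Z\otimes_S K$ gives an algebra isomorphism $\Tor^S_\bullet(\cO_{\widetilde Z},\mathbb C)\cong\cO_{\widetilde Z}\otimes_{\cO_Z}T_Z\cong A\otimes_{\mathbb C}T_Z$, with $A=\Tor^S_0(\cO_{\widetilde Z},\mathbb C)$. This replaces the paper's appeal to \cite[Proposition~6.8]{SamSnowdenFlagSupervarieties}.

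Then $\gr B$ is free over $\gr A\cong A$, and lifting a basis through the finite filtration shows that $B$ is $A$-free. Hence $\dim\overline B^{ad}=\dim(T_Z)_d$, and the surjection above is an isomorphism. So freeness must be established before Step~2, not in Step~3. Also, the $E_\infty$ statement you want is $\gr B\cong A\otimes T_Z$, not $A\otimes\overline B$.

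With that repair, your Step~3 is a legitimate alternative to the paper's route. The paper shows $\gr B$ is Koszul over $A$ by tensoring a linear $T_Z$-resolution with $A$, then lifts to $B$ with the May spectral sequence. You would instead go from $\overline B$ up to $B$ via flatness. Rather than ``lifting'' a given $\overline B$-resolution, which is not automatic, take the minimal graded $B$-resolution of $A$. Its terms are $A$-free, so its reduction modulo $A_+$ is exact and is a minimal $\overline B$-resolution of $\mathbb C$. Linearity then follows from Koszulity of $\overline B$. This route uses a single structural fact, $A$-freeness, both for the dimension count and for relative Koszulity, and it avoids the May spectral sequence.

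Your worry about obtaining a twist of $T_Z$ rather than $T_Z$ itself is harmless, since Koszulity is invariant under graded Zhang twists.
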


\begin{proof}
The multiplicative spectral sequence of
\cite[Proposition~6.4 and Remark~2.3]{SamSnowdenFlagSupervarieties}
degenerates at its first page by
\cite[Proposition~6.10]{SamSnowdenFlagSupervarieties}; hence
\[
  \gr B\cong\Tor^S_\bullet(\cO_{\widetilde Z},\mathbb C).
\]
The algebra map \(\cO_Z\to\cO_{\widetilde Z}\), together with
multiplication by \(A=\Tor^S_0(\cO_{\widetilde Z},\mathbb C)\), gives a
homomorphism
\[
 A\otimes_{\mathbb C}T_Z\longrightarrow
 \Tor^S_\bullet(\cO_{\widetilde Z},\mathbb C).
\]
It is an isomorphism of \(A\)-modules by
\cite[Proposition~6.8]{SamSnowdenFlagSupervarieties}, hence an isomorphism
of algebras. A linear \(T_Z\)-resolution of \(\mathbb C\) remains linear
and free after tensoring with \(A\), so \(\gr B\) is Koszul over \(A\).
The May spectral sequence \cite[\S4, Theorem~3]{May1966}
\(\Tor^{\gr B}(A,A)\Longrightarrow\gr\Tor^B(A,A)\) is therefore concentrated
on the diagonal, so \(B\) is Koszul over~\(A\).

Consequently, quotienting by \(A_+\) gives a surjection
\[
T_Z\cong(\gr B)/(A_+\gr B)\twoheadrightarrow\gr\overline B.
\]
It is an isomorphism: by
\cite[Theorem~6.1(b),(c)]{SamSnowdenFlagSupervarieties}, its source and
target have the same Hilbert series.  Finally,
\cite[Theorem~6.6]{SamSnowdenFlagSupervarieties} identifies the regrading
above with the strand grading.  If $a<\min(m,n)$, Koszulity follows from
Theorem~\ref{thm:koszul-strands} and the standard filtration spectral
sequence; if $a=\min(m,n)$, then $Z=W$, so $T_Z=\mathbb C$ and the conclusion
is immediate.
\end{proof}

\begin{remark}
If $a=0$, then $B=\mathbb C$ when $m=0$; otherwise $(r,s)=(n,0)$ and $B$
is the exterior algebra on $(\mathbb C^m)^*\otimes\mathbb C^n$, concentrated
in cohomological degree~$0$ and Koszul in exterior degree. The fiber
isolates an ordinary connected graded algebra over \(\mathbb C\), whereas
\(B\) is treated relatively over \(A\). This distinction is substantive:
if $s=1$, then $A=\mathbb C[h]/(h^a)$, so is not itself Koszul after normalizing $\deg(h)=1$ for $a \ge 3$.
\end{remark}

\begin{remark}
The coherent cohomology algebras of the periplectic Grassmannians in
\cite{superresII} are likewise Koszul over their invariant
cohomology subalgebras, and their fibers modulo the invariants of positive
degree are Koszul. This follows from
\cite[Theorem~5.1(b),(c), Propositions~5.5, 5.6, 5.8, and~5.13, and
Corollary~5.10]{superresII}; the endpoint and double-cover cases are covered
by \cite[Remark~5.2, \S4.1, and Theorem~4.3(d)]{superresII}.
\end{remark}

\subsection{The Filtration Machinery}\label{subsec:filtration}

We return to the proof of Theorem~\ref{thm:koszul-strands}. Fix \(L\in\mathbb Z_{\ge0}\). For \(i\ge0\), let
\(h_i=(i{+}1,1^{L+i})\), with Frobenius pair \((i\mid L{+}i)\), and for a
finite set \(I\subset\mathbb Z_{\ge0}\), let \(h_I\) be its Frobenius union
(Definition~\ref{def:frobenius-union}). Write
\(c^\gamma_{\alpha,\beta}\) for the Littlewood--Richardson coefficient.

\begin{notation}\label{not:multiset-union}
For finite sets \(I,J\), write \(I\uplus J\) for their multiset union: an element of \(I\cap J\) has multiplicity two, and other elements of \(I\cup J\) have multiplicity one. We lexicographically sort multisets of integers by treating them as weakly decreasing sequences.
\end{notation}

\begin{lemma}\label{lem:multi-jump-ladder}
Let $I,J,K\subset\mathbb Z_{\ge0}$ be finite sets with
$\#K=\#I+\#J=:d$. Set $M=I\uplus J$. If $c^{h_K}_{h_I,h_J}\ne0$, then $K\ge_{\mathrm{lex}}M$.  Equality holds if and only if $I\cap J=\varnothing$ and $K=I\cup J$.
\end{lemma}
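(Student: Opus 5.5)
The plan is to turn the lexicographic statement into a dominance-type statement and then read off the lexicographic order and the equality case. First record the sizes. Since \(h_i\) has Frobenius pair \((i\mid L+i)\), the diagram \(h_I\) has Frobenius coordinates \((I\mid I+L)\), with both lists sorted decreasingly, and \(|h_I|=\sum_{i\in I}(2i+L+1)\). The Littlewood--Richardson rule forces \(|h_K|=|h_I|+|h_J|\). Together with \(\#K=\#I+\#J\), this gives \(\sum K=\sum M\), where \(M=I\uplus J\).

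Write \(k_1>\cdots>k_d\) for the elements of \(K\) and \(m_1\ge\cdots\ge m_d\) for the elements of \(M\). I will aim for the partial-sum inequalities
\[
\textstyle\sum_{t\le s}k_t\ \ge\ \sum_{t\le s}m_t\qquad(1\le s\le d).
\]
Since the total sums agree, these inequalities say \(K\) dominates \(M\), and dominance implies \(K\ge_{\mathrm{lex}}M\). If \(K=M\) as sorted sequences, then \(M\) has no repeated entry, so \(I\cap J=\varnothing\) and \(K=I\cup J\). Conversely, when \(I\cap J=\varnothing\) the two partitions are mergeable and \(h_K=h_I\sqcup h_J\). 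The needed nonvanishing of \(c^{h_K}_{h_I,h_J}\) is the multiplicity-one statement recalled in \S\ref{ssec:universal-hook-algebra}.

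To get the partial sums, I use the number of cells of \(\nu=h_K\) lying in its first \(s\) rows or first \(s\) columns. This count equals \(\sum_{t\le s}(2k_t+L+1)\). I then bound it below by combining two Littlewood--Richardson facts: \(\nu\) dominates the row-sorted union \(h_I\cup h_J\), and \(\nu^{\mathsf t}\) dominates the column-sorted union \(h_I^{\mathsf t}\cup h_J^{\mathsf t}\). Split the top \(s\) entries of \(M\) into \(s_1\) entries from \(I\) and \(s_2\) from \(J\). This gives
\[
\textstyle\sum_{t\le s}(\nu_t+\nu^{\mathsf t}_t)\ \ge\ \sum_{t\le s_1}(\lambda_t+\lambda^{\mathsf t}_t)+\sum_{t\le s_2}(\mu_t+\mu^{\mathsf t}_t),
\qquad \lambda=h_I,\ \mu=h_J .
\]
Rewriting both sides in terms of diagonal hooks yields \(\sum_{t\le s}k_t\ge\sum_{t\le s}m_t-s_1s_2\). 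The error term comes from the overlaps \(s^2\) versus \(s_1^2+s_2^2\).

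The main obstacle is removing the deficit \(s_1s_2\); the naive row and column bounds are not sharp enough. I would attack it directly on a Littlewood--Richardson tableau of shape \(h_K/h_I\) with content \(h_J\). Such a tableau contains \(\mu_t\) entries equal to \(t\), and the entries equal to \(t\) lie in distinct columns, with the lattice condition on the reading word. The goal is to show that the \(t\)th row of \(h_J\) for \(t\le s_2\) must be placed so that its cells occupy the arm region of one of the first \(s\) diagonal hooks of \(\nu\), and symmetrically for the legs. The first step is to prove that the entries equal to \(t\) occupy at most one cell per column within the first \(s\) columns, using the lattice condition. Adding these contributions row by row should recover the missing \(2s_1s_2\) cells and give the inequality exactly. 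The fixed offset \(L\) between arms and legs keeps the row and column bookkeeping parallel. If this fails, the fallback is to argue by induction on \(\#J\), peeling off the largest hook \(h_{j_1}\) via the Pieri-type description of \(\bS_{h_I}\otimes\bS_{h_{j_1}}\).
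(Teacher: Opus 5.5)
Your setup is correct. The size count gives $\sum K=\sum M$, dominance of the sorted sequences would imply $K\ge_{\mathrm{lex}}M$, and the equality case is formal. The converse needs no nonvanishing input, since $I\cap J=\varnothing$ and $K=I\cup J$ already mean $K=M$.

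The gap is that the partial-sum inequality $\sum_{t\le s}k_t\ge\sum_{t\le s}m_t$ is the whole content of the argument, and you do not prove it. The row and column bounds $\nu\trianglerighteq\lambda\cup\mu$ and $\nu^{\mathsf t}\trianglerighteq\lambda^{\mathsf t}\cup\mu^{\mathsf t}$ cannot remove the deficit $s_1s_2$, even inside the family. Take $L=0$, $I=\{4,0\}$, $J=\{4\}$, $K=\{5,2,1\}$, so $h_I=(5,2,1,1,1)$, $h_J=(5,1,1,1,1)$, $h_K=(6,4,4,3,1,1)$. Then $h_K$ satisfies both bounds, and also $\nu\trianglelefteq\lambda+\mu$, yet $5+2<4+4$. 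Here $c^{h_K}_{h_I,h_J}=0$, but your inequalities cannot detect this.

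The tableau step does not close the gap either. The one concrete claim you name, that equal entries occupy distinct columns, is just column strictness. Nothing in the sketch forces the entries $t\le s_2$ into the first $s$ rows or columns. The fallback induction on $\#J$ also fails as stated. The constituents of $\bS_{h_I}\otimes\bS_{h_{j_1}}$ are generally not of the form $h_S$, so the inductive hypothesis does not apply to the intermediate shape.

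The missing idea is subadditivity of Durfee rank: if $c^\alpha_{\lambda'\mu'}\ne0$, then $\rank\alpha\le\rank\lambda'+\rank\mu'$. One proof: $\bS_\alpha(\bk^{p|p})\ne0$ exactly when $\rank\alpha\le p$, and $\bS_{\lambda'}(E_1)\otimes\bS_{\mu'}(E_2)$ occurs in $\bS_\alpha(E_1\oplus E_2)$ with multiplicity $c^\alpha_{\lambda'\mu'}$. Another: Weyl's inequality $\alpha_{i+j-1}\le\lambda'_i+\mu'_j$. With this your route closes:
\begin{enumerate}
\item Let $\lambda'$ be the first $s_1$ diagonal hooks of $h_I$ and $\lambda''$ the remaining ones, so $c^{h_I}_{\lambda'\lambda''}=1$. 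Define $\mu',\mu''$ from $h_J$ the same way.
\item Positivity of Littlewood--Richardson coefficients gives $\alpha,\beta$ with $c^\alpha_{\lambda'\mu'}\,c^{h_K}_{\alpha\beta}\ne0$.
\item Since $\alpha\subseteq h_K$ and $\rank\alpha\le s$, $\alpha$ lies in the first $s$ rows and columns of $h_K$. Hence $\sum_{t\le s}(2k_t+L+1)\ge|\alpha|=\sum_{t\le s}(2m_t+L+1)$.
\end{enumerate}
This proves dominance, which is stronger than the lemma.

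The paper needs only the lexicographic statement and avoids all of this. It inducts on $d$, using $h_I,h_J\subseteq h_K$ to get $\max K\ge\max M$. When equality holds, say $\max K=\max I$, the shapes $h_K$ and $h_I$ share their first row and column. So $h_K/h_I$ is a translate of $h_{K'}/h_{I'}$, and the coefficient survives deletion of the common hook.
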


\begin{proof}
  Let $k_1=\max(K)$ and $m_1=\max(M)$.   We argue by induction on $d$, the case $d=0$ being immediate.  The
Littlewood--Richardson condition implies $h_I,h_J\subseteq h_K$, and hence
$k_1\ge m_1$.  If the inequality is strict, then $K>_{\mathrm{lex}}M$.
Otherwise, after interchanging $I$ and $J$, assume $k_1=m_1\in I$.  Deleting
one copy of $m_1$ from $M$, put
$K'=K\setminus\{k_1\}$, $I'=I\setminus\{m_1\}$, and
$M'=I'\uplus J$. If \(S=(s_1>\cdots>s_p)\), then $h_S\leftrightarrow (s_1,\ldots,s_p\mid L+s_1,\ldots,L+s_p)$.

Deleting the common first row and column of $h_K$ and $h_I$ therefore
identifies, up to translation, the skew diagrams $h_K/h_I\cong h_{K'}/h_{I'}$.
The Littlewood--Richardson rule therefore gives
  $c^{h_K}_{h_I,h_J}   =c^{h_{K'}}_{h_{I'},h_J}\ne 0$.
Induction gives $K'\ge_{\mathrm{lex}}M'$, and hence
$K\ge_{\mathrm{lex}}M$.  Equality is possible precisely when $M$ has no repeated entry and $K=M$, as stated.
\end{proof}

\begin{example}
For \(L=1\), \(I=\{2\}\), and \(J=\{1\}\), size forces a two-element
set~\(K\) to be either \(\{1,2\}\) or \(\{0,3\}\). The first is the Frobenius
union, while the second is lexicographically larger and therefore has
smaller filtration degree below.
\end{example}

Fix a finite set \(\Omega\subset\mathbb Z_{\ge0}\) and a parity function
$\pi\colon\Omega\to\mathbb Z/2$. Let
\[
  \cH^\pi_{\Omega,L}(V):=
  \bigoplus_{I\subseteq\Omega}\bS_{h_I}(V)
\]
be the parity form of the hook subalgebra supported on
\(\{h_i:i\in\Omega\}\), with the generator indexed by \(h_i\) assigned
parity \(\pi(i)\). Write
\(\cH_{\Omega,L}(V):=\cH^0_{\Omega,L}(V)\). The Zhang twist relating the
parity forms in \S\ref{ssec:universal-hook-algebra} restricts to this
subalgebra; hence \(\cH^\pi_{\Omega,L}(V)\) is a Zhang twist of
\(\cH_{\Omega,L}(V)\).

Choose rational numbers $\varepsilon_i>0$, $i\in\Omega$, satisfying
\[
  \varepsilon_k>
  2\sum_{\substack{j\in\Omega\\j<k}}\varepsilon_j,
  \qquad
  \sum_{i\in\Omega}\varepsilon_i<1,
\]
and set
$d(I)=\sum_{i\in I}(i+1-\varepsilon_i)$.

\begin{proposition}[Diagonal hook degeneration]
\label{prop:diagonal-hook-degeneration}

Let \(B=\bigoplus_{I\subseteq\Omega}B_I\) be a supercommutative
$\GL(V)$-algebra generated by the singleton components, such that
\[
  B_I\cong\bS_{h_I}(V),\qquad  \deg_{\mathbb Z/2}B_I=\sum_{i\in I}\pi(i),
\]
and $B$ is graded by $\deg B_I = \#I$.
Assume that $B_I\cdot B_J$ has nonzero projection to $B_{I\cup J}$ whenever
$I\cap J=\varnothing$.  Give $B$ the ascending filtration
\[
  F_{\le q}B:=\bigoplus_{d(I)\le q}B_I.
\]
Then \(\gr_F(B)\cong\cH^\pi_{\Omega,L}(V)\) as graded $\GL(V)$-equivariant superalgebras.
\end{proposition}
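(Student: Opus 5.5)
The plan is to first show that $F$ is a multiplicative filtration, and then to identify the associated graded product component by component. Since $B$ is generated by the singleton components, $B_I\cdot B_J\subseteq B_{\#I+\#J}$ (rank grading). By $\GL(V)$-equivariance and Schur's lemma, the product $B_I\otimes B_J\to B$ can only land in those $B_K$ with $\#K=\#I+\#J$ and $c^{h_K}_{h_I,h_J}\ne0$. So the first key step is to compare $d(K)$ with $d(I)+d(J)$ for every such $K$.

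\emph{Step 1 (size and weights).} Sizes must match: $|h_K|=|h_I|+|h_J|$. Since $|h_S|=\sum_{s\in S}(2s+L+1)$ and $\#K=\#I+\#J$, this forces $\sum_{k\in K}k=\sum_{m\in M}m$ with $M=I\uplus J$. Hence
\[
d(K)-d(I)-d(J)=\sum_{m\in M}\varepsilon_m-\sum_{k\in K}\varepsilon_k,
\]
where the sum over $M$ counts multiplicities.

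\emph{Step 2 (Lemma~\ref{lem:multi-jump-ladder}).} This lemma gives $K\ge_{\mathrm{lex}}M$, with equality exactly when $I\cap J=\varnothing$ and $K=I\cup J$. If $K>_{\mathrm{lex}}M$, take the first position where the decreasing sequences differ. There $K$ has an element $k$ exceeding the corresponding entry of $M$, and the two sequences agree before that position. Every remaining element of $M$ is $<k$ and occurs with multiplicity at most $2$. The superincreasing condition $\varepsilon_k>2\sum_{j<k}\varepsilon_j$ then gives $\sum_K\varepsilon>\sum_M\varepsilon$ after the common prefix cancels, so $d(K)<d(I)+d(J)$ strictly. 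If the sequences agree up to the point where $K$ runs out, the size equality forces $K=M$, which is the equality case.

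Consequently $B_I\cdot B_J\subseteq F_{\le d(I)+d(J)}$, so $F$ is multiplicative. Moreover, modulo $F_{<d(I)+d(J)}$ the product is exactly its projection to $B_{I\cup J}$ when $I\cap J=\varnothing$, and it is zero otherwise. One point needs care: we need distinct $I$ to give distinct $d(I)$, or at least that $F_{<d(I)+d(J)}$ contains no $B_{I\cup J}$ term. This holds because the condition $\sum_i\varepsilon_i<1$ makes the integer part $\sum_{i\in I}(i+1)$ determine the coarse level, and superincreasing $\varepsilon$ separate sets at the same integer level.

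\emph{Step 3 (identify $\gr_F B$).} In $\gr_FB$ the graded pieces are still $\bS_{h_I}(V)$, and the product of singletons $B_{i_1}\cdots B_{i_p}$ with distinct indices maps onto $B_{\{i_1,\dots,i_p\}}$. Indeed, the iterated projection is nonzero: by hypothesis the pairwise projections are nonzero, and multiplicity one of $\bS_{h_I}$ in the tensor product means it is the Frobenius union projection up to a nonzero scalar. Products of a singleton with a non-disjoint set vanish. For $i>j$ the hook types $(i,L+i)>(j,L+j)$ are strictly comparable, so the relations of $\cH_{\Omega,L}$ are those of $\cH(V)$ restricted to these hooks. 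These are the vanishing of $h_i^2$ (with the super sign convention) and the kernels $R_{h_i,h_j}$, and all of them hold in $\gr_FB$ by the above.

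This gives a surjection from $\cH^\pi_{\Omega,L}(V)$, the corresponding subalgebra of $\mathcal H_\eps$ with parity $\pi$, onto $\gr_F B$. By Theorem~\ref{thm:main-hook}(1), restricted to these types, both sides are $\bigoplus_I\bS_{h_I}(V)$, so the surjection is an isomorphism by dimension count. Parity is preserved since $\deg_{\mathbb Z/2}B_I=\sum_{i\in I}\pi(i)$.

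The main obstacle is the $\varepsilon$-comparison in Step 2. Specifically, one has to argue carefully that the lexicographic excess of $K$ over the multiset $M$, which may have repeated entries, strictly dominates under the factor-$2$ superincreasing weights.
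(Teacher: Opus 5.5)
Your proposal is correct and follows essentially the same route as the paper. The size count reduces the comparison of $d(K)$ with $d(I)+d(J)$ to the $\varepsilon$-sums, Lemma~\ref{lem:multi-jump-ladder} with the factor-$2$ superincreasing condition gives a strict drop except when $K=I\cup J$ with $I\cap J=\varnothing$, and the induced surjection from $\cH^\pi_{\Omega,L}(V)$ is an isomorphism by comparing underlying $\GL(V)$-modules. Your ``point needing care'' about injectivity of $d$ is harmless but unnecessary: $d(I\cup J)=d(I)+d(J)$ for disjoint $I,J$ already places $B_{I\cup J}$ in the top filtration level.
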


\begin{proof}
Let $B_K$ be a component with nonzero projection from $B_I\cdot B_J$. The grading gives $\#K=\#I+\#J$, while equivariance gives $c^{h_K}_{h_I,h_J}\ne0$ and $|h_K|=|h_I|+|h_J|$.  Since
\(|h_S|=\sum_{s\in S}(2s+L+1)=2\sum_{s\in S}(s+1)+(L-1)\#S\) for every
finite \(S\), these equalities give
\(\sum_{k\in K}(k+1)=\sum_{i\in I}(i+1)+\sum_{j\in J}(j+1)\), so the
comparison of \(d(K)\) with \(d(I)+d(J)\) depends only on the
corresponding sums of the \(\varepsilon_i\). With
$M=I\uplus J$, Lemma~\ref{lem:multi-jump-ladder} gives
$K\ge_{\mathrm{lex}}M$.  If \(K>_{\mathrm{lex}}M\), let \(a\) be the largest
index with different multiplicities in \(K\) and \(M\).  After their common
initial segment is removed, $K$ contains one additional copy of $a$, while
every lower index occurs at most twice in $M$.  Hence
\[
  \sum_{k\in K}\varepsilon_k-
  \sum_{m\in M}\varepsilon_m
  \ge \varepsilon_a-
  2\sum_{\substack{j\in\Omega\\j<a}}\varepsilon_j>0.
\]
Thus $d(K)<d(I)+d(J)$.  Equality occurs exactly when $I$ and $J$ are
disjoint and $K=I\cup J$.  This proves that $F_\bullet B$ is
multiplicative.

Write $\overline B_I$ for the image of $B_I$ in $\gr_F B$.  If
$I\cap J=\varnothing$, the only component in filtration degree
$d(I)+d(J)$ is $\overline B_{I\cup J}$, and its projection is nonzero by
hypothesis.  If $I\cap J\ne\varnothing$, every component has strictly
smaller filtration degree, so $\overline B_I\cdot\overline B_J=0$.
The singleton components therefore satisfy precisely the quadratic
multiplication rules defining $\cH^\pi_{\Omega,L}(V)$.  After choosing equivariant isomorphisms on these components (which are unique up to scalar), they induce a surjection
\[
  \cH^\pi_{\Omega,L}(V)\twoheadrightarrow\gr_F B.
\]
Both sides have underlying $\GL(V)$-module $\bigoplus_{I\subseteq\Omega}\bS_{h_I}(V)$, so this is an isomorphism.
\end{proof}

\begin{remark}\label{rem:filtration-subtlety}
The degeneration is generally nontrivial.  For $\dim V=6$, a
\texttt{Macaulay2}~\cite{M2} computation in the $r=2$ Pfaffian Tor algebra
shows that the product
\[
  \bS_{(2,1^4)}(V)\cdot\bS_{(2,1^4)}(V)\subseteq\Tor_2\cdot\Tor_2
\]
contains $\bS_{(3,2^4,1)}(V)\subseteq\Tor_4$, while the corresponding product in $\cH^\pi_{\Omega,L}(V)$ vanishes.
\end{remark}

\begin{corollary}\label{cor:hook-degreewise-compatibility}
If algebras \(B^{(1)},\dots,B^{(s)}\) satisfy the hypotheses of
Proposition~\textup{\ref{prop:diagonal-hook-degeneration}}, with corresponding
algebras \(\cH^{\pi_i}_{\Omega_i,L_i}(V_i)\), and \(E\) is obtained from them
by iterated Veronese and Segre operations in the strand grading, equipped with
the induced filtrations, then \(\gr(E)\) is obtained by applying the same
operations to the algebras \(\cH^{\pi_i}_{\Omega_i,L_i}(V_i)\).
\end{corollary}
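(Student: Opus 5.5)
The plan is to reduce the statement to two compatibility facts about associated graded algebras of filtered algebras, and then apply Proposition~\ref{prop:diagonal-hook-degeneration} to each factor. First I make precise which filtration $E$ carries. Each $B^{(i)}$ has the ascending filtration $F_{\le q}$, indexed by $d(I)$. This filtration is a direct sum over the components $B^{(i)}_I$, so it is compatible with the strand grading $\deg B_I=\#I$. For a Veronese $E=(B^{(i)})^{(v)}=\bigoplus_n B^{(i)}_{vn}$, the induced filtration is the restriction $F_{\le q}\cap E$. For a Segre product $E=B\circ B'=\bigoplus_n B_n\otimes B'_n$, the induced filtration is the tensor product filtration
\[
F_{\le q}E=\sum_{q_1+q_2=q}F_{\le q_1}B_n\otimes F_{\le q_2}B'_n .
\]
If mixed iterations occur, I use the same definitions at each stage, the inputs being the already filtered intermediate algebras.

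\textbf{Step 1 (splitting).} Every filtration in sight is split degreewise: it is the filtration by a sum of components indexed by the labels $I$, or by tuples of such labels. Hence $\gr$ can be identified with the same underlying graded vector space, with a new multiplication. That multiplication is the old product followed by projection onto the components of top filtration degree $d(I)+d(J)$, resp.\ $\sum_t (d(I_t)+d(J_t))$ for tuples. This identification is equivariant and respects the strand grading and the $\mathbb Z/2$-parity.

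\textbf{Step 2 (Veronese).} Under the identification of Step 1, the subalgebra $\bigoplus_n B_{vn}$ is spanned by components, and its induced filtration is the restriction of $F$. So the leading-term product computed in $\gr E$ is literally the leading-term product computed in $\gr B$. This gives $\gr(B^{(v)})=(\gr B)^{(v)}$ as graded superalgebras.

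\textbf{Step 3 (Segre).} For $x\in B_I$, $x'\in B'_{I'}$ and $y\in B_J$, $y'\in B'_{J'}$, the product in $B\circ B'$ is $(x\otimes x')(y\otimes y')=\pm\, xy\otimes x'y'$, with the Koszul sign depending only on parities. Expanding $xy$ and $x'y'$ into components, the filtration degree of a term is additive across the two factors. The top-degree part of the product is therefore the tensor product of the top-degree parts, because the two bounds are attained independently: by the multiplicativity proved in Proposition~\ref{prop:diagonal-hook-degeneration}, each factor's product has all components in degree at most $d(I)+d(J)$. Consequently $\gr(B\circ B')=\gr B\circ\gr B'$, and the signs agree.

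\textbf{Step 4 (induction).} Induct on the number of operations, applying Steps 2 and 3 at each stage. For the base case, Proposition~\ref{prop:diagonal-hook-degeneration} gives $\gr B^{(i)}\cong\cH^{\pi_i}_{\Omega_i,L_i}(V_i)$ equivariantly. Since the Veronese and Segre operations are functorial in isomorphisms of graded superalgebras, the conclusion follows.

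The main obstacle is Step 3, together with the inductive use of Steps 2 and 3. I must check that the intermediate algebras still carry filtrations that are split by components and multiplicative in the additive sense used above. I also need that the degree indexing is additive; this holds because $d$ is additive in the labels and tensor degrees add. Otherwise a leading term in one factor could be cancelled or outranked by combinations coming from the other. I would verify this first for the tensor product filtration, using that the set of components of maximal degree in a tensor product is the product of the sets of components of maximal degree in each factor.
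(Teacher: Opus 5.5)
Your proposal is correct and follows essentially the same route as the paper. You restrict the filtration to the Veronese, put the tensor-product filtration on each Segre component $A_n\otimes C_n$, identify $\gr$ degreewise (you via the splitting by components, the paper via exactness of $\otimes_\bk$), and iterate from the base case of Proposition~\ref{prop:diagonal-hook-degeneration}. Your Step~3 check, that the top-degree part of $(x\otimes x')(y\otimes y')$ is the tensor product of the top-degree parts of $xy$ and $x'y'$, makes explicit the compatibility with multiplication that the paper leaves implicit in the word ``degreewise.''
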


\begin{proof}
The filtration preserves strand degree, so
\(F_{\le q}(A^{(d)})=F_{\le q}A\cap A^{(d)}\) and
\(\gr(A^{(d)})\cong(\gr A)^{(d)}\). On a Segre product
\(A\circ C:=\bigoplus_{n\ge0}A_n\otimes_\bk C_n\), set
\(F_{\le q}(A_n\otimes C_n):=\sum_{q_1+q_2\le q}F_{\le q_1}A_n
\allowbreak\otimes_\bk F_{\le q_2}C_n\); exactness of \(\otimes_\bk\) gives
\(\gr(A\circ C)\cong(\gr A)\circ(\gr C)\) degreewise, and the assertion
follows by iteration.
\end{proof}

\subsection{Strand Generation}\label{subsec:strand-generation}

We prove generation by the first nonzero strand for all three
families. The Pfaffian and symmetric degenerations then follow from
Proposition~\ref{prop:diagonal-hook-degeneration}; the generic degeneration
is constructed below. For the symmetric family with $r=2$, this property for the second Veronese
subalgebra of a polynomial ring was proved in \cite{ConcaKatthanReiner2021}.

For each product below, the union summand has Littlewood--Richardson
multiplicity one. By naturality, the projection onto this summand is either
identically zero or nonzero in every dimension in which the summand is
nonzero.

\subsubsection{Pfaffian case}

Fix \(r\ge 2\), set \(A=\Sym(\bigwedge^2 V)\), and let \(I_{2r}\) be the ideal
of Pfaffians of order~\(2r\). 
Set \(B:=\Tor^A(A/I_{2r},\bk)\). As a \(\GL(V)\)-representation, its
degree-\(p\) component decomposes as
\(B_p=\bigoplus_{\Sigma(I)=p}\bS_I(V)\), where \(\Sigma(I):=\sum_{i\in I}(i{+}1)\) and \(\bS_I(V)\) is
the Schur functor indexed by the Frobenius union of the hooks
\(h_i=(i{+}1,1^{2r-1+i})\) for \(i\in I\).

\begin{proposition}\label{prop:pfaffian-strand-generation}
For disjoint \(I,J\), the product \(\bS_I\cdot\bS_J\) surjects
onto~\(\bS_{I\cup J}\).  In particular, \(B\) is generated by the
singleton summands \(\bS_{\{i\}}\) in its first nonzero strand.
\end{proposition}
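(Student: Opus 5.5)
The plan is to reduce the surjectivity to a nonvanishing statement for a single highest weight vector, and to obtain that vector from the geometric (Kempf--Lascoux--Weyman) construction of the Pfaffian resolution, where the multiplication on \(B\) becomes a cup product. Since \(\bS_{I\cup J}\) occurs with multiplicity one in \(\bS_I\otimes\bS_J\) (Littlewood--Richardson, as in the proof of Theorem~\ref{thm:main-hook}(1)), the composite \(\bS_I\otimes\bS_J\to B_{|I|+|J|}\to\bS_{I\cup J}\) is either zero or surjective. By the naturality remark preceding the statement, it suffices to show it is nonzero for one value of \(\dim V\); I would take \(\dim V=\ell(h_{I\cup J})\), the smallest dimension in which the target is nonzero.

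The generation statement then follows formally by induction on \(\#I\). Each \(B_I\) with \(\#I\ge2\) is the image of \(\bS_{\{i\}}\cdot\bS_{I\setminus\{i\}}\) for any \(i\in I\). Since \(B\) is multiplicity free, this image lies in the subalgebra generated by the singletons, and the singletons \(\bS_{\{i\}}\) are exactly the strand-degree-one summands.

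For the nonvanishing, I would use the desingularization of the rank \(<2r\) locus by the total space of the subbundle \(\bigwedge^2\mathcal R\) over \(\Gr=\Gr(2r-2,V)\), with \(\mathcal R\) the tautological subbundle. Set \(\xi:=\bigwedge^2(V/\mathcal R)^{*}\). The basic theorem of \cite[\S5.1, \S6.4]{Weyman2003} gives \(B_p\cong\bigoplus_q\rH^q(\Gr,\bigwedge^{p+q}\xi)\). The key point is that this isomorphism is multiplicative: the Koszul-homology product corresponds to cup product composed with exterior multiplication \(\bigwedge\xi\otimes\bigwedge\xi\to\bigwedge\xi\). This holds because the Tor algebra can be computed from the Koszul complex of the pushed-forward sheaf, and the identification through the hypercohomology spectral sequence respects products; I would cite or reprove this the same way the paper uses such a multiplicative spectral sequence in \S\ref{sec:superspaces}.

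On the fibre, \(\bigwedge(\bigwedge^2 Q^*)=\bigoplus_{\mu\in Q_{-1}}\bS_\mu Q^*\) for \(Q=V/\mathcal R\). One then checks that the product \(\bS_\mu\cdot\bS_\nu\) in this exterior algebra has nonzero component on \(\bS_{\mu\sqcup\nu}\) for mergeable \(\mu,\nu\). To do this, write explicit highest weight vectors \(\bigwedge_{(p,q)\in\text{hook}}e_p\wedge e_q\), pairing arm and leg cells; their wedge product is again such a monomial and is nonzero.

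Bott's theorem sends \(\bS_\mu Q^*\) (twisted appropriately) to \(\bS_{\mu^{(2r-2)}}\)-type classes in a single cohomological degree. The main obstacle is the remaining step: showing that the cup product of the two resulting Bott classes, in degrees \(q_1\) and \(q_2\), is nonzero in degree \(q_1+q_2\).

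My first attempt at this obstacle would be to realize the Bott classes via Borel--Weil on the full flag variety. Each class is a Čech/Dolbeault class supported on a Schubert cell attached to the Weyl element \(w_\mu\). The claim is that \(\ell(w_{\mu\sqcup\nu})=\ell(w_\mu)+\ell(w_\nu)\) with \(w_{\mu\sqcup\nu}=w_\mu w_\nu\) (the hooks occupy disjoint column blocks), so the product of the highest weight classes is the highest weight class of the union. Failing that, I would fall back on an inductive argument on \(\#I\cup J\). This would delete the outermost hook exactly as in Lemma~\ref{lem:multi-jump-ladder}, reducing \(\dim V\) and \(r\), and use the rank-stratum localization remark to identify the residual Tor algebra.
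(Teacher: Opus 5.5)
You get the easy parts right: the reduction to a single nonvanishing statement (multiplicity one plus naturality, with $\dim V=2r+N$, $N=\max(I\cup J)$), and the deduction of generation from surjectivity. The gap is that the only substantive step, nonvanishing of the projection $\bS_I\otimes\bS_J\to\bS_{I\cup J}$, is never established. You call it ``the main obstacle'' yourself, and neither the heuristic nor the fallback closes it.

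\emph{The heuristic.} The class you need is a cup product composed with the fibrewise projection $\bS_\mu\otimes\bS_\nu\to\bS_{\mu\sqcup\nu}$, which lands in a non-Cartan component. So it is not a product of line-bundle classes on the flag variety, since those products see $\mu+\nu$. Whether cup products of Bott classes hit such components is governed by inversion-set criteria (Kostant, Dimitrov--Roth), not by the length additivity $w_{\mu\sqcup\nu}=w_\mu w_\nu$ that you assert without checking.

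\emph{The fibrewise check.} It has the same defect. The wedge of the two highest weight vectors has weight $\mu+\nu$ and is often zero. For example, with $e_{ij}=e_i\wedge e_j$, one has $e_{12}\wedge(e_{12}\wedge e_{13})=0$ in $\bigwedge(\bigwedge^2U)$ for $\mu=(1,1)$, $\nu=(2,1,1)$, whose union is $(2,2,2)$. So this computation says nothing about $\bS_{\mu\sqcup\nu}$.

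\emph{The fallback.} It is not an argument either. Localizing at a rank-two point lowers $r$ and $\dim V$ but keeps every hook, so it does not ``delete the outermost hook.'' The rank-stratum remark concerns only Poincar\'e series. You would need Pfaffian analogues of Lemmas~\ref{lem:generic-loc} and~\ref{lem:generic-mult-one}, plus a separate base case in $\bigwedge(\bigwedge^2V)$.

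\emph{The setup.} For the desingularization by $\bigwedge^2\mathcal R$ over $\Gr(2r-2,V)$, the bundle in the basic theorem is dual to $\bigwedge^2V/\bigwedge^2\mathcal R$. It has rank $\binom n2-\binom{2r-2}2$ and a filtration with graded pieces dual to $\mathcal R\otimes\mathcal Q$ and $\bigwedge^2\mathcal Q$. Your $\bigwedge^2\mathcal Q^*$ has rank only $\binom{n-2r+2}2$, which is the codimension, so your formula for $B_p$ is not correct as written.

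The missing idea is the one that makes any product computation unnecessary, and it is how the paper argues:
\begin{enumerate}
\item Take $\dim V=2r+N$ with $N=\max I$. Then $A/I_{2r}$ is Gorenstein, so $B$ is a Poincar\'e duality algebra with top summand $\bS_{\Omega_N}$, where $\Omega_N=\{0,\dots,N\}$.
\item By the Littlewood--Richardson rule, the unique dual of $\bS_I$ is $\bS_{\Omega_N\setminus I}$.
\item Put $K=\Omega_N\setminus I$. Then $K\supseteq J$ and $\max K<N$, so induction on $N$ gives $\bS_J\cdot\bS_{K\setminus J}\twoheadrightarrow\bS_K$.
\item Hence $\bS_I\cdot\bS_J\cdot\bS_{K\setminus J}\to\bS_{\Omega_N}$ is nonzero.
\item The unique dual of $\bS_{K\setminus J}$ is $\bS_{I\cup J}$, so by associativity the projection of $\bS_I\cdot\bS_J$ onto $\bS_{I\cup J}$ is nonzero.
\end{enumerate}
Your choice of $\dim V$ is exactly the one where this works, but the proposal never uses Gorensteinness.
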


\begin{proof}
  We may assume $I,J$ are nonempty and that \(\max(I\cup J)=\max I\). Let $N=\max I$ and argue by induction on $N$.  By naturality, we may take \(\dim V=2r+N\).  Then \(A/I_{2r}\) is
Gorenstein~\cite{JozefiakPragacz1979}, so its Tor algebra is a Poincar\'e
duality algebra~\cite{AvramovGolod1971}. Put
\(\Omega_N:=\{0,\dots,N\}\); the top summand is
\(\bS_{\Omega_N}(V)\). The Littlewood--Richardson
rule identifies the unique dual of \(\bS_I\) as \(\bS_K\), where
\(K=\Omega_N\setminus I\).

Since \(J\subseteq K\) and \(\max K<N\), induction gives a nonzero product
\(\bS_J\cdot\bS_{K\setminus J}\to\bS_K\).  Associativity therefore gives a
nonzero product
\(\bS_I\cdot\bS_J\cdot\bS_{K\setminus J}\to\bS_{\Omega_N}\). The unique
Poincar\'e dual of \(\bS_{K\setminus J}\) is
\(\bS_{I\cup J}\), so the projection of \(\bS_I\cdot\bS_J\) onto that summand is nonzero.
Iterating over the singleton decomposition of \(I\) proves generation.
\end{proof}

\subsubsection{Symmetric determinantal case}

Fix \(r\ge 2\), set \(A=\Sym(S^2 V)\), and let \(I_r\) be the ideal of
\(r\times r\) minors. Let \(M_r\) be the algebra introduced in
\S\ref{sec:det-poincare}, which is finite over \(A/I_r\). The Tor algebra
\(B:=\Tor^A(M_r,\bk)\) decomposes as \(B_p=\bigoplus_{\Sigma(I)=p}\bS_I(V)\), indexed by subsets \(I\subset\mathbb Z_{\ge 0}\), where \(\Sigma(I):=\sum_{a\in I}a\) and \(\bS_I(V):=\bS_{\lambda(I)^{(r-2)}}(V)\) where $\lambda(I)$ has Frobenius coordinates $(I|I)$. The involution on \(M_r\) with fixed subalgebra \(A/I_r\) induces one on \(B\), with \(B^{\bZ/2}\cong\Tor^A(A/I_r,\bk)\) the sum of the \(\bS_I(V)\) with
\(\#I\) even.

\begin{proposition}\label{prop:symm-strand-generation}
For all disjoint \(I,J\), the product \(\bS_I\cdot\bS_J\) in $B$ surjects onto \(\bS_{I\cup J}\).  In particular, \(B\) is generated by the summands \(\bS_{\{a\}}\) with \(a\ge 0\), and $B^{\bZ/2}$ is generated by the summands $\bS_{\{a,b\}}$ with $a>b \ge 0$.
\end{proposition}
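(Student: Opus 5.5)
I will mimic the proof of Proposition~\ref{prop:pfaffian-strand-generation}, replacing the Pfaffian ring by the Gorenstein algebra \(M_r\). Since \(S/I_r\) is not Gorenstein in general, this is exactly why \(M_r\) is used.

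\emph{Reduction.} Assume \(I,J\) nonempty and disjoint, with \(N:=\max(I\cup J)=\max I\), and induct on \(N\). By naturality (the union summand has Littlewood--Richardson multiplicity one), nonvanishing of the projection \(\bS_I\cdot\bS_J\to\bS_{I\cup J}\) may be checked in a single convenient dimension. Choose \(\dim V\) so that the top Tor group of \(M_r\) is \(\bS_{\Omega_N}(V)\), where \(\Omega_N=\{0,\dots,N\}\). In that dimension the partitions \(\lambda(I)^{(r-2)}\) with \(\ell\le\dim V\) are exactly those with \(I\subseteq\Omega_N\); since \(\lambda(\Omega_N)^{(r-2)}\) has length \(N+r-1\), the natural choice is \(\dim V=N+r-1\), and I would verify this from the Józefiak--Pragacz--Weyman description.

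\emph{Duality step.} Because \(M_r\) is Gorenstein, \(B=\Tor^A(M_r,\bk)\) is a Poincar\'e duality algebra by Avramov--Golod, with socle \(\bS_{\Omega_N}(V)\). For \(K:=\Omega_N\setminus I\), I claim \(\bS_K\) is the unique summand pairing nontrivially with \(\bS_I\) into the top. This should follow because \(\bS_{\Omega_N}(V)\) for \(\dim V=N+r-1\) is a power of the determinant twisted appropriately: \(\bS_I\otimes\bS_{K'}\) contains it only if \(\bS_{K'}\) is the complementary shape. Checking that the complement of \(\lambda(I)^{(r-2)}\) inside the rectangle \(\lambda(\Omega_N)^{(r-2)}\), rotated by \(180^\circ\), equals \(\lambda(K)^{(r-2)}\) is a direct Frobenius-coordinate computation. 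Here the diagonal hooks \((i\mid i)\) with inserted rows interleave exactly as in the Pfaffian case, and the computation is multiplicity-free.

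Given this, the induction runs verbatim. Since \(J\subseteq K\), \(J\cap(K\setminus J)=\varnothing\), and \(\max K<N\), the inductive hypothesis makes \(\bS_J\cdot\bS_{K\setminus J}\to\bS_K\) nonzero. Poincar\'e duality and associativity (supercommutativity handles signs) then give a nonzero product \(\bS_I\cdot\bS_J\cdot\bS_{K\setminus J}\to\bS_{\Omega_N}\). The dual of \(\bS_{K\setminus J}\) is \(\bS_{I\cup J}\), so the projection of \(\bS_I\cdot\bS_J\) onto \(\bS_{I\cup J}\) is nonzero. Iterating over singletons shows \(B\) is generated by the \(\bS_{\{a\}}\).

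\emph{Even subalgebra.} For \(B^{\bZ/2}\), write an even-cardinality \(I=\{a_1>\cdots>a_{2k}\}\) as the disjoint union of the pairs \(\{a_1,a_2\},\dots\). Applying the surjectivity statement repeatedly gives \(\bS_I\) as a product of the \(\bS_{\{a,b\}}\), all lying in \(B^{\bZ/2}\) because the induced involution is multiplicative.

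\emph{Main obstacle.} The hardest step is the duality bookkeeping. I must confirm that the top degree and socle of \(B\) are as claimed for \(M_r\) in the chosen dimension, and that the complement of \(\lambda(I)^{(r-2)}\) matches \(\lambda(\Omega_N\setminus I)^{(r-2)}\) uniquely among summands. This includes the edge case \(0\in I\), where the hook \((0\mid0)\) contributes a single diagonal box.
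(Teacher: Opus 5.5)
Your proposal follows the paper's intended argument; the paper proves this only by saying it is similar to the Pfaffian case, i.e.\ Gorensteinness of \(M_r\), Poincar\'e duality for \(B\) in dimension \(\dim V=N+r-1\), the complement \(\Omega_N\setminus I\) as the unique dual, and induction on \(N=\max I\). One correction: since \(\Sigma\) ignores the element \(0\), the top homological degree of \(B\) is \(\bS_{\Omega_N}(V)\oplus\bS_{\Omega_N\setminus\{0\}}(V)\) rather than \(\bS_{\Omega_N}(V)\) alone (dually \(B_0=\bk\oplus\bigwedge^{r-1}V\), because \(M_r\) is not cyclic over \(A\)), so you should pair into the socle \(\bS_{\Omega_N}(V)=\det(V)^{N+1}\), which is the unique summand of top \emph{internal} degree, using the form of Avramov--Golod for Koszul homology of a Gorenstein ring with respect to any generators of an \(\mathfrak m\)-primary ideal (this makes \(B\) a Frobenius algebra); with that change the rest of your argument goes through verbatim.
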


The proof is similar to the Pfaffian case.

\subsubsection{Generic determinantal case}

Fix \(\dim V=n\), \(\dim W=m\), \(2\le r\le\min(n,m)\), set \(A=\Sym(V\otimes W)\),
and let \(I_r\) be the ideal of \(r\times r\) minors.  The Tor algebra
\(B:=\Tor^A(A/I_r,\bk)\) decomposes as
\(B_p=\bigoplus_{|\lambda|=p}\bS_\lambda(V,W)\), where
\(\bS_\lambda(V,W):=\bS_{\lambda^{(r-1)}}(V)\otimes
\bS_{(\lambda^{\mathsf t})^{(r-1)}}(W)\) and
\(\lambda\subseteq (m{-}r{+}1)^{n-r+1}\). Here \(\lambda^{(r-1)}\) uses the
row insertion of Definition~\ref{def:row-insertion}. The two factors encode
the same multiset of Frobenius hooks, with arms and legs interchanged; the
fine Segre presentation below records both factors simultaneously.
Write \(B_\lambda:=\bS_\lambda(V,W)\).

\begin{proposition}\label{prop:generic-strand-generation}
If \(\lambda,\mu\) are mergeable
\textup{(Definition~\ref{def:frobenius-union})}, then the product
\(\bS_\lambda(V,W)\cdot\bS_\mu(V,W)\) surjects onto
\(\bS_{\lambda\sqcup\mu}(V,W)\).  In particular, \(B\) is generated
by the hook summands \(\bS_{(a+1,1^b)}(V,W)\).
\end{proposition}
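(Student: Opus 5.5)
We follow the Pfaffian argument of Proposition~\ref{prop:pfaffian-strand-generation}, replacing the induction on a single index set by an induction on the size of \(\lambda\sqcup\mu\). Fix mergeable \(\lambda,\mu\) and set \(\nu=\lambda\sqcup\mu\). By naturality, and because the union summand has Littlewood--Richardson multiplicity one, it suffices to show that the projection of \(B_\lambda\cdot B_\mu\) onto \(B_\nu\) is nonzero for a single pair of dimensions \((n,m)\) in which all three summands are nonzero. We choose \(n,m\) minimal so that \(\nu\) fits: \(n-r+1=\ell(\nu)\) and \(m-r+1=\nu_1\), i.e. \(\nu\subseteq(m-r+1)^{n-r+1}\) has first row and column of maximal length. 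In this range \(A/I_r\) is Gorenstein exactly when \(n=m\), so first we enlarge to the square case. Put \(N:=\max(n,m)\) and work in an \(N\times N\) matrix, so that the box is \(c\times c\) with \(c=N-r+1\). Then \(A/I_r\) is Gorenstein, and \(B\) is a Poincar\'e duality algebra~\cite{AvramovGolod1971} with top summand \(B_{(c^c)}\).

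Next we identify Poincar\'e duals. In Frobenius coordinates, \((c^c)\) consists of the hooks \((i\mid j)\) with each arm and each leg in \(\{0,\dots,c-1\}\); its diagonal hooks are \((c-1-k\mid c-1-k)\). For \(\lambda\subseteq(c^c)\), the complement \(\lambda^\vee\), namely the rotation of \((c^c)/\lambda\), is the unique partition \(\kappa\) for which \(B_\kappa\otimes B_\lambda\) contains the top class. The \(\GL(V)\) and \(\GL(W)\) factors each contain the determinant-power summand with multiplicity one, and only for this \(\kappa\), since \(\bS_\alpha\otimes\bS_\beta\supseteq\det^{k}\) forces \(\beta\) to be the complement of \(\alpha\). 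Because the factors are \(\lambda^{(r-1)}\), we also check that the row insertion is compatible with complementation inside \((N^N)\). This is a direct bookkeeping verification. With the dual in hand, the argument runs as before. Let \(\kappa=\nu^\vee\). We want a factorization in which \(\mu\sqcup\kappa\) is mergeable and strictly smaller than the top class, so that induction gives \(B_\mu\cdot B_\kappa\to B_{\mu\sqcup\kappa}\) nonzero. Duality then gives \((\mu\sqcup\kappa)^\vee=\lambda\), and associativity yields a nonzero triple product \(B_\lambda\cdot B_\mu\cdot B_\kappa\to B_{(c^c)}\). This forces the projection of \(B_\lambda B_\mu\) onto \(B_\nu\), the unique dual of \(B_\kappa\), to be nonzero.

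\textbf{Main obstacle.} Unlike the Pfaffian case, complementation inside a square does not simply complement a set of diagonal hooks, so mergeability of \(\mu\) with \(\nu^\vee\) is not automatic. We would first try restricting to the case where \(\lambda\) is a single hook \((a\mid b)\). This suffices for generation and, by associativity, for the full statement after an induction on rank. We then choose the square size so that \((a\mid b)\) is the outermost hook, \(a=b=c-1\), by padding: tensor with a suitable determinant-type hook, or take \(n,m\) so that the box has size \((a+1)\times(b+1)\), and use a rectangular Gorenstein substitute via the Gulliksen--Neg\aa rd self-duality. In that case the dual of \(\nu\) is obtained by stripping an outer layer, and one checks directly that it merges with \(\mu\). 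If this padding argument fails, the fallback is a direct computation of the projection on highest weight vectors, using Lascoux's explicit description of the first strand of the resolution.
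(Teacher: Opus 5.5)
There is a genuine gap, located exactly at the step you call the main obstacle, and neither remedy closes it.

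\emph{The factorization identity fails.} Complementation in the $c\times c$ box interchanges arms and legs. The arm set of $\lambda^\vee$ is $\{0,\dots,c-1\}$ minus the leg set of $\lambda$, and its leg set is $\{0,\dots,c-1\}$ minus the arm set of $\lambda$. Hence $\mu\sqcup\nu^\vee$ can have the arms of $\lambda^\vee$ only when $\mu$ has equal arm and leg sets. For example, take $c=3$, $\lambda=(1)$, $\mu=(2,1,1)=(1\mid 2)$. Then $\nu=(2,2,1)$, $\nu^\vee=(2,1,1)=\mu$ and $\lambda^\vee=(3,3,2)$, so $\mu$ and $\nu^\vee$ do not merge.

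\emph{The induction need not descend.} Even when $\mu\sqcup\nu^\vee=\lambda^\vee$ holds, induction on $|\lambda\sqcup\mu|$ can be circular. Take $\lambda=(2\mid 1)=(3,1)$ and $\mu=(1)$ in the minimal box $c=3$. Then $\nu=(3,2)$ and $\nu^\vee=(3,1)$, so the product you would need, $B_{(1)}\cdot B_{(3,1)}\to B_{(3,2)}$, is the original one.

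\emph{The padding step fails as described.} Take $h=(2\mid 2)$, $\mu=(2)$, $c=3$. Then $\nu=(3,3,1)$, $\nu^\vee=(2)$ and $h^\vee=(2,2)$, and $(2)$ does not merge with $(2)$. The needed nonvanishing $B_{(2)}\cdot B_{(2)}\to B_{(2,2)}$ is not a Frobenius-union product, so no inductive hypothesis of your form covers it. For $a=b=c-1$ this particular product can be rescued: it is the Poincar\'e pairing of the ring for the $(c-1)$-box, transported by naturality. But that reaches only symmetric outermost hooks. For $a\neq b$, no square box has $(a\mid b)$ as its outer layer. Non-square generic determinantal rings are not Gorenstein, and Gulliksen--Neg\aa rd concerns only submaximal minors of a square matrix, so there is no rectangular Gorenstein substitute.

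\emph{The associativity reduction is not automatic.} Passing from single hooks to arbitrary mergeable pairs by associativity does not work directly in $B$, because other components of $B_{\lambda'}\cdot B_\mu$ can also map to $B_\nu$. It needs the multiplicative filtration coming from Lemma~\ref{lem:2d-jump-ladder}.

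The missing idea is that duality is not needed: the paper inducts on $r$ instead.
\begin{itemize}
\item Localizing at the point $\xi$ with $x_{nm}=1$, the Schur complement identifies $I_r$ with $I_{r-1}$ of a generic $(n-1)\times(m-1)$ matrix. This gives an equivariant algebra isomorphism from $\rH_\bullet(C\otimes_A\bk_\xi)$ to $\Tor^{A'}(A'/I_{r-1},\bk)$.
\item Since $j_r(\nu)=j_r(\lambda)+j_r(\mu)$ for $j_r(\rho)=|\rho|+(r-1)\rank\rho$, the component of an equivariant multiplication chain map from the $\lambda,\mu$ generators to the $\nu$ generator has scalar entries. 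So it is $m_r(\lambda,\mu)$ itself and survives specialization.
\item By branching, $U_\rho=\bS_{\rho^{(r-2)}}(V')\otimes\bS_{(\rho^{\mathsf t})^{(r-2)}}(W')$ occurs exactly once in $F_{|\rho|}$, inside $B_\rho$, and represents the $\rho$-summand of the smaller Tor algebra. Hence $m_r(\lambda,\mu)=0$ would force $m_{r-1}(\lambda,\mu)=0$.
\item The base case $r=1$ is the exterior algebra $\bigwedge(V\otimes W)$, handled by Graham--Hunziker after transposing the $W$ factor.
\end{itemize}
This argument treats all mergeable pairs at once and needs no Gorenstein hypothesis.
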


\begin{lemma}\label{lem:generic-corner}
Let \(c=x_{nm}\) and let \(M'=(x_{ij}-x_{im}x_{nj}c^{-1})_{i<n,\,j<m}\) be the
Schur complement over \(A[c^{-1}]\).  Then \(M'\) is a generic
\((n-1)\times(m-1)\) matrix, \(I_r\cdot A[c^{-1}]=I_{r-1}(M')\cdot A[c^{-1}]\),
and the subgroup \(\GL(V')\times\GL(W')\) of \(\GL(V)\times\GL(W)\) fixing the
last basis vectors of \(V\) and \(W\) (with \(V'=\bk^{\,n-1}\),
\(W'=\bk^{\,m-1}\)) acts on \(M'\) as the full generic action.
\end{lemma}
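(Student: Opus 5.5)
The lemma to be proved is Lemma~\ref{lem:generic-corner}: Schur complement at the corner entry $c=x_{nm}$. The argument is classical row and column reduction over the localization $A[c^{-1}]$, so I will simply carry out the elimination and check the three claims in turn.

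Let $X=(x_{ij})$ be the generic matrix. Over $A[c^{-1}]$, subtracting $x_{im}c^{-1}$ times the last row from row $i$, and then clearing the last column, gives an invertible factorization
\[
X=P\begin{pmatrix}M'&0\\0&c\end{pmatrix}Q,
\]
where $P$ and $Q$ are unipotent triangular matrices with entries in $A[c^{-1}]$. The entries of $M'$ are the $x_{ij}-x_{im}x_{nj}c^{-1}$. The key steps are then as follows.

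First, genericity. The ring map
\[
\bk[y_{ij}:i<n,\,j<m][x_{im},x_{nj},x_{nm}^{\pm1}]\to A[c^{-1}],\qquad y_{ij}\mapsto x_{ij}-x_{im}x_{nj}c^{-1},
\]
is an isomorphism, with inverse $x_{ij}\mapsto y_{ij}+x_{im}x_{nj}c^{-1}$. Hence the entries of $M'$ are algebraically independent and $M'$ is a generic $(n-1)\times(m-1)$ matrix over a localized polynomial extension.

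Second, the ideals. Multiplying by invertible matrices preserves each determinantal ideal, so $I_r(X)$ and $I_r(M'\oplus c)$ agree over $A[c^{-1}]$. The $r\times r$ minors of the block diagonal matrix $M'\oplus c$ are of two kinds: either $c$ times an $(r-1)$-minor of $M'$, or an $r$-minor of $M'$. The $r$-minors of $M'$ lie in $I_{r-1}(M')$ by Laplace expansion, and $c$ is a unit. Together these give $I_r\cdot A[c^{-1}]=I_{r-1}(M')\cdot A[c^{-1}]$.

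Third, equivariance. Take $g\in\GL(V')\times\GL(W')$ fixing the last basis vectors of $V$ and $W$; it acts as $X\mapsto gXh^{\mathsf t}$ with block forms $\mathrm{diag}(g',1)$ and $\mathrm{diag}(h',1)$. Then $c$ is fixed, the vectors $(x_{im})_{i<n}$ and $(x_{nj})_{j<m}$ transform by $g'$ and $h'$ respectively, and the rank-one correction
\[
x_{\bullet m}\,x_{n\bullet}\,c^{-1}
\]
transforms exactly as the block $(x_{ij})_{i<n,\,j<m}$ does. It follows that $M'\mapsto g'M'h'^{\mathsf t}$, which is the full generic action.

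There is no real obstacle here: every step is bookkeeping. The only place that needs care is the second step, confirming that the $r$-minors of $M'$ lie in $I_{r-1}(M')$ and that the factorization is compatible with the ideal identification. Both follow from Laplace expansion and the Cauchy--Binet formula.
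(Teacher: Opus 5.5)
Your proposal is correct and follows the paper's proof. You use the same row and column reduction to $M'\oplus(c)$ over $A[c^{-1}]$ for the ideal identity, the same triangular change of coordinates for genericity (you also write down its explicit inverse), and the block-diagonal embedding for equivariance; your two-case list of minors omits those using exactly one of the last row and last column, but these vanish, so nothing is lost.
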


\begin{proof}
Over \(A[c^{-1}]\), row and column operations clear row \(n\) and column \(m\)
off the corner, carrying the generic matrix to \(M' \oplus (c)\) up to units; hence \(I_r\) localizes to \(I_{r-1}(M')\).  The substitution
\(x_{ij}\mapsto x_{ij}-x_{im}x_{nj}c^{-1}\) (\(i<n\), \(j<m\)) is unitriangular
with unit Jacobian, so the interior entries are independent coordinates and
\(M'\) is generic; the stated subgroup transforms each interior entry in its
row and column index while fixing \(c\).
\end{proof}

Let \(\xi\) be the matrix with \(x_{nm}(\xi)=1\) and
\(x_{ij}(\xi)=0\) for \((i,j)\ne(n,m)\), and let
\[
\mathfrak m_\xi=
(x_{ij}:(i,j)\ne(n,m)) + ( x_{nm}-1)\subset A,
\qquad
R:=A_{\mathfrak m_\xi},
\qquad
\bk_\xi:=R/\mathfrak m_\xi R\cong\bk.
\]
Introduce independent variables \(y_{ij}\), \(i<n\), \(j<m\), and put
\[
  A':=\bk[y_{ij}:i<n,\ j<m]\cong\Sym(V'\otimes W'),
  \qquad y_{ij}\longmapsto M'_{ij}.
\]
Write \(C\) for the minimal graded free resolution of \(A/I_r\), \(D\) for
the minimal \(\GL(V')\times\GL(W')\)-equivariant \(A'\)-free resolution of
\(A'/I_{r-1}(y)\), and \(K:=D\otimes_{A'}R\).

\begin{lemma}\label{lem:generic-loc}
The complex \(K\) is the minimal \(R\)-free resolution of \(R/I_rR\), and
\[
  C\otimes_A R\cong K\oplus T,
\]
where \(T\) is a bounded contractible complex of finite free \(R\)-modules.
Specialization at \(\xi\) gives a natural
\(\GL(V')\times\GL(W')\)-equivariant isomorphism of graded algebras
\[
  \rH_\bullet(C\otimes_A\bk_\xi)
  \cong \Tor^{A'}_\bullet(A'/I_{r-1}(y),\bk).
\]
\end{lemma}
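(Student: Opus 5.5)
The plan is to run three steps in order: localize the resolution, compare minimal resolutions, then specialize and compare multiplications.

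\emph{Step 1 (localization).} Since \(\mathfrak m_\xi\) does not contain \(c=x_{nm}\), the ring \(R\) is a localization of \(A[c^{-1}]\). By Lemma~\ref{lem:generic-corner}, \(I_rR=I_{r-1}(M')R\). The map \(A'\to A[c^{-1}]\), \(y_{ij}\mapsto M'_{ij}\), is flat: by the unitriangular change of coordinates in that lemma, \(A[c^{-1}]\) is a Laurent–polynomial extension of the image of \(A'\), so it is free over \(A'\). Hence \(D\otimes_{A'}A[c^{-1}]\) resolves \(A[c^{-1}]/I_r\), and further localizing gives that \(K=D\otimes_{A'}R\) resolves \(R/I_rR\).

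\emph{Step 2 (minimality and the splitting).} The point \(\xi\) corresponds to \(y=0\), since every \(M'_{ij}\) vanishes at \(\xi\). Therefore \(\mathfrak m_{A'}\) maps into \(\mathfrak m_\xi\). Minimality of \(D\) means its differentials have entries in \(\mathfrak m_{A'}\), so \(K\) is minimal over the local ring \(R\). Next, \(C\otimes_AR\) is a finite free resolution of the same module \(R/I_rR\), because localization is exact. The standard structure theorem over a local ring then says that any finite free resolution is isomorphic to the minimal one plus a split exact (contractible) complex \(T\). This gives \(C\otimes_AR\cong K\oplus T\).

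\emph{Step 3 (specialization and multiplication).} Tensoring with \(\bk_\xi\) kills the homology of \(T\). It also makes the differentials of \(K\otimes\bk_\xi=D\otimes_{A'}\bk\) vanish. So as graded vector spaces,
\[
  \rH_\bullet(C\otimes_A\bk_\xi)\cong D\otimes_{A'}\bk=\Tor^{A'}_\bullet(A'/I_{r-1}(y),\bk).
\]
Equivariance follows because every map above commutes with \(\GL(V')\times\GL(W')\), which fixes \(\xi\) and acts on \(M'\) as in Lemma~\ref{lem:generic-corner}. Note, however, that the splitting in Step 2 is not itself claimed to be equivariant.

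To get the algebra structure I would avoid dga structures entirely, since none need exist. Instead I would use the derived description
\[
  \rH_\bullet(C\otimes_A\bk_\xi)=\Tor^A_\bullet(A/I_r,\bk_\xi),
\]
with its canonical multiplication. This equals \(\Tor^R(R/I_rR,\bk_\xi)\) by flat base change. Its product can be computed from any resolution and is independent of that choice. It can also be computed as the homology of the Koszul complex tensored with a resolution, so the product is canonical and natural in the ring maps. Now apply naturality along the flat local homomorphism \(A'_{(y)}\to R\), which has residue field extension \(\bk\to\bk_\xi\). This gives a natural algebra isomorphism
\[
  \Tor^{A'}(A'/I_{r-1}(y),\bk)\otimes_\bk\bk_\xi\xrightarrow{\cong}\Tor^R(R/I_rR,\bk_\xi),
\]
because \(R/I_rR=(A'/I_{r-1})\otimes_{A'}R\) and \(R\) is flat over \(A'\). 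The maps used are equivariant and natural, so the isomorphism is equivariant.

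\emph{Main obstacle.} The hardest step is Step 3. One must check that the Tor product defined via \(C\) agrees with the intrinsic product, which requires care because \(C\) may not admit a dga structure. One must also check that the flat base-change map is multiplicative and respects the grading. Here the "graded" structure is by homological degree only, since the internal grading is lost under localization at \(\xi\), so that part should be harmless.
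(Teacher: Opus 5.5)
Your proposal is correct and follows essentially the same route as the paper. Both arguments get flatness of \(A'\to R\) from the Schur-complement coordinates. Both get minimality of \(K\) from the fact that its differentials have entries in \(A'_+\subset\mathfrak m_\xi R\), and both split \(C\otimes_A R\) over the local ring \(R\) into a minimal part plus a contractible part. The algebra isomorphism likewise comes from the intrinsic Tor product under flat base change; the paper phrases this step as quasi-isomorphisms of the derived fibers viewed as augmented differential graded algebras.
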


\begin{proof}
The coordinates obtained above identify \(R\) with a localization of
\[
  A'[x_{1m},\ldots,x_{n-1,m},x_{n1},\ldots,x_{n,m-1},c,c^{-1}]
\]
at the maximal ideal generated by \(A'_+=(y_{ij}:i<n,\ j<m)\), the border
variables, and \(c-1\).
Thus \(A'\to R\) is flat, and Lemma~\ref{lem:generic-corner} gives
\(I_rR=I_{r-1}(y)R\).  Hence \(K\) resolves \(R/I_rR\); it is minimal because
its differentials have entries in \(A'_+\subset\mathfrak m_\xi R\).

Over a local ring, every bounded finite free resolution is the direct sum of
its minimal resolution and a bounded contractible complex. Applying this to
\(C\otimes_A R\) gives the decomposition. Base change along the Schur
complement also gives natural equivariant quasi-isomorphisms of augmented
differential graded \(\bk\)-algebras
\[
 (A/I_r)\otimes_A^{\mathrm L}\bk_\xi
 \simeq (R/I_rR)\otimes_R^{\mathrm L}\bk_\xi
 \simeq (A'/I_{r-1}(y))\otimes_{A'}^{\mathrm L}\bk.
\]
Taking homology gives the displayed algebra isomorphism.
\end{proof}

\begin{samepage}
Let \(A_+=(x_{ij})\subset A\) be the homogeneous maximal ideal. Since \(C\)
is minimal, reduction modulo \(A_+\) kills its differential. Thus
\[
F_p:=C_p/A_+C_p
\cong\Tor_p^A(A/I_r,\bk)
=B_p
\cong\bigoplus_{|\rho|=p}\bS_\rho(V,W).
\]
\end{samepage}

\begin{lemma}\label{lem:generic-mult-one}
For each \(\lambda\), set
\[
  U_\lambda:=
  \bS_{\lambda^{(r-2)}}(V')\otimes
  \bS_{(\lambda^{\mathsf t})^{(r-2)}}(W').
\]
The representation \(U_\lambda\) occurs with multiplicity one in the
restriction of \(F_{|\lambda|}\), and only in the restriction of its
summand \(\bS_\lambda(V,W)\). Its unique copy in
\(C_{|\lambda|}\otimes_A\bk_\xi\) represents the summand indexed by
\(\lambda\) in
\(\Tor^{A'}_\bullet(A'/I_{r-1}(y),\bk)\) under
Lemma~\ref{lem:generic-loc}.
\end{lemma}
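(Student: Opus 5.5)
The plan is to prove the multiplicity statement by the branching rule and Frobenius coordinates, and then obtain the second assertion by counting multiplicities on the equivariant isomorphism of Lemma~\ref{lem:generic-loc}. Throughout, write \(\rho\) for an arbitrary partition with \(|\rho|=|\lambda|\), and put \(d=\rank\rho\), \(e=\rank\lambda\).

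\emph{Branching step.} Restriction from \(\GL(V)\) to \(\GL(V')\) is multiplicity free: \(\bS_\mu(V)\) restricts to \(\bigoplus_\nu\bS_\nu(V')\) over those \(\nu\) with \(\mu/\nu\) a horizontal strip, and similarly for \(W\). Hence \(U_\lambda\) occurs in the restriction of \(\bS_\rho(V,W)\) exactly when
\begin{itemize}
\item[(a)] \(\rho^{(r-1)}/\lambda^{(r-2)}\) is a horizontal strip, and
\item[(b)] \((\rho^{\mathsf t})^{(r-1)}/(\lambda^{\mathsf t})^{(r-2)}\) is a horizontal strip;
\end{itemize}
in that case it occurs exactly once in that summand. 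So the first assertion reduces to showing that (a), (b) and \(|\rho|=|\lambda|\) force \(\rho=\lambda\), and that for \(\rho=\lambda\) both strips exist, which is clear.

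\emph{Combinatorial step.} I would read off Frobenius coordinates using Definition~\ref{def:row-insertion}. Here \(\rho^{(r-1)}\leftrightarrow(a(\rho)\mid b(\rho)+r-1)\) and \(\lambda^{(r-2)}\leftrightarrow(a(\lambda)\mid b(\lambda)+r-2)\), with \(b+r-1\) meaning every leg shifted by \(r-1\). Both have the same ranks as \(\rho\) and \(\lambda\).
\begin{itemize}
\item Containment in (a) gives \(d\ge e\).
\item Suppose \(d>e\). Then column \(e+1\) of \(\lambda^{(r-2)}\) has length at most \(e\), since every row below its Durfee square has length at most \(e\). Column \(e+1\) of \(\rho^{(r-1)}\) has length at least \(e+1+(r-1)\ge e+2\). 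This contradicts (a), because a horizontal strip changes each column length by at most one. Hence \(d=e\).
\item For \(j\le e\), the column-length condition in (a) compares \(b_j(\rho)+j+r-1\) with \(b_j(\lambda)+j+r-2\) and gives \(b_j(\rho)\le b_j(\lambda)\).
\item Symmetrically, (b) gives \(a_j(\rho)\le a_j(\lambda)\).
\end{itemize}
Then \(|\rho|=e+\sum_j a_j(\rho)+\sum_j b_j(\rho)\le|\lambda|\), with equality only if \(\rho=\lambda\). I expect this bookkeeping to be the main obstacle: getting the leg and arm shifts and the role of the inserted rows exactly right, in particular the \(d>e\) exclusion.

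\emph{Representation step.} The point \(\xi\) is fixed by \(\GL(V')\times\GL(W')\). So \(C_{|\lambda|}\otimes_A\bk_\xi\cong F_{|\lambda|}\) as \(\GL(V')\times\GL(W')\)-representations, and by the step above \(U_\lambda\) occurs there exactly once. By Lascoux's theorem applied to \(A'/I_{r-1}(y)\), the summand of \(\Tor^{A'}_{|\lambda|}(A'/I_{r-1}(y),\bk)\) indexed by \(\lambda\) is \(U_\lambda\). By Lemma~\ref{lem:generic-loc}, it is equivariantly a subquotient of \(C_{|\lambda|}\otimes_A\bk_\xi\). Complete reducibility in characteristic zero and multiplicity one then force the unique copy of \(U_\lambda\) in the chains to consist of cycles that are not boundaries. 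That copy therefore maps isomorphically onto the \(\lambda\)-summand of the homology, which is the claimed representative.
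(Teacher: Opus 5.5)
Your proposal is correct and follows the paper's proof essentially step for step. In both, the branching rule reduces the claim to the two horizontal-strip conditions; the column-$(e+1)$ height comparison rules out $\rank\rho>\rank\lambda$; comparing column heights, and row heights via transposition, gives $b_j(\rho)\le b_j(\lambda)$ and $a_j(\rho)\le a_j(\lambda)$, so $|\rho|=|\lambda|$ forces $\rho=\lambda$; and multiplicity one together with complete reducibility for $\GL(V')\times\GL(W')$ makes the unique copy of $U_\lambda$ survive to homology. Your remark that $\xi$ is fixed by $\GL(V')\times\GL(W')$, so that $C_{|\lambda|}\otimes_A\bk_\xi\cong F_{|\lambda|}$ equivariantly, only makes explicit a step the paper leaves implicit.
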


We use below that $\bS_\alpha(V')$ occurs in the restriction of $\bS_\beta(V)$ if and only if $\alpha \subseteq \beta$ and $\beta/\alpha$ is a horizontal strip, i.e., does not contain more than one box in any column (and similar statements apply to $W'$ and $W$), see e.g., \cite[(3.11)]{introtca}. 

\begin{proof}
Within the single irreducible \(\bS_\lambda(V,W)\), $U_\lambda$ occurs exactly once by the branching rule stated above since  \(\lambda^{(r-1)}/ \lambda^{(r-2)}\) is a single row of length \(\rank(\lambda)\).  As the term in homological degree \(p\) is
\(\bigoplus_{|\nu|=p}\bS_\nu(V,W)\), it suffices to show that \(\nu=\lambda\) is
the only partition with \(|\nu|=|\lambda|\) for which both
\(\nu^{(r-1)}/\lambda^{(r-2)}\) and \((\nu^{\mathsf t})^{(r-1)}/(\lambda^{\mathsf t})^{(r-2)}\) are
horizontal strips. 

Choose $\lambda,\nu$ satisfying these two horizontal strip conditions.
Write Frobenius coordinates \(\nu=(a_i\mid b_i)_{i\le d}\) and
\(\lambda=(a'_i\mid b'_i)_{i\le e}\).
First, \(\lambda^{(r-2)}\subseteq\nu^{(r-1)}\) implies that \(e\le d\).  If \(e<d\), then by Definition~\ref{def:row-insertion}, column \(e{+}1\) of \(\nu^{(r-1)}\) has height \((e{+}1)+b_{e+1}+(r{-}1)\ge
e+r\), while column \(e{+}1\) of \(\lambda^{(r-2)}\)
has height \(\le e\). Hence column $e+1$ of $\nu^{(r-1)}/\lambda^{(r-2)}$ has at least \(r\ge2\) boxes, a contradiction.

Thus \(e=d\), and comparing \(i\)-th column heights gives
\(0\le(b_i+r-1)-(b'_i+r-2)\le1\), i.e.\ \(b_i\le b'_i\) for \(i\le d\).  The
second strip condition is the first for the transposed partitions, so the same
computation on rows gives \(a_i\le a'_i\).  Then
\[
  0=|\lambda|-|\nu|=\sum_{i=1}^{d}\bigl[(a'_i-a_i)+(b'_i-b_i)\bigr]
\]
is a sum of nonnegative terms, forcing \(a_i=a'_i\) and \(b_i=b'_i\) for
all \(i\); hence \(\nu=\lambda\).

Thus \(U_\lambda\) occurs once in
\(C_{|\lambda|}\otimes_A\bk_\xi\). The algebra
\(\Tor^{A'}_\bullet(A'/I_{r-1}(y),\bk)\) contains it once in homological
degree \(|\lambda|\). Since \(\GL(V')\times\GL(W')\) is reductive, this unique copy
must survive to homology and maps isomorphically onto that summand.
\end{proof}

\begin{proof}[Proof of Proposition~\textup{\ref{prop:generic-strand-generation}}]
Set \(\nu=\lambda\sqcup\mu\), and let
\[
  m_r(\lambda,\mu)\colon B_\lambda\otimes B_\mu\longrightarrow B_\nu
\]
be multiplication followed by projection onto \(B_\nu\). We prove by induction
on \(r\) that this map is nonzero.

For \(r=1\), \(A/I_1=\bk\) and \(B=\bigwedge(V\otimes W)\) is an exterior
algebra. The maps \(m_1(\lambda,\mu)\) are
nonzero by the results in \cite[\S 2]{GrahamHunziker2009}, after applying the monoidal
transpose functor to the \(W\) factor as in \S\ref{sec:further-koszul-examples}; see \cite[\S7.4]{introtca}.

Let \(r\ge2\) and assume the result for \(r-1\). Choose a homogeneous
\(\GL(V)\times\GL(W)\)-equivariant chain map
\(C\otimes_A C\to C\) lifting multiplication on \(A/I_r\), and put
\[
  j_r(\rho):=|\rho|+(r-1)\rank(\rho).
\]
By the Tor decomposition in \S\ref{sec:det-poincare}, \(j_r(\rho)\) is the
internal degree of the free generators in the summand indexed by~\(\rho\).
Both size and rank are additive under Frobenius union, whence
\(j_r(\nu)=j_r(\lambda)+j_r(\mu)\).  The component from the
\(\lambda,\mu\) free generators to the \(\nu\) free generator therefore has
matrix entries in \(A_0=\bk\). Its reduction modulo \(A_+\) is
\(m_r(\lambda,\mu)\). Hence, if \(m_r(\lambda,\mu)=0\), this chain map
component is zero and remains zero after specialization at \(\xi\).

By Lemmas~\ref{lem:generic-loc} and~\ref{lem:generic-mult-one}, the unique
copies of \(U_\lambda,U_\mu,U_\nu\) in the specialized resolution represent
the corresponding summands of
\(\Tor^{A'}_\bullet(A'/I_{r-1}(y),\bk)\). Moreover, after restriction to
\(\GL(V')\times\GL(W')\), \(U_\nu\) occurs only in
\(\bS_\nu(V,W)\subset F_{|\nu|}\).
If \(m_r(\lambda,\mu)=0\), the \(U_\nu\)-component of the specialized
product on \(U_\lambda\otimes U_\mu\) would therefore vanish. The algebra
isomorphism of Lemma~\ref{lem:generic-loc} identifies this with
\(m_{r-1}(\lambda,\mu)\), contradicting the inductive hypothesis. Thus
\(m_r(\lambda,\mu)\ne0\), and this map is surjective because \(B_\nu\) is
irreducible.

For generation, write \(\lambda\) as the Frobenius union of its diagonal
hooks
\[
H_i(\lambda)=(a_i{+}1,1^{b_i}). 
\]
Applying the nonvanishing successively to these hooks shows that the product of the corresponding
summands in the first nonzero strand has nonzero projection onto
\(\bS_\lambda(V,W)\). Hence \(B\) is generated by its first nonzero strand.
\end{proof}

\medskip
Write \(s:=r-1\), \(\pi(a,b):=a+b+1\pmod 2\), and
\begin{equation}\label{eq:generic-base-hook-set}
  H=\{(a\mid b):0\le a\le m-r,\ 0\le b\le n-r\}.
\end{equation}
We identify \(h=(a\mid b)\in H\) with the hook partition
\((a+1,1^b)\).
For a partition \(\rho\), let \(\mathsf H(\rho)\) denote the multiset of its
Frobenius pairs from Definition~\ref{def:frobenius-coords}.
For \(h=(a\mid b)\in H\), put
\[
 E_h:=\bS_{(a+1,1^{b+s})}(V)\otimes
      \bS_{(b+1,1^{a+s})}(W),
\]
and set \(E=\bigoplus_{h\in H}E_h\), with \(E_h\) of parity \(\pi(a,b)\).
Let \(\Sym_\pi(E)\) be the supersymmetric algebra for this parity, and let
$q_\pi\colon E\otimes E\twoheadrightarrow\Sym_\pi^2(E)$
be the canonical quotient. If \(h,h'\) are mergeable, let
\(\eta(h,h')\) be the rank two partition having Frobenius hooks \(h,h'\),
and let
\[
p_{h,h'}\colon E_h\otimes E_{h'}\twoheadrightarrow
\bS_{\eta(h,h')^{(s)}}(V)\otimes
\bS_{(\eta(h,h')^{\mathsf t})^{(s)}}(W)
\]
be the tensor product of the projections onto these two unique
Littlewood--Richardson summands. Its kernel is independent of all scalar
choices. Define the
\defi{fine hook algebra} by
\[
G^\pi(V,W):=
\Sym_\pi(E)\Big/
\left\langle
\begin{array}{ll}
q_\pi(E_h\otimes E_{h'})&\text{if \(h,h'\) are not mergeable},\\
q_\pi(\ker p_{h,h'})&\text{if \(h,h'\) are mergeable}.
\end{array}
\right\rangle.
\]
Equivalently,
\[
  G^\pi(V,W)=\bigoplus_\lambda
  \bS_{\lambda^{(s)}}(V)\otimes\bS_{(\lambda^{\mathsf t})^{(s)}}(W),
  \qquad \lambda\subseteq(m-s)^{n-s}.
\]
Its product is zero for a nonmergeable pair and, for a mergeable pair
\(\lambda,\mu\), is the projection onto the
summand indexed by \(\lambda\sqcup\mu\). Write \(G^0(V,W)\) for its purely
even form.

Unlike the Pfaffian and symmetric cases, the generic hooks $(a\mid b)$ vary over
a two-dimensional grid rather than a single diagonal, so the combinatorial input
takes the following form involving both factors. Order the finite hook set \(H\) by
\begin{equation}\label{eq:generic-hook-order}
  (a\mid b)\prec_H(c\mid d)
  \quad\Longleftrightarrow\quad
  a+b<c+d,\ \text{ or }\ a+b=c+d\text{ and }a<c.
\end{equation}
Multisets are compared lexicographically after sorting in decreasing order.
The second criterion
only makes the order total; the hooks of equal size compared below always
coincide.

\begin{lemma}\label{lem:2d-jump-ladder}
Assume \(s>0\). Let \(\lambda,\mu\) be partitions and let \(M\) be the
multiset union of \(\mathsf H(\lambda)\) and \(\mathsf H(\mu)\). Suppose a
partition \(\nu\) satisfies
\(|\nu|=|\lambda|+|\mu|\) and
\[
c^{\nu^{(s)}}_{\lambda^{(s)},\mu^{(s)}}\ne0,\qquad
c^{(\nu^{\mathsf t})^{(s)}}_{(\lambda^{\mathsf t})^{(s)},(\mu^{\mathsf t})^{(s)}}\ne0.
\]
If \(\mathsf H(\nu)\ne M\), then
\(\mathsf H(\nu)>_{\mathrm{lex}}M\).
\end{lemma}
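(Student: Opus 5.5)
We argue by induction on the number of Frobenius hooks of \(\lambda\) and \(\mu\), modelled on the peeling argument of Lemma~\ref{lem:multi-jump-ladder}, but now using both Littlewood--Richardson conditions. Write \(\mathsf H(\nu)=\{(a_i\mid b_i)\}\) and record the hooks of \(\lambda,\mu\) by their pairs. The case in which \(\lambda\) or \(\mu\) is empty is trivial, since then \(\nu\) has the same size as the other partition and the first LR condition forces equality. In general the LR conditions give \(\lambda^{(s)},\mu^{(s)}\subseteq\nu^{(s)}\) and likewise after transposition. Because \(s>0\), the inserted rows of length \(\rank\) force \(\rank\nu\ge\max(\rank\lambda,\rank\mu)\) via the column-height argument in the proof of Lemma~\ref{lem:generic-mult-one}. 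Together with the transposed containment, this bounds the first arm and first leg of \(\nu\) below by the first arm and leg of the outer hooks of \(\lambda\) and \(\mu\).

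First key step: compare top hooks. Let \(h=(a\mid b)\) be the \(\prec_H\)-largest element of \(M\), say from \(\lambda\). From \(\lambda^{(s)}\subseteq\nu^{(s)}\) and \((\lambda^{\mathsf t})^{(s)}\subseteq(\nu^{\mathsf t})^{(s)}\) we get \(a_1\ge a\) and \(b_1\ge b\). Hence the top hook \((a_1\mid b_1)\) of \(\nu\) satisfies \(a_1+b_1\ge a+b\). If the inequality is strict, then \(\mathsf H(\nu)>_{\mathrm{lex}}M\) at once. Otherwise \(a_1=a\) and \(b_1=b\), so \((a_1\mid b_1)=h\). This is the sense in which hooks of equal size that we compare must coincide.

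Second key step: peel. When the top hooks agree, delete the first row and column of \(\nu^{(s)}\) and \(\lambda^{(s)}\). The point is that \((\ -\ )^{(s)}\) commutes with removing the outer hook, in the sense that \(\nu^{(s)}\) minus its first row and column equals \(\nu'^{(s)}\), where \(\nu'\) is \(\nu\) with its top hook removed. Indeed, the inserted rows have length \(\rank\nu\), and removing the first column turns them into rows of length \(\rank\nu-1=\rank\nu'\), which sit below the shifted Durfee square. So \(\nu^{(s)}/\lambda^{(s)}\cong\nu'^{(s)}/\lambda'^{(s)}\) as skew shapes up to translation, exactly as in Lemma~\ref{lem:multi-jump-ladder}. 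The same holds for the transposes. The LR coefficients are invariant under translation of the skew shape, so both coefficients for \((\nu',\lambda',\mu)\) are nonzero, and \(|\nu'|=|\lambda'|+|\mu|\). By induction, either \(\mathsf H(\nu')=M'\) or \(\mathsf H(\nu')>_{\mathrm{lex}}M'\). Prepending the common top element \(h\) gives the claim.

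The main obstacle is the first step, specifically deducing \(a_1\ge a\) and \(b_1\ge b\) from containment of the inserted shapes. Containment \(\lambda^{(s)}\subseteq\nu^{(s)}\) only compares row lengths directly: \(\lambda_1\le\nu_1\) gives \(a\le a_1\), and the transposed containment gives \(b\le b_1\). So this step itself is fine. The subtle point is that \(h\) might come from \(\mu\) rather than being the top hook of \(\lambda\), and that \(\mu\)'s top hook might not be \(\prec_H\)-comparable in the same way. Since both \(\lambda\) and \(\mu\) embed, however, \(a_1\ge\max\) of the first arms and \(b_1\ge\max\) of the first legs. If the two top hooks of \(\lambda\) and \(\mu\) are incomparable, then \(a_1+b_1\) strictly exceeds the size of each, and we get strict lex inequality. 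One must also check that the peeling identity survives when \(\rank\nu>\rank\lambda\), which is where I expect care to be needed. I would first try to handle that case separately: there the inserted rows have different lengths in \(\nu^{(s)}\) and \(\lambda^{(s)}\), and one should show it already forces \(a_1+b_1>a+b\) through the size count.
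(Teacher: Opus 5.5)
Your argument is correct and is essentially the paper's proof. The two containments bound the first arm and leg of \(\nu\) below by those of the \(\prec_H\)-largest hook of \(M\), so a strict size inequality gives the lex inequality at once; otherwise you peel the common hook using the translation-invariant identity \(\nu^{(s)}/\lambda^{(s)}\cong(\nu^-)^{(s)}/(\lambda^-)^{(s)}\) and induct. (The paper inducts on \(\rank\nu\), which equals \(\rank\lambda+\rank\mu\) by the size count since \(s>0\); your induction on \(\rank\lambda+\rank\mu\) with an empty base case works equally well.)

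Your closing worry is unfounded, however, and the proposed workaround is false. The peeling identity is checked for \(\nu\) and \(\lambda\) separately, so it holds whatever their ranks. Moreover, \(\rank\nu>\rank\lambda\) is the generic situation (it holds whenever \(\mu\ne\varnothing\)) and does not force \(a_1+b_1>a+b\); for example, \(\lambda=(2,1)\), \(\mu=(1)\), \(\nu=(2,2)\) satisfy the hypotheses with equal top hooks.
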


\begin{proof}
Since \(|X^{(s)}|=|X|+s\,\rank(X)\), comparison of sizes in the first
Littlewood--Richardson coefficient gives
\[
\rank(\nu)=\rank(\lambda)+\rank(\mu).
\]
Thus \(\nu\) has the same number of Frobenius hooks as \(M\). We argue by
induction on this number, as in the proof of
Lemma~\ref{lem:multi-jump-ladder}. If \(M=\varnothing\), then
\(\lambda=\mu=\nu=\varnothing\), and there is nothing to prove.

The first Littlewood--Richardson condition implies
\(\lambda^{(s)},\mu^{(s)}\subseteq \nu^{(s)}\). By
Definition~\ref{def:row-insertion}, row insertion leaves the Frobenius arms
unchanged, so the largest arm of \(\nu\) is at least the largest arm in
\(M\). Applying the same argument to the transposed condition shows that the
largest leg of \(\nu\) is at least the largest leg in \(M\).

Let \((k_a\mid k_b)\) be the Frobenius pair of the first diagonal hook
\(H_1(\nu)\) from Definition~\ref{def:frobenius-coords}, and let
\((m_a\mid m_b)\) be the largest element of \(M\) under \(\prec_H\).
The preceding discussion implies that \(k_a+k_b\ge m_a+m_b\). If this inequality is strict, the
first hook of \(\nu\) exceeds the largest hook of \(M\) in size, so
\(\mathsf H(\nu)>_{\mathrm{lex}}M\).

It remains to treat the case that \(k_a+k_b=m_a+m_b\).
Thus the maximal arm and maximal leg in \(M\) occur in
the same hook \((m_a\mid m_b)=(k_a\mid k_b)\), which belongs to one of
\(\lambda,\mu\). After interchanging them, assume it is the first diagonal
hook of \(\lambda\), and let \(\nu^-,\lambda^-\) be obtained by deleting
this common hook. Removing the shared first row and column identifies
\[
\nu^{(s)}/\lambda^{(s)}\ \cong\ (\nu^-)^{(s)}/(\lambda^-)^{(s)},
\]
and likewise after transposition. The same Littlewood--Richardson tableaux
therefore witness both nonvanishing conditions for
\((\lambda^-,\mu,\nu^-)\). Let \(M^-\) be obtained from \(M\) by deleting
the common hook. If \(\mathsf H(\nu^-)=M^-\), then
\(\mathsf H(\nu)=M\), contrary to hypothesis. Otherwise induction gives
\(\mathsf H(\nu^-)>_{\mathrm{lex}}M^-\), and restoring the common first
hook gives \(\mathsf H(\nu)>_{\mathrm{lex}}M\).
\end{proof}

\begin{proposition}[Generic degeneration]\label{prop:generic-degeneration}
  Choose positive rational weights
\(\varepsilon_h>0\) for $h\in H$ with \(\varepsilon_h>2\sum_{h'\prec_H h}\varepsilon_{h'}\) and
\(\sum_h\varepsilon_h<1\).  For a partition \(\lambda\), put
\[
  d(\lambda)=|\lambda|-\sum_{h\in\mathsf H(\lambda)}\varepsilon_h.
\]
The degrees \(d(\lambda)\) define an ascending multiplicative filtration
\(F_\bullet B\), and \(\gr_F B\cong G^\pi(V,W)\) as
\(\GL(V)\times\GL(W)\)-equivariant superalgebras graded by strands.
\end{proposition}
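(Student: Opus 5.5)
The proof follows Proposition~\ref{prop:diagonal-hook-degeneration}, with Lemma~\ref{lem:2d-jump-ladder} replacing Lemma~\ref{lem:multi-jump-ladder}. The plan has three parts. We first show that $F_\bullet B$, defined by $F_{\le q}B:=\bigoplus_{d(\lambda)\le q}B_\lambda$, is multiplicative. We then show that the associated graded product is exactly the mergeable/nonmergeable rule. Finally, a surjection from $G^\pi(V,W)$ onto $\gr_F B$ combined with a character count finishes the argument.

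\textbf{Multiplicativity.} Suppose $B_\nu$ has nonzero projection from $B_\lambda\cdot B_\mu$.
\begin{itemize}
\item Homological degree gives $|\nu|=|\lambda|+|\mu|$.
\item $\GL(V)\times\GL(W)$-equivariance gives nonvanishing of both Littlewood--Richardson coefficients $c^{\nu^{(s)}}_{\lambda^{(s)},\mu^{(s)}}$ and $c^{(\nu^{\mathsf t})^{(s)}}_{(\lambda^{\mathsf t})^{(s)},(\mu^{\mathsf t})^{(s)}}$.
\item Strand grading gives $\rank\nu=\rank\lambda+\rank\mu$. This also follows from the size comparison in the proof of Lemma~\ref{lem:2d-jump-ladder}.
\end{itemize}
Since the sizes add, comparing $d(\nu)$ with $d(\lambda)+d(\mu)$ reduces to comparing $\sum_{h\in\mathsf H(\nu)}\varepsilon_h$ with $\sum_{h\in M}\varepsilon_h$, where $M=\mathsf H(\lambda)\uplus\mathsf H(\mu)$.

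If $\mathsf H(\nu)\ne M$, Lemma~\ref{lem:2d-jump-ladder} gives $\mathsf H(\nu)>_{\mathrm{lex}}M$. Let $h$ be the $\prec_H$-largest hook whose multiplicities differ after the common initial segment is removed. Then $\mathsf H(\nu)$ has one more copy of $h$, and every smaller hook occurs at most twice in $M$. The weight inequality $\varepsilon_h>2\sum_{h'\prec_H h}\varepsilon_{h'}$ then forces
\[
\sum_{\mathsf H(\nu)}\varepsilon-\sum_M\varepsilon>0,
\]
so $d(\nu)<d(\lambda)+d(\mu)$.

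If instead $\mathsf H(\nu)=M$, then $\mathsf H(\nu)$ is a set, since a partition has distinct Frobenius pairs. Hence $M$ has no repeats and $\nu=\lambda\sqcup\mu$, with equality of filtration degree. This proves multiplicativity. It also shows that in $\gr_F B$ a product $\overline B_\lambda\cdot\overline B_\mu$ vanishes unless $\lambda,\mu$ are mergeable.

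\textbf{Associated graded product.} When $\lambda,\mu$ are mergeable, $\overline B_\lambda\cdot\overline B_\mu$ is the projection onto $B_{\lambda\sqcup\mu}$. This projection is nonzero by Proposition~\ref{prop:generic-strand-generation}. The hook summands $B_h\cong E_h$ generate $\gr_F B$, because they generate $B$ by Proposition~\ref{prop:generic-strand-generation}. Supercommutativity with parity $\pi$ holds because the Tor parity of $E_h$ is $|h|=a+b+1$. Hence the defining quadratic relations of $G^\pi(V,W)$ hold in $\gr_F B$:
\begin{itemize}
\item nonmergeable pairs multiply to zero;
\item for a mergeable pair, $\ker p_{h,h'}$ maps to zero, since the only surviving target summand is the one indexed by $\eta(h,h')$, and it occurs with multiplicity one.
\end{itemize}
Fixing equivariant isomorphisms $E_h\cong B_h$, each unique up to scalar, we obtain a surjection $G^\pi(V,W)\twoheadrightarrow\gr_F B$. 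It is surjective by generation.

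\textbf{Conclusion.} Both sides are $\bigoplus_{\lambda\subseteq(m-s)^{n-s}}\bS_{\lambda^{(s)}}(V)\otimes\bS_{(\lambda^{\mathsf t})^{(s)}}(W)$ as representations. Here we use the stated equivalent description of $G^\pi$ and the Lascoux decomposition from \S\ref{sec:det-poincare}. So the surjection is an isomorphism. The filtration is indexed by summands, each homogeneous in strand degree $\rank\lambda$, so the isomorphism respects the strand grading.

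The main obstacle is the combinatorial inequality in the first step. Lemma~\ref{lem:2d-jump-ladder} supplies the lexicographic comparison, but one must check that the ``$+1$ copy at $h$, at most two copies below'' counting is valid for multisets of hooks ordered by $\prec_H$. A second point needing care is whether the $G^\pi$ relations suffice for the Hilbert-function comparison, that is, whether the stated equivalent description of $G^\pi$ as a direct sum is available. If it is not, I would instead prove directly, via a Segre-type argument, that its graded pieces are at most this large.
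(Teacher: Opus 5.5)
This is correct and is the paper's argument: the paper's proof just says it is identical to that of Proposition~\ref{prop:diagonal-hook-degeneration}, with Lemma~\ref{lem:2d-jump-ladder} replacing Lemma~\ref{lem:multi-jump-ladder} and the nonzero union projection taken from Proposition~\ref{prop:generic-strand-generation}. Your count of ``one extra copy at the largest differing hook, at most two copies below'' and your use of the stated direct-sum description of $G^\pi(V,W)$ are exactly the details that reduction needs; the direct-sum description holds degreewise, since the kernel of a tensor product of two projections is the sum of the two partial kernels.

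One point of justification needs correcting. Generation of $\gr_F B$ by the hook components does not follow formally from generation of $B$. It follows instead from what you already showed: in $\gr_F B$ the product of two mergeable components surjects onto the irreducible union component, and you apply this successively to the diagonal hooks of each $\lambda$.
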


\begin{proof}
The proof is identical to that of
Proposition~\ref{prop:diagonal-hook-degeneration}, using
Lemma~\ref{lem:2d-jump-ladder} in place of
Lemma~\ref{lem:multi-jump-ladder}; the nonzero Frobenius-union projection is
given by Proposition~\ref{prop:generic-strand-generation}.
\end{proof}

\smallskip\noindent
Recall the set \(H\) from \eqref{eq:generic-base-hook-set}, and set
\[
 H_V=\{(a\mid b+s):(a\mid b)\in H\},\qquad
 H_W=\{(b\mid a+s):(a\mid b)\in H\}.
\]
Let
\[
A_V:=\bigoplus_{\substack{\rho:\ \mathsf H(\rho)\subseteq H_V}}\bS_\rho(V)
\subseteq\cH(V),
\]
and define \(A_W\subseteq\cH(W)\) analogously using \(H_W\). Since
Frobenius union retains every input hook, the componentwise argument in the
proof of Proposition~\ref{prop:diagonal-retracts} shows that the projections
onto \(A_V\) and \(A_W\) are algebra retractions. The quadratic Gr\"obner
basis of Theorem~\ref{thm:quad-gb} is homogeneous for the hook multigrading
of Definition~\ref{def:hook-multigrading}, so its restrictions to these hook
types give quadratic Gr\"obner bases for \(A_V\) and \(A_W\).

\smallskip\noindent\emph{The common multigrading.}\quad
Let \(\{e_h\}_{h\in H}\) be the standard basis of \(\mathbb N^{H}\).  Grade \(A_V\) by \(\mathbb N^{H}\), assigning the generator summand \(\bS_{(a+1,1^{b+s})}(V)\) the multidegree \(e_{(a\mid b)}\); then
\(\deg \bS_{\lambda^{(s)}}(V)= \sum_i e_{(a_i\mid b_i)}\), where
\((a_i\mid b_i)\) are the Frobenius hooks of \(\lambda\).  Grade \(A_W\) by
the \emph{same} \(\mathbb N^{H}\), assigning \(\bS_{(b+1,1^{a+s})}(W)\) the
multidegree \(e_{(a\mid b)}\); then
\(\deg \bS_{(\lambda^{\mathsf t})^{(s)}}(W) = \sum_i e_{(a_i\mid b_i)}\).

\begin{definition}\label{def:fine-segre}
Let \(A=\bigoplus_{\alpha\in\mathbb N^{H}}A_\alpha\) and
\(B=\bigoplus_{\alpha\in\mathbb N^{H}}B_\alpha\) be \(\mathbb N^{H}\)-graded
\(\bk\)-algebras with \(A_{\mathbf0}=B_{\mathbf0}=\bk\), each generated by
its components in the degrees \(\{e_h\}_{h\in H}\).
Their \defi{fine \textup{(}multigraded\textup{)}
Segre product} is the subalgebra
\[
  A\boxtimes B:=\bigoplus_{\alpha\in\mathbb N^{H}}A_\alpha\otimes_\bk B_\alpha
  \ \subseteq\ A\otimes_\bk B ,
\]
with componentwise multiplication.
\end{definition}

Thus \(G^0(V,W)=A_V\boxtimes A_W\), while \(G^\pi(V,W)\) is the Zhang twist
determined by \(\pi\) and the common \(\mathbb N^H\)-grading. To apply
Sullivant's theorem, we present this fine Segre product as a toric fiber
product. Fix
\(\bk\)-bases \(\{x^h_j\}_j\) of \(A_{V,e_h}\) and \(\{y^h_k\}_k\) of
\(A_{W,e_h}\), write \(A_V=\bk[x^h_j]/I_V\) and
\(A_W=\bk[y^h_k]/I_W\), and set
\(z^h_{jk}:=x^h_j\otimes y^h_k\).
Write \(\mathcal D=\{e_h:h\in H\}\). Since the degree vectors \(e_h\) are
linearly independent, Sullivant's toric fiber product
\(I_V\times_{\mathcal D}I_W\)~\cite{Sullivant2007} has codimension zero. By
\cite[Lemma~7]{KahleRauh2014},
\[
  G^0(V,W)=A_V\boxtimes A_W
  \cong \bk[z^h_{jk}]/(I_V\times_{\mathcal D}I_W).
\]

\begin{proposition}\label{prop:generic-fine-segre}
\(G^\pi(V,W)\) is Koszul.
\end{proposition}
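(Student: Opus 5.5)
We first reduce to the purely even form \(G^0(V,W)=A_V\boxtimes A_W\). Since \(G^\pi(V,W)\) is the Zhang twist of \(G^0(V,W)\) determined by \(\pi\) and the common \(\mathbb N^H\)-grading, and Zhang twists preserve Koszulity~\cite{Zhang1996}, it suffices to treat \(G^0\). Here the relevant grading is the rank (strand) grading, i.e.\ total degree in \(\mathbb N^H\). Each generator \(z^h_{jk}\) has degree \(e_h\), of total degree one.

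The plan is to produce a quadratic Gr\"obner basis for \(I_V\times_{\mathcal D}I_W\). By the discussion preceding Definition~\ref{def:fine-segre}, the restrictions of Theorem~\ref{thm:quad-gb} give quadratic Gr\"obner bases \(\mathcal G_V,\mathcal G_W\) for \(I_V\subset\bk[x^h_j]\) and \(I_W\subset\bk[y^h_k]\). These are homogeneous for the \(\mathbb N^H\)-grading, and their initial ideals are generated by the nonsemistandard quadratic tabloids together with the incomparable-type monomials. Because the degree vectors \(e_h\) are linearly independent, the toric fiber product has codimension zero. Sullivant's theorem~\cite{Sullivant2007} on Gr\"obner bases of codimension-zero toric fiber products then yields a Gr\"obner basis consisting of three families:
\begin{enumerate}
\item the lifts \(\mathrm{Lift}(\mathcal G_V)\) and \(\mathrm{Lift}(\mathcal G_W)\), obtained by replacing each \(x\)-monomial by a \(z\)-monomial with chosen \(y\)-indices;
\item the quadratic binomials \(\mathrm{Quad}\), namely \(z^h_{jk}z^{h'}_{j'k'}-z^h_{jk'}z^{h'}_{j'k}\) and their analogues in the same multidegree.
\end{enumerate}
Both families consist of quadrics, since the lifts preserve degree. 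The weight order to use is the one in Sullivant's construction, which refines a term order on the \(y\)-side by one on the \(x\)-side. Taking the orders of Proposition~\ref{prop:global-tabloid-order} on each side, the result is a quadratic Gr\"obner basis of \(G^0\). Koszulity then follows from~\cite[Theorem~4.3.12]{PolishchukPositselski2005}.

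The main obstacle is checking the hypotheses of Sullivant's theorem as stated. In particular, that theorem typically requires a compatible term order on the combined ring, and for relations of degree \(e_h+e_{h'}\) with \(h=h'\) the lifting must be well defined. In our algebras the square \(x^h_jx^h_{j'}\) lies in the ideal, because a hook type is incomparable with itself. So these monomials are themselves Gr\"obner basis elements, and their lifts \(z^h_{jk}z^h_{j'k'}\) must be included; they are monomial quadrics, so no difficulty arises.

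If the cited theorem's hypotheses prove awkward, the fallback is a direct argument. Write \(A_V\boxtimes A_W\) as the diagonal subalgebra of the \(\mathbb N^H\)-graded tensor product \(A_V\otimes A_W\). Both factors are Koszul via their quadratic Gr\"obner bases, hence so is the tensor product. A multigraded diagonal along a set of linearly independent degree vectors is an algebra retract, which is Koszul as in Proposition~\ref{prop:diagonal-retracts}~\cite{BackelinFroberg1985}. One then checks that projection onto the diagonal multidegrees \(\alpha\otimes\alpha\) is multiplicative, which is immediate from the bigrading.
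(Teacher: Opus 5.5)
Your main argument is correct and is essentially the paper's proof: reduce to \(G^0(V,W)\) via the Zhang twist, then feed the quadratic Gr\"obner bases of \(A_V\) and \(A_W\) into Sullivant's theorem for codimension-zero toric fiber products. One small correction: \(\mathrm{Quad}\) consists only of the \(2\times2\) minors \(z^h_{jk}z^h_{j'k'}-z^h_{jk'}z^h_{j'k}\) for a single \(h\); for \(h\ne h'\) the indices \(k,k'\) index bases of different spaces, so the swapped monomial is undefined.

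Discard the fallback, though. The projection of \(A_V\otimes A_W\) onto the diagonal multidegrees is not multiplicative. For mergeable \(h\ne h'\), it kills elements of bidegrees \((e_h,e_{h'})\) and \((e_{h'},e_h)\), yet their product can be a nonzero element of the diagonal bidegree \((e_h+e_{h'},e_h+e_{h'})\). So the fine Segre product is not a retract of the tensor product, and Koszulity does not follow that way.
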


\begin{proof}
The factors \(A_V\) and \(A_W\) have quadratic Gr\"obner bases, and the
degree vectors \(\{e_h\}\) are linearly independent.  Sullivant's
theorem for toric fiber products of codimension zero
\cite[Thms.~2.8--2.9, Prop.~2.6]{Sullivant2007} gives a quadratic
Gr\"obner basis for \(I_V\times_{\mathcal D}I_W\). Thus \(G^0(V,W)\) is
Koszul.  Since \(G^\pi(V,W)\) is a graded Zhang twist of \(G^0(V,W)\), it is
Koszul as well~\cite{Zhang1996}.
\end{proof}

\begin{proof}[Proof of Theorem~\textup{\ref{thm:koszul-strands}}]
Propositions~\ref{prop:pfaffian-strand-generation},
\ref{prop:symm-strand-generation}, and
\ref{prop:generic-strand-generation} give generation by the first nonzero
strand. In each case, write $\gr(B)$ for the associated graded algebra of the
filtration under consideration and $\pi$ for the parity induced by Tor
homological degree.

\smallskip
\noindent\textbf{Pfaffian:} For \(n=\dim V\), apply
Proposition~\ref{prop:diagonal-hook-degeneration} with
\(\Omega=\{0,\dots,n-2r\}\) and \(L=2r-1\). It identifies \(\gr(B)\) with
the parity form of the Koszul retract in
Proposition~\ref{prop:diagonal-retracts}(i), so \(\gr(B)\) is Koszul.

\smallskip
\noindent\textbf{Symmetric determinantal:} For
$\widetilde B=\Tor^A(M_r,\bk)$, apply
Proposition~\ref{prop:diagonal-hook-degeneration} with
\(\Omega=\{0,\dots,n-r+1\}\) and \(L=r-2\), after row insertion with
parameter~\(r-2\) (Definition~\ref{def:row-insertion}). Thus
\(\gr(\widetilde B)\) is Koszul.
The summands indexed by sets of even cardinality form
$B=\Tor^A(A/I_r,\bk)=\widetilde B^{\bZ/2}$; Corollary~\ref{cor:hook-degreewise-compatibility}
identifies \(\gr(B)\) with the second Veronese of \(\gr(\widetilde B)\),
which is Koszul~\cite{BackelinFroberg1985}.

\smallskip
\noindent\textbf{Generic determinantal:} By
Proposition~\ref{prop:generic-degeneration},
\(\gr(B)\cong G^\pi(V,W)\), and the latter is Koszul by Proposition~\ref{prop:generic-fine-segre}.

In every case the filtration preserves strand degree and is finite and
exhaustive in each such degree. The induced filtration on the reduced bar
complex gives a convergent spectral sequence compatible with the strand grading,
\[
E^1\cong\Tor^{\gr(B)}(\bk,\bk)
\Longrightarrow\gr\Tor^B(\bk,\bk).
\]
The left side is concentrated where strand and homological degrees agree,
so the same is true of the abutment. Thus \(B\) is Koszul.
\end{proof}

\bibliographystyle{alpha}
\bibliography{biblio}

\end{document}